\documentclass[10pt]{article}   
\usepackage[utf8]{inputenc}   
\usepackage[left=1.8cm,right=1.8cm,top=1.8cm,bottom=1.8cm]{geometry} 
\usepackage[T1]{fontenc}                
\usepackage[english]{babel}              
\usepackage{graphicx}
\usepackage{amsmath,amsfonts,amssymb,amsthm}  
\usepackage{xspace}
\usepackage{geometry} 
\usepackage{graphicx}
\usepackage[overload]{empheq}
\usepackage{dsfont} 
\usepackage{stmaryrd}
\usepackage{tikz} 
\usepackage{minted}     
\usepackage{array}
\usepackage{booktabs}
\usepackage{mathrsfs}
\usepackage{enumerate}
\usepackage{subfig}
\usepackage{float}

\usepackage{hyperref}
\hypersetup{
colorlinks = true,
urlcolor = blue,
linkcolor = blue,
citecolor = blue,
}

\makeatletter

\newif\ifappendixtoc
\appendixtocfalse

\let\oldaddcontentsline\addcontentsline
\renewcommand{\addcontentsline}[3]{%
  \ifappendixtoc
    \def\tempa{#1}%
    \def\tempb{toc}%
    \ifx\tempa\tempb
      \oldaddcontentsline{atoc}{#2}{#3}%
    \else
      \oldaddcontentsline{#1}{#2}{#3}%
    \fi
  \else
    \oldaddcontentsline{#1}{#2}{#3}%
  \fi
}

\newcommand{\appendixtableofcontents}{%
  \section*{Appendix contents}
  \@starttoc{atoc}%
}

\makeatother

\usepackage{tikz}

\newtheorem{theorem}{Theorem}
\newtheorem{definition}{Definition}
\newtheorem{assumption}{Assumption} 
\newtheorem{proposition}{Proposition} 
\newtheorem{lemma}{Lemma}
\newtheorem{corollary}{Corollary}
\newtheorem{remark}{Remark}

\usepackage{amsmath,amssymb,euscript ,yfonts,psfrag,latexsym,dsfont,graphicx,bbm,color,amstext,wasysym,epsfig,subfig,parskip,textcomp}

\usepackage{pgfplots}
\usepackage{mathrsfs}
\usetikzlibrary{arrows}

\usepackage{natbib}
\bibpunct{(}{)}{;}{a}{,}{,}
\usepackage{hyperref}
\hypersetup{
colorlinks = true,
urlcolor = blue,
linkcolor = blue,
citecolor = blue,
}

\DeclareMathOperator*{\argmin}{arg min}

\newcommand{\R}{\mathbb{R}}
\newcommand{\E}{\mathbb{E}}
\makeatletter
\def\thm@space@setup{%
    \thm@preskip=6pt plus 2pt minus 2pt 
    \thm@postskip=1pt plus 2pt minus 2pt 
}
\makeatother   

\usepackage{titling}

\title{Lipschitz regularity  in Flow Matching and Diffusion Models:\\ sharp sampling rates and functional inequalities}
\date{}  
\usepackage{authblk}
\author{\vspace{1.1em}Arthur St\'ephanovitch}
\affil{\small CREST, ENSAE, IP Paris\\
5 av. H. Le Chatelier, 91120 Palaiseau, France,\\ \texttt{arthur.stephanovitch@ensae.fr}}
\begin{document}
\maketitle 

\begin{abstract}
\noindent Under general assumptions on the target distribution $p^\star$, we establish a sharp Lipschitz regularity theory for flow-matching vector fields and diffusion-model scores, with optimal dependence on time and dimension. As applications, we obtain Wasserstein discretization bounds for Euler-type samplers in dimension $d$: with $N$ discretization steps, the error achieves the optimal rate $\sqrt{d}/N$ up to logarithmic factors. Moreover, the constants do not deteriorate exponentially with the spatial extent of $p^\star$. We also show that the one-sided Lipschitz control yields a globally Lipschitz transport map from the standard Gaussian to $p^\star$, which implies Poincar\'e and log-Sobolev inequalities for a broad class of probability measures.
\end{abstract}

\tableofcontents
\addtocontents{toc}{\protect\setcounter{tocdepth}{3}}

\section{Introduction}
\subsection{Continuous-time generative models and Lipschitz regularity} 
Generative modeling aims at learning a  target distribution $p^\star$ on \(\mathbb{R}^d\) from data, in order to generate new samples whose law is close to $p^\star$.
A dominant paradigm is to construct a continuous-time transport from a simple reference law (typically the standard Gaussian) to the target law $p^\star$.
Two influential families of methods instantiate this principle in complementary ways:
\emph{score-based diffusion models} (SGMs) build such a transport through time-reversal of a stochastic dynamics and require the estimation of a score function, whereas \emph{flow matching} learns directly a deterministic velocity field generating a probability flow.
Both viewpoints ultimately reduce sampling to the numerical approximation of an ODE/SDE whose drift is learned from data, and the present work is concerned with quantitative regularity properties of these drifts.

\subsubsection{Generative modeling through (stochastic) differential equations}

A convenient way to describe flow-based generative models is to start from a family of
intermediate distributions $(p_t)_{t\in[0,1]}$ connecting a simple reference law $p_0$ to the target
distribution $p^\star$. In the constructions considered in this paper, these marginals are of
Gaussian-mixture type: there exist a latent variable $Z\sim \pi$ and an independent Gaussian noise
$\xi\sim \mathcal N(0,I_d)$ such that
\begin{equation}\label{eq:intro_gaussian_path}
   Y_t = m_t(Z)+\sigma_t \xi,
\qquad
p_t=\text{Law}(Y_t). 
\end{equation}
One may think of $Z$ as the latent variable that parametrizes the Gaussian components of the mixture. In the
concrete models considered later, its law $\pi$ is built from the data distribution and the
reference Gaussian law, so sampling $Z$ amounts in practice to taking a point from the dataset,
together with an auxiliary reference-noise variable.

Assuming that $t\mapsto m_t(z)$ and $t\mapsto \sigma_t$ are differentiable, one can associate to this
interpolation an explicit conditional vector field $v_t^c$ such that
\[
\partial_t Y_t = v_t^c(Z,Y_t).
\]
The corresponding marginal vector field is then obtained by conditional expectation:
\begin{equation}\label{eq:hgjkloiuhj}
v_t(x)=\E\big[v_t^c(Z,Y_t)\mid Y_t=x\big],
\end{equation}
and transports the density path $(p_t)_{t\in[0,1]}$ in the sense that it solves the continuity equation $\partial_t p_t+\nabla \cdot (v_t p_t)=0$.
Equivalently to \eqref{eq:hgjkloiuhj}, $v_t$ is the unique minimizer of the quadratic regression problem
\begin{equation}\label{eq:fhodozpzmdjfd}
v_t\in \argmin_{w:\mathbb R^d\to\mathbb R^d}
\E\big[\|w(Y_t)-v_t^c(Z,Y_t)\|^2\big].
\end{equation}
This identity is the conceptual bridge between probabilistic constructions and modern training
objectives. In practice, this regression problem is approximated empirically from samples $(Z^{(i)},\xi^{(i)},t^{(i)})$, by optimizing over a neural-network class $\mathcal V$:
\[
\hat v \approx \argmin_{w\in\mathcal V}\frac{1}{n}\sum_{i=1}^n \left\| w\bigl(t^{(i)},Y_{t^{(i)}}^{(i)}\bigr)-v_{t^{(i)}}^c\bigl(Z^{(i)},Y_{t^{(i)}}^{(i)}\bigr)\right\|^2,
\]
where $Z^{(i)}$ is constructed from the dataset and the optimization is carried out in practice by stochastic gradient descent.

Once the vector field has been learned, sampling amounts to approximating the continuous flow associated with it.
This can be viewed in two closely related ways. In the deterministic case, one considers the ODE
\[
\partial_t X_t = v_t(X_t),
\qquad
X_0\sim p_0,
\]
whose marginals follow the prescribed curve $(p_t)_{t\in[0,1]}$. This is the flow-matching point of
view.

More generally, as detailed in Section~\ref{sec:models}, the same curve of marginals may be realized as the family of time marginals of an
It\^o dynamics
\begin{equation}
    \label{eq:intro_sde_general}
    dX_t = a_t(X_t)\,dt + b_t\,dB_t,
\qquad
t\in[0,1],
\qquad
X_0\sim p_0,
\end{equation}
for a suitable drift $a_t$ being also solution to a minimization problem of the form \eqref{eq:fhodozpzmdjfd}. When $b_t\equiv 0$, this reduces to the ODE above. When $b_t\not\equiv 0$,
one obtains a stochastic sampler; in diffusion models, the relevant drift is expressed in terms of the
score $\nabla \log p_t$ and corresponds to a reverse-time SDE associated with the same family of
marginals.

Therefore, flow matching and diffusion models rely on
the same basic mechanism: a Gaussian-mixture interpolation defines a path of marginals, an
explicit conditional drift is projected onto the current state and 
used inside an ODE or an SDE sampler. The main goal of this paper is to study the regularity of these
projected drifts.

\subsubsection{The role of Lipschitz regularity}

A recurring theme in continuous-time generative modeling is that the model is defined through a
continuous flow,
\begin{equation}\label{eq:fhfkdodopdpdpd}
dX_t = a_t(X_t)\,dt + b_t\,dB_t,
\end{equation}
but used through a numerical scheme. For both flow matching and diffusion models, the practical algorithm is an Euler-type discretization driven by a learned
approximation of the true drift. This makes quantitative regularity bounds on the drift central: they are the common currency that simultaneously (i) guarantees well-posedness and stability
of the underlying bridge, (ii) controls discretization errors of samplers, and (iii) allows one to extract
analytic information on the target distribution through the induced transport map. We discuss below in detail why Lipschitz regularity is important.

\vspace{0.2cm}
\noindent\textbf{Stability of trajectories and propagation of perturbations.}
A first reason is \emph{dynamic stability}. 
A convenient quantitative notion is the \emph{one-sided Lipschitz} control $L:[0,1]\to\mathbb{R}$ such that, for all $x,y\in\mathbb{R}^d$,
\begin{equation}\label{eq:OSL}
\langle x-y,  a_t(x)-a_t(y)\rangle \le L_t\|x-y\|^2 .
\end{equation}
The key point is that
\eqref{eq:OSL} yields a sharp Grönwall-type stability inequality: if two trajectories are driven by the equation \eqref{eq:fhfkdodopdpdpd} and differ only through initial condition, then their
distance at time $t$ is bounded by the initial distance times $\exp(\int_0^tL_sds)$. This is precisely the mechanism by which one can
control how initial errors or learning errors in a drift  propagate to the terminal distribution.

\noindent\textbf{Discretization error of Euler-type samplers in Wasserstein distance.}
A second reason is that Lipschitz estimates control the local truncation error in the discretization analysis.
Writing \(X_t\) for the exact process and \(\bar X_t\) for its Euler scheme with exact Gaussian increment, one derives on each interval
\([t_k,t_{k+1}]\) the one step decomposition
\[
X_{t_{k+1}}-\bar X_{t_{k+1}}
=
X_{t_k}-\bar X_{t_k}
+\int_{t_k}^{t_{k+1}} \big(a_s(X_s)-a_{t_k}(\bar X_{t_k})\big) ds,
\]
Now, to control the local truncation error $\|a_s(X_s)-a_{t_k}(X_{t_k})\|$, one needs both global space-Lipschitz regularity
\begin{equation}\label{eq:globalLip}
\|a_t(x)-a_t(y)\|\le C_t\|x-y\| ,
\end{equation}
to control how much the drift can change when the state is perturbed, and time-regularity
\begin{equation}\label{eq:timeReg}
\|\partial_t a_t(x)\|\le M_t\big(\sqrt{d}+\|x\|\big),
\end{equation}
to control how much the drift can vary over a single time-step.
Equivalently, \eqref{eq:globalLip} and \eqref{eq:timeReg} are  exactly the
estimates that convert the pathwise coupling into a closed recursion for the mean-square error, and
therefore into a non-asymptotic bound on \(W_2(\text{Law}(X_1),\text{Law}(\bar X_1))\).

\vspace{0.2cm}
\noindent\textbf{From one-sided Lipschitz control to functional inequalities.}
A third, more structural reason is that regularity of $v_t$ controls the geometry of the induced
transport. Let $X_t(x)$ denote the flow map solving $\partial_t X_t(x)=v_t(X_t(x))$ with $X_0(x)=x$.
Differentiating in space shows that $\nabla X_t$ satisfies a linear ODE driven by $\nabla v_t$ along
the flow. As a result, an integrability bound on the maximal eigenvalue of $\nabla v_t$ yields a global
Lipschitz bound on $X_1$:
\[
\|\nabla X_t(x)\|_{\mathrm{op}} \le \exp\Big(\int_0^t \sup_{z\in\mathbb{R}^d}\lambda_{\max}(\nabla v_s(z)) ds\Big).
\]
In particular, when $X_0\sim \mathcal{N}(0,\text{Id})$, the terminal map $T:=X_1$ pushes forward the
standard Gaussian measure onto the target $p^\star$. A dimension-free Lipschitz bound on $T$ is then a
powerful analytic tool: Lipschitz changes of variables preserve functional inequalities, so one can transfer Poincaré and log-Sobolev inequalities to
targets.
This viewpoint connects the regularity theory of learned drifts/scores with classical questions in
probability theory (concentration, $\Psi$-Sobolev inequalities), and it highlights that Lipschitz control is
not merely a technical assumption for numerical schemes: it is a bridge between generative modeling and
quantitative analysis of measures.

\vspace{0.2cm}
\noindent\textbf{Scope of this paper.}
Motivated by these three roles, the goal of the present work is to establish sharp bounds on
the spatial (one-sided and two-sided) Lipschitz regularity and on the time-regularity of the canonical drift
fields arising in flow matching and diffusion models, under weak assumptions on the data distribution. The
applications developed later ($W_2$ discretization bounds and transfer
of functional inequalities) are direct consequences of these regularity
estimates and serve as illustrations of their probabilistic relevance.

\subsection{Previous works and contributions}
\subsubsection{Assumptions on the target distribution}\label{sec:saaumptionstarget}
Throughout the paper, we work with a density $p^\star$ that is $(\alpha,\beta,K)$-weakly log-concave with $\alpha,K>0$ and $\beta\in (0,1]$, in the sense of Definition~\ref{defi:weakconcave}.
\begin{definition}[Weak log-concavity]
\label{defi:weakconcave}
A probability density $p:\mathbb{R}^d\rightarrow \mathbb{R}$ is said to be $(\alpha,\beta,K)$-weakly log-concave if it is of the form 
$p(x)=\exp(-u(x)+a(x))$ and satisfies both the following assumptions:
\begin{enumerate}
    \item $u:\mathbb{R}^d\rightarrow \mathbb{R}\cup\{\infty\}$ is convex, is $C^2$ on the interior of the support and satisfies $\nabla^2 u \succeq \alpha\text{Id}$, 
    \item For all $x,y\in \mathbb{R}^d$, $|a(x)-a(y)|\leq K\|x-y\|^{\beta}$.
\end{enumerate}
\end{definition}
Informally, the probability measures in Definition~\ref{defi:weakconcave} can be thought of as sub-Gaussian densities $p^\star$ with convex support that are $\beta$-Hölder on compact sets of the support with a constant that depends on the size of the compact.

\begin{figure}[H]
  \centering
  \includegraphics[width=0.85\textwidth]{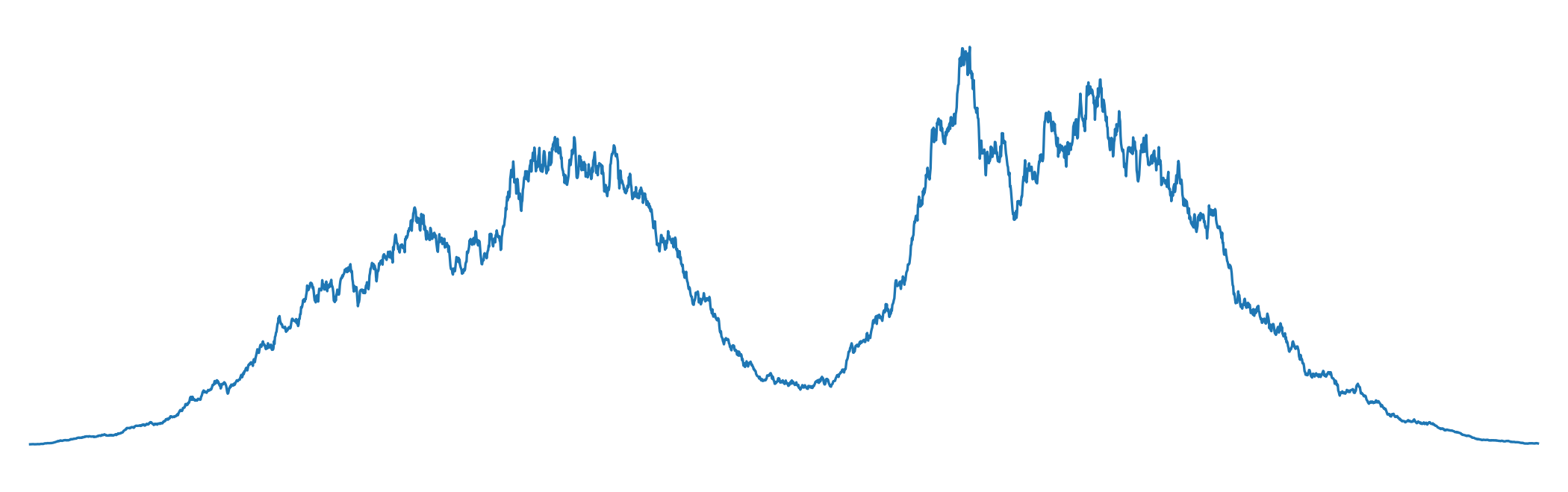}
  \caption{Illustration of Definition~\ref{defi:weakconcave} for Hölder regularity $\beta=1/2$.}
  \label{fig:beta_half_example}
\end{figure}

\paragraph{Tail assumption:  sub-Gaussianity.}
Definition~\ref{defi:weakconcave} does not impose any restriction on the support of $p^\star$ besides its convexity.
It is compatible both with \emph{full-support} densities and with \emph{compactly supported} ones.
Indeed, the curvature requirement on $u$ is only local and compact support can be encoded by taking $u=+\infty$ outside $\mathrm{supp}(p^\star)$.

In the full-support case $\mathrm{supp}(p^\star)=\mathbb{R}^d$, the decomposition enforces sub-Gaussian tails: the strongly convex part $u$ provides at least quadratic growth, while the Hölder perturbation $a$ can grow at most linearly when $\beta\le 1$. This behavior is not an artefact of the decomposition, but rather matches what is structurally compatible with the main goals of the paper. 

Indeed, if one seeks a globally Lipschitz transport map $T$ such that $T_\#\gamma_d=p^\star$,
then sub-Gaussian concentration for $p^\star$ is unavoidable:
for any $1$-Lipschitz $\varphi$, the random variable $\varphi(T(G))-\mathbb{E}[\varphi(T(G))]$
(with $G\sim\gamma_d$) is sub-Gaussian with variance proxy of order $\mathrm{Lip}(T)^2$ by Gaussian concentration.

The same tail information is also crucial when one passes from the continuous-time to time-discretized algorithms:
our results use stability estimates of the form
\[
\|X_{t_{k+1}}-\bar X_{t_{k+1}}\|
 \le\
(1+L h) \|X_{t_k}-\bar X_{t_k}\|
 + h \|\varepsilon_k\|,
\]
where $\bar X$ denotes a numerical scheme, $h$ is the time step, and $\varepsilon_k$ collects local truncation errors.
While Lipschitzness controls the propagation of perturbations, turning such  bounds into
non-asymptotic estimates requires control of moments of $X_t$.  Sub-Gaussian tails provide exactly this and yield stable constants that do not deteriorate
with~$d$.

\paragraph{Regularity assumption: local Hölder control.}
The regularity condition  on $a$ controls the local oscillations of the log-density: on any ball of radius $r$, the multiplicative weight $e^{a}$ can change by at most a factor $\exp(K(2r)^\beta)$. In particular, $a$ may encode large-scale nonconvexity as long as it does not create arbitrarily sharp fluctuations at arbitrarily small scales. Note also that we do \emph{not} assume differentiability of $a$: when $\beta=1$ this is merely a Lipschitz assumption, and when $\beta<1$ it still allows rough perturbations far from $C^1$ (see the example of Figure~\ref{fig:beta_half_example}). 

A control on $a$ is needed as quantitative bounds on $\nabla\log p_t$ can blow up if $a$ is allowed to oscillate arbitrarily fast. Indeed, without a modulus of continuity on $a$, the factor $e^{a}$ can concentrate mass into extremely thin regions which may produce very large derivatives of $\log p_t$.

Overall, Definition~\ref{defi:weakconcave} is a fairly mild structural assumption: it allows both compactly supported and full-support targets and permits rough perturbations through a mere Hölder control, while enforcing the sub-Gaussian tail behavior that is unavoidable for the sharp guarantees pursued here.

\subsubsection{Previous work}\label{sec:previouswork}
We measure generative error using the Wasserstein distance $W_2$ as it provides an
intrinsically geometric notion of discrepancy: it directly accounts for the ambient
distance in the sample space and therefore distinguishes between samples that are slightly
off the data support and samples that are far away. In contrast, total variation and
Kullback-Leibler are $f$-divergences that essentially depend on density ratios; as a result
they do not encode the underlying geometry and may be ill-defined if one wants to generalize the results to low dimensional structure like the manifold hypothesis.
We refer the reader to \cite{arsenyan2025assessing} and \cite{beyler2025convergence} for a thorough
discussion of the advantages of Wasserstein distances in the context of generative modeling.

\textbf{Sampling bounds in Wasserstein distance.}
The recent literature on non-asymptotic $W_2$ guarantees for diffusion-model samplers is now large, with results spanning a wide range of assumptions on the target distribution, the discretization grid, and the notion of learning error. Rather than attempting an exhaustive survey, we refer to Table~\ref{tab:w2_sampling_bounds} for a representative snapshot of $W_2$ sampling bounds obtained by direct stability arguments (as opposed to bounds derived by first controlling KL/TV, which typically introduces additional losses).

\begin{table}[H]
\centering
\setlength{\tabcolsep}{4pt}
\renewcommand{\arraystretch}{1.25}
\scriptsize
\begin{tabular}{
>{\centering\arraybackslash}m{2.35cm}
>{\centering\arraybackslash}m{2.15cm}
>{\centering\arraybackslash}m{3.05cm}
>{\centering\arraybackslash}m{3.05cm}
>{\centering\arraybackslash}m{2.15cm} 
>{\centering\arraybackslash}m{3.5cm}
}
\toprule
\textbf{Reference} &
\textbf{Method} &
\textbf{Assumptions on $p^\star$} &
\textbf{$W_2$ sampling rate} &
\textbf{Assumptions on the drift} &
\textbf{$\epsilon_t$} \\
\midrule
 
\cite{strasman2025noise_schedule} &
Diffusion models &
strongly log concave &
$\widetilde O \left(1/\sqrt N+\sqrt d/N+\int \epsilon_t\right)$ &
$ \text{Lip}_x,$ $\text{Lip}_t$ &
$\|a_t(\hat X_{t})-\hat a_t(\hat X_{t})\|_{L_2}$ \\

\cite{yu_yu_2025_advancing_wasserstein} & 
Diffusion models & 
strongly log-concave&
$\widetilde O \left(\sqrt{d/N}+\int \epsilon_t\right)$&
$\text{Lip}_x$, $\text{Lip}_t$ &
$\|a_t(\hat X_{t})-\hat a_t(\hat X_{t})\|_{L_2}$ \\

\cite{gao_nguyen_zhu_2025_jmlr} &
Diffusion models &
strongly log concave and  bounded log-Hessian &
$\widetilde O\left(\sqrt{d/N}+\int \epsilon_t\right)$ &
$\text{Lip}_t$ &
$\|a_t(\hat X_{t})-\hat a_t(\hat X_{t})\|_{L_2}$ \\

\cite{bruno2025wasserstein} &
Diffusion models &
weakly log concave &
$\widetilde O \left( \sqrt{d/N}+\int \epsilon_t\right)$&
$\text{Höl}_t^{1/2},  \text{Lip}_x$&
$\|a_t( X_{t}^{\text{aux}})-\hat a_t( X_{t}^{\text{aux}})\|_{L_2}$  \\

\cite{gao_zhu_2024_pflow_wasserstein} &
Probability flow ODE &
strongly log concave and bounded log-Hessian&
$\widetilde O\left(\sqrt d/N+\int \epsilon_t\right)$ &
$\text{Lip}_t$ &
$\|a_t(\hat X_{t})-\hat a_t(\hat X_{t})\|_{L_2}$ \\

\cite{kremling2025pflow_weaklogconcave} &
Probability flow ODE &
weakly log concave  and  bounded log-Hessian &
$\widetilde O\left(\sqrt d/N+\int \epsilon_t\right)$ &
$\text{Lip}_t$ &
$\|a_t(\hat X_{t})-\hat a_t(\hat X_{t})\|_{L_2}$ \\

\cite{pmlr-v267-zhou25l}&  Flow matching
&
bounded support  and  bounded log-Hessian &
$\widetilde O\left(e^d(1/N^{1/3}+\int \epsilon_t)\right)$&
$\text{Lip}_t$ &
$\|a_t( X_{t})-\hat a_t( X_{t})\|_{L_2}$
\\

\cite{benton_debortoli_doucet_2024_flowmatching} &
Flow matching &
weakly log concave  and  bounded support &
$\widetilde O \left(\sqrt d/N+N^{2}\int \epsilon_t \right)$ &
$\varnothing$ &
$\|a_t( X_{t})-\hat a_t( X_{t})\|_{L_2}$ \\

\cite{gao2024convergence} &
Flow matching &
bounded log-Hessian &
$\widetilde O\left(\sqrt{d}/N^{1/3}+\int \epsilon_t\right)$&
$\varnothing$ &
$\|a_t( X_{t})-\hat a_t( X_{t})\|_{L_2}$ 
\\ 

\cite{beyler2025convergence} &
Diffusion models  and  Probability flow ODE &
Gaussian convolution of bounded-support law &
$\widetilde O \left(d^{1/4}/N^{1/2}+\int\epsilon_t\right)$ &
$\varnothing$&
$\|a_t( X_{t})-\hat a_t( X_{t})\|_{L_2}$ \\

\cite{gentiloni_silveri_ocello_2025} &
Diffusion models &
weakly log-concave  and  one-sided log-Lipschitz &
$ \widetilde O\left(\sqrt{d/N}+\int \epsilon_t\right)$ &
$\varnothing$ &
$\|a_t(\hat X_{t})-\hat a_t(\hat X_{t})\|_{L_2}$ \\

\cite{wang2024wasserstein}& 
Diffusion models &
$C^2$ Gaussian perturbation & 
$ \widetilde O\left(\sqrt{d}/N+\int \epsilon_t\right)$ &
$\varnothing$ &
$\|a_t( X_{t})-\hat a_t( X_{t})\|_{L_2}$ \\

\cite{arsenyan2025assessing} &
Diffusion models &
weakly log concave  and  bounded support &
$\widetilde O\left(\sqrt d/N+\int \epsilon_t\right)$ &
$\varnothing$ &
$\|a_t-\hat a_t\|_{L_\infty}$ \\

This work &
Flow matching + Diffusion models &
weakly log concave &
$\widetilde O \left(\sqrt d/N+\int \epsilon_t \right)$ &
$\varnothing$ &
$\|a_t( X_{t})-\hat a_t( X_{t})\|_{L_2}$ \\

\bottomrule
\end{tabular}
\caption{\textbf{Representative $W_2$ sampling bounds.}\\
{\small For each reference we report the dominant scaling of the terminal sampling error in $W_2$ as a function of the dimension $d$, the number of discretization steps $N$, and a learning error $\epsilon$ on the drift.
Several works derive bounds involving several error terms and auxiliary parameters. Whenever needed, we optimize over these auxiliary parameters to extract a simplified rate depending only on $(N,d,\epsilon)$.
We suppress logarithmic factors by absorbing in $\widetilde O(\cdot)$. 
 In the column ``Assumption on the drift'', we write $\varnothing$ when no extra regularity assumption is imposed on the true or learned score/velocity field, or when the only requirement is that the learned drift matches the regularity of the true drift and the cited work proves this regularity for the true drift (rather than assuming it). When a condition is displayed instead of $\varnothing$ (e.g. $\mathrm{Lip}_x$, $\mathrm{Lip}_t$, \dots), it should be read as an additional assumption on the true or learned drift, with no proof that the true drift satisfies it  with the stated assumptions on the target density $p^\star$. For simplicity, we do not distinguish whether $\epsilon_t$ is integrated or evaluated on the discretization grid, nor whether the relevant error term is $\left(\int \epsilon_t^2\,dt\right)^{1/2}$ rather than $\int \epsilon_t\,dt$. Note also that the notion of weak log-concavity is not standardized and the definition varies across references; in some works it is compatible with non-$C^1$ potentials (for instance \cite{bruno2025wasserstein}, as well as our Definition~\ref{defi:weakconcave}), while in other works the same label is paired with additional differentiability assumptions.}}
\label{tab:w2_sampling_bounds}
\end{table}

A recurring feature of the $W_2$ sampling literature is to impose \emph{a priori}
regularity on the drift (e.g. global space-Lipschitz bounds,
time-Lipschitz bounds, or uniform bounds on higher derivatives).
While these conditions make stability estimates tractable, it is not clear
when they are actually satisfied, nor whether they remain meaningful at the small-noise end where
singular behavior is expected.
Having score regularity should be viewed as a nontrivial analytic property
of the exact intermediate marginals $p_t$, and therefore calls for a separate
regularity theory (which is one of the main goals of the present work).

Among the works summarized in Table~\ref{tab:w2_sampling_bounds}, \cite{arsenyan2025assessing} is particularly close to the perspective of this paper. They obtain the optimal discretization scaling $\sqrt{d}/N$ for Euler-type diffusion samplers without assuming Lipschitz regularity of the score. Their main estimates nevertheless require uniform $L^\infty$ control of the score error $\hat{s}_t - s_t$ over the sampling grid, a stronger requirement than the usual denoising score-matching risk. In addition, their analysis is specific to diffusion-model samplers and does not cover deterministic flow-matching dynamics. Another closely related work is \cite{wang2024wasserstein}, who establish $W_2$ bounds for diffusion-model samplers in a setting where the target distribution is a $C^2$ perturbation of a Gaussian measure. This specific structure allows them to isolate the Gaussian component of the score, prove exponential decay of the remaining nonlinear part, and derive sampling bounds of order $\sqrt{d}/N$. Their framework is therefore much more specific than ours or that of \cite{arsenyan2025assessing} as it allows the Gaussian component of the score to be separated explicitly and the remaining terms exhibit strong compensations.

As discussed in \cite{beyler2025convergence},
several works control the drift error evaluated along the \emph{learned} reverse trajectory
$\|s_t(\widehat X_t)-\widehat s_t(\widehat X_t)\|_{L_2}$, but this does not match the
denoising score matching objective, since the law of $\widehat X_t$ itself depends on $\widehat s_t$.
In contrast, when one compares the exact reverse dynamics $(X_t)$ and its learned counterpart, the stability estimate naturally produces the
learning term as the drift mismatch along the true process, namely
\[
\epsilon_{\mathrm{drift}}(t)\;:=\;\|s_t(X_t)-\widehat s_t(X_t)\|_{L_2}.
\]
This is the statistically natural notion of score error as it aligns with what is minimized by denoising score matching, and it cleanly
separates learning from discretization/initialization effects.

\paragraph{Score regularity in diffusion models.}
Many statistical and discretization analyses
either assume Lipschitz bounds on  the score $s_t$, for instance the polynomial-time guarantees for probability
flow samplers in \cite{chen2024probability} are obtained under the
assumption that $s_t$ is $L$-Lipschitz along the forward process (hence the regularity is
an input to the theory rather than a derived property). In contrast, recent works have started to \emph{prove} regularity estimates for
the exact score itself.

\cite{beyler2025convergence} work under a bounded-support assumption and from this alone, derive global space-Lipschitz estimates for the score function.
The main drawback is that the resulting stability constants are very large, which propagates into very conservative $W_2$ guarantees.
In this sense, while the mechanism ``bounded support $\Rightarrow$ Lipschitz score'' is explicit, the constants are not optimized for sampling complexity. The paper \cite{brigati2024heat} establishes sharp dimension-free Lipschitz bounds along the heat flow  $p_t = p^\star \ast  \gamma_t$ when $p^\star=\exp(-u+a)$ with $u$ $\alpha$-convex (allowing $\alpha<0$ provided $1+\alpha t>0$) and $a$ is supposed to be Lipschitz.
Beyond this Lipschitz-perturbative  regime, \cite{Stephanovitch2025regularity}
derives Lipschitz and higher-order regularity bounds for $s_t$ under similar assumptions but
allowing rough log-densities (e.g. log--$\beta$-Hölder with additional bounded assumptions) and provides sharp
time-dependence, but the constants depend on the dimension.

Beyond such general regularity theories, alternative bounds can be obtained under stronger structural assumptions, typically with constants tailored to the specific model class.
For Gaussian-mixture targets, \cite{LiangEtAl2025GMMsmoothness}  derives an explicit bound on the full Lipschitz constant of the score. Importantly, the
bound is independent of the number of mixture components and is presented as tight with respect to the
natural covariance regularization. However, the estimate is not uniform in space
and the bound carries a pronounced  dependence on the dimension. Assuming $p^\star= e^{-U}$ with $\nabla^2 U \leq \gamma \text{Id}$ and $U$ weakly convex (different definition than in this paper), \cite{gentiloni_silveri_ocello_2025} prove the upper bound $\sup_x \|\nabla^2\log p_t(x)\|_{\text{\text{op}}}\leq L$ with $L$ depending on the weak log-concavity profile.

\cite{wang2024wasserstein} consider targets of the form $
p^\star(x)=\exp \left(-\frac12 \|x\|_A^2 + h(x)\right),$
where the perturbation $h$ has bounded first and second derivatives. In this regime, they prove exponential decay in time of a modified score and its Jacobian. This is a powerful result, but it crucially exploits the explicit Gaussian structure of the target. In particular, it is  far from Definition~\ref{defi:weakconcave}, for which no such Gaussian decomposition is available.

\textbf{Velocity field regularity in flow matching.} Compared with diffusion models, direct regularity results for the flow-matching velocity field are still scarce. \cite{benton_debortoli_doucet_2024_flowmatching} derive an explicit Jacobian formula for the stochastic-interpolant velocity field and use it to obtain a bound on the full spatial Lipschitz constant. This is one of the first general analyses of regularity for flow-matching and covers log-concave targets and certain Gaussian-smoothed bounded laws. Compared with the present work, however, their result does not provide time-regularity estimates, does not identify a pointwise terminal-time blow-up rate, and is not dimension-free.

\cite{gao2024convergence} are among the closest predecessors to the present work, since they also derive regularity of the exact velocity field from structural assumptions on $p^\star$.  For linear-interpolation flow matching, they show a time-Lipschitz bound of order $(1-t)^{-2}$ which captures the same time scale as ours although the estimate is not sharp with respect to the dimension. For space regularity, their Jacobian estimate is of order $(1-t)^{-3}$ which is more singular than the $(1-t)^{-1}$ behaviour established here. Their assumptions are also stronger than ours: they require either strong log-concavity together with a bounded Hessian of the log-density, or bounded support together with the same $C^2$-type control on the log-density, or else a Gaussian smoothing of a compactly supported law.

Under a bounded-support assumption on the target distribution, \cite{pmlr-v267-zhou25l} obtain an explicit bound on the Lipschitz constant of the velocity field, of order $d(1-t)^{-3}$ near terminal time. Under an additional smoothness assumption on the target density, they further establish a global-in-time space-Lipschitz bound, again with polynomial dependence on the dimension. \cite{kunkel2025distribution} studies the spatial Lipschitz constant of the exact flow-matching vector field through a covariance representation and proves an integrated-in-time bound under explicit covariance asymptotics. This gives a refined structural picture of spatial regularity, and shows that some singular contribution from the noise schedule is unavoidable. However, it is not dimension-free and is verified only under strong target assumptions.

\paragraph{Transfer of Gaussian functional inequalities.}
A standard way to derive functional inequalities for a probability measure $\mu$ on $\mathbb{R}^d$
is to realize it as a Lipschitz pushforward of the standard Gaussian measure $\gamma_d$.
This approach was initiated by  \cite{caffarelli2000monotonicity} using optimal transport and then generalized by \cite{kim2012generalization} with heat-flow maps. In particular,
\cite{neeman2022lipschitz} proves existence of Lipschitz changes of variables between $\gamma_d$ and certain perturbations by exploiting this construction,
while \cite{mikulincer2024brownian} establish refined Lipschitz bounds for the corresponding flow map for (semi)-log-concave targets and Gaussian mixtures.

A major perturbative extension is obtained in \cite{fathi2024transportation}, which gives dimension-free existence of globally Lipschitz transports
from $\gamma_d$  strongly curved reference measures to log-Lipschitz perturbations.  \cite{brigati2024heat}
derive log-Hessian lower bounds along the heat equation in a related regime, yielding improved quantitative Lipschitz estimates.
The coupling-based method of \cite{ConfortiEichinger2025Coupling} provides an alternative probabilistic route to bounding the Lipschitz constant of the same flow map
and relaxes some structural assumptions in the log-Lipschitz perturbation framework,
and \cite{LopezRivera2024BakryEmery} develops a Bakry-\'Emery approach on weighted manifolds,
recovering Lipschitz transports in that setting.

In contrast with the above works which largely treat log-Lipschitz tilts, our setting allows Hölder control of the log-density (including $\beta<1$), while still producing a globally Lipschitz map and hence transferring Gaussian inequalities.

In our earlier paper~\cite{Stephanovitch2024smooth}, we studied the same diffusion transport mechanism but pursued a different goal: under the more restrictive Gaussian-tilt assumption $p^\star(x)\propto \exp \big(-\|x\|^2/2+a(x)\big)$, it establishes
higher-order  regularity of the Langevin transport map. This allows to transfer
higher-order logarithmic Sobolev inequalities but no attempt is made to obtain dimension-free estimates.
By contrast, the present work focuses on \emph{dimension-free} Lipschitz control of flow-matching/diffusion-induced transport maps under
weaker structural assumptions on the target, enabling a dimension-free transfer of Gaussian functional inequalities.

\subsubsection{Contributions}

\paragraph{Lipschitz regularity of vector fields/scores.}
Our first set of contributions is a unified, sharp regularity theory for the
canonical drift fields that appear in flow matching and score-based diffusion models. We treat, in a
single framework, the three notions of regularity that are needed for stability, discretization, and analytic
applications: \emph{one-sided} Lipschitz control, \emph{two-sided} Lipschitz control, and \emph{time}
regularity.

A central novelty is our one-sided Lipschitz theory, which is the most difficult regime and the one that truly
drives quantitative stability because it enters inside exponentials through Grönwall-type arguments. The general one-sided results are stated in an abstract form and only require a weak
log-concavity structure of the intermediate marginals and a control of the conditional
fluctuations of the path derivative. We then verify these assumptions and obtain explicit bounds for our three concrete model
classes: Lipman flow matching (Assumption~\ref{assum:lipman}), Stochastic-interpolant flow matching (Assumption~\ref{assum:sto-int}), and Diffusion Models (Assumption~\ref{assum:diffusion}). The precise definitions of these models are given in Section~\ref{sec:models}.

Informally, Corollaries~\ref{coro:lipmafm},~\ref{coro:notlipmafm} and~\ref{coro:score_osl_selfcontained} show that for $a_t$ the drift of the true sampling equation of either flow matching or (rescaled) diffusion models, $a_t$ satisfies that for all $z\in [0,1]$:
\[
\int_z^1 \sup_{x\in\mathbb{R}^d} \lambda_{\max}\big(\nabla a_t(x)\big) dt \;\le\; C,
\]
with $C>0$ independent of the dimension. This $L^1$ estimate is precisely what yields
dimension-free stability factors of the form $\exp \big(\int_0^1 \sup_x\lambda_{\max}(\nabla a_t(x))dt\big)\le e^C$,
and it is the key input behind both our discretization analysis and the Lipschitz-transport consequences developed later.

Beyond one-sided control, we establish two-sided space-Lipschitz and time-Lipschitz
estimates. In particular, Corollaries~\ref{cor:lipman-global}, \ref{coro:global-Lipschitz-v} and~\ref{coro:difflipbound} obtain 
$$\sup_x\|\nabla a_t(x)\|_{\mathrm{op}} \leq C (1-t)^{-1},$$
and Corollaries~\ref{coro:dtv_rate},~\ref{coro:timeestimatesi} and~\ref{coro:timeregudiffu} show that
\[
\|\partial_t a_t(x)\| \le \frac{C}{(1-t)^2}\bigl(\sqrt d+\|x\|\bigr).
\]These blow-up rates are sharp in time:
the $(1-t)^{-1}$ (resp. $(1-t)^{-2}$) behavior is the natural singularity created by vanishing noise in the
bridge and cannot, in general, be improved. They are also sharp in dimension in the sense that all
constants $C$ in the regularity bounds are independent of $d$.

Compared with previous works, our results (i) provide the first dimension-sharp Lipschitz regularity bounds for
flow matching drifts, and (ii) substantially relax the
assumptions used to derive score regularity in diffusion models. Much of the existing sampling literature
either assumes Lipschitz/one-sided Lipschitz properties as an input, or proves regularity under stronger
conditions. In contrast, we work under the weak log-concavity framework of
Definition~\ref{defi:weakconcave}, allowing Hölder perturbations with regularity $\beta\le 1$.

Importantly, these precise blow-up rates are exactly the regime in which the geometric time grid compensates the terminal singularity, making the discretization analysis work without any extra slack in time or dimension.

\paragraph{Sampling error.}
Our second contribution is a quantitative sampling theory in $W_2$ for ODE/SDE samplers with approximated drift.
A key point is that we do  not impose  global Lipschitz assumptions on the true drift: regularity needed for Wasserstein control is proved from our structural assumptions on the target $p^\star$ (Definition~\ref{defi:weakconcave}) via the sharp bounds of Section~\ref{sec:lipreguv}.
The only additional requirement is that the learned drift $\hat{a}_t$ inherits the same structural regularity bounds as its true counterpart $a_t$ on the sampling time interval.
This type of assumption is essentially unavoidable for the error analysis used here, where the drift mismatch is measured along the true process $X_t\sim p_t$, namely $
\epsilon_{\mathrm{drift}}(t)\;:=\;\|a_t(X_t)-\widehat a_t(X_t)\|_{L_2}.$

We first develop model-agnostic discretization  bounds:
Proposition~\ref{prop:geom_mesh_general} (ODE) and Theorem~\ref{theo:geom_mesh_generaldiff} (SDE) give general $W_2$ discretization guarantees for explicit Euler and Euler-Maruyama schemes under regularity controls of the drift with the singular behavior that naturally occurs near terminal times.

Specializing these results to the canonical generative dynamics yields sharp non-asymptotic rates.
For flow matching, Corollary~\ref{cor:lipman_sampling_error_log2_over_N} provides to our knowledge the \emph{first linear-in-$N$} Wasserstein sampling bound:
with an appropriate early-stopping time and a geometric grid, the sampling error scales as
\[
W_2\bigl(p^\star,\text{Law}(\bar X_N)\bigr) \leq C\left( \frac{\sqrt d}{N} \log^2 N + \int_0^\tau \varepsilon_{\mathrm{drift}}(t) dt\right),
\]
where $\bar X_N$ denotes the terminal iterate of the $N$-step discretized sampler. This removes the polynomial amplification in $N$ that appears in prior stability-based analyses of flow matching.
For diffusion models, Corollary~\ref{cor:diffusion_sde_sampling_logN} yields an analogous bound under the same weak log-concavity assumptions on $p^\star$:
\[
W_2\bigl(p^\star,\text{Law}(\bar X_N)\bigr) \leq C\left( \frac{\sqrt d}{N} \log^3 N + \int_0^\tau \varepsilon_{\mathrm{drift}}(t) dt\right),
\]
where the extra logarithmic loss is an artefact of using a single geometric grid over the whole sampling interval. The rate could be improved by combining a uniform grid on the initial part with a geometric refinement only near the terminal time, as in \cite{arsenyan2025assessing}.
In both cases, the leading $\sqrt d/N$ dependence is sharp up to logarithmic factors, and our assumptions on $p^\star$ are the weakest in the literature that still allow such $W_2$ guarantees (as motivated in Section~\ref{sec:saaumptionstarget}).

An important caveat in comparing Wasserstein sampling bounds is that the displayed dependence on $N$ and $d$ can hide very large structural constants. In several previous works, these constants can be exponential either explicitly in the size of the support of $p^\star$, or implicitly in weak-convexity quantities that themselves grow with the size of the effective support (where most of the mass lies); this is the case, for instance, in \cite{benton_debortoli_doucet_2024_flowmatching,arsenyan2025assessing,beyler2025convergence,gentiloni_silveri_ocello_2025,kremling2025pflow_weaklogconcave}. In our setting, the constants also depend exponentially on the weak log-concavity parameters, but this dependence is of a different nature as it does not deteriorate with the size of the effective support. Indeed, under Definition~\ref{defi:weakconcave} the effective support can be of order $\sqrt d$ (the Gaussian scale) but this size does not enter the exponential. This illustrates that the effective-support geometry of $p^\star$ does not deteriorate the constants.

\paragraph{Functional inequalities.}
Our third contribution leverages the dimension-free one-sided Lipschitz control developed in Section~\ref{sec:onesidedlipreguv} to derive dimension-free functional inequalities for the target law. This  connects the regularity theory of drifts with the problem of identifying  classes of probability measures that satisfy functional inequalities. Indeed, Poincar\'e and logarithmic Sobolev inequalities often appear as standing assumptions in probability, statistics, and machine learning, where they yield concentration, variance and mixing estimates. Therefore, proving that $p^\star$ inherits such inequalities enlarges the family of measures for which many general results become directly applicable.

Concretely, starting from the probability flow ODE / flow-matching dynamics, we consider the
associated flow map \(T(x)=X_{1}(x)\).
The key observation is that an \(L^{1}\)-in-time bound on the maximal eigenvalue
\(\lambda_{\max}(\nabla v_{t})\) integrates into a global Lipschitz bound on \(T\) by a
Grönwall argument. 

Once a dimension-free Lipschitz transport \(T\) is available, functional inequalities follow
by stability under Lipschitz change of variables. As a result, Theorem~\ref{theo:functional-ineq-flowmatching} provides our main conclusion in this direction: every
\((\alpha,\beta,K)\)-weakly log-concave probability measure $p^\star$ (Definition~\ref{defi:weakconcave} with $\alpha,K>0$ and $\beta\in (0,1]$) satisfies a \emph{dimension-free}
family of \(\Psi\)-Sobolev inequalities. In particular, \(p^\star\) satisfies
Poincar\'e and logarithmic Sobolev inequalities:
\[
\mathrm{Var}_{p^\star}(f) \leq C \int_{\mathbb{R}^{d}} |\nabla f|^{2} dp^\star,
\qquad
\mathrm{Ent}_{p^\star}(f^{2}) \leq C \int_{\mathbb{R}^{d}} |\nabla f|^{2} dp^\star,
\]
with $C>0$ depending only on \((\alpha,\beta,K)\) and not on \(d\). Compared to the results discussed in Section~\ref{sec:previouswork}, our functional-inequality consequences apply under strictly more general assumptions on the target: relative to \cite{brigati2024heat} we move from the log-Lipschitz regime \(\beta=1\) to the genuinely rough Hölder regime \(\beta\in(0,1]\), and relative to \cite{Stephanovitch2024smooth} we treat general strongly convex reference potentials \(u\) (not only Gaussian tilts) while obtaining constants that are dimension-free.

\subsection{Models and notations}
\subsubsection{Notations}
\paragraph{Linear algebra and differential operators.}
We work in $\R^d$ with inner product $\langle x,y\rangle$ and Euclidean norm $\|x\|$.
For $A\in\R^{d\times d}$, $\|A\|_{\text{op}}$ denotes the operator norm.
If $A$ is symmetric, $\lambda_{\max}(A)$ and $\lambda_{\min}(A)$ denote its largest and smallest eigenvalues.
For a general  matrix $A$, we set
$
\lambda_{\max}(A):=\lambda_{\max} \left(\tfrac{A+A^\top}{2}\right),$
so that $\lambda_{\max}(\nabla v)$ naturally controls one-sided Lipschitz constants.
We write $\text{Id}$ for the identity matrix, $\text{Tr}(A)$ for the trace, and $u\otimes v:=uv^\top$.
For $f:\R^d\to\R^k$ differentiable, $\nabla f(x)$ is its Jacobian; if $f:\R^d\to\R$ is $C^2$ then
$\nabla^2 f(x)$ is the Hessian and $\Delta f(x):=\text{Tr}(\nabla^2 f(x))$.
For a vector field $v:\R^d\to\R^d$, $\nabla \cdot v$ denotes its divergence.
Time derivatives are written $\partial_t$; for trajectories $t\mapsto X_t$ we also use $\dot X_t$.

\paragraph{Measures and Wasserstein distance.}
$\mathcal P(\R^d)$ is the set of Borel probability measures on $\R^d$, and $\mathcal P_2(\R^d)$ those with finite second moment.
For $p\in[1,\infty)$ and a random vector $X$, $\|X\|_{L^p}:=(\E\|X\|^p)^{1/p}$.
For $\mu\in\mathcal P(\R^k)$ and integrable $f:\R^k\to\R^m$, define the centered map
\[
H_f^\mu(z):=f(z)-\int f(z') d\mu(z').
\]
For real-valued $f,g$, we set
\[
\text{Var}_\mu(f):=\int (H_f^\mu(z))^2)d\mu(z),
\qquad
\text{Cov}_\mu(f,g):=\int H_f^\mu(z)H_g^\mu(z)d\mu(z).
\]
For $\mu,\nu\in\mathcal P_2(\R^d)$, the squared Wasserstein distance corresponds to 
\[
W_2^2(\mu,\nu):=\inf_{\pi\in\Pi(\mu,\nu)}\int_{\R^d\times\R^d}\|x-y\|^2 d\pi(x,y),
\]
where $\Pi(\mu,\nu)$ is the set of couplings.
For a measurable map $T$, $T_\#\mu$ denotes the pushforward measure of $\mu$ by $T$.

\paragraph{One-sided Lipschitz constants.}
For a vector field $v:\R^d\to\R^d$, we say that $v$ is one-sided Lipschitz with constant $L$ if
\[
\langle v(x)-v(y),x-y\rangle \le L\|x-y\|^2,\qquad \forall x,y\in\R^d,
\]
and we use the pointwise bound $\lambda_{\max}(\nabla v(x))$ as the infinitesimal version of this control.

\paragraph{Gaussian and SDE.}
We denote the standard Gaussian measure on $\R^d$ by $\gamma_d$.
For $m\in\R^d$ and $\sigma>0$, $\mathcal N(m,\sigma^2\text{Id})$ is the Gaussian law with mean $m$ and covariance $\sigma^2\text{Id}$,
and $\varphi^{m,\sigma}$ denotes its density.
$(W_t)_{t\ge 0}$ (or $(B_t)_{t\ge 0}$) denotes a standard $d$-dimensional Brownian motion.
We consider It\^o SDEs of the form
\[
dX_t=a_t(X_t) dt+b_t dW_t,
\]
where $b_t$ may be scalar or matrix-valued depending on the context.

\paragraph{Constants.}
$C>0$ denotes a \textbf{dimension-free} positive constant whose value may change from line to line.
We write $A\lesssim B$ if $A\le CB$ for such a constant $C$.

\subsubsection{Models}\label{sec:models}

\paragraph{Gaussian-mixture paths and ODE.}
Let $\pi\in \mathcal{P}(\mathbb{R}^d\times \mathbb{R}^d)$ be a coupling between an initial law $p_0$ and the target law $p^\star$, and let
$Z\sim\pi$ denote the associated latent variable. Let $\xi\sim\mathcal N(0,I_d)$ be independent of $Z$.
Given a map $m_t:\mathrm{Supp}(\pi)\to\R^d$ and a scalar schedule $\sigma_t\ge 0$, we consider the
Gaussian-mixture path
\begin{equation}\label{eq:bridge_general}
    X_t = m_t(Z) + \sigma_t \xi,\qquad t\in[0,1],
\end{equation}
and write $p_t$ for the density of $X_t$. For $m\in\R^d$ and $\sigma>0$, we denote by
$\varphi^{m,\sigma}$ the density of $\mathcal N(m,\sigma^2 I_d)$, so that
\[
p_t(x)=\int \varphi^{m_t(z),\sigma_t}(x) \pi(dz).
\]
For fixed realizations $(Z,\xi)=(z,\eta)$, assuming that the maps $t\mapsto m_t(z)$, $t\mapsto \sigma_t$ are differentiable and $\sigma_t>0$, we have
\[
\dot X_t(z,\eta)=\partial_t m_t(z)+\frac{\sigma_t'}{\sigma_t}\bigl(X_t(z,\eta)-m_t(z)\bigr),
\]
using that $
\eta=\frac{X_t(z,\eta)-m_t(z)}{\sigma_t}$.
This motivates the definition of the conditional velocity field
\[
v_t^{\mathrm c}(z,x):=\partial_t m_t(z)+\frac{\sigma_t'}{\sigma_t}\bigl(x-m_t(z)\bigr),\qquad \sigma_t>0,
\]
so that $\dot X_t=v_t^{\mathrm c}(Z,X_t)$ holds pathwise on the set $\{\sigma_t>0\}$.
We then define the velocity field as the conditional projection
\[
v_t(x) := \E \left[v_t^{\mathrm c}(Z,X_t)\mid X_t=x\right]
        = \E \left[\dot X_t\mid X_t=x\right],
\]
so that the curve of densities $(p_t)_{t\in[0,1]}$ satisfies the continuity equation
\begin{equation}\label{eq:continuity_equation}
\partial_t p_t+\nabla \cdot(p_t v_t)=0.
\end{equation}
A direct computation yields for every $x\in\R^d$ and $t$ such that $\sigma_t>0$,
\begin{equation}\label{eq:vf}
\begin{aligned}
v_t(x)
&=\int\Bigl(\frac{\sigma_t'}{\sigma_t}(x-m_t(z))+m_t'(z)\Bigr) 
\frac{\varphi^{m_t(z),\sigma_t}(x)}{\int \varphi^{m_t(z'),\sigma_t}(x) \pi(dz')} \pi(dz) \\
&=-\sigma_t'\sigma_t \nabla \log p_t(x) \;+\; \E \left[m_t'(Z)\mid X_t=x\right].
\end{aligned} 
\end{equation}
 The vector score function $\nabla \log p_t(x)$ captures how the smoothed density varies in space and acts as a denoising force toward regions where $p_t$ is large. The second term, $\E[m_t'(Z)\mid X_t=x]$, is a mean-velocity term: it is the posterior average of how the component means move along the interpolation.

\paragraph{Assumptions on the dynamic.}
Our main estimates  are first established abstractly under assumptions that only involve
the triplet $(m,\pi,\sigma)$, and are then specialized
to concrete generative-model constructions by verifying these abstract hypotheses in each case.

We will focus on three classical regimes, corresponding respectively to (i) deterministic flow matching in the
sense of \cite{lipman2023flow}, (ii) a variant of stochastic interpolants \citep{albergo2025stochastic}, 
and (iii) score-based diffusion models \citep{song2020score}.  In all three settings we take the reference
distribution to be the standard Gaussian $\gamma_d$ and we assume throughout that the target density
$p^\star=e^{-u+a}$ is $(\alpha,\beta,K)$-weakly log-concave in the sense of Definition~\ref{defi:weakconcave}.

We start with Lipman flow matching, where $m_t$ depends only on the target endpoint and the noise level
decreases to $0$ as $t\to 1$; the additional lower bounds on $f_t$ and $\sigma_t$ prevent degeneracies near the
endpoints and ensure that the projected drift remains well behaved.

\begin{assumption}[Lipman flow matching]\label{assum:lipman}
The vector field $v:[0,1]\times \mathbb{R}^d\rightarrow \mathbb{R}^d$ is of the form \eqref{eq:vf} with
\begin{itemize}
    \item $\pi=\gamma_d\otimes p^\star$,
    \item $p^\star=e^{-u+a}$ is $(\alpha,\beta,K)$-weakly log-concave with \(\alpha,K>0\) and \(\beta\in(0,1]\),
    \item $m_t((x_0,x_1))=f_tx_1$ with $f\in C^1_{\text{a.e.}}([0,1],[0,1])$ increasing satisfying $f_0=0=1-f_1$,
    \item $\sigma\in C^1_{\text{a.e.}}([0,1],[0,1])$ is decreasing and satisfies $\sigma_1=0=1-\sigma_0$,
    \item there exists $\gamma  \in (0,1)$ such that $\gamma  \leq \sigma_{\frac{1}{2}}\wedge f_{\frac{1}{2}}$ and $f_{1-\gamma  }^2\geq \sigma_{1-\gamma  }$.
\end{itemize}
\end{assumption}
\noindent
In Assumption~\ref{assum:lipman}, the marginal satisfies for each $t\in[0,1]$,
\[
X_t=f_tY+\sigma_t\xi,\qquad Y:=X_1\sim p^\star,\  \xi\sim\mathcal N(0,I_d) \text{ independent}.
\]
An important feature here is that the map $m_t((x_0,x_1))=f_tx_1$ \emph{does not depend on $x_0$}: the latent variable carried by the reference Gaussian is irrelevant, and the only randomness beyond $Y$ is the additive noise $\xi$. 

We now turn to the stochastic-interpolant variant, which mixes both endpoints in $m_t$ and uses a
non-monotone noise schedule that vanishes at both $t=0$ and $t=1$ (with a single peak in between); the extra
balance conditions control the interaction between the deterministic interpolation $(f_t,g_t)$ and the random
perturbation $\sigma_t$.

\begin{assumption}
[Stochastic-interpolant flow matching]\label{assum:sto-int} The vector field $v:[0,1]\times \mathbb{R}^d\rightarrow \mathbb{R}^d$ is of the form \eqref{eq:vf} with
\begin{itemize}
    \item $\pi=\gamma_d\otimes p^\star$,
    \item $p^\star=e^{-u+a}$ is $(\alpha,\beta,K)$-weakly log-concave with \(\alpha,K>0\) and \(\beta\in(0,1]\),
    \item $m_t((x_0,x_1))=g_tx_0+f_tx_1$ with $f,g\in C^1_{\text{a.e.}}([0,1],[0,1])$ increasing and decreasing respectively and satisfying $f_0=1-f_1=1-g_0=g_1=0$,
    \item $\sigma\in C^1_{\text{a.e.}}([0,1],[0,1])$ satisfies $\sigma_1=0=\sigma_0$. Furthermore, there exists $\delta \in (0,1)$ such that for all $t\in [0,\delta)$, $\sigma_t'> 0$ and for all $t\in (\delta,1)$ $\sigma_t'< 0$,
    \item There exists $q\in [0,1]$ such that $|g_t'f_t-f_t'g_t|\leq K \sigma_t^q$ and $\int_0^1 \sigma_t^{-1+q}dt\leq K$ 
    \item for all $t\in [0,1]$, $g_t+f_t\geq \gamma$ and for all $t\in (0,\delta)$ $g_t\geq \gamma$
\end{itemize}
\end{assumption}
Compared to Lipman flow matching, Assumption~\ref{assum:sto-int} allows the interpolant to retain a contribution of the variable $X_0$ (through a coefficient \(g_t\)) together with an additional injected noise \(\sigma_t\). At the level of marginals this gives
\[
X_t = f_t Y + \bar g_t \xi,
\qquad \bar g_t^2 := g_t^2+\sigma_t^2,
\qquad Y\sim p^\star,\ \xi\sim\mathcal N(0,I_d) \text{ independent},
\]
so the path remains a Gaussian mixture but with a variance schedule \(\bar g_t\) that is no longer constrained to be monotone. This seemingly mild change is precisely what complicates the one-sided Lipschitz analysis in Section~\ref{sec:onesidedlipreguv}: the sign of \(\bar g_t'\) determines whether the score contribution is dissipative or expansive, so one can no longer rely solely on the log-concave smoothing mechanism available when the noise decreases. This stochastic-interpolant model is standard in the recent generative-modeling literature and has been found to perform strongly in numerical experiments  \citep{albergo2025stochastic,boffi2025flowmapmatching}. 

Finally, we single out the rescaled diffusion-model regime, which corresponds to the Ornstein--Uhlenbeck
interpolation; we state it separately since it is a central benchmark in modern generative modeling.

\begin{assumption}[Rescaled Diffusion models]\label{assum:diffusion}
The vector field $v:[0,1]\times \mathbb{R}^d\rightarrow \mathbb{R}^d$ is of the form \eqref{eq:vf} with
\begin{itemize}
    \item $\pi=\gamma_d\otimes p^\star$,
    \item $p^\star=e^{-u+a}$ is $(\alpha,\beta,K)$-weakly log-concave with \(\alpha,K>0\) and \(\beta\in(0,1]\),
    \item $m_t((x_0,x_1))=tx_1$  and $\sigma_t=\sqrt{1-t^2}$.
\end{itemize}
\end{assumption}
Assumption~\ref{assum:diffusion} corresponds to the reversed path of the Ornstein-Uhlenbeck path that underlies a large class of score-based diffusion models \cite{song2020score}. Concretely, the choice
$m_t(x_0,x_1)=t x_1$ and $\sigma_t=\sqrt{1-t^2}$ yields the interpolation
$$X_t=tY+\sqrt{1-t^2} \xi,$$ i.e. the Ornstein-Uhlenbeck semigroup rescaled in
$t\in(0,1]$. Equation~\eqref{eq:vf} gives
$$v_t(x)=\frac{1}{t}(\nabla\log p_t(x)+x),$$
which corresponds to the \emph{probability flow ODE} \citep{chen2024probability}.
To connect Assumption~\ref{assum:diffusion} to the usual reverse-time diffusion sampler, remark that the continuity equation \eqref{eq:continuity_equation} can be transformed using the score function:
\[
\partial_t p_t+\nabla \cdot(p_t v_t)=0
\quad\Longleftrightarrow\quad
\partial_t p_t=-\nabla \cdot(p_t a_t)+\frac{b_t^2}{2} \Delta p_t,
\qquad
a_t:=v_t+\frac{b_t^2}{2} \nabla \log p_t,
\]
for any $b_t\geq 0$. In other words, the same curve $(p_t)_{t\in [0,1]}$ may be regarded as solving a Fokker-Planck equation with diffusion $\frac{b_t^2}{2}$. Consequently, $(p_t)_{t\in [0,1]}$ can be realized as the family of time-marginals of the It\^o dynamics
\[
dX_t=a_t(X_t) dt+b_t dB_t.
\]
Taking a forward time $T>0$ and a diffusion coefficient $b_t\equiv\sqrt2$, after the time reparametrization $t=e^{-s}$, this coincides with the classical time-reversal of the Ornstein-Uhlenbeck diffusion:
\[
dX_s=(X_s+2\nabla \log q_{T-s}(X_s)) ds+\sqrt{2} dB_s,
\]
with $q_s=\text{Law}(e^{-s}Y+\sqrt{1-e^{-2s}}\xi)$. 

\paragraph{Scope of the assumptions.}
Ultimately, the regularity estimates we seek are not tied to a specific training objective but to the
regularity of the intermediate mean laws $$\mu_t := (m_t)_{\#}\pi.$$
Indeed,
the bounds of Section~\ref{sec:onesidedlipreguv} are derived abstractly from two types of information:
\begin{itemize}
    \item[(i)]structural regularity of \(\mu_t\) (weak log-concavity with a \(\beta\)-Hölder perturbation,
and in some places a bounded-perturbation variant), and
\item[(ii)] a quantitative control of the fluctuations of \(\partial_t m_t(X_0,X_1)\) under the posterior\  $\text{Law}((X_0,X_1)|X_t=x)$.
\end{itemize}
In principle, this viewpoint would allow one to treat much more general couplings \(\pi\) and nonlinear
families \(m_t\); however, verifying such assumptions on \(\mu_t\) outside of structured
Gaussian-mixture models is typically hard: the density of \(\mu_t\) depends on the global
geometry of the pushforward (inverse Jacobian determinants), and obtaining dimension-free  concentration information together with a
uniform Hölder control of the log-density is delicate even when the endpoints are regular.
This is precisely why we focus on the product coupling and affine
interpolations, for which \(\mu_t\) reduces to explicit
Gaussian smoothing and rescaling operations that preserve the
weak log-concavity.
Nevertheless, the theory in Section~\ref{sec:onesidedlipreguv} is formulated only in terms of the
regularity of \((m_t)_{\#}\pi\): consequently, any alternative construction for instance
Optimal Transport-based schemes \citep{tong2024improving}, would fall within the scope of our results as soon as one can verify the same
regularity hypotheses.

\addtocontents{toc}{\protect\setcounter{tocdepth}{2}}
\section{Lipschitz estimates}\label{sec:lipreguv}
\subsection{One-sided Lipschitz regularity}\label{sec:onesidedlipreguv}
\subsubsection{General one-sided Lipschitz estimate}\label{sec:onesidedlipregugene}

In this section we derive an abstract control of the one-sided Lipschitz constant of the canonical velocity field $v_t$ from Section~\ref{sec:models} solving the continuity equation
\[
\partial_t p_t+\nabla \cdot(p_t v_t)=0,
\]
where $p_t$ is the law of
the Gaussian-mixture path
\[
X_t=m_t(Z)+\sigma_t \xi,\qquad \xi\sim \mathcal N(0,\mathrm{Id})\  \text{independent of }Z.
\]
Recall (see the discussion leading to \eqref{eq:vf}) that $v_t$ can be decomposed as
\begin{equation}\label{eq:jffdodjhdhhsdhdjddd}
v_t(x)= -\sigma_t'\sigma_t \nabla \log p_t(x)\;+\;\mathbb E \left[m_t'(Z)\mid X_t=x\right].
\end{equation}
The first term is the ``score part'' (a denoising force produced by the Gaussian smoothing),
while the second term is the posterior average of the velocity of the means $m_t(Z)$.

A convenient way to understand derivatives of conditional expectations is to consider the posterior law of $Z$ given $X_t=x$,
denoted by $\pi^{t,x}=\mathrm{Law}(Z\mid X_t=x)$. The regularization created by the Gaussian noise $\sigma_t\xi$ induces a quadratic
confinement at scale $\sigma_t^{-2}$ which combines with the curvature of the mean law $(m_t)_\#\pi$.
When $(m_t)_\#\pi$ has a strongly log-concave core with curvature $\alpha_t$, these two effects add up into an effective curvature
$\alpha_t+\sigma_t^{-2}$, suggesting that typical posterior fluctuations occur at scale
\[
(\alpha_t+\sigma_t^{-2})^{-1}\;\asymp\;\frac{\sigma_t^2}{1+\alpha_t\sigma_t^2}.
\]
This heuristic is precisely what will explain the factor $(\alpha_t\sigma_t^2+1)^{-1}$ in the bounds below.

\begin{assumption}\label{assum:1} The probability density $(m_t)_{\#}\pi:\mathbb{R}^d\rightarrow \mathbb{R}$ is $(\alpha_t,\beta,K_t)$-weakly log-concave for $\alpha_t,K_t>0$ and $\beta\in (0,1]$.
\end{assumption} 

This hypothesis supposes that $(m_t)_\#\pi$ behaves like a strongly log-concave density (with parameter $\alpha_t$), up to a $\beta$-Hölder
tilt controlled by $K_t$. The strongly convex part yields dimension-free concentration and covariance control, while the Hölder
perturbation quantifies how far we allow the law of $m_t(Z)$ to deviate from a purely log-concave reference.
As a consequence, posterior variances under $\pi^{t,x}$ are controlled at the scale
$\sigma_t^2/(\alpha_t\sigma_t^2+1)$, uniformly in dimension; this is the main mechanism behind the next results.

\begin{assumption}[Control of $m_t' $]\label{assum:varmprime}
    There exists $L:[0,1]\rightarrow \mathbb{R}_{>0}$ such that, for all $x\in\mathbb R^d$, $t\in [0,1]$ and unit $h\in\mathbb S^{d-1}$,
\[
\operatorname{Var}_{\pi^{t,x}} \big(h^\top m_t'(Z)\big)
 \le 
L_t^2 \frac{\sigma_t^2}{\alpha_t\sigma_t^2+1}.
\]
\end{assumption}

The velocity $v_t$ \eqref{eq:jffdodjhdhhsdhdjddd} contains the regression term $\mathbb E[m_t'(Z)\mid X_t=x]$. Differentiating such conditional expectations produces
conditional covariance terms, hence it is natural that a variance control appears.
Assumption~\ref{assum:varmprime} requires that, in every direction, the posterior fluctuations of the time-velocity $m_t'(Z)$
are at most of order $L_t$ times the typical posterior scale of $m_t(Z)$, namely $\sigma_t^2/(\alpha_t\sigma_t^2+1)$.
In concrete models, $m_t$ is often affine in time (or close to it), so that $m_t'$ is comparable to $m_t$; then the inequality above
is a quantitative way of stating that the mixture means do not move too erratically relative to their posterior uncertainty. Under Assumptions~\ref{assum:1} and~\ref{assum:varmprime} we obtain the following bound on the largest eigenvalue of the Jacobian of the vector field.

\begin{theorem}\label{theo:onesidedlip2}Let $v:[0,1]\times \mathbb{R}^d\rightarrow \mathbb{R}^d$ of the form \eqref{eq:vf} with $\sigma\in C^1_{\text{a.e.}}([0,1],[0,1])$ and $(m_t)_t$ satisfying Assumptions~\ref{assum:1} and~\ref{assum:varmprime}. Then for $x\in \mathbb{R}^d$ and $t\in [0,1]$ such that $\sigma_t>0$, we have for all $\xi \in \{0,1\}$,
\begin{itemize}
    \item if $\sigma_t'\leq 0$, then $$\lambda_{\max}\left(\nabla v_t(x)\right)\leq  \frac{1}{\alpha_t\sigma_t^2+1}\left(C\exp\Big(\frac{CK_t^2}{(\alpha_t+\sigma_t^{-2})^{\beta}}\Big)\Big)\Biggl( \underbrace{L_t}_{m_t' \text{ term}}-\underbrace{\sigma_t'\sigma_t^{-1}(K_t\sigma_t^\beta)^\xi}_{\text{score stability}}\Biggl)+\underbrace{\sigma_t\sigma_t'\alpha_t}_{\text{log-concave behavior}}\right),$$
    \item if $\sigma_t'\geq 0$ and $\nabla^2 u_t \preceq A_t \text{Id}$ in the decomposition $(m_t)_{\#} \pi =\exp(-u_t+a_t)$ with $u_t$ convex and $a_t$ $\beta$-Hölder (from Assumption~\ref{assum:1}), then
   $$\lambda_{\max}\left(\nabla v_t(x)\right)\leq  \frac{1}{\alpha_t\sigma_t^2+1}\left(C\exp\Big(\frac{CK_t^2}{(\alpha_t+\sigma_t^{-2})^{\beta}}\Big)\Biggl( \underbrace{L_t}_{m_t' \text{ term}}+\underbrace{\sigma_t'\sigma_t^{-1}(K_t\sigma_t^\beta)^\xi}_{\text{score stability}}\Biggl)+\underbrace{\sigma_t\sigma_t'A_t}_{\text{bounded Hessian  behavior}}\right),$$
\end{itemize}
with $C>0$ depending only on $\beta$ and not the dimension $d$.
\end{theorem}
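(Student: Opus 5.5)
Our plan is to differentiate the representation \eqref{eq:jffdodjhdhhsdhdjddd} explicitly. We write $\mu_t:=(m_t)_\#\pi$ and set $Y:=m_t(Z)$. The posterior $\pi^{t,x}$ pushed forward by $m_t$ has density proportional to $\exp\big(-u_t(y)+a_t(y)-\|x-y\|^2/(2\sigma_t^2)\big)$. Differentiating in $x$ gives
\[
\nabla^2\log p_t(x)=-\sigma_t^{-2}\mathrm{Id}+\sigma_t^{-4}\,\mathrm{Cov}_{\pi^{t,x}}(Y),
\qquad
\nabla_x \mathbb E[m_t'(Z)\mid X_t=x]=\sigma_t^{-2}\,\mathrm{Cov}_{\pi^{t,x}}\big(m_t'(Z),Y\big).
\]
Hence, for a unit $h$, the quadratic form is
\[
h^\top\nabla v_t h=\frac{\sigma_t'}{\sigma_t}-\frac{\sigma_t'}{\sigma_t^3}\,\mathrm{Var}_{\pi^{t,x}}(h^\top Y)+\sigma_t^{-2}\,\mathrm{Cov}_{\pi^{t,x}}\big(h^\top m_t'(Z),h^\top Y\big).
\]
The $m_t'$ term is handled by Cauchy--Schwarz and Assumption~\ref{assum:varmprime}. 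It is at most $\sigma_t^{-2}L_t\big(\sigma_t^2/(\alpha_t\sigma_t^2+1)\big)^{1/2}\mathrm{Var}(h^\top Y)^{1/2}$. So everything reduces to two-sided control of $\mathrm{Var}_{\pi^{t,x}}(h^\top Y)$ at the scale $s_t:=\sigma_t^2/(\alpha_t\sigma_t^2+1)=(\alpha_t+\sigma_t^{-2})^{-1}$.

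\textbf{Upper variance bound.} Write the posterior as $e^{-W+a_t}$ with $W=u_t+\|x-\cdot\|^2/(2\sigma_t^2)$, which is $(\alpha_t+\sigma_t^{-2})$-strongly convex. If $a_t$ were bounded, Holley--Stroock together with Brascamp--Lieb would give $\mathrm{Var}\le e^{\mathrm{osc}(a_t)}s_t$. Since $a_t$ is only Hölder, we regularize at scale $r=s_t^{1/2}$: replace $a_t$ by a mollification $a_t^r$ with $\|a_t-a_t^r\|_\infty\le K_tr^\beta$. The smooth part has gradient at most $K_tr^{\beta-1}$, and that Lipschitz tilt can be absorbed by the known bounds for log-Lipschitz perturbations of strongly log-concave measures. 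This produces the factor $C\exp\big(CK_t^2 s_t^{\beta}\big)=C\exp\big(CK_t^2(\alpha_t+\sigma_t^{-2})^{-\beta}\big)$. We expect this step to be the main obstacle, because the constant must stay dimension-free; working only with one-dimensional marginals in the direction $h$ (variance of a $1$-Lipschitz function) should keep it so. The result is $\mathrm{Var}(h^\top Y)\le C e^{CK_t^2 s_t^\beta}s_t$, and the $m_t'$ term is then at most $C e^{(\cdot)}L_t/(\alpha_t\sigma_t^2+1)$.

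\textbf{Case $\sigma_t'\le0$.} The term $-\sigma_t'\sigma_t^{-3}\mathrm{Var}$ is nonnegative, so we need the upper bound. We write
\[
\frac{\sigma_t'}{\sigma_t}\Big(1-\frac{\mathrm{Var}}{\sigma_t^2}\Big)=\frac{\sigma_t'}{\sigma_t}\Big(1-\frac{s_t}{\sigma_t^2}\Big)-\frac{\sigma_t'}{\sigma_t^3}\big(\mathrm{Var}-s_t\big).
\]
The first piece equals $\sigma_t\sigma_t'\alpha_t/(\alpha_t\sigma_t^2+1)$, which is the ``log-concave behavior'' term. For the second piece there are two options. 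Choosing $\xi=0$, we bound $\mathrm{Var}-s_t\le Ce^{(\cdot)}s_t$ crudely, which gives $-C e^{(\cdot)}\sigma_t'\sigma_t^{-1}/(\alpha_t\sigma_t^2+1)$. Choosing $\xi=1$, we need the refined estimate $\mathrm{Var}-s_t\lesssim e^{(\cdot)}K_t\sigma_t^\beta s_t$. For this we compare the posterior with the purely log-concave one ($a_t\equiv0$), whose variance is at most $s_t$ by Brascamp--Lieb. Because the Hölder tilt varies by at most $K_t r^\beta$ over the posterior scale $r\lesssim\sigma_t$, the covariance perturbation is first order in $K_t\sigma_t^\beta$; this can be made precise through a covariance identity and the concentration bounds from the previous step.

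\textbf{Case $\sigma_t'\ge0$.} Now the term $-\sigma_t'\sigma_t^{-3}\mathrm{Var}$ has the bad sign, so we need a lower bound on the variance. The hypothesis $\nabla^2u_t\preceq A_t\mathrm{Id}$ makes $W$ at most $(A_t+\sigma_t^{-2})$-smooth, and the reverse (Cramér--Rao type) inequality then gives $\mathrm{Var}\ge (A_t+\sigma_t^{-2})^{-1}$, up to the same Hölder perturbation handled by mollification. Inserting this bound produces $\sigma_t\sigma_t'A_t/(\alpha_t\sigma_t^2+1)$, while the perturbative remainder produces the $\sigma_t'\sigma_t^{-1}(K_t\sigma_t^\beta)^\xi$ term exactly as in the first case. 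Finally we collect terms and factor out $(\alpha_t\sigma_t^2+1)^{-1}$.
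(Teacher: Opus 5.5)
Your skeleton is the paper's. The Jacobian formula is Proposition~\ref{propnablav}. The mixed term is handled by Cauchy--Schwarz and Assumption~\ref{assum:varmprime}, as in Proposition~\ref{prop:boundsecondterm}. The Brascamp--Lieb/Cram\'er--Rao dichotomy on the surrogate $\nu^{t,x}$, together with your identity $\frac{\sigma_t'}{\sigma_t}(1-s_t\sigma_t^{-2})=\frac{\sigma_t\sigma_t'\alpha_t}{\alpha_t\sigma_t^2+1}$, is Lemmas~\ref{lemma:boundeasy} and~\ref{lem:lemma3_cramer_rao}.

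The case $\xi=0$ also follows from your crude bound $\mathrm{Var}\le Ce^{(\cdot)}s_t$. For $\sigma_t'\ge0$ you can even drop the term $-\sigma_t'\sigma_t^{-3}\mathrm{Var}$ and use $\alpha_t\le A_t$. This works provided the regularization is the inf-convolution $\inf_y\{a_t(y)+K_tr^{\beta-1}\|x-y\|\}$ of Lemma~\ref{lem:infconv}, not a mollification. A Gaussian mollifier at scale $r$ only gives $\|a_t-a_t*\phi_r\|_\infty\lesssim K_t(r\sqrt d)^\beta$, and this enters the Holley--Stroock factor.

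The gap is the case $\xi=1$. This is the case Theorems~\ref{theo:onesidedlipinteg} and~\ref{theo:onesidedlipinteg0} actually need, because $\int\sigma_t'/\sigma_t\,dt$ diverges where $\sigma_t\to0$. It requires the two-sided, first-order comparison
\[
\big|\mathrm{Var}_{p^{t,x}}(h^\top Y)-\mathrm{Var}_{\nu^{t,x}}(h^\top Y)\big|\le CK_t\gamma^{-\frac{2+\beta}{2}}e^{CK_t^2\gamma^{-\beta}},\qquad \gamma:=\alpha_t+\sigma_t^{-2},
\]
using the upper side when $\sigma_t'\le0$ and the lower side when $\sigma_t'\ge0$. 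Your proposal supplies no working mechanism for it, for three reasons.
\begin{itemize}
\item The heuristic that the tilt ``varies by at most $K_tr^\beta$ over the posterior scale'' fails in high dimension. A $\gamma$-strongly log-concave posterior can put its mass at distance $\asymp\sqrt{d/\gamma}$ from its mean (it does when $u_t$ is quadratic), so $a_t$ oscillates by $K_t(d/\gamma)^{\beta/2}$ on its bulk. This pointwise use of the H\"older bound is exactly the source of dimension dependence discussed in Section~\ref{sec:technicalone}.
\item Your multiplicative bound carries a constant $C$ that does not tend to $1$ as $K_t\sigma_t^\beta\to0$, so it cannot give a vanishing difference.
\item For an unbounded tilt, the tools you invoke (Holley--Stroock plus Poincar\'e bounds for Lipschitz tilts) control variances only from above. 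The lower bound needed when $\sigma_t'\ge0$ is therefore unsupported.
\end{itemize}

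What is missing is the exact identity below, which is what Proposition~\ref{prop:firstuglybound} encodes. Write $e^v=dp^{t,x}/d\nu^{t,x}$ and $F_c:=h^\top Y-\E_{\nu^{t,x}}[h^\top Y]$. Then
\[
\mathrm{Var}_{p^{t,x}}(h^\top Y)-\mathrm{Var}_{\nu^{t,x}}(h^\top Y)=\mathrm{Cov}_{\nu^{t,x}}(F_c^2,e^v)-\mathrm{Cov}_{\nu^{t,x}}(F_c,e^v)^2 .
\]
After Cauchy--Schwarz and Brascamp--Lieb for $F_c$ and $F_c^2$, both directions reduce to the quadratic smallness $\mathrm{Var}_{\nu^{t,x}}(e^v)\le CK_t^2\gamma^{-\beta}e^{CK_t^2\gamma^{-\beta}}$.

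Even this needs care. Inf-convolution plus Herbst alone gives only $\exp(4K_t\gamma^{-\beta/2}+\dots)-1$ (Lemma~\ref{lem:var_exp_concentration}), which would yield $(K_t\sigma_t^\beta)^{1/2}$ instead of $K_t\sigma_t^\beta$. The paper recovers the square by factoring $e^v=e^{w}e^{c+\delta}$ (Lemma~\ref{lem:lemma10-unbounded}). Here $w$ is Lipschitz with $\int e^w\,d\nu^{t,x}=1$, so Herbst gives $\mathrm{Var}(e^w)\le e^{CK_t^2\gamma^{-\beta}}-1$. The remaining factor satisfies $|c+\delta|\le K_t\gamma^{-\beta/2}$, and its variance is controlled by Popoviciu's inequality.
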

The proof of Theorem~\ref{theo:onesidedlip2} can be found in Section~\ref{sec:prooftheo12}.
The bound has three contributions, which are highlighted in the statement.
First, the $m_t'$ term reflects the cost of transporting a mixture whose component means move with time; it is governed by $L_t$.
Second, the ``score stability'' term quantifies the sensitivity of the score contribution $-\sigma_t'\sigma_t\nabla\log p_t$,
and exhibits two regimes through $\xi\in\{0,1\}$: taking $\xi=0$ gives a robust bound, while $\xi=1$ reveals an additional gain
when the perturbation is small at the noise scale, i.e. when $K_t\sigma_t^\beta\ll 1$.
Finally, the last term is the contribution of the strongly log-concave core.
When $\sigma_t'\le 0$ (noise decreasing), $\sigma_t\sigma_t'\alpha_t\le 0$ is dissipative, reflecting contraction created by decreasing noise.
When $\sigma_t'\ge 0$ (noise increasing), this mechanism becomes potentially expansive and one needs the upper curvature bound
$\nabla^2 u_t\preceq A_t\mathrm{Id}$ to control it.

The exponential prefactor in Theorem~\ref{theo:onesidedlip2} arises from allowing an unbounded Hölder perturbation. 
If the perturbation is uniformly bounded, one can avoid this price and obtain a sharper estimate, see Theorem~\ref{theo:onesidedlip} in Section~\ref{sec:prooftheo12}.

\subsubsection{Lipman flow matching setting}\label{sec:lipmanonesided}

In this subsection we specialize the abstract one-sided Lipschitz estimate of Theorem~\ref{theo:onesidedlip2} to the Lipman flow matching regime. The condition $\sigma_0>0$ of the Lipman case is not merely technical: it prevents a small-noise singularity at early times and
keeps the conditional distributions uniformly regularized. Moreover, in the Lipman schedule the noise level
is decreasing, which places the velocity field in a dissipative regime where one-sided Lipschitz bounds follow
from lower curvature information only. This contrasts with the increasing-noise case treated in
Section~\ref{sec:stoponesidedlip}, where additional structure is needed precisely during the expansive phase.

\begin{assumption}[Behavior of $\sigma_t^2$ when $\sigma_0>0$]\label{assum:sigma} The function $\sigma$ belongs to $ C^1_{\text{a.e.}}([0,1],[0,1])$ and satisfies $\lambda(\{\sigma=0\})=0$. Furthermore,
there exists $\delta \in [0,1)$ such that for $t\in [0,\delta]$, $ |\sigma_t'|\vee \sigma_t^{-1}\le A$ and for $t\in [\delta,1]$, $\sigma_t'\leq 0.$
\end{assumption}

Assumption~\ref{assum:sigma} is designed to split the analysis into two time regimes. On a short initial window one works under uniform non-degeneracy: the noise is bounded away from $0$ and does not vary too fast. This prevents blow-ups of the terms like $1/\sigma_t$ and $|\sigma_t'|/\sigma_t$. After that, one only needs the monotone property $\sigma_t'\leq 0$ which makes the score part of the drift dissipative in the one-sided estimate and allows to control the singular behavior as $\sigma_t\to 0$ near $t=1$. We now state a general integrable one-sided Lipschitz estimate corresponding to the regime $\sigma_0>0$. 

\begin{theorem}[$\sigma_0>0$]\label{theo:onesidedlipinteg}Let $v:[0,1]\times \mathbb{R}^d\rightarrow \mathbb{R}^d$ of the form \eqref{eq:vf} with $(m,\pi,\sigma)$  satisfying Assumptions~\ref{assum:1}, \ref{assum:varmprime} and~\ref{assum:sigma}. Suppose that there exists $A>0$ such that
\begin{itemize}
    \item $\int_0^1\frac{L_t}{\alpha_t\sigma_t^2+1}\leq A $,
    \item there exists $s\in [0,1),\chi \in [0,\beta)$ such that for all $t\geq s$, $K_t\leq A\sigma_t^{-\chi}$,\item For all $t\in [0,\delta\vee s]$, $K_t^2\alpha_t^{-\beta}\leq A$.
\end{itemize}
 Then, for all $z\in [0,1]$
$$\int_z^1 \sup_{x\in   \mathbb{R}^d} \lambda_{\max}\left(\nabla v_t(x)\right)dt\leq C,$$
with $C>0$ depending only on $A,\beta,\chi,\sigma_\delta,\sigma_s$ and not the dimension $d$.  
\end{theorem}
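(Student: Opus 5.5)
The plan is to integrate the pointwise bound of Theorem~\ref{theo:onesidedlip2} over $[z,1]$, splitting time into an initial window $[0,\delta\vee s]$ and a terminal window $[\delta\vee s,1]$. On each window we choose the regime $\xi\in\{0,1\}$ and the sign case according to Assumption~\ref{assum:sigma}. Since $\lambda(\{\sigma=0\})=0$, the times with $\sigma_t=0$ do not contribute to the integral. Because $\lambda_{\max}$ may be negative, it suffices to bound $\int_z^1 (\sup_x\lambda_{\max})_+\,dt$ by a constant independent of $z$. In every case we simply drop the log-concave term $\sigma_t\sigma_t'\alpha_t\le 0$ whenever $\sigma_t'\le0$.

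\textbf{Initial window $t\in[0,\delta\vee s]$.} The third hypothesis gives $K_t^2\alpha_t^{-\beta}\le A$, so
\[
\frac{K_t^2}{(\alpha_t+\sigma_t^{-2})^{\beta}}\le K_t^2\alpha_t^{-\beta}\le A,
\]
and the exponential prefactor is bounded by $Ce^{CA}$. We take $\xi=0$. The $L_t$ contribution, $\frac{L_t}{\alpha_t\sigma_t^2+1}$, is integrable by the first hypothesis. The score-stability term is $|\sigma_t'|\sigma_t^{-1}/(\alpha_t\sigma_t^2+1)$, which we handle in two pieces:
\begin{itemize}
\item On $[0,\delta]$ it is bounded by $A^2$, by Assumption~\ref{assum:sigma}.
\item On $[\delta,\delta\vee s]$ we have $\sigma_t'\le0$ and $\sigma$ is nonincreasing, so $\sigma_t\ge\sigma_s$ there. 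Hence $\int |\sigma_t'|/\sigma_t\,dt\le \log(\sigma_\delta/\sigma_s)$, which depends only on $\sigma_\delta,\sigma_s$.
\end{itemize}
On $[0,\delta]$, where $\sigma_t'$ may be positive, we apply the second case of Theorem~\ref{theo:onesidedlip2}. This requires an upper curvature bound $A_t$, and it is unclear how to obtain it directly. What we would try first is to avoid the term altogether: for $\sigma_t'>0$, rework the score part with the crude bound $\lambda_{\max}(-\sigma_t'\sigma_t\nabla^2\log p_t)\le \sigma_t'\sigma_t\cdot\sigma_t^{-2}$, using that $-\nabla^2\log p_t\preceq \sigma_t^{-2}\mathrm{Id}$ for a Gaussian mixture. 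This gives the bound $|\sigma_t'|/\sigma_t\le A^2$. Equivalently, the bounded-Hessian term can be replaced by $\sigma_t'/\sigma_t$. This is the step I expect to need care.

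\textbf{Terminal window $t\ge\delta\vee s$.} Here $\sigma_t'\le0$. The difficulty is that $K_t$ may blow up like $\sigma_t^{-\chi}$ as $\sigma_t\to0$. Since $(\alpha_t+\sigma_t^{-2})^{-\beta}\le\sigma_t^{2\beta}$,
\[
\frac{K_t^2}{(\alpha_t+\sigma_t^{-2})^{\beta}}\le A^2\sigma_t^{2(\beta-\chi)}\le A^2,
\]
so the prefactor is again bounded by $Ce^{CA^2}$. The $L_t$ term is controlled as before. For the score term we now take $\xi=1$, which gives
\[
|\sigma_t'|\sigma_t^{-1}K_t\sigma_t^\beta\le A|\sigma_t'|\sigma_t^{\beta-\chi-1}.
\]
Since $\sigma$ is monotone on this window, the change of variables $u=\sigma_t$ yields
\[
\int |\sigma_t'|\sigma_t^{\beta-\chi-1}\,dt\le \int_0^{1}u^{\beta-\chi-1}\,du=\frac{1}{\beta-\chi}<\infty.
\]
This is exactly where $\chi<\beta$ is used, and it removes the logarithmic divergence that the choice $\xi=0$ would produce as $\sigma_t\to 0$.

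Summing the two windows gives a bound depending only on $A,\beta,\chi,\sigma_\delta,\sigma_s$, uniformly in $z$ and in the dimension $d$. The main obstacle remains the increasing-noise portion of $[0,\delta]$, where Theorem~\ref{theo:onesidedlip2} requires an upper Hessian bound. The plan there is to bypass it with the Gaussian-mixture bound $\nabla^2\log p_t\succeq-\sigma_t^{-2}\mathrm{Id}$ together with $|\sigma_t'|\vee\sigma_t^{-1}\le A$.
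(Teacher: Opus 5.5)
Your proposal is correct and follows the paper's proof essentially step for step. The paper uses the same time split, with $\xi=0$ on $[0,\delta]$ and $[\delta,\delta\vee s]$ and $\xi=1$ on $[\delta\vee s,1]$; it tames the exponential prefactor with $K_t^2\alpha_t^{-\beta}\le A$ on the first window and with $K_t\le A\sigma_t^{-\chi}$ on the second, and it evaluates the same $\log(\sigma_\delta/\sigma_s)$ and $\int|\sigma_t'|\sigma_t^{\beta-\chi-1}\,dt$ integrals. The step you flagged is handled exactly as you suggest: for $\sigma_t'>0$ on $[0,\delta]$, the paper works directly from Proposition~\ref{propnablav}, drops the term $-\frac{\sigma_t'}{\sigma_t^3}\mathrm{Cov}\preceq 0$, bounds $\sigma_t'/\sigma_t\le A^2$, and controls the mixed covariance by Proposition~\ref{prop:boundsecondterm}, so no upper Hessian bound is needed.
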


The proof of Theorem~\ref{theo:onesidedlipinteg} can be found in Section~\ref{sec:theo:onesidedlipinteg}. The three additional conditions needed for integrability should be read as follows.
\begin{itemize}
    \item The quantity $\frac{L_t}{\alpha_t\sigma_t^2+1}$ measures the possible expansion coming from the mean-velocity term, attenuated by posterior concentration. Indeed, when either $\alpha_t$ is large (strong curvature of the log-concave core) or $\sigma_t$ is not too small, conditioning on $X_t=x$ is more stable, and the contribution of $m_t'(Z)$ to $\nabla v_t$ is correspondingly reduced.
    \item The estimate $K_t\le A\sigma_t^{-\chi}$ with $\chi<\beta$ is a compatibility condition between the vanishing-noise limit and the $\beta$-Hölder perturbation in the weak log-concavity structure. Heuristically, it ensures that at the noise scale $\sigma_t$ the Hölder perturbation becomes small, which prevents the score term from generating an exponential blow-up.
    \item The bound $K_t^2\alpha_t^{-\beta}\le A$ on the initial window $[0,\delta\vee s]$ is an early-time condition ensuring that the estimates used before entering the small-noise regime do not incur uncontrolled constants.
\end{itemize}
Applying Theorem~\ref{theo:onesidedlipinteg}  to the Lipman flow-matching parametrization, we obtain the following explicit, dimension-free integrable one-sided Lipschitz estimate.

\begin{corollary}\label{coro:lipmafm} Suppose that we are under the Lipman flow matching setting of Assumption~\ref{assum:lipman}.
 Then, for all $z\in [0,1]$,
$$\int_z^1 \sup_{x\in \mathbb{R}^d} \lambda_{\max}\left(\nabla v_t(x)\right)dt\leq C,$$
with $C>0$ depending only on $\alpha,\beta,K,\gamma,\sigma_{1-\gamma}$ and not the dimension $d$.  
\end{corollary}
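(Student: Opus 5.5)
The plan is to apply Theorem~\ref{theo:onesidedlipinteg} directly. Its hypotheses are stated in terms of $(m_t)_\#\pi$ and the posterior $\pi^{t,x}$, so the work consists of checking them for the Lipman parametrization and recording explicit choices of $\alpha_t$, $K_t$, $L_t$, $\delta$, $s$, $\chi$, $A$. All of these will depend only on $\alpha,\beta,K,\gamma,\sigma_{1-\gamma}$.

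\emph{Step 1: Assumption~\ref{assum:1}.} Since $m_t((x_0,x_1))=f_tx_1$, the law $(m_t)_\#\pi$ is the law of $f_tY$ with $Y\sim p^\star$. For $f_t>0$ its density is $f_t^{-d}\exp(-u(x/f_t)+a(x/f_t))$. The convex part $u(\cdot/f_t)$ has Hessian $\succeq \alpha f_t^{-2}\,\mathrm{Id}$, and $a(\cdot/f_t)$ is $\beta$-Hölder with constant $Kf_t^{-\beta}$. Hence Assumption~\ref{assum:1} holds with $\alpha_t=\alpha f_t^{-2}$ and $K_t=Kf_t^{-\beta}$. The key observation is that $K_t^2\alpha_t^{-\beta}=K^2\alpha^{-\beta}$ is constant in $t$. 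Consequently the third condition of Theorem~\ref{theo:onesidedlipinteg} holds on all of $[0,1]$, and the exponential prefactors $\exp(CK_t^2(\alpha_t+\sigma_t^{-2})^{-\beta})\le \exp(CK^2\alpha^{-\beta})$ are uniformly bounded.

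\emph{Step 2: Assumption~\ref{assum:sigma} and the Hölder-compatibility condition.} Because $\sigma$ is decreasing with $\sigma_1=0$, we can take $\delta=0$, and $\{\sigma=0\}=\{1\}$ is Lebesgue-null. For the second condition we take $\chi=0$ and $s=1-\gamma$ (or $s=1/2$). For $t\ge s$ we have $f_t\ge f_{1/2}\ge\gamma$, hence $K_t\le K\gamma^{-\beta}$. The constants $\sigma_\delta=\sigma_0=1$ and $\sigma_s=\sigma_{1-\gamma}$ then match the announced dependence. If the theorem requires $\sigma_s$ to be controlled from below via $f$, the condition $f_{1-\gamma}^2\ge\sigma_{1-\gamma}$ is where this enters.

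\emph{Step 3: Assumption~\ref{assum:varmprime}, the main obstacle.} Here $m_t'(Z)=f_t'Y=(f_t'/f_t)\,m_t(Z)$, so
\[
\operatorname{Var}_{\pi^{t,x}}(h^\top m_t'(Z))=(f_t'/f_t)^2\operatorname{Var}(h^\top W),
\]
where $W$ has the posterior law of $m_t(Z)$ given $X_t=x$. This law has density proportional to $\exp(-u_t(w)-\|x-w\|^2/(2\sigma_t^2)+a_t(w))$, i.e.\ a potential with curvature at least $\alpha_t+\sigma_t^{-2}$ plus a $\beta$-Hölder tilt with constant $K_t$. I would bound its directional variance by $C\exp(CK_t^2(\alpha_t+\sigma_t^{-2})^{-\beta})(\alpha_t+\sigma_t^{-2})^{-1}$. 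To do this, I would regularize $a_t$ at the intrinsic scale $r=(\alpha_t+\sigma_t^{-2})^{-1/2}$. The mollified tilt differs from $a_t$ by at most $K_tr^\beta$, which costs a Holley--Stroock factor. The mollified part has gradient of size $K_tr^{\beta-1}$, and I would handle it through Brascamp--Lieb or a one-dimensional Lipschitz-perturbation estimate. This is the same posterior concentration mechanism that underlies Theorem~\ref{theo:onesidedlip2}, so I expect to reuse the lemma from its proof. It yields $L_t=C(f_t'/f_t)$ with $C$ depending only on $\alpha,\beta,K$.

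\emph{Step 4: integrability of $L_t/(\alpha_t\sigma_t^2+1)$.} Write
\[
\frac{L_t}{\alpha_t\sigma_t^2+1}=\frac{Cf_t'f_t}{\alpha\sigma_t^2+f_t^2}.
\]
On $[0,1/2]$ we have $\sigma_t\ge\sigma_{1/2}\ge\gamma$, so the integral is at most $C/(\alpha\gamma^2)$; this handles the apparent singularity of $f'/f$ at $t=0$. On $[1/2,1]$ we have $f_t\ge\gamma$, so the integral is at most $C\int f_t'/f_t\le C\log(1/\gamma)$. Combining Steps~1--4, Theorem~\ref{theo:onesidedlipinteg} gives the claim.
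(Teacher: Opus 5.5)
Your proposal is correct and follows the paper's proof. Both apply Theorem~\ref{theo:onesidedlipinteg} with $\delta=0$, $\alpha_t=\alpha f_t^{-2}$, $K_t=Kf_t^{-\beta}$ (so that $K_t^2\alpha_t^{-\beta}=K^2\alpha^{-\beta}$) and $L_t\propto f_t'/f_t$, and verify Assumption~\ref{assum:varmprime} by Brascamp--Lieb on $\nu^{t,x}$ combined with the covariance comparison of Proposition~\ref{prop:finalestimatecompcov}. That proposition is the lemma you plan to reuse; note that Brascamp--Lieb alone would not apply to your Lipschitz-tilted measure, which need not be log-concave.

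The differences are minor and harmless. You take $\chi=0$ from $f_t\ge f_{1/2}\ge\gamma$ for $t\ge 1/2$, whereas the paper uses $f_{1-\gamma}^2\ge\sigma_{1-\gamma}$ to get $\chi=\beta/2$. You also split the integral of $L_t/(\alpha_t\sigma_t^2+1)$ at $t=1/2$, where the paper uses the uniform bound $\alpha\sigma_t^2+f_t^2\ge(\alpha\gamma^2)\wedge\gamma^2$.
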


The proof of Corollary~\ref{coro:lipmafm} can be found in Section~\ref{sec:coro:lipmafm}. This result shows that the assumptions of Theorem~\ref{theo:onesidedlipinteg} can be verified explicitly in the Lipman flow matching parametrization. In particular, the weak log-concavity parameters $(\alpha_t,K_t)$ and the mean-velocity scale $L_t$ become concrete functions of the schedules, and the conclusion yields an integrable one-sided Lipschitz bound independent of the dimension.

\subsubsection{Stochastic-interpolant flow matching setting}\label{sec:stoponesidedlip}

We now specialize the general one-sided Lipschitz theory of Theorem~\ref{theo:onesidedlip2} to the
stochastic-interpolant flow-matching construction.
Compared to the Lipman setting, the key difference is that the auxiliary noise level
$\sigma_t$ vanishes at both endpoints and is typically non-monotone:
one injects noise at the beginning (to randomize the interpolation) and then removes it
to reach the target distribution at terminal time.
This apparently mild change has an important consequence for stability.

The following assumption encodes the ``single bump'' behavior of the noise schedule that is standard
in stochastic interpolants.

\begin{assumption}[Behavior of 
$\sigma_t^2$ when $\sigma_0=0$]\label{assum:sigma2} $\sigma\in C^1_{\text{a.e.}}([0,1],[0,1])$ and there exists $\delta \in (0,1)$ such that for all $t\in [0,\delta)$, $\sigma_t'> 0$ and for all $t\in (\delta,1)$ $\sigma_t'< 0$ .
\end{assumption}

\noindent
Assumption~\ref{assum:sigma2} allows us to separate the dynamics into two phases:
(i) a \emph{noise-injection} phase $t\in[0,\delta)$ where the score part may be expansive, and
(ii) a \emph{denoising} phase $t\in(\delta,1)$ where the score part is dissipative.
The next theorem is the analogue of the monotone-noise case, adapted to schedules that start at $\sigma_0=0$.
Its assumptions mirror the structure of the general bound obtained earlier.

\begin{theorem}[$\sigma_0=0$]\label{theo:onesidedlipinteg0}Let $v:[0,1]\times \mathbb{R}^d\rightarrow \mathbb{R}^d$ of the form \eqref{eq:vf} with $(m,\pi,\sigma)$  satisfying Assumptions~\ref{assum:1}, \ref{assum:varmprime} and \ref{assum:sigma2}. Suppose that there exists $A>0$ such that
\begin{itemize}
    \item $\int_0^1\frac{L_t}{\alpha_t\sigma_t^2+1}\leq A $,
    \item there exists $\chi \in [0,\beta)$ such that for all $t\in [0,1]$, $K_t\leq A\sigma_t^{-\chi}$.
    \item For all $t$ such that $\sigma_t'>0$, writing $(m_t)_{\#} \pi =\exp(-u_t+a_t)$ with $u_t$ convex and $a_t$ $\beta$-Hölder (from Assumption~\ref{assum:1}), $u_t$ satisfies $\nabla^2 u_t \preceq A \text{Id}$.
\end{itemize}
 Then,
$$\int_0^1 \sup_{x\in \mathbb{R}^d} \lambda_{\max}\left(\nabla v_t(x)\right)dt\leq C,$$
with $C>0$ depending only on $A,\beta,\chi$ and not the dimension $d$.  
\end{theorem}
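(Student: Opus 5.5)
We integrate the pointwise bound of Theorem~\ref{theo:onesidedlip2} over $[0,1]$, splitting at $\delta$ from Assumption~\ref{assum:sigma2}: on $[0,\delta)$ we have $\sigma_t'>0$ and use the second case of Theorem~\ref{theo:onesidedlip2}, with $A_t=A$ from the third hypothesis; on $(\delta,1)$ we have $\sigma_t'<0$ and use the first case. Since $\sigma_t>0$ on $(0,1)$ except possibly on a null set, the pointwise bounds hold for a.e.\ $t$, which suffices for the integral. The first step is to check that the exponential prefactor is uniformly bounded. By the second hypothesis,
\[
\frac{K_t^2}{(\alpha_t+\sigma_t^{-2})^{\beta}}\le K_t^2\sigma_t^{2\beta}\le A^2\sigma_t^{2(\beta-\chi)}\le A^2,
\]
because $\sigma_t\le 1$ and $\chi<\beta$. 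Hence $C\exp(CK_t^2(\alpha_t+\sigma_t^{-2})^{-\beta})\le C'$, with $C'$ depending only on $A,\beta$. Only the lower bound $\alpha_t>0$ is used here, which is why no early-time condition of the type $K_t^2\alpha_t^{-\beta}\le A$ is needed, in contrast with Theorem~\ref{theo:onesidedlipinteg}.

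Next we take $\xi=1$ in both cases and bound the three terms.

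\emph{The $L_t$ term.} It contributes $C'\int_0^1 L_t/(\alpha_t\sigma_t^2+1)\,dt\le C'A$ by the first hypothesis.

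\emph{The score-stability term.} It contributes at most $C'|\sigma_t'|\sigma_t^{-1}K_t\sigma_t^\beta\le C'A|\sigma_t'|\sigma_t^{\beta-\chi-1}$, using $(\alpha_t\sigma_t^2+1)^{-1}\le 1$. Since $\sigma$ is monotone on each of $[0,\delta]$ and $[\delta,1]$, the substitution $s=\sigma_t$ gives
\[
\int_0^\delta \sigma_t'\sigma_t^{\beta-\chi-1}\,dt=\frac{\sigma_\delta^{\beta-\chi}}{\beta-\chi}\le \frac{1}{\beta-\chi},
\]
and the same bound holds on $[\delta,1]$. This is exactly where $\chi<\beta$ is essential: the singularity of $\sigma_t'/\sigma_t$ at both endpoints, where $\sigma_0=\sigma_1=0$, is integrable only thanks to the extra factor $\sigma_t^{\beta-\chi}$. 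This is the main obstacle; choosing $\xi=0$ would give $\int|\sigma_t'|/\sigma_t=\infty$.

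\emph{The curvature term.} On $(\delta,1)$ we have $\sigma_t\sigma_t'\alpha_t\le0$, so the term can be dropped. On $[0,\delta)$ the term is
\[
\frac{\sigma_t\sigma_t'A}{\alpha_t\sigma_t^2+1}\le A\sigma_t\sigma_t',
\]
whose integral is $A\sigma_\delta^2/2\le A/2$.

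Summing the three contributions gives $\int_0^1\sup_x\lambda_{\max}(\nabla v_t(x))\,dt\le C(A,\beta,\chi)$, which is dimension-free since the constant of Theorem~\ref{theo:onesidedlip2} depends only on $\beta$. Two technical points need care. First, in the substitution step, $\sigma\in C^1_{\text{a.e.}}$ must be absolutely continuous on each monotone piece, as is implicitly assumed in the paper. Second, we bound $\sup_x\lambda_{\max}$ from above only; since the estimate is one-sided, the negative parts cause no issue.
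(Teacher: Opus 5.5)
Your proposal is correct and follows essentially the same route as the paper. You split at $\delta$ and use the two cases of Theorem~\ref{theo:onesidedlip2} with $\xi=1$, bound the exponential envelope uniformly via $(\alpha_t+\sigma_t^{-2})^{-\beta}\le\sigma_t^{2\beta}$ and $K_t\le A\sigma_t^{-\chi}$, and integrate $|\sigma_t'|\sigma_t^{\beta-\chi-1}$ and $A\sigma_t\sigma_t'$ on each monotone piece; the paper writes the argument for $\int_z^1$ with arbitrary $z$, and your bound also covers that because your majorant of $\sup_x\lambda_{\max}(\nabla v_t(x))$ is nonnegative.
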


The proof of Theorem~\ref{theo:onesidedlipinteg0} can be found in Section~\ref{sec:theo:onesidedlipinteg0}. \noindent
Theorem~\ref{theo:onesidedlipinteg0} should be read as the analogue of Theorem~\ref{theo:onesidedlipinteg} (the case $\sigma_0>0$),
adapted to the  regime where $\sigma_0=0$ and $\sigma_t$ is typically non-monotone.
In Theorem~\ref{theo:onesidedlipinteg}, the growth control $K_t\lesssim \sigma_t^{-\chi}$ is only required for $t\ge s$:
    this matches the fact that, when $\sigma_0>0$ and $\sigma_t$ stays bounded away from $0$ initially,
    the only potentially singular regime is near $t=1$ (where $\sigma_t$ becomes small).
    Here, since $\sigma_t$ is also small near $t=0$, we need the same type of control for all times
    to prevent the Hölder perturbation constants from blowing up at either endpoint.

Additionally, in Theorem~\ref{theo:onesidedlipinteg}, one assumes $K_t^2\alpha_t^{-\beta}\le A$ on an initial interval,
    which
    keeps the exponential envelope in the general bound uniformly bounded.
    When $\sigma_0=0$, such a strategy cannot rely on $\sigma_t^{-1}$ being bounded near $t=0$.
    Instead, the new difficulty is the possible expansion created by the score term when $\sigma_t'>0$;
    the condition $\nabla^2 u_t \preceq A \mathrm{Id}$ precisely prevents this expansion from becoming too large,
    and Assumption~\ref{assum:sigma2} ensures that this ``bad sign'' only occurs up to time $\delta$. The following result shows that the assumptions of Theorem~\ref{theo:onesidedlipinteg0} can be verified explicitly in the Stochastic-interpolant flow matching parametrization.

\begin{corollary}\label{coro:notlipmafm} Suppose that we are under the Stochastic-interpolant flow matching setting of Assumption~\ref{assum:sto-int}.
 Then, for all $z\in [0,1]$
$$\int_z^1 \sup_{x\in \mathbb{R}^d} \lambda_{\max}\left(\nabla v_t(x)\right)dt\leq C,$$
with $C>0$ depending only on $\alpha,\beta,K,\gamma,\sigma_{\delta}$ and not the dimension $d$.  
\end{corollary}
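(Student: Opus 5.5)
The plan is to apply Theorem~\ref{theo:onesidedlipinteg0} and check its hypotheses for the triplet $(m,\pi,\sigma)$ of Assumption~\ref{assum:sto-int}. The integral over $[z,1]$ is at most the integral over $[0,1]$ of the positive part of $\sup_x\lambda_{\max}(\nabla v_t(x))$. The theorem bounds that integral if we rerun its proof with positive parts, which we expect it already handles.

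\textbf{Step 1: the mean law.} Here $\mu_t=(m_t)_\#\pi$ is the law of $f_tY+g_tX_0$, with $Y\sim p^\star$ and $X_0\sim\gamma_d$ independent. So $\mu_t$ is the $f_t$-rescaling of $p^\star=e^{-u+a}$ convolved with $\mathcal N(0,g_t^2\mathrm{Id})$. The rescaled density is $e^{-u(\cdot/f_t)+a(\cdot/f_t)}$, which is weakly log-concave with curvature $\alpha/f_t^2$ and Hölder constant $Kf_t^{-\beta}$. Convolving with the Gaussian needs a stability lemma: Gaussian smoothing of an $(\alpha',\beta,K')$-weakly log-concave density is weakly log-concave with curvature $(\alpha'^{-1}+g_t^2)^{-1}$ and a comparable Hölder constant. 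I would prove it via the Prékopa--Leindler theorem for the convex part, and control the tilt by writing the smoothed density as an expectation of $e^{a}$ against a log-concave posterior.

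The result is $\alpha_t\asymp (f_t^2/\alpha+g_t^2)^{-1}$. This is bounded above and below by constants depending on $\alpha,\gamma$, since $f_t+g_t\ge\gamma$. We also get $K_t\lesssim K\min(f_t^{-\beta},\dots)$, uniformly bounded in $t$ once the smoothing regime is used near $t=0$, where $g_t\ge\gamma$. Hence $K_t\le A\sigma_t^{-\chi}$ holds with $\chi=0$.

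\textbf{Step 2: the variance of $m'_t$ (Assumption~\ref{assum:varmprime}).} Write $m_t'=f_t'Y+g_t'X_0$. Given $m_t$ and $X_t$, the only remaining freedom is the split of $m_t$ into $f_tY$ and $g_tX_0$. Decompose
\[
m_t'=\tfrac{f_t'}{f_t}m_t+\tfrac{g_t'f_t-f_t'g_t}{f_t}X_0,
\]
or the symmetric version dividing by $g_t$, whichever coefficient is $\ge\gamma/2$. The conditional variance of $h^\top m_t$ under $\pi^{t,x}$ is $\lesssim\sigma_t^2/(\alpha_t\sigma_t^2+1)$; this is the posterior concentration from Theorem~\ref{theo:onesidedlip2}'s framework, via a Brascamp--Lieb/Hölder-perturbation argument. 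The conditional variance of $X_0$ given $(m_t,X_t)$ is $O(1)$, since $X_0\mid m_t$ is weakly log-concave with curvature $\gtrsim 1$. This gives
\[
L_t\lesssim |f_t'|+|g_t'|+|g_t'f_t-f_t'g_t|\,\sigma_t^{-1}\lesssim |f'_t|+|g'_t|+K\sigma_t^{q-1}.
\]
By monotonicity, $\int(|f'|+|g'|)\le 2$, and $\int\sigma_t^{q-1}\le K$ by assumption. So $\int_0^1 L_t/(\alpha_t\sigma_t^2+1)\le A$.

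\textbf{Step 3: the Hessian upper bound (main obstacle).} Theorem~\ref{theo:onesidedlipinteg0} requires $\nabla^2u_t\preceq A\mathrm{Id}$ on $\{\sigma_t'>0\}=[0,\delta)$. There $g_t\ge\gamma$, so the Gaussian convolution with variance $\ge\gamma^2$ should supply it. The difficulty is that the decomposition of $\mu_t$ into $u_t$ and $a_t$ must be chosen so that $u_t$ is both $\alpha_t$-convex and $\gamma^{-2}$-smooth, while the Hölder tilt stays in $a_t$.

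I would first try to define $u_t$ as the negative log of the Gaussian convolution of $e^{-u(\cdot/f_t)}$. Its Hessian is then bounded by $g_t^{-2}\mathrm{Id}$ from the covariance identity $\nabla^2(-\log(\rho*\varphi))=g_t^{-2}\mathrm{Id}-g_t^{-4}\mathrm{Cov}$. The term $a_t=\log$ of a posterior average of $e^{a(\cdot/f_t)}$ is then Hölder by coupling posteriors under translation, using their strong log-concavity. The constants then depend only on $\alpha,\beta,K,\gamma,\sigma_\delta$, and Theorem~\ref{theo:onesidedlipinteg0} gives the claim.
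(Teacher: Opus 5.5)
Your proposal is correct and follows essentially the paper's route. The paper makes the same choice $u_t=-\log\bigl(e^{-u(\cdot/f_t)}*\varphi^{0,g_t}\bigr)$ (up to constants), with $\frac{\alpha}{\alpha g_t^2+f_t^2}\mathrm{Id}\preceq\nabla^2u_t\preceq g_t^{-2}\mathrm{Id}$. It obtains a uniformly bounded H\"older constant for $a_t$ from stability of the strongly log-concave posteriors, so $\chi=0$. It takes $L_t$ of order $|f_t'|+|g_t'|+|g_t'f_t-f_t'g_t|\sigma_t^{-1}$, integrated via the $q$-condition, and uses $\nabla^2u_t\preceq\gamma^{-2}\mathrm{Id}$ on $[0,\delta)$; Theorem~\ref{theo:onesidedlipinteg0} then concludes.

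One small repair in Step~2: your Cauchy--Schwarz split of $m_t'$ needs the full posterior variance $\operatorname{Var}_{\pi^{t,x}}(h^\top X_0)$ (or of $h^\top Y$), not the variance conditional on $m_t$. This is still $O(1)$, by Brascamp--Lieb on the joint log-concave surrogate on $\mathbb{R}^{2d}$ plus the tilt comparison, which is exactly how the paper obtains $L_t$ in Proposition~\ref{prop:boundvarmprime}.
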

The proof of Corollary~\ref{coro:notlipmafm} can be found in Section~\ref{sec:coro:notlipmafm}.

\subsubsection{Diffusion models setting}
\label{sec:score_osl}
In the diffusion-model regime, the prescribed probability path is
the (rescaled) Ornstein--Uhlenbeck interpolation
\[
X_t \;=\; tY+\sqrt{1-t^2} \xi, \qquad Y\sim p^\star,\ \xi\sim\mathcal N(0,I_d) \text{ independent}.
\]
The associated \emph{probability flow ODE} is exactly the flow-matching vector field
\(
v_t(x)=\E[\dot X_t | X_t=x].
\)
Writing the score as \(s(t,x):=\nabla\log p_t(x)\), one has the closed-form identity
\[
v_t(x)=\frac{1}{t}\bigl(x+s(t,x)\bigr).
\]
Since the path \((X_t)_{t\in[0,1]}\) is a particular case of the Lipman flow-matching setting
 with schedules \(f_t=t\) and \(\sigma_t=\sqrt{1-t^2}\),
we may directly invoke the one-sided Lipschitz estimate of Section~\ref{sec:lipmanonesided}.

\begin{corollary}\label{coro:score_osl_selfcontained}
Suppose that we are under the Diffusion models setting of Assumption~\ref{assum:diffusion} and let $s(t,x)=\nabla\log p_t(x)$ be the score.
Then there exists a constant $C>0$ independent of the dimension such that for all $z\in [0,1]$,
\[
\int_z^1 \frac{1}{t} \sup_{x\in\R^d}\lambda_{\max}\big(\text{Id}+\nabla s(t,x)\big)  dt   \le   C.
\] 
\end{corollary}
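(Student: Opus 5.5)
The plan is to reduce to Corollary~\ref{coro:lipmafm}. Since $v_t(x)=\frac1t(x+s(t,x))$ for $t>0$, we have $\nabla v_t(x)=\frac1t(\mathrm{Id}+\nabla s(t,x))$. Because $\nabla s(t,x)=\nabla^2\log p_t(x)$ is symmetric,
\[
\lambda_{\max}(\nabla v_t(x))=\frac1t\lambda_{\max}\big(\mathrm{Id}+\nabla s(t,x)\big).
\]
Hence the integrand in the statement is exactly $\sup_x\lambda_{\max}(\nabla v_t(x))$, and the claim is the conclusion of Corollary~\ref{coro:lipmafm} for the schedules $f_t=t$, $\sigma_t=\sqrt{1-t^2}$.

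It therefore suffices to check Assumption~\ref{assum:lipman} for these schedules.

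\textbf{Coupling and target.} The coupling is $\pi=\gamma_d\otimes p^\star$ and $p^\star$ is $(\alpha,\beta,K)$-weakly log-concave. Both hold by Assumption~\ref{assum:diffusion}.

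\textbf{Mean schedule.} $f_t=t$ is increasing, of class $C^1$, and satisfies $f_0=0=1-f_1$.

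\textbf{Noise schedule.} $\sigma_t=\sqrt{1-t^2}$ is decreasing with $\sigma_0=1$ and $\sigma_1=0$. Its derivative $\sigma_t'=-t/\sqrt{1-t^2}$ blows up at $t=1$, so $\sigma$ is $C^1$ on $[0,1)$, i.e.\ $C^1_{\text{a.e.}}([0,1],[0,1])$. I read this as the intended meaning of the class.

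\textbf{Choice of $\gamma$.} We need a $\gamma\in(0,1)$ with $\gamma\le\sigma_{1/2}\wedge f_{1/2}=\frac{\sqrt3}{2}\wedge\frac12=\frac12$ and $f_{1-\gamma}^2\ge\sigma_{1-\gamma}$. Setting $s=1-\gamma$, the second condition reads $s^2\ge\sqrt{1-s^2}$, i.e.\ $s^4+s^2-1\ge0$, i.e.\ $s^2\ge\frac{\sqrt5-1}{2}\approx0.618$, so $s\ge0.786$. Taking $\gamma=\frac15$ gives $s=0.8$ and $s^2=0.64\ge0.618$, so both conditions hold.

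With this choice, $\sigma_{1-\gamma}=\sqrt{1-0.64}=0.6$. The constant supplied by Corollary~\ref{coro:lipmafm} then depends only on $(\alpha,\beta,K)$ and absolute numbers, not on $d$.

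The argument is essentially bookkeeping, and the only delicate point is the behaviour at $t=0$. There the factor $1/t$ is singular and $v_t$ as written in \eqref{eq:vf} is not defined at $t=0$ itself. But $\{0\}$ has measure zero, and Corollary~\ref{coro:lipmafm} already controls $\int_z^1\sup_x\lambda_{\max}(\nabla v_t)\,dt$ for every $z$, including $z=0$. So no separate treatment is needed.

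One could worry that the bound is a signed quantity which could be very negative near $t=0$ while the integral still makes sense. The corollary bounds the integral from above, which is exactly the claim, so this does not matter.

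Finally, I would double-check that Corollary~\ref{coro:lipmafm} uses the identity $v_t=-\sigma_t'\sigma_t\nabla\log p_t+\mathbb E[m_t'(Z)\mid X_t=x]$ in the same form. Substituting the schedules gives $t\,s+\mathbb E[Y\mid X_t=x]$. By Tweedie's formula, $\mathbb E[Y\mid X_t=x]=(x+(1-t^2)s)/t$, so the sum is $(x+s)/t$, consistent with the identity above.
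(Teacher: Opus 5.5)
Your proposal is correct and follows the paper's own route: you reduce to Corollary~\ref{coro:lipmafm} via $v_t=\frac1t(x+s(t,x))$, then verify Assumption~\ref{assum:lipman} for $f_t=t$, $\sigma_t=\sqrt{1-t^2}$ with the same choice $\gamma=\tfrac15$, so that $f_{4/5}^2=\tfrac{16}{25}\ge\tfrac35=\sigma_{4/5}$. Your explicit remark that $\lambda_{\max}(\nabla v_t)=\frac1t\lambda_{\max}(\mathrm{Id}+\nabla s)$, using the symmetry of $\nabla s$, makes precise a step the paper leaves implicit.
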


The proof of Corollary~\ref{coro:score_osl_selfcontained} can be found in Section~\ref{sec:coro:score_osl_selfcontained}.

\begin{remark}\label{rem:contractive-window}
In the diffusion-model setting, our sharp one-sided Lipschitz control implies that the
reverse drift enjoys a genuinely contractive regime over a non-negligible initial portion of the
backward dynamics. More precisely, for the reverse drift $a_t$,
one can choose an explicit $\kappa>0$
such that, for all $t\in(0,\kappa]$ and all $x,y\in\R^d$,
\[
\langle x-y,  a_t(x)-a_t(y)\rangle \le -\frac12 \|x-y\|^2 .
\]
Consequently, one
may discretize the first stage of the sampling procedure with constant step sizes on $(0,\kappa]$,
and only switch later to a geometric discretization as $t$ is close to $1$. This two-stage schedule is advocated and analyzed in \cite{arsenyan2025assessing}. We do not exploit this
additional structure here and, for simplicity, we work throughout with purely geometric step-size
discretizations.
\end{remark}

\subsection{Lipschitz regularity}\label{sec:lipschitz}
\subsubsection{General Lipschitz estimates}\label{sec:lipschitzgene}

In this section we control the \emph{two-sided} Lipschitz constant of $v_t(\cdot)$, i.e.
\[
\text{Lip}(v_t)  :=  \sup_{x\in\R^d}\|\nabla v_t(x)\|_{\text{op}}.
\]
This is stronger than the one-sided estimates of Section~\ref{sec:onesidedlipreguv}, where we only control the largest eigenvalue of $\nabla v_t(x)$. Here we want to control the full operator norm, so we must also bound possible strong contraction directions (large negative eigenvalues).

\medskip

The key point is that the Jacobian $\nabla v_t(x)$ admits a decomposition into:
\begin{itemize}
    \item an explicit isotropic term (coming from the Gaussian part of the interpolation),
    \item a covariance term involving posterior fluctuations of $m_t(Z)$,
    \item and a mixed covariance term involving $m_t'(Z)$ and $m_t(Z)$.
\end{itemize}
Therefore, the problem reduces to understanding how the posterior law concentrates. This is exactly where Assumptions~\ref{assum:1} and \ref{assum:varmprime} enter. The first result below is the direct  estimate, obtained by bounding each contribution to $\nabla v_t$ separately.

\begin{corollary}\label{coro:lipregugeneral}
Let $v:[0,1]\times \mathbb{R}^d\rightarrow \mathbb{R}^d$ of the form \eqref{eq:vf} with $\sigma\in C^1_{\text{a.e.}}([0,1],[0,1])$ and $(m_t)_t$ satisfying Assumptions~\ref{assum:1} and \ref{assum:varmprime}. Then for $x\in \mathbb{R}^d$ and $t\in [0,1]$ such that $\sigma_t>0$, we have
$$\sup_{x\in\R^d}\|\nabla v_t(x)\|_{\text{op}}
\le
\frac{|\sigma'_t|}{\sigma_t}
+
C  \frac{1}{1+\alpha_t\sigma_t^2}  
\exp \Big(CK_t^2(\alpha_t+\sigma_t^{-2})^{-\beta}\Big)\left(  \frac{|\sigma'_t|}{\sigma_t}
+L_t\right),$$
with $C>0$ independent of the dimension.
\end{corollary}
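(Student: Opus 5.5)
The plan is to differentiate the explicit formula \eqref{eq:vf} in $x$ and bound each resulting piece in operator norm, with the posterior covariance controls behind Theorem~\ref{theo:onesidedlip2} as the main input. Write $\pi^{t,x}$ for the posterior law of $Z$ given $X_t=x$. Its density with respect to $\pi$ is proportional to $\exp\big(-\|x-m_t(z)\|^2/(2\sigma_t^2)\big)$. For any integrable $g$, differentiating $x\mapsto \E[g(Z)\mid X_t=x]$ gives $\sigma_t^{-2}\mathrm{Cov}_{\pi^{t,x}}(g(Z),m_t(Z))$. Applying this to the first line of \eqref{eq:vf} yields the decomposition
\[
\nabla v_t(x)=\frac{\sigma_t'}{\sigma_t}\,\mathrm{Id}-\frac{\sigma_t'}{\sigma_t^{3}}\,\mathrm{Cov}_{\pi^{t,x}}\big(m_t(Z)\big)+\frac{1}{\sigma_t^{2}}\,\mathrm{Cov}_{\pi^{t,x}}\big(m_t'(Z),m_t(Z)\big),
\]
which consists of the isotropic, covariance and mixed terms announced above. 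The isotropic term contributes exactly $|\sigma_t'|/\sigma_t$ to the operator norm, and this is the first summand of the claimed bound.

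The key input is a dimension-free posterior covariance bound. By Assumption~\ref{assum:1} and the change of variables $y=m_t(z)$, the law of $m_t(Z)$ under $\pi^{t,x}$ has density proportional to $\exp\big(-u_t(y)-\|x-y\|^2/(2\sigma_t^2)+a_t(y)\big)$. This is an $(\alpha_t+\sigma_t^{-2})$-strongly log-concave measure tilted by the $\beta$-Hölder function $a_t$ with constant $K_t$. The tool I would use is the same lemma that drives Theorem~\ref{theo:onesidedlip2}: for such a tilted measure,
\[
\|\mathrm{Cov}\|_{\mathrm{op}}\le C\exp\big(CK_t^2(\alpha_t+\sigma_t^{-2})^{-\beta}\big)\,(\alpha_t+\sigma_t^{-2})^{-1}.
\]
This lemma is established in the proof of Theorem~\ref{theo:onesidedlip2}, which I may take as given. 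If it were not available, I would prove it by localizing at scale $(\alpha_t+\sigma_t^{-2})^{-1/2}$, bounding the oscillation of $a_t$ there by $K_t$ times that scale to the power $\beta$, and applying Holley--Stroock perturbation of a Brascamp--Lieb bound. This localization is the only genuinely delicate step, since a global Holley--Stroock argument fails for unbounded $a_t$. Note that $(\alpha_t+\sigma_t^{-2})^{-1}=\sigma_t^2/(1+\alpha_t\sigma_t^2)$. Using this covariance bound, the covariance term is at most $\frac{|\sigma_t'|}{\sigma_t}\cdot\frac{C e^{CK_t^2(\alpha_t+\sigma_t^{-2})^{-\beta}}}{1+\alpha_t\sigma_t^2}$.

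For the mixed term, I would take unit vectors $h,k$ and apply Cauchy--Schwarz:
\[
\big|h^\top\mathrm{Cov}(m_t',m_t)k\big|\le \mathrm{Var}(h^\top m_t')^{1/2}\,\mathrm{Var}(k^\top m_t)^{1/2}.
\]
Assumption~\ref{assum:varmprime} bounds the first factor by $L_t\sigma_t/(1+\alpha_t\sigma_t^2)^{1/2}$. The covariance lemma bounds the second factor by $C e^{CK_t^2(\cdot)^{-\beta}/2}\sigma_t/(1+\alpha_t\sigma_t^2)^{1/2}$. Dividing the product by $\sigma_t^2$ gives $C e^{CK_t^2(\alpha_t+\sigma_t^{-2})^{-\beta}} L_t/(1+\alpha_t\sigma_t^2)$. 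Summing the three contributions gives the stated estimate, with constants depending only on $\beta$ and uniform in $x$, so the supremum over $x$ follows.
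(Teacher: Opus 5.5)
Your proposal is correct and follows essentially the same route as the paper. You use the decomposition of Proposition~\ref{propnablav}, then bound the covariance term in operator norm by Brascamp--Lieb on the surrogate $\nu^{t,x}$ together with the $\xi=0$ case of Proposition~\ref{prop:finalestimatecompcov}, and finally control the mixed term by Cauchy--Schwarz with Assumption~\ref{assum:varmprime}. Your fallback localization/Holley--Stroock sketch is not needed, and it is not how the paper compares the covariances; the paper does this through exponential-tilt moment bounds (Lemma~\ref{lem:lemma10-unbounded}).
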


The proof of Corollary~\ref{coro:lipregugeneral} can be found in Section~\ref{sec:coro:lipregugeneral}.
The different terms in the bound have the following interpretations:
\begin{itemize}
    \item $\frac{|\sigma_t'|}{\sigma_t}$ is the explicit contribution of the Gaussian part of the interpolation. It is present even in the simplest cases and is therefore unavoidable at this level of generality.
    \item $\frac{1}{1+\alpha_t\sigma_t^2}$ is the posterior concentration gain: stronger curvature or larger noise level  improve stability.
    \item $L_t$ is the cost of the temporal motion $m_t'$, coming from the mixed covariance term.
    \item The exponential factor is the price paid for allowing an unbounded Hölder perturbation in Assumption~\ref{assum:1}.
\end{itemize}
Under the additional assumption that the convex part $u_t$ has bounded Hessian, Section~\ref{sec:cor:refined_one_sided_lip} provides a refined two-sided Lipschitz bound, stated in Corollary~\ref{cor:refined_one_sided_lip}, which matches the structure of the one-sided estimates in Theorem~\ref{theo:onesidedlip2}. The point is that, in this regime, one can exploit cancellations in the score contribution instead of estimating each term separately. As a consequence, for the time schedules typically used in diffusion models, this refined estimate often yields a $(1-t)^{-(2-\beta)/2}$ behavior near $t=1$, rather than the more singular $(1-t)^{-1}$ behavior given by the general bound. In discretization arguments, this can improve the logarithmic factors in the final error estimates. We do not pursue this refinement in the present paper as it would require the convex potential $u$ in the weakly log-concave decomposition $p^\star = e^{-u+a}$ to have bounded Hessian.

\subsubsection{Lipman Flow matching setting}

We now specialize the general Lipschitz estimate of Section~\ref{sec:lipschitzgene} to the Lipman flow-matching regime. In this setting, it is natural to impose assumptions directly on the schedules $f_t$ and $\bar g_t$.
\begin{assumption}\label{assum:lipmanlip} Writing \[
X_t = f_t Y + \bar g_t \xi,\qquad Y:=X_1\sim p^\star,\  \xi\sim\mathcal N(0,I_d) \text{ independent},
\] the model satisfies
\begin{enumerate}
\item There exist $p>0$, $t_0\in(0,1)$ and $\ell\in C^2([t_0,1))$ such that for all $t\in[t_0,1)$,
\[
\bar g_t=(1-t)^p  \ell(t),
\qquad 0<A^{-1} \le \ell(t)\le A,
\qquad \|\ell'\|_{L^\infty([t_0,1))}\le A.
\]
\item For all $t\in [0,1]$, $f_tf_t'\le A(1-t)^{-1}$.
\item For all $t\in (0,t_0)$, $|\bar g_t'|+|\bar g_t|^{-1}\leq A $.
\end{enumerate}
\end{assumption}

Assumption~\ref{assum:lipmanlip} should be read as a two-regime condition.

\noindent\textbf{Terminal regime $[t_0,1)$.}
The factorization $
\bar g_t=(1-t)^p\ell(t)$
means that the noise vanishes polynomially at rate $p$, up to a bounded and slowly varying correction $\ell$. In particular, $
\left|\frac{\bar g_t'}{\bar g_t}\right|\le \frac{C}{1-t}.$
This is exactly the natural singularity created by denoising near $t=1$, and it explains why the final estimate has the scale $(1-t)^{-1}$. On the other hand,
the $m_t'$ contribution appearing in the general Lipschitz bound is $
\frac{1}{1+\alpha_t\sigma_t^2}L_t
=
\frac{|f_t'|f_t}{f_t^2+\alpha\sigma_t^2}.$
Therefore, the relevant object to control is $f_tf_t'$. Assumption~\ref{assum:lipmanlip}-2 is tailored exactly to the structure of the previous subsection's estimate.

\noindent\textbf{Early-time regime $(0,t_0)$.}
No singularity is expected away from $t=1$. Assumption~\ref{assum:lipmanlip}-3 guarantees that $\bar g_t$ stays non-degenerate and varies in a controlled way on $(0,t_0)$, so that no artificial blow-up appears near $t=0$.

Under these schedule assumptions, the abstract estimate from Section~\ref{sec:lipschitzgene} becomes a clean global bound.

\begin{corollary}\label{cor:lipman-global}
Assume the Lipman flow-matching setting of Assumption~\ref{assum:lipman}. Assume in addition that the model satisfies Assumption~\ref{assum:lipmanlip}. Then, for all $ x\in\mathbb R^d$ and $t\in(0,1)$ a.e.
\[
\|\nabla v_t(x)\|_{\mathrm{op}} \le \frac{C}{1-t},
\]
$C>0$ depending only on $A,\alpha,\beta,K,\gamma,f_{t_0},p$ and not on the dimension $d$.
\end{corollary}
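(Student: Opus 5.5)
The plan is to apply Corollary~\ref{coro:lipregugeneral} with the Lipman parametrization and estimate each term in terms of the schedules. Since $m_t(x_0,x_1)=f_tx_1$ does not depend on $x_0$, we can rewrite the path as $X_t=f_tY+\bar g_t\xi$ with $\bar g_t=\sigma_t$. Rescaling shows that $(m_t)_\#\pi=\mathrm{Law}(f_tY)$ has density $f_t^{-d}p^\star(\cdot/f_t)=\exp(-u(\cdot/f_t)+a(\cdot/f_t))$. This density is $(\alpha_t,\beta,K_t)$-weakly log-concave with $\alpha_t=\alpha f_t^{-2}$ and $K_t=Kf_t^{-\beta}$, so Assumption~\ref{assum:1} holds. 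For Assumption~\ref{assum:varmprime}, note that $m_t'(Z)=f_t'Y=(f_t'/f_t)m_t(Z)$. Hence $\operatorname{Var}_{\pi^{t,x}}(h^\top m_t'(Z))=(f_t'/f_t)^2\operatorname{Var}(h^\top m_t(Z)\mid X_t=x)$. The posterior variance of $m_t(Z)$ is bounded by $C\exp(CK_t^2(\alpha_t+\sigma_t^{-2})^{-\beta})\sigma_t^2/(\alpha_t\sigma_t^2+1)$; this is the same posterior concentration bound (Brascamp--Lieb plus a Hölder perturbation argument) that underlies Theorem~\ref{theo:onesidedlip2} and Corollary~\ref{coro:lipregugeneral}. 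We can therefore take $L_t\asymp |f_t'|/f_t$, with the exponential factor either absorbed into $L_t$ or kept separate as in the statement of Corollary~\ref{coro:lipregugeneral}.

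Next we simplify the resulting bound. We have $\frac{1}{1+\alpha_t\sigma_t^2}=\frac{f_t^2}{f_t^2+\alpha\sigma_t^2}$ and $K_t^2(\alpha_t+\sigma_t^{-2})^{-\beta}=K^2\big(\alpha+f_t^2\sigma_t^{-2}\big)^{-\beta}\le K^2\alpha^{-\beta}$. The exponential prefactor is therefore bounded by a constant depending only on $(\alpha,\beta,K)$, uniformly in $t$ and $d$. This uniformity is the key dimension-free point. The bound then reads
\[
\|\nabla v_t\|_{\mathrm{op}}\le C\Big(\frac{|\bar g_t'|}{\bar g_t}+\frac{f_t|f_t'|}{f_t^2+\alpha\bar g_t^2}\Big).
\]

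We treat the two time regimes of Assumption~\ref{assum:lipmanlip} separately. On $[t_0,1)$, write $\bar g_t=(1-t)^p\ell(t)$, so that $\bar g_t'/\bar g_t=-p/(1-t)+\ell'/\ell$. Since $\ell\ge A^{-1}$ and $|\ell'|\le A$, this gives $|\bar g_t'|/\bar g_t\le C/(1-t)$. For the second term, monotonicity gives $f_t\ge f_{t_0}>0$ on this range, so $\frac{f_t|f_t'|}{f_t^2+\alpha\bar g_t^2}\le f_{t_0}^{-2}f_tf_t'\le Cf_{t_0}^{-2}/(1-t)$ by item~2, using $f'\ge0$. On $(0,t_0)$, item~3 bounds $|\bar g_t'|/\bar g_t$ by $A^2$, and $\frac{f_tf_t'}{f_t^2+\alpha\bar g_t^2}\le \frac{f_tf_t'}{\alpha A^{-2}}\le CA/(1-t)$ again by item~2. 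Since $1\le (1-t)^{-1}$, every contribution is at most $C/(1-t)$.

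The main obstacle is making sure the constant in the posterior variance bound used for Assumption~\ref{assum:varmprime}, and the exponential factor, stay uniform as $f_t\to0$ near $t=0$, where $\alpha_t$ and $K_t$ both blow up. The computation above shows that only the ratio $K_t^2\alpha_t^{-\beta}=K^2\alpha^{-\beta}$ enters, and this ratio is constant in $t$. I would verify this carefully, together with the fact that $L_t=|f_t'|/f_t$ only ever appears multiplied by $f_t^2/(f_t^2+\alpha\bar g_t^2)$. That product removes the $1/f_t$ singularity at $t=0$ and is precisely why item~2 is phrased in terms of $f_tf_t'$.
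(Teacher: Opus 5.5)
Your proposal is correct and follows the paper's proof essentially step by step. Both arguments apply Corollary~\ref{coro:lipregugeneral} with $\alpha_t=\alpha f_t^{-2}$, $K_t=Kf_t^{-\beta}$ and $L_t=|f_t'|/f_t$, use $K_t^2(\alpha_t+\sigma_t^{-2})^{-\beta}\le K^2\alpha^{-\beta}$ to bound the exponential, rewrite $L_t/(1+\alpha_t\sigma_t^2)=f_tf_t'/(f_t^2+\alpha\sigma_t^2)$, and split at $t_0$. The differences are cosmetic: the paper lower-bounds $f_t^2+\alpha\sigma_t^2$ uniformly by $\gamma^2\wedge(\alpha\gamma^2)$ via $\gamma\le\sigma_{1/2}\wedge f_{1/2}$ and monotonicity, whereas you use $f_{t_0}$ on $[t_0,1)$ and $\bar g_t\ge A^{-1}$ on $(0,t_0)$; you also spell out the verification of Assumption~\ref{assum:varmprime}, which the paper takes from the proof of Corollary~\ref{coro:lipmafm}.
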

The proof of Corollary~\ref{cor:lipman-global} can be found in Section~\ref{sec:cor:lipman-global}. This result shows that the drift remains Lipschitz in space for every $t<1$, with the natural terminal blow-up rate $(1-t)^{-1}$ induced by the denoising regime.

\subsubsection{Stochastic interpolant Flow matching setting}\label{sec:stogoballip}

In the Stochastic-interpolant setting, the interpolation path mixes the two endpoints with an additional Gaussian noise term. At first sight, this seems more involved than the Lipman setting. However, for the two-sided Lipschitz estimate, the analysis becomes much simpler after using the reduced marginal representation from Assumption~\ref{assum:sto-int}:
\[
X_t=f_tY+\bar g_t \xi,
\qquad 
\bar g_t^2=g_t^2+\sigma_t^2,
\]
where $Y\sim p$ and $\xi\sim \mathcal N(0,I_d)$ are independent. In particular, the estimate only depends on the effective noise level $\bar g_t$. This is an important difference with the one-sided bounds of Section~\ref{sec:onesidedlipreguv} where the sign of $(\log \bar g_t)'$ matters. By contrast, here we estimate $\|\nabla v_t(x)\|_{\mathrm{op}}$, and the general two-sided estimate depends on the Gaussian contribution through $|\bar g_t'|/\bar g_t$, that is, only through its magnitude. Hence sign changes of the noise schedule do not create a new mechanism in the present argument.

\begin{corollary}\label{coro:global-Lipschitz-v}
Assume the stochastic-interpolant flow matching setting of Assumption~\ref{assum:sto-int}.
Assume in addition that the model satisfies Assumption~\ref{assum:lipmanlip}.
 Then, for all $ x\in\mathbb R^d$ and $t\in(0,1)$ a.e.
\[
\|\nabla v_t(x)\|_{\mathrm{op}} \le \frac{C}{1-t},
\]
with $C>0$ depending only on $A,\alpha,\beta,K,\gamma,f_{t_0},p$ not the dimension $d$.
\end{corollary}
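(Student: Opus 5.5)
We plan to reduce Corollary~\ref{coro:global-Lipschitz-v} to the general two-sided estimate of Corollary~\ref{coro:lipregugeneral}, applied to the reduced marginal representation $X_t=f_tY+\bar g_t\xi$. The velocity field only depends on the law of $X_t$ and on $\E[\dot X_t\mid X_t=x]$. We therefore first rewrite $v_t$ in the form \eqref{eq:vf} with latent variable $Z=Y$, mean map $m_t(y)=f_ty$ and noise level $\bar g_t$.

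This rewriting needs a check. The conditional velocity $g_t'x_0+f_t'x_1+\sigma_t'\xi$ projected on $X_t=x$ must coincide with the Lipman-type expression $f_t'\E[Y\mid X_t=x]+\frac{\bar g_t'}{\bar g_t}(x-f_t\E[Y\mid X_t=x])$. This follows because, given $Y$, the pair $(g_tX_0,\sigma_t\xi)$ is an isotropic Gaussian vector. Its conditional mean given the sum $\bar g_t\xi'$ is $\frac{g_t g_t'+\sigma_t\sigma_t'}{\bar g_t^2}(x-f_tY)=\frac{\bar g_t'}{\bar g_t}(x-f_tY)$. Once this identity is in place, the model is formally a Lipman model with schedules $(f_t,\bar g_t)$. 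The rest of the argument then parallels the proof of Corollary~\ref{cor:lipman-global}.

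Next we verify Assumptions~\ref{assum:1} and~\ref{assum:varmprime} for $\mu_t=(f_t\cdot)_\#p^\star$. Rescaling gives $\mu_t(y)\propto\exp(-u(y/f_t)+a(y/f_t))$, which is $(\alpha f_t^{-2},\beta,Kf_t^{-\beta})$-weakly log-concave. Hence $\alpha_t=\alpha/f_t^2$ and $K_t=Kf_t^{-\beta}$. Since $m_t'(Y)=\frac{f_t'}{f_t}m_t(Y)$, the posterior variance of $h^\top m_t'$ equals $(f_t'/f_t)^2$ times the posterior variance of $h^\top m_t$. The latter is bounded by $C\frac{\bar g_t^2}{\alpha_t\bar g_t^2+1}$ by the posterior concentration underlying Assumption~\ref{assum:1}, with the exponential factor absorbed as in Theorem~\ref{theo:onesidedlip2}. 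This gives $L_t\lesssim f_t'/f_t$.

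Plugging into Corollary~\ref{coro:lipregugeneral} yields three terms.
\begin{itemize}
\item The Gaussian term $|\bar g_t'|/\bar g_t$ is at most $C(1-t)^{-1}$ on $[t_0,1)$, by writing $\bar g_t'/\bar g_t=-p/(1-t)+\ell'/\ell$ with $\ell\ge A^{-1}$ and $|\ell'|\le A$. It is bounded by $A^2$ on $(0,t_0)$ by item 3 of Assumption~\ref{assum:lipmanlip}.
\item The mean term $\frac{L_t}{1+\alpha_t\bar g_t^2}\lesssim\frac{f_tf_t'}{f_t^2+\alpha\bar g_t^2}$. Using $\bar g_t^2=g_t^2+\sigma_t^2$ and $g_t+f_t\ge\gamma$, the denominator is at least $c(\gamma,\alpha)>0$. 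Item 2 of Assumption~\ref{assum:lipmanlip} then gives $f_tf_t'\le A(1-t)^{-1}$.
\item The exponential factor is $K_t^2(\alpha_t+\bar g_t^{-2})^{-\beta}=K^2(\alpha+f_t^2\bar g_t^{-2})^{-\beta}\le K^2\alpha^{-\beta}$, which is bounded uniformly in $t$. This is the advantage of the $f_t$-rescaling: the factors $f_t^{-2\beta}$ cancel.
\end{itemize}

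The main obstacle I expect is the first step: justifying rigorously that the stochastic-interpolant velocity is exactly the Lipman-form field for $(f,\bar g)$, so that \eqref{eq:vf} and Corollary~\ref{coro:lipregugeneral} apply. This needs care at times where $\sigma_t$ or $g_t$ vanish; a.e.\ differentiability of $\bar g_t$ suffices since $\bar g_t>0$ on $(0,1)$. A smaller concern is the small-$f_t$ regime near $t=0$, where $\alpha_t$ blows up. There the factor $\frac{1}{1+\alpha_t\bar g_t^2}$ multiplying $|\bar g_t'|/\bar g_t$ is harmless, because item 3 already bounds that quantity. Combining the three bounds gives $\|\nabla v_t(x)\|_{\mathrm{op}}\le C(1-t)^{-1}$ for a.e.\ $t$, with $C$ depending only on $A,\alpha,\beta,K,\gamma,f_{t_0},p$.
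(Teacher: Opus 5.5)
Your proposal is correct and follows essentially the same route as the paper. Both apply Corollary~\ref{coro:lipregugeneral} to the reduced model $X_t=f_tY+\bar g_t\xi$ with $\alpha_t=\alpha/f_t^2$, $K_t=Kf_t^{-\beta}$ and $L_t=f_t'/f_t$, bound the mean term by $C/(1-t)$ via $f_t+g_t\ge\gamma$, show the exponential factor is uniformly bounded, and treat $|\bar g_t'|/\bar g_t$ separately on $[t_0,1)$ and $(0,t_0)$. Your explicit Gaussian-conditioning check that the stochastic-interpolant field has the form \eqref{eq:vf} with schedules $(f_t,\bar g_t)$ is a useful addition: the paper uses this reduction only implicitly, through the rotation argument of Lemma~\ref{lemma:boundnormvt}.
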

The proof of Corollary~\ref{coro:global-Lipschitz-v} can be found in Section~\ref{sec:coro:global-Lipschitz-v}.

\subsubsection{Diffusion Models setting}
As explained in Section~\ref{sec:score_osl}, the Probability flow ODE being a particular case of Lipman flow matching, Corollary~\ref{cor:lipman-global} directly gives an estimate on the score.
\begin{corollary}\label{coro:difflipbound}
Suppose that we are under the Diffusion models setting of Assumption~\ref{assum:diffusion} and let $s(t,x)=\nabla\log p_t(x)$ be the score.
Then, for all $ x\in\mathbb R^d$ and $t\in(0,1)$
\[\frac{1}{t}
\sup_{x\in\R^d}\bigl\| \text{Id}+\nabla s(t,x)\bigr\|_{ \text{op}} \le \frac{C}{1-t},
\]
with $C>0$ a dimension-free constant depending only on $\alpha,\beta,K$.
\end{corollary}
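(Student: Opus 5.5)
The plan is to read Corollary~\ref{coro:difflipbound} off Corollary~\ref{cor:lipman-global}, as the text announces. Under Assumption~\ref{assum:diffusion} the path is $X_t=tY+\sqrt{1-t^2}\,\xi$, which is the Lipman setting with $f_t=t$ and $\sigma_t=\bar g_t=\sqrt{1-t^2}$. By \eqref{eq:vf} we have $v_t(x)=\frac1t(x+s(t,x))$, hence $\nabla v_t(x)=\frac1t(\mathrm{Id}+\nabla s(t,x))$. Any bound $\sup_x\|\nabla v_t(x)\|_{\mathrm{op}}\le C/(1-t)$ is therefore exactly the claimed estimate.

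The first step is to verify Assumption~\ref{assum:lipman}.
\begin{itemize}
\item The coupling is $\pi=\gamma_d\otimes p^\star$, and $p^\star$ is $(\alpha,\beta,K)$-weakly log-concave by hypothesis.
\item $f_t=t$ is increasing, $C^1$, with $f_0=0$ and $f_1=1$.
\item $\sigma_t=\sqrt{1-t^2}$ is decreasing and $C^1$ on $[0,1)$, hence $C^1_{\text{a.e.}}$, with $\sigma_0=1$ and $\sigma_1=0$.
\item We need $\gamma\in(0,1)$ with $\gamma\le \sigma_{1/2}\wedge f_{1/2}=\frac12$ and $f_{1-\gamma}^2\ge\sigma_{1-\gamma}$. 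The latter reads $(1-\gamma)^2\ge\sqrt{\gamma(2-\gamma)}$, which holds for small fixed $\gamma$; for instance $\gamma=1/10$ gives $0.81\ge\sqrt{0.19}\approx0.436$.
\end{itemize}

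The second step is to verify Assumption~\ref{assum:lipmanlip}. Take $t_0=1/2$ and write $\bar g_t=(1-t)^{1/2}\ell(t)$ with $\ell(t)=\sqrt{1+t}$ and $p=1/2$. On $[1/2,1)$ the function $\ell$ is smooth, lies in $[1,\sqrt2]$, and satisfies $|\ell'|\le 1/\sqrt{6}$. Next, $f_tf_t'=t\le 1\le (1-t)^{-1}$. Finally, on $(0,1/2)$ we have $|\bar g_t'|=t/\sqrt{1-t^2}\le 1/\sqrt3$ and $\bar g_t^{-1}\le 2/\sqrt3$. Hence all three conditions hold with an absolute constant $A$.

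Corollary~\ref{cor:lipman-global} then gives $\|\nabla v_t(x)\|_{\mathrm{op}}\le C/(1-t)$ for a.e. $t\in(0,1)$. Here $C$ depends only on $A,\alpha,\beta,K,\gamma,f_{t_0}=1/2,p=1/2$, and these reduce to $\alpha,\beta,K$.

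The main obstacle is upgrading "a.e. $t$" to "all $t\in(0,1)$". I would handle it by continuity. Since $\sigma_t>0$ on $(0,1)$, the density $p_t$ is a smooth Gaussian convolution and $(t,x)\mapsto \nabla s(t,x)$ is continuous. An inequality $\|\nabla v_t(x)\|_{\mathrm{op}}\le C/(1-t)$ holding on a full-measure set of $t$ therefore extends to every $t\in(0,1)$ for each fixed $x$. Alternatively, one can observe that the schedules here are $C^1$ on all of $[0,1)$, so the a.e. restriction in Corollary~\ref{cor:lipman-global}, which comes only from $C^1_{\text{a.e.}}$ schedules, is vacuous.

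Near $t=0$ the factor $1/t$ causes no trouble, because the bound is stated on $\nabla v_t$ itself and $\nabla v_t=\frac1t(\mathrm{Id}+\nabla s)$ exactly. Indeed, $\mathrm{Id}+\nabla s(t,x)=t^2\,\mathrm{Cov}(Y\mid X_t=x)/\sigma_t^2$ vanishes like $t^2$ as $t\to0$.
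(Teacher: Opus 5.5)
Your proposal is correct and is essentially the paper's proof, which likewise just uses $v_t(x)=\frac1t\bigl(x+s(t,x)\bigr)$ and invokes Corollary~\ref{cor:lipman-global}; your explicit checks ($\gamma=\tfrac1{10}$, $t_0=\tfrac12$, $p=\tfrac12$, $\ell(t)=\sqrt{1+t}$) and the continuity upgrade from a.e.\ $t$ to every $t\in(0,1)$ only fill in details the paper leaves implicit. One harmless slip in your closing aside: the correct identity is $\mathrm{Id}+\nabla s(t,x)=\frac{t^2}{\sigma_t^2}\bigl(\sigma_t^{-2}\mathrm{Cov}(Y\mid X_t=x)-\mathrm{Id}\bigr)$, which is still $O(t^2)$ as $t\to0$, and your argument does not rely on it anyway.
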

The proof of Corollary~\ref{coro:difflipbound} can be found in Section~\ref{sec:coro:difflipbound}.

\subsection{Time-Lipschitz regularity}\label{sec:time_lipschitz}
\subsubsection{General time-Lipschitz estimate}\label{sec:reduced_model}

Throughout, let \(Y\in\R^d\) have density \(p^\star\) and let \(\xi\sim\mathcal N(0,I_d)\) be independent of \(Y\).
Let \(f,\bar g:(0,1)\to(0,\infty)\) be schedules and define, for \(t\in(0,1)\),
\begin{equation}\label{eq:reduced_gaussian_mixture}
X_t = f_t Y + \bar g_t \xi.
\end{equation}
Our goal in this section is to control the time regularity of \(v_t\), namely \(\partial_t v_t(x)\), with an estimate that is explicit in the schedules \(f,\bar g\) and dimension-free up to the natural \(\sqrt d\) scale.

The reason why the time-regularity analysis is developed directly for the reduced model \eqref{eq:reduced_gaussian_mixture}
is that, unlike the spatial bounds, the estimate of \(\partial_t v_t\) depends on the precise time structure of the interpolation. In the general Gaussian-mixture form $X_t = m_t(Z) + \sigma_t \xi,$
differentiating in time would produce model-dependent terms involving \(\partial_t m_t\), \(\partial_{tt} m_t\), and the time variation of the posterior law of \(Z\) given \(X_t=x\). By contrast, in the reduced model all the time dependence is encoded by the scalar schedules \(f_t\) and \(\bar g_t\), which makes it possible to reduce the control of \(\partial_t v_t\) to scalar derivatives of the schedules together with posterior quantities that can be estimated uniformly. For this reason, whereas the space-Lipschitz analysis can be carried out in a general Gaussian-mixture framework, the time-Lipschitz analysis is formulated directly at the level of the reduced model.

Recall that \(p^\star\) is \((\alpha,\beta,K)\)-weakly log-concave, so it can be written as $
p^\star(y)=e^{-u(y)+a(y)}$
with \(u\) strongly convex (curvature \(\alpha\)) and \(a\) \(\beta\)-Hölder (size \(K\)). In the proof, it is useful to compare the true posterior with the log-concave surrogate posterior
\[
\nu^{t,x}(dy)\propto \exp \Big(-u(y)-\frac{\|x-f_ty\|^2}{2\bar g_t^2}\Big) dy.
\]
The quantity \(\E_{\nu_{t,0}}[W]\) appears in the final estimate because it controls the posterior center at \(x=0\), while the dependence on \(x\) is handled by Lipschitz estimates in the observation variable.

\begin{theorem}\label{theo:partialtvt}
Suppose that we are in the model \eqref{eq:reduced_gaussian_mixture} with \(f,\bar g\in C^2_{\text{a.e.}}([0,1])\), \(\bar g_t>0\),
and that the prior density of \(Y\) is \((\alpha,\beta,K)\)-weakly log-concave with \(\alpha,K>0\) and \(\beta\in (0,1]\).
Define
\[
a_t:=\frac{\bar g_t'}{\bar g_t},\quad c_t:=f_t'-a_t f_t,\quad
\gamma_t:=\alpha+\frac{f_t^2}{\bar g_t^2}.
\]
Then there exists \(C>0\) independent of the dimension such that, for a.e. \(t\in(0,1)\),
\[
\|\partial_t v_t(x)\|\le \mathcal A_t+\mathcal B_t\|x\|
\]
with
\begin{align*}
\mathcal A_t
&:= C|c_t'|\Big(\|\mathbb E_{\nu_{t,0}}[W]\|+\sqrt{\tfrac{d}{\gamma_t}}\Big)
 + C|c_t|^2\frac{f_t}{\bar g_t^2\gamma_t}
\Big(\|\mathbb E_{\nu_{t,0}}[W]\|+\sqrt{\tfrac{d}{\gamma_t}}\Big),
\\[0.3em]
\mathcal B_t
&:= |a_t'|
 + C|c_t'|\frac{f_t}{\bar g_t^2\gamma_t}
 + C|c_t|\frac{|f_t'-2a_t f_t|}{\alpha \bar g_t^2+f_t^2}
 + C|c_t|^2\frac{f_t^2}{\bar g_t^4\gamma_t^2}.
\end{align*}
\end{theorem}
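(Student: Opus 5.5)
We first write the velocity field in the reduced model in the form that isolates all time dependence in scalar coefficients. Since $\dot X_t = f_t'Y + \bar g_t'\xi$ and $\xi = (X_t - f_tY)/\bar g_t$, we get
\[
v_t(x)=a_t x + c_t\,\mathbb E[Y\mid X_t=x],
\]
with $a_t=\bar g_t'/\bar g_t$ and $c_t=f_t'-a_tf_t$ as in the statement. Writing $W$ for a random variable with the posterior law $\pi^{t,x}(dy)\propto p^\star(y)\exp(-\|x-f_ty\|^2/(2\bar g_t^2))$ and $m_t(x):=\mathbb E_{\pi^{t,x}}[W]$, differentiation in $t$ gives
\[
\partial_t v_t(x)=a_t'x + c_t' m_t(x) + c_t\,\partial_t m_t(x).
\]
The term $a_t'x$ gives directly the contribution $|a_t'|\|x\|$ to $\mathcal B_t$. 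It then remains to bound $\|m_t(x)\|$ and $\|\partial_t m_t(x)\|$ affinely in $\|x\|$, with dimension-free constants apart from $\sqrt{d/\gamma_t}$.

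\textbf{Step 1: the posterior mean.} We split $m_t(x)=m_t(0)+(m_t(x)-m_t(0))$. The spatial derivative is $\nabla_x m_t(x)=\frac{f_t}{\bar g_t^2}\mathrm{Cov}_{\pi^{t,x}}(W)$. The posterior is $(\gamma_t,\beta,K)$-weakly log-concave, with convex part $u+\|x-f_ty\|^2/(2\bar g_t^2)$ of curvature $\gamma_t$. The covariance bound behind Theorem~\ref{theo:onesidedlip2} (the Hölder perturbation costs a factor $C\exp(CK^2\gamma_t^{-\beta})\le C'$, since $\gamma_t\ge\alpha$) gives $\|\mathrm{Cov}\|_{\mathrm{op}}\le C/\gamma_t$. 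Hence $\|m_t(x)-m_t(0)\|\le C\frac{f_t}{\bar g_t^2\gamma_t}\|x\|$. For $m_t(0)$, we compare $\pi^{t,0}$ with the surrogate $\nu_{t,0}$. Both are strongly log-concave up to a bounded-on-scale tilt, so a $W_1$ or transport comparison (for instance via a Holley--Stroock-type localization at scale $\gamma_t^{-1/2}$, or via the dimension-free concentration of $\nu_{t,0}$) gives $\|m_t(0)-\mathbb E_{\nu_{t,0}}[W]\|\le C\sqrt{d/\gamma_t}$. This produces the $c_t'$ terms in $\mathcal A_t$ and $\mathcal B_t$.

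\textbf{Step 2: time derivative of the posterior mean.} Differentiating the log-likelihood $-\|x-f_ty\|^2/(2\bar g_t^2)$ in $t$ yields
\[
\partial_t m_t(x)=\mathrm{Cov}_{\pi^{t,x}}\!\Big(W,\ \tfrac{f_t'}{\bar g_t^2}\langle x-f_tW,W\rangle + \tfrac{\bar g_t'}{\bar g_t^3}\|x-f_tW\|^2\Big).
\]
Expanding the quadratic, the part linear in $W$ gives $\mathrm{Cov}(W)\,x\,(f_t'-2a_tf_t)/\bar g_t^2$. Bounding $\mathrm{Cov}$ by $C/\gamma_t$ and using $\gamma_t\bar g_t^2=\alpha\bar g_t^2+f_t^2$ gives the term $C|c_t|\frac{|f_t'-2a_tf_t|}{\alpha\bar g_t^2+f_t^2}\|x\|$. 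The quadratic-in-$W$ part has coefficient $-f_t(f_t'-a_tf_t)/\bar g_t^2=-f_tc_t/\bar g_t^2$. After centering $W$ at $m_t(x)$, it produces $\mathrm{Cov}(W,\|W-m\|^2)$ (a third central moment) and $2\,\mathrm{Cov}(W)\,m_t(x)$. The second piece, combined with Step 1, yields $|c_t|^2\frac{f_t}{\bar g_t^2\gamma_t}(\|\mathbb E_{\nu_{t,0}}W\|+\sqrt{d/\gamma_t})$ in $\mathcal A_t$ and $|c_t|^2\frac{f_t^2}{\bar g_t^4\gamma_t^2}\|x\|$ in $\mathcal B_t$.

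\textbf{Main obstacle.} The hard part is to show that the third-moment term satisfies $\|\mathrm{Cov}(W,\|W-m\|^2)\|\le C\sqrt d/\gamma_t^{3/2}$ rather than $C d/\gamma_t^{3/2}$, uniformly under a merely Hölder tilt. We would pair it with a unit vector $h$ and apply Cauchy--Schwarz, so that it is bounded by $\mathrm{Var}(h^\top W)^{1/2}\,\mathrm{Var}(\|W-m\|^2)^{1/2}$. The first factor is at most $C\gamma_t^{-1/2}$. For the second factor, a Poincaré inequality for the posterior with constant $C/\gamma_t$ gives $\mathrm{Var}\|W-m\|^2\le \frac{C}{\gamma_t}\mathbb E\|W-m\|^2\le Cd/\gamma_t^2$. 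That Poincaré inequality would itself be obtained from Bakry--Émery plus a perturbation argument at the scale $\gamma_t^{-1/2}$, where the Hölder oscillation is $K\gamma_t^{-\beta/2}$, which is bounded. This is the delicate point, since Holley--Stroock does not apply directly to an unbounded tilt. Multiplying by $|c_t|f_t/\bar g_t^2$ gives a contribution of order $|c_t|^2\frac{f_t}{\bar g_t^2\gamma_t}\sqrt{d/\gamma_t}$, which is absorbed in $\mathcal A_t$. Collecting all terms gives the stated $\mathcal A_t$ and $\mathcal B_t$.
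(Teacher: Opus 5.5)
Your skeleton is the paper's. You use the affine representation $v_t=a_tx+c_tm_t(x)$ and the Fisher-type covariance identity for $\partial_t m_t$. You split it the same way, into the $\frac{f_t'-2a_tf_t}{\bar g_t^2}\mathrm{Cov}(W)\,x$ term and the $-\frac{f_tc_t}{\bar g_t^2}\mathrm{Cov}(W,\|W\|^2)$ term, and you reduce the cubic term by Cauchy--Schwarz. Your centring $\mathrm{Cov}(W,\|W\|^2)=\mathrm{Cov}(W,\|W-m\|^2)+2\,\mathrm{Cov}(W)\,m$ is a harmless variant of the paper's direct bound through the uncentred $\mathrm{Var}(\|Y\|^2)$.

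\textbf{The gap.} It is the step you flag yourself. The estimate $\mathrm{Var}_{\pi^{t,x}}(\|W-m\|^2)\le Cd/\gamma_t^2$ rests on a Poincar\'e inequality with constant $C/\gamma_t$ for the H\"older-tilted posterior, and you do not prove it. The route you sketch is not yet an argument. If the perturbation step is the Lipschitz inf-convolution of Lemma~\ref{lem:infconv}, it leaves an unbounded Lipschitz part of the tilt, to which neither Bakry--\'Emery nor Holley--Stroock applies directly. The same objection applies to comparing $m_t(0)$ with $\mathbb E_{\nu_{t,0}}[W]$ via Holley--Stroock, since the tilt is unbounded.

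\textbf{How to close it.} Both points follow from tools already in the paper. Let $\nu^{t,x}$ be the surrogate with potential $V_{t,x}(y)=u(y)+\|x-f_ty\|^2/(2\bar g_t^2)$, and set $L=e^{a}/\mathbb E_{\nu^{t,x}}e^{a}$. Then $\mathbb E_{\nu^{t,x}}L^2\le\exp(4K+CK^2/\gamma_t)\le C$ by Lemma~\ref{lem:mgf_holder} and Jensen.

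First, $m_t(x)-\mathbb E_{\nu^{t,x}}[W]=\mathrm{Cov}_{\nu^{t,x}}(W,L)$. By Brascamp--Lieb its norm is at most $\gamma_t^{-1/2}\mathrm{Var}_{\nu^{t,x}}(L)^{1/2}\le C\gamma_t^{-1/2}$. At $x=0$ this is your Step~1 claim, even without the $\sqrt d$.

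Second, as in the proof of Proposition~\ref{prop:rt-gradient-holder}, no functional inequality for the tilted posterior is needed. One has $\mathrm{Var}_{\pi^{t,x}}(F)\le\mathbb E_{\nu^{t,x}}[L(F-\mathbb E_{\nu^{t,x}}F)^2]\le(\mathbb E_{\nu^{t,x}}L^2)^{1/2}(\mathbb E_{\nu^{t,x}}(F-\mathbb E_{\nu^{t,x}}F)^4)^{1/2}$. For $F=\|W-m\|^2$, the fourth moment follows from Lemma~\ref{lem:lip-moments} applied to the $1$-Lipschitz map $w\mapsto\|w-m\|$. Its $\nu^{t,x}$-mean is $O(\sqrt{d/\gamma_t})$ by the first point. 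This gives $\mathrm{Var}_{\pi^{t,x}}(F)\le Cd/\gamma_t^2$.

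\textbf{If you keep the Poincar\'e route.} The missing idea is a \emph{semiconcave} regularization. Take $a_s(y)=\inf_z\{a(z)+\|y-z\|^2/(2s)\}$ with $s=2/\gamma_t$. Then $a_s-\frac{\gamma_t}{4}\|\cdot\|^2$ is concave, so $V_{t,x}-a_s$ is $\frac{\gamma_t}{2}$-strongly convex. Meanwhile $0\le a-a_s\le C_\beta(K\gamma_t^{-\beta/2})^{2/(2-\beta)}$, which is bounded because $\gamma_t\ge\alpha$. Bakry--\'Emery plus Holley--Stroock then give the constant $C/\gamma_t$.

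The change-of-measure route uses only exponential integrability of the tilt under $\nu^{t,x}$. The Poincar\'e route additionally gives a reusable inequality for the true posterior.
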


The proof of Theorem~\ref{theo:partialtvt} can be found in Section~\ref{sec:theo:partialtvt}. This estimate has the form needed for numerical analysis: \(\partial_t v_t(x)\) has at most affine growth in \(x\), with explicit coefficients depending only on the schedules and posterior concentration parameters. In particular:
the $x$-slope coefficient $B_t$ is dimension-free, while the constant term $A_t$ contains two contributions: the centering term $\|\mathbb E_{\nu_{t,0}}[W]\|$ and the fluctuation term $\sqrt{d/\gamma_t}$, the latter being the natural scale under a $\gamma_t$-strongly log-concave law.

\subsubsection{Lipman Flow matching setting}

We now specialize the general time-Lipschitz estimate to the Lipman flow-matching parametrization. 
\begin{assumption}\label{assum:lipmanliptime} Writing \[
X_t = f_t Y + \bar g_t \xi,\qquad Y:=X_1\sim p^\star,\  \xi\sim\mathcal N(0,I_d) \text{ independent},
\] there exist \(p>0\), \(t_0\in(0,1)\) and \(A\ge 1\) such that:

\begin{enumerate}
\item \(f,\bar g\in C^{2}_{a.e.}(0,1)\) and \(\bar g_t>0\) for all \(t\in(0,1)\).
\item There exists \(\ell\in C^2([t_0,1))\) such that for all \(t\in[t_0,1)\),
\[
\bar g_t=(1-t)^p\ell(t),
\qquad 0<A^{-1}\le \ell(t)\le A,
\qquad \|\ell'\|_{L^\infty([t_0,1))}+\|\ell''\|_{L^\infty([t_0,1))}\le A.
\]
\item \(\|f'\|_{L^\infty([t_0,1))}+\|f''\|_{L^\infty([t_0,1))}+\|\argmin u\|d^{-1/2}\le A.\)
\item For a.e. \(t\in(0,t_0]\),
\[
\bar g_t^{-1}+|\bar g_t'|+|\bar g_t''|+|f_t'|+|f_t''|\le A.
\]
\end{enumerate}
\end{assumption}

Compared with Assumption~\ref{assum:lipmanlip}, Assumption~\ref{assum:lipmanliptime} has the same two-regime philosophy but it requires one additional derivative in each regime:
\begin{itemize}
    \item near \(t=1\), we need control of \(\ell''\) and \(f''\), because \(a_t'\) and \(c_t'\) appear in \(\partial_t v_t\);
    \item on \((0,t_0]\), we need uniform bounds on \(\bar g_t''\) and \(f_t''\) to prevent artificial singularities away from the terminal time.
\end{itemize}
A second new feature is the control of the posterior center at the natural fluctuation scale. 
This is why Assumption~\ref{assum:lipmanliptime}-3 contains the condition
\[
\|\argmin u\|\le A\sqrt{d},
\]
which ensures that the relevant posterior means remain of order \(\sqrt d\), yielding a dimension-sharp affine bound in \(x\). Assumption~\ref{assum:lipmanliptime}-2 also makes the terminal singularity transparent. Indeed, from
$
\bar g_t=(1-t)^p\ell(t)$
we obtain
$
\frac{\bar g_t'}{\bar g_t}
=
-\frac{p}{1-t}+\frac{\ell'(t)}{\ell(t)},$
so \(a_t=\bar g_t'/\bar g_t\) is of order \((1-t)^{-1}\), and therefore \(a_t'\) is of order \((1-t)^{-2}\). Since \(c_t\) and \(c_t'\) are built from \(a_t,a_t'\) and \(f',f''\), the same \((1-t)^{-2}\) scale propagates to \(\partial_t v_t\). Thus the exponent \(2\) in the next corollary is the natural singularity produced by differentiating the denoising dynamics in time.

\begin{corollary}\label{coro:dtv_rate}
Assume the Lipman  flow-matching setting of Assumption~\ref{assum:lipman} and that the model satisfies Assumption  \ref{assum:lipmanliptime}.
Then, for a.e. $t\in(0,1)$ and all $x\in\mathbb R^d$,
\[
\|\partial_t v_t(x)\| \le \frac{C}{(1-t)^2}\bigl(\sqrt d+\|x\|\bigr),
\]
with $C>0$ depending only on $A,\alpha,\beta,K,\gamma,f_{t_0},p$ and independent of the dimension $d$.
\end{corollary}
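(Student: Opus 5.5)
The plan is to apply Theorem~\ref{theo:partialtvt} directly to the Lipman parametrization and estimate each schedule-dependent coefficient in $\mathcal A_t$ and $\mathcal B_t$ separately on the two regimes $(0,t_0]$ and $[t_0,1)$. In the Lipman setting $X_t=f_tY+\bar g_t\xi$ with $\bar g_t=\sigma_t$, and Assumption~\ref{assum:lipman} gives $f_t\ge \gamma$ for $t\ge 1/2$ (monotonicity) and $f_t\le 1$. Hence $\gamma_t=\alpha+f_t^2/\bar g_t^2\asymp \bar g_t^{-2}$ near $t=1$, while on any compact subinterval of $(0,1)$, $\gamma_t\ge\alpha$. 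Throughout, I would use the elementary bounds $f_t/(\bar g_t^2\gamma_t)\le 1/f_t$ and $f_t^2/(\bar g_t^4\gamma_t^2)\le \bar g_t^{-2}\wedge (f_t^2\alpha^{-2}\bar g_t^{-4})$. The last one needs care near $t=0$, where $f_t\to0$; there, however, $\bar g_t$ is bounded below by Assumption~\ref{assum:lipmanliptime}-4, so all quantities stay bounded.

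\textbf{Terminal regime $[t_0,1)$.} From $\bar g_t=(1-t)^p\ell(t)$ we get $|a_t|\lesssim (1-t)^{-1}$ and $|a_t'|\lesssim (1-t)^{-2}$, using the bounds on $\ell,\ell',\ell''$. Since $f',f''$ are bounded, $|c_t|\lesssim(1-t)^{-1}$ and $|c_t'|=|f_t''-a_t'f_t-a_tf_t'|\lesssim (1-t)^{-2}$. For $\mathcal B_t$, each of the following is $O((1-t)^{-2})$ after using $\gamma_t\bar g_t^2\gtrsim f_t^2\ge$ const (for $t_0$ large enough, or else via $f_{t_0}>0$):
\begin{itemize}
\item $|a_t'|$;
\item $|c_t'|f_t/(\bar g_t^2\gamma_t)\lesssim |c_t'|/f_t$;
\item $|c_t||f_t'-2a_tf_t|/(\alpha\bar g_t^2+f_t^2)\lesssim (1-t)^{-2}/f_t^2$;
\item $|c_t|^2f_t^2/(\bar g_t^4\gamma_t^2)\lesssim |c_t|^2/f_t^2$.
\end{itemize}
For $\mathcal A_t$, the factor $\sqrt{d/\gamma_t}\le\sqrt{d/\alpha}$ and the prefactors $|c_t'|$ and $|c_t|^2 f_t/(\bar g_t^2\gamma_t)\lesssim |c_t|^2/f_t$ are $O((1-t)^{-2})$. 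It remains to show $\|\mathbb E_{\nu_{t,0}}[W]\|\lesssim\sqrt d$.

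\textbf{Main obstacle: the posterior center.} I need a dimension-free bound $\|\mathbb E_{\nu_{t,0}}[W]\|\le C\sqrt d$ uniformly in $t$, where $\nu_{t,0}\propto \exp(-u(y)-f_t^2\|y\|^2/(2\bar g_t^2))$. This measure is $\gamma_t$-strongly log-concave with potential $U_t=u+f_t^2\|\cdot\|^2/(2\bar g_t^2)$. Writing $y_t^\star=\argmin U_t$, the standard estimate for strongly log-concave measures gives $\|\mathbb E_{\nu}[W]-y_t^\star\|\le\sqrt{d/\gamma_t}\le\sqrt{d/\alpha}$; I would prove it via $\mathbb E\langle\nabla U_t(W),W-y^\star\rangle=d$ together with strong convexity. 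For the minimizer, strong convexity of $u$ at $y_0=\argmin u$ and the first-order condition $\nabla u(y_t^\star)=-(f_t^2/\bar g_t^2)y_t^\star$ give
\[
\alpha\|y_t^\star-y_0\|^2\le\langle\nabla u(y_t^\star),y_t^\star-y_0\rangle=-\tfrac{f_t^2}{\bar g_t^2}\langle y_t^\star,y_t^\star-y_0\rangle\le \tfrac{f_t^2}{\bar g_t^2}\langle y_0,y_t^\star-y_0\rangle ,
\]
so $\|y_t^\star\|\le\|y_0\|\le A\sqrt d$ by Assumption~\ref{assum:lipmanliptime}-3; the monotonicity actually yields $\|y_t^\star\|\le\|y_0\|$ directly. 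If $u$ takes the value $+\infty$ outside a convex support, I would argue with subgradients and a normal cone instead, which changes nothing in the sign argument. The possible non-smoothness of $u$ at the boundary of the support is the delicate point, and I would handle it by approximating $u$ through Moreau envelopes.

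\textbf{Early regime $(0,t_0]$.} Assumption~\ref{assum:lipmanliptime}-4 makes $a_t,a_t',c_t,c_t'$ bounded. The ratios are bounded since $\bar g_t\ge A^{-1}$, which gives, for instance, $f_t/(\bar g_t^2\gamma_t)\le A^2 f_t/\alpha$. So $\mathcal A_t\lesssim\sqrt d$ and $\mathcal B_t\lesssim 1$, and both are bounded by $C(1-t)^{-2}$ since $1-t\le1$.

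Combining the two regimes yields $\|\partial_tv_t(x)\|\le C(1-t)^{-2}(\sqrt d+\|x\|)$, with $C$ depending on $A,\alpha,\beta,K,\gamma,f_{t_0},p$.
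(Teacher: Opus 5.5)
Your proposal is correct and follows essentially the paper's proof. You apply Theorem~\ref{theo:partialtvt}, bound $a_t,a_t',c_t,c_t',f_t'-2a_tf_t$ by $(1-t)^{-1}$ or $(1-t)^{-2}$ on $[t_0,1)$ using $f_t\ge f_{t_0}$ to control the ratios in $\alpha\bar g_t^2+f_t^2$, treat $(0,t_0]$ with bounded coefficients, and control $\|\mathbb E_{\nu_{t,0}}[W]\|$ exactly as in Lemma~\ref{lem:mean_bound_nu_t0}, which you re-derive. One small slip: in your displayed chain the last term should be $-\tfrac{f_t^2}{\bar g_t^2}\langle y_0,y_t^\star-y_0\rangle$ (with your sign the resulting bound would not be uniform in $t$), but the direct monotonicity argument $\|y_t^\star\|^2\le\langle y_t^\star,y_0\rangle$ that you then invoke is the paper's and suffices.
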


The proof of Corollary~\ref{coro:dtv_rate} can be found in Section~\ref{sec:coro:dtv_rate}. This result is the time-regular analogue of the space-Lipschitz estimate proved in Section~\ref{sec:lipschitz}: it provides an affine growth bound in \(x\), with dimension-free coefficient in front of \(\|x\|\), and only the natural \(\sqrt d\)-scale in the constant term. This is precisely the form needed in Section~\ref{sec:applicampl} for quantitative time-discretization estimates.

\subsubsection{Stochastic interpolant Flow matching setting}
As in Section~\ref{sec:stogoballip}, the key simplification is to use the reduced marginal representation
\[
X_t = f_t Y + \bar g_t \xi, \qquad \bar g_t^2 = g_t^2+\sigma_t^2,
\]
so that the time-regularity estimate depends on the stochastic-interpolant model only through the pair
\((f_t,\bar g_t)\). In particular, once Assumption~\ref{assum:lipmanliptime} holds for these reduced schedules, the general estimate
from Section~\ref{sec:reduced_model} applies exactly as in the Lipman case.

\begin{corollary}\label{coro:timeestimatesi}
Assume the stochastic-interpolant flow matching setting of Assumption~\ref{assum:sto-int} and that the model satisfies Assumption  \ref{assum:lipmanliptime}.
Then there exists a constant \(C>0\) (depending only on \(\alpha,\beta,K,p,A,f_{t_0}\))
 such that for all \(x\in\R^d\),
for a.e. \(t\in(0,1)\),
\[
\|\partial_t v_t(x)\|
  \le\frac{C}{(1-t)^2}  (\sqrt{d}+\|x\|).
\]
\end{corollary}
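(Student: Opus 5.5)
The plan is to reduce Corollary~\ref{coro:timeestimatesi} to Theorem~\ref{theo:partialtvt}, following the same route as Corollary~\ref{coro:dtv_rate}. Under Assumption~\ref{assum:sto-int} the marginals are $X_t=f_tY+\bar g_t\xi$ with $\bar g_t^2=g_t^2+\sigma_t^2$. The velocity field $v_t(x)=\E[\dot X_t\mid X_t=x]$ then coincides with the reduced-model velocity. Indeed, $g_t'X_0+\sigma_t'\eta$ conditioned on $(Y,X_t)$ has mean $\frac{\bar g_t'}{\bar g_t}(X_t-f_tY)$, because $(g_tX_0+\sigma_t\eta)$ and $(g_t'X_0+\sigma_t'\eta)$ are jointly Gaussian with covariance $(g_tg_t'+\sigma_t\sigma_t')\mathrm{Id}=\bar g_t\bar g_t'\mathrm{Id}$. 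So $v_t(x)=a_tx+c_t\E[Y\mid X_t=x]$, and Theorem~\ref{theo:partialtvt} applies verbatim, since $f,\bar g\in C^2_{a.e.}$ and $\bar g_t>0$ on $(0,1)$ by Assumption~\ref{assum:lipmanliptime}-1.

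\textbf{Terminal regime $[t_0,1)$.} Here I bound each coefficient in $\mathcal A_t,\mathcal B_t$. From $\bar g_t=(1-t)^p\ell(t)$ we get $|a_t|\lesssim (1-t)^{-1}$ and $|a_t'|\lesssim(1-t)^{-2}$, using the bounds on $\ell,\ell',\ell''$. With $f,f',f''$ bounded, this gives $|c_t|\lesssim(1-t)^{-1}$ and $|c_t'|\lesssim(1-t)^{-2}$. Next I need a lower bound $f_t\ge c>0$ on $[t_0,1)$. This follows from monotonicity of $f$ and $f_t\ge f_{t_0}$ (hence the dependence on $f_{t_0}$), so $\gamma_t\gtrsim \bar g_t^{-2}$. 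Then:
\begin{itemize}
\item the factors $f_t/(\bar g_t^2\gamma_t)$ and $f_t^2/(\bar g_t^4\gamma_t^2)$ are $O(1)$;
\item $|f_t'-2a_tf_t|/(\alpha\bar g_t^2+f_t^2)\lesssim (1-t)^{-1}$;
\item $\sqrt{d/\gamma_t}\lesssim \sqrt d$.
\end{itemize}
Every term of $\mathcal B_t$ is therefore $O((1-t)^{-2})$. The same holds for $\mathcal A_t$, provided $\|\E_{\nu_{t,0}}[W]\|\lesssim\sqrt d$.

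\textbf{Early regime $(0,t_0]$.} Assumption~\ref{assum:lipmanliptime}-4 makes $a_t,a_t',c_t,c_t'$ bounded, and $\gamma_t\ge\alpha$. The ratios in $\mathcal B_t$ are then bounded by using $\alpha\bar g_t^2+f_t^2\ge \alpha A^{-2}$. The $f_t$ factors need no lower bound here, since $f_t\le 1$ and every denominator is controlled through $\bar g_t^{-1}\le A$. Hence $\mathcal B_t\lesssim 1$ and $\mathcal A_t\lesssim \|\E_{\nu_{t,0}}[W]\|+\sqrt d$, which is harmless because $(1-t)^{-2}\ge1$.

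\textbf{Main obstacle: the centering term.} The step I expect to be hardest is the uniform dimension-sharp bound $\|\E_{\nu_{t,0}}[W]\|\lesssim\sqrt d$ for the log-concave surrogate posterior $\nu^{t,0}\propto \exp(-u(y)-f_t^2\|y\|^2/(2\bar g_t^2))$. I would reproduce the argument used for Corollary~\ref{coro:dtv_rate}; the surrogate posterior depends only on $(f_t,\bar g_t)$ and $u$, so it is unchanged in the stochastic-interpolant case. Concretely, $\nu^{t,0}$ is $\gamma_t$-strongly log-concave with mode $y_t^\star$. The first-order condition $\nabla u(y^\star_t)+\frac{f_t^2}{\bar g_t^2}y_t^\star=0$ together with strong convexity gives $\|y_t^\star\|\le\|\argmin u\|\le A\sqrt d$. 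The mean-to-mode distance for a $\gamma_t$-strongly log-concave law is at most $\sqrt{d/\gamma_t}\le\sqrt{d/\alpha}$. Combining the two regimes yields $\|\partial_tv_t(x)\|\le C(1-t)^{-2}(\sqrt d+\|x\|)$, with $C$ depending only on $\alpha,\beta,K,p,A,f_{t_0}$.
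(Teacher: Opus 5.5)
Your proposal is correct and follows the paper's proof essentially step for step. You reduce to Theorem~\ref{theo:partialtvt} for $X_t=f_tY+\bar g_t\xi$, bound $a_t,a_t',c_t,c_t'$ and the $D_t$-ratios separately on $[t_0,1)$ (using $f_t\ge f_{t_0}$) and on $(0,t_0]$, and control the centering term through the mode/mean argument of Lemma~\ref{lem:mean_bound_nu_t0}. Your one addition is the explicit Gaussian-conditioning check that the stochastic-interpolant velocity equals $a_tx+c_t\mu_t(x)$; the paper takes this for granted in this proof, and it amounts to the rotation argument of Lemma~\ref{lemma:boundnormvt}.
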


The proof of Corollary~\ref{coro:timeestimatesi} can be found in Section~\ref{sec:coro:timeestimatesi}.

\subsubsection{Diffusion Models setting}
As explained in Section~\ref{sec:score_osl}, the Probability flow ODE being a particular case of Lipman flow matching, Corollary~\ref{coro:dtv_rate} directly gives an estimate on the score.
\begin{corollary}\label{coro:timeregudiffu}
Suppose that we are under the Diffusion models setting of Assumption~\ref{assum:diffusion} and let $s(t,x)=\nabla\log p_t(x)$ be the score. Write $p^\star(y)=e^{-u(y)+a(y)}$ and assume that $\|\argmin u\|d^{-1/2}\le A$.
Then there exists a constant $C>0$, independent of the dimension $d$, such that for a.e. $t\in(0,1)$ and all $x\in\mathbb{R}^d$,
\[
\|\partial_t s(t,x)\| \le \frac{C}{(1-t)^2}\big(\sqrt d+\|x\|\big).
\]
\end{corollary}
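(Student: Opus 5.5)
We reduce Corollary~\ref{coro:timeregudiffu} to Corollary~\ref{coro:dtv_rate} and Corollary~\ref{coro:difflipbound} through the identity $v_t(x)=\frac1t\bigl(x+s(t,x)\bigr)$, valid for $t\in(0,1)$. Solving for the score gives $s(t,x)=t\,v_t(x)-x$. Differentiating in time at fixed $x$ then yields
\[
\partial_t s(t,x)=v_t(x)+t\,\partial_t v_t(x),
\]
so $\|\partial_t s(t,x)\|\le \|v_t(x)\|+\|\partial_t v_t(x)\|$. It therefore suffices to bound both terms by $C(1-t)^{-2}(\sqrt d+\|x\|)$.

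\textbf{Bound on $\partial_t v_t$.} We check that the diffusion schedule $f_t=t$, $\bar g_t=\sqrt{1-t^2}$ satisfies Assumption~\ref{assum:lipmanliptime}. Fix $t_0=1/2$ and $p=1/2$, and set $\ell(t)=\sqrt{1+t}$. On $[1/2,1)$ we have $\bar g_t=(1-t)^{1/2}\ell(t)$, with $\ell$ bounded above and below and $\ell',\ell''$ bounded. Also $f'=1$ and $f''=0$, and $\|\argmin u\|d^{-1/2}\le A$ holds by hypothesis. On $(0,1/2]$, the quantities $\bar g_t^{-1}$, $\bar g_t'$ and $\bar g_t''$ are bounded because $\bar g_t\ge\sqrt{3}/2$ there. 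Assumption~\ref{assum:lipman} holds, as already noted in Section~\ref{sec:score_osl}. Corollary~\ref{coro:dtv_rate} then gives $\|\partial_t v_t(x)\|\le C(1-t)^{-2}(\sqrt d+\|x\|)$.

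\textbf{Bound on $v_t$.} This is the only step needing a new argument. We integrate the spatial Lipschitz bound of Corollary~\ref{cor:lipman-global} (equivalently Corollary~\ref{coro:difflipbound}):
\[
\|v_t(x)\|\le \|v_t(x_t^\ast)\|+\frac{C}{1-t}\,\|x-x_t^\ast\|
\]
for a reference point $x_t^\ast$. The natural choice is $x_t^\ast=0$, which reduces the task to showing $\|v_t(0)\|\le C(1-t)^{-1}\sqrt d$, or even $C(1-t)^{-2}\sqrt d$, which suffices here.

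To prove it, write $v_t(0)=\mathbb E[\dot X_t\mid X_t=0]$ using \eqref{eq:vf}. With $m_t=tY$ this reads
\[
v_t(0)=\frac{\sigma_t'}{\sigma_t}\bigl(-t\,\mathbb E[Y\mid X_t=0]\bigr)+\mathbb E[Y\mid X_t=0],
\]
and $|\sigma_t'/\sigma_t|\lesssim(1-t)^{-1}$. So we need $\|\mathbb E[Y\mid X_t=0]\|\lesssim\sqrt d$.

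The posterior of $Y$ given $X_t=0$ has density proportional to $e^{-u(y)+a(y)-t^2\|y\|^2/(2\bar g_t^2)}$. This is a $\beta$-Hölder perturbation of a measure that is $\alpha$-strongly log-concave. The comparison with the surrogate $\nu^{t,0}$ used in the proof of Theorem~\ref{theo:partialtvt} controls its mean by $\|\mathbb E_{\nu_{t,0}}[W]\|+C\sqrt{d/\gamma_t}$. The mode of $\nu^{t,0}$ lies between $0$ and $\argmin u$, so $\|\mathbb E_{\nu_{t,0}}[W]\|\lesssim \|\argmin u\|+\sqrt{d/\alpha}\lesssim\sqrt d$.

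\textbf{Main obstacle and fallback.} The delicate point is exactly this centering estimate, in particular tracking the Hölder perturbation with a dimension-free constant. If a direct citation of the posterior-mean bound from the proof of Theorem~\ref{theo:partialtvt} is unavailable, a cruder route also works. Compute $\partial_t s$ directly, as $\frac{d}{dt}$ of a Tweedie-type formula for $s$. The resulting terms have the same structure as those in $\mathcal A_t,\mathcal B_t$ of Theorem~\ref{theo:partialtvt}, and can be bounded by the same $(1-t)^{-2}$ scaling.

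Finally, near $t=0$ the factor $1/t$ in $v_t$ is harmless: the identity $\partial_t s=v_t+t\,\partial_t v_t$ and the bound on $v_t(0)$ from \eqref{eq:vf} involve no division by $t$.
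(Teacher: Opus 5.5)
Your proposal is correct and follows essentially the paper's proof. Both arguments use the identity $\partial_t s=v_t+t\,\partial_t v_t$, invoke Corollary~\ref{coro:dtv_rate} for $\partial_t v_t$ after checking Assumption~\ref{assum:lipmanliptime} with $t_0=1/2$ and $\ell(t)=\sqrt{1+t}$, and control $v_t$ through the posterior-mean bound of Lemma~\ref{lemma:bounfmut} combined with Lemma~\ref{lem:mean_bound_nu_t0}. The only minor difference is that the paper writes $v_t(x)=(\mu_t(x)-tx)/(1-t^2)$ and applies Lemma~\ref{lemma:bounfmut} at every $x$, whereas you apply it at $x=0$ and extend with the Lipschitz bound of Corollary~\ref{coro:difflipbound}; this is equally valid, just slightly less self-contained.
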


The proof of Corollary~\ref{coro:timeregudiffu} can be found in Section~\ref{sec:coro:timeregudiffu}.

\section{Application to sampling}\label{sec:applicampl}
\subsection{Discretization error}\label{sec:discretizationerror}
\subsubsection{General discretization analysis}

In this section, we isolate a general discretization principle for Euler-type schemes. The objective is to make explicit which type of properties of the drift allow to control the sampling error, and how each property appears in the final $W_2$ bound. This will be the interface between the regularity estimates proved earlier and the applications to flow matching / diffusion models. We first isolate an abstract set of hypotheses on the drift that is used for both the SDE and ODE discretization analysis.

\begin{assumption}\label{assum:sdebounds} Consider the SDE
\begin{equation}\label{eq:SDE_main}
dX_t = a_t(X_t)  dt + b_t  dW_t, \qquad X_0\in L^2(\Omega;\R^d),
\end{equation}
and suppose that:
\begin{enumerate}
\item[(A1)] (one-sided Lipschitz) there exists   $L:[0,\tau]\to\R$ such that for all
$t\in[0,\tau]$ and all $x,y\in\R^d$,
\[
\langle x-y,  a_t(x)-a_t(y)\rangle \le L_t  \|x-y\|^2.
\]

\item[(A2)] (global Lipschitz) there exists   $C:[0,\tau]\to[0,\infty)$ such that for all
$t\in[0,\tau]$ and all $x,y\in\R^d$,
\[
\|a_t(x)-a_t(y)\|\le C_t  \|x-y\|.
\]

\item[(A3)] (time Lipschitz)
there exists   $M:[0,\tau]\to[0,\infty)$  such that
for every $x\in\R^d$, the map $t\mapsto a_t(x)$ is absolutely continuous on $[0,\tau]$ and,
for a.e. $t\in[0,\tau]$ and all $x\in\R^d$,
\[
\|\partial_t a_t(x)\|\le M_t\big(\sqrt d+\|x\|\big).
\]

\item[(A4)] (linear growth)
strong solutions of \eqref{eq:SDE_main} exist, and there exists   $F:[0,\tau]\to[0,\infty)$ such that
for all $t\in[0,\tau]$ and all $x\in\R^d$,
\[
\|a_t(x)\|\le F_t\big(\sqrt d+\|x\|\big).
\]
\end{enumerate}
\end{assumption}

The four assumptions have distinct functions. Assumption (A1) is the stability condition: it controls the propagation of errors once they have been generated, and therefore enters through a Grönwall-type exponential factor. Assumption (A2) is a spatial local truncation condition, used to bound the variation of the drift during a single Euler step as the state evolves. Assumption (A3)  controls the error induced by freezing the drift on each interval $[t_k,t_{k+1}]$; the factor $\sqrt d+\|x\|$ is natural and matches the scale of the trajectories in our applications. Finally, Assumption (A4)  provides the moment bounds needed to close the local estimates.

Our starting point is a general one-step Euler–Maruyama estimate, stated so as to separate explicitly the roles of stability, spatial regularity, time regularity, and moment control in the final $W_2$ bound.

\begin{proposition}[Euler-Maruyama with exact Gaussian increments]
\label{prop:W2_discrete}
Let $\tau>0$ and let $(W_t)_{t\in[0,\tau]}$ be a standard $d$-dimensional Brownian motion.
Consider the SDE
\begin{equation*}
dX_t = a_t(X_t)  dt + b_t  dW_t, \qquad X_0\in L^2(\Omega;\R^d),
\end{equation*}
and the Euler--Maruyama scheme on a grid $0=t_0<\dots<t_N=\tau$, $h_k=t_{k+1}-t_k$,
\begin{equation}\label{eq:EM_main}
\bar X_{k+1}=\bar X_k+h_k  a_{t_k}(\bar X_k)+\xi_{k+1},
\qquad \bar X_0=X_0,
\end{equation}
where
$
\xi_{k+1}:=\int_{t_k}^{t_{k+1}} b_t dW_t$.
Assume that the drift $a_t$ satisfies Assumption~\ref{assum:sdebounds}
and
\[
A_{\bar{X}}:=\sup_k (\E\|\bar X_k\|^2)^{1/2}<\infty,\qquad
B:=\sup_{t\in[0,\tau]}\|b_t\|<\infty,
\qquad
\int_{t_k}^{t_{k+1}} F_t  dt \leq \frac{1}{2},
\]
and take
$$
U_k:=2\big(A_{\bar X}+2\sqrt d  + \sqrt dB  h_k^{1/2}\big).$$
Then, 
\begin{align*}
&W_2(\text{Law}(X_\tau),\text{Law}(\bar X_N))\le
\sum_{k=0}^{N-1}h_k
\exp \Big(\int_{t_{k+1}}^{\tau} L_s  ds\Big)\Bigg[
U_k
\int_{t_k}^{t_{k+1}}  \big(M_t+C_{t_k}F_t\big)  dt+C_{t_k}\sqrt dB  h_k^{1/2}
\Bigg].
\end{align*} 
\end{proposition}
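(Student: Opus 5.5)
We couple the exact solution and the scheme synchronously, using the same Brownian motion and the same initial condition $\bar X_0=X_0$. Set $e_k:=X_{t_k}-\bar X_k$. On $[t_k,t_{k+1}]$ the noise increments are identical, so they cancel:
\[
e_{k+1}=e_k+\int_{t_k}^{t_{k+1}}\big(a_t(X_t)-a_{t_k}(\bar X_k)\big)\,dt .
\]
We regroup this as
\[
e_{k+1}=\Big(\Phi_k(X_{t_k})-\Phi_k(\bar X_k)\Big)+R_k ,
\]
where $\Phi_k(y)$ denotes the exact flow of the SDE from time $t_k$ to $t_{k+1}$ started at $y$, driven by the same noise, and $R_k:=\Phi_k(\bar X_k)-\bar X_{k+1}$ is the local one-step error of the scheme started from $\bar X_k$.

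\textbf{Stability term.} Two solutions driven by the same noise have a difference $D_t$ satisfying $\frac{d}{dt}\|D_t\|^2=2\langle D_t,a_t(Y_t)-a_t(Y'_t)\rangle\le 2L_t\|D_t\|^2$ by (A1). This gives the pathwise bound
\[
\|\Phi_k(X_{t_k})-\Phi_k(\bar X_k)\|\le e^{\int_{t_k}^{t_{k+1}}L}\,\|e_k\| .
\]
Iterating in $L^2(\Omega)$ with Minkowski's inequality yields
\[
\|e_N\|_{L^2}\le\sum_k \exp\Big(\int_{t_{k+1}}^\tau L_s\,ds\Big)\|R_k\|_{L^2},
\]
and $W_2\le \|e_N\|_{L^2}$. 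It remains to show $\|R_k\|_{L^2}\le h_k[\cdots]$.

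\textbf{Local error.} Let $Y_t=\Phi$-solution on $[t_k,t]$ started at $\bar X_k$. Then
\[
R_k=\int_{t_k}^{t_{k+1}}\big(a_t(Y_t)-a_{t_k}(Y_t)\big)\,dt+\int_{t_k}^{t_{k+1}}\big(a_{t_k}(Y_t)-a_{t_k}(\bar X_k)\big)\,dt .
\]
\emph{Time term.} By (A3),
\[
\|a_t(Y_t)-a_{t_k}(Y_t)\|\le\int_{t_k}^{t}M_s\,ds\,(\sqrt d+\|Y_t\|).
\]
After integrating over $t$, this is bounded by $h_k\int_{t_k}^{t_{k+1}}M_s\,ds\,\sup_t(\sqrt d+\|Y_t\|)$.

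\emph{Space term.} By (A2), $\|a_{t_k}(Y_t)-a_{t_k}(\bar X_k)\|\le C_{t_k}\|Y_t-\bar X_k\|$. We bound the displacement by
\[
\|Y_t-\bar X_k\|\le\int_{t_k}^t F_s(\sqrt d+\|Y_s\|)\,ds+\Big\|\int_{t_k}^t b_s\,dW_s\Big\| .
\]
The noise part has $L^2$ norm at most $\sqrt d\,B\,h_k^{1/2}$, which produces the term $C_{t_k}\sqrt d B h_k^{1/2}$ after multiplying by $h_k$.

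\textbf{Moment control.} The main obstacle is a moment bound on $\sup_{t\in[t_k,t_{k+1}]}(\sqrt d+\|Y_t\|)$ that is valid without any Lipschitz-based Gronwall step. We use (A4) together with the condition $\int F\le 1/2$. Writing
\[
\|Y_t\|\le\|\bar X_k\|+\int_{t_k}^t F_s(\sqrt d+\|Y_s\|)\,ds+\|N_t\| ,
\]
where $N_t$ is the stochastic integral, we take the supremum over $t$. Since $\int F\le 1/2$, the integral term is at most half the supremum and can be absorbed, giving
\[
\sup_t(\sqrt d+\|Y_t\|)\le 2\big(\sqrt d+\|\bar X_k\|+\sqrt d/2+\sup_t\|N_t\|\big).
\]
To bound $\sup_t\|N_t\|$ in $L^2$ we apply Doob's inequality. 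This costs a factor that we expect to fit into the constant $2\sqrt d$ in $U_k$ together with the $\sqrt d B h_k^{1/2}$ term, possibly after a slight adjustment, since the statement's $U_k$ suggests exactly this bookkeeping. Its $L^2$ norm is then bounded by $U_k$, using $\|\bar X_k\|_{L^2}\le A_{\bar X}$.

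\textbf{Assembly.} Plugging the moment bound into both the time and space terms gives
\[
\|R_k\|_{L^2}\le h_k\Big[U_k\int_{t_k}^{t_{k+1}}(M_t+C_{t_k}F_t)\,dt+C_{t_k}\sqrt d\,B\,h_k^{1/2}\Big].
\]
Summing with the stability weights $\exp\big(\int_{t_{k+1}}^\tau L_s\,ds\big)$ gives the claimed bound. The only delicate point is tracking the constants in $U_k$ through the sup-moment estimate.
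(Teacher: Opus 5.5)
Your proof follows the paper's argument step for step. The shared steps are: synchronous coupling, restarting the exact flow from $\bar X_k$ at each grid point, pathwise one-step stability from (A1), splitting the local error into a time part controlled by (A3) and a space part controlled by (A2), absorbing the linear growth (A4) through $\int_{t_k}^{t_{k+1}}F_t\,dt\le\frac12$, and a discrete Gr\"onwall sum with weights $\exp(\int_{t_{k+1}}^\tau L_s\,ds)$.

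The one step that does not close as written is the moment control, which is the point you left open. You bound the $L^2$ norm of the pathwise supremum $\sup_t(\sqrt d+\|Y_t\|)$. Even with tight bookkeeping, the absorption gives $\sqrt d+\sup_t\|Y_t\|\le 2\|\bar X_k\|+2\sqrt d+2\sup_t\|N_t\|$. Doob's inequality $\big\|\sup_t\|N_t\|\big\|_{L^2}\le 2\sqrt d\,B\,h_k^{1/2}$ then produces $2A_{\bar X}+2\sqrt d+4\sqrt d\,B\,h_k^{1/2}$. This is $\le U_k=2A_{\bar X}+4\sqrt d+2\sqrt d\,B\,h_k^{1/2}$ only when $Bh_k^{1/2}\le 1$. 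So your argument proves the estimate with a larger $U_k$, not the stated one.

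The fix, which is what the paper does, is to note that the supremum never needs to sit inside the expectation. Both local terms are time integrals, so Minkowski's integral inequality gives, for instance,
\[
\Big\|\int_{t_k}^{t_{k+1}}\Big(\int_{t_k}^{t}M_s\,ds\Big)\big(\sqrt d+\|Y_t\|\big)\,dt\Big\|_{L^2}\le h_k\int_{t_k}^{t_{k+1}}M_t\,dt\,\sup_{t\in[t_k,t_{k+1}]}\big(\sqrt d+\|Y_t\|_{L^2}\big),
\]
and similarly for the space term. It therefore suffices to control the deterministic quantity $A_k:=\sup_{t\in[t_k,t_{k+1}]}\|Y_t\|_{L^2}$.

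Minkowski together with It\^o's isometry at each \emph{fixed} $t$, with no maximal inequality, gives $A_k\le A_{\bar X}+\frac12(\sqrt d+A_k)+\sqrt d\,B\,h_k^{1/2}$. Hence $\sqrt d+A_k\le 2A_{\bar X}+2\sqrt d+2\sqrt d\,B\,h_k^{1/2}\le U_k$.

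This absorption requires $A_k<\infty$ a priori, which is standard for strong solutions with a linear-growth drift; your pathwise version had the merit of not needing it. The same fixed-$t$ isometry in the space term gives $\int_{t_k}^{t_{k+1}}\sqrt d\,B\,(t-t_k)^{1/2}\,dt=\frac23\sqrt d\,B\,h_k^{3/2}$. The rest of your assembly goes through unchanged.
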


The proof of Proposition~\ref{prop:W2_discrete} can be found in Section~\ref{sec:prop:W2_discrete}.
The estimate has a ``local error $\times$ stability'' structure.
\begin{itemize}
    \item The factor $
    \exp \Big(\int_{t_{k+1}}^{\tau} L_s  ds\Big)$
    measures the amplification of the error created on step $k$ when it is transported from time $t_{k+1}$ to the final time $\tau$.
    
    \item The term involving $M_t$ comes from replacing $a_t$ by $a_{t_k}$ on $[t_k,t_{k+1}]$.
    
    \item The term involving $C_{t_k}F_t$ measures how much the state can drift away from $\bar X_k$ during one step; the factor $C_{t_k}$ then translates this state variation into a spatial drift error.
    
    \item The term $C_{t_k}\sqrt dB  h_k^{1/2}$ is the stochastic spatial variation created by Brownian fluctuations of size $\sqrt d h_k^{1/2}$.
\end{itemize}

Proposition~\ref{prop:W2_discrete} is optimal for a model-agnostic one-step analysis, but in the SDE setting it cannot by itself
yield a linear global dependence on $h_{\max}$. Indeed, the estimate
contains the stochastic spatial-variation term $
h_kB  h_k^{1/2},$
which once summed, scales at best like
$B\sqrt{h_{\max}}$.
Nevertheless, Proposition~\ref{prop:W2_discrete} can be used to obtain the optimal rate of discretization for ODE as displayed in  Proposition~\ref{prop:geom_mesh_general}. To handle the singular behavior of the drift near the terminal time, we introduce the geometric mesh that is naturally adapted to the bounds of Section~\ref{sec:lipreguv}.

\begin{definition}\label{def:geometricgrid} The geometric grid with parameters $(\tau,T,N)$ with $0<\tau<T$ and $N\in \mathbb{N}_{>0}$ corresponds to the discretization $0=t_0<t_1<...<t_N=\tau$ with
\[
r := \left(\frac{T-\tau}{T}\right)^{1/N},\qquad h_{\max}:= T(1-r),\qquad h_k := r^k h_{\max},\qquad t_{k+1}:= t_k+h_k.
\]
\end{definition}
A direct computation gives $
h_k=(1-r)(T-t_k)$, thus each step size is proportional to the remaining distance to the terminal time $T$. This is exactly the right behavior when the drift bounds blow up like $(T-t)^{-1}$ and $(T-t)^{-2}$. The grid is coarse when the dynamics is regular, and it automatically refines near the singular endpoint. We next specialize the abstract estimate to the ODE case and show that, under natural blow ups the geometric mesh restores a first-order discretization rate in $h_{\max}$.

\begin{proposition}[Geometric mesh discretization bound of ODE]\label{prop:geom_mesh_general}
Fix $\tau\in(1/2,1)$, an integer $N\ge 1$ and consider the geometric grid with parameters $(\tau,1,N)$ of Definition~\ref{def:geometricgrid}.
Let $(X_t)_{t\in[0,\tau]}$ be the solution of the ODE
\[
\dot X_t=v_t(X_t),\qquad X_0\in L^2(\Omega;\R^d),
\]
and let $(\bar X_k)_{k=0}^N$ be the explicit Euler scheme
\[
\bar X_{k+1}=\bar X_k+h_k v_{t_k}(\bar X_k),\qquad \bar X_0=X_0.
\]

Assume that $v_t$ satisfies Assumption~\ref{assum:sdebounds} in the ODE case ($b_t= 0$), with 
\begin{equation}\label{eq:lipman_envelopes_general}
\sup_{z\in [0,\tau]}\int_z^\tau L_t\leq A,\quad
F_t\le \frac{A}{1-t},\quad
C_t\le \frac{A}{1-t},\quad
M_t\le \frac{A}{(1-t)^2},
\end{equation}
and that $\sup_k (\E\|\bar X_k\|^2)^{1/2}\leq A \sqrt d$.
Then there exist dimension-free constants $\bar h\in(0,1)$ and $C>0$, depending only on $A$
 such that if $h_{\max}\le \bar h$ then
\[
W_2\big(\text{Law}(X_\tau),\text{Law}(\bar X_N)\big)\le C \sqrt d h_{\max} \log \Big(\frac{1}{1-\tau}\Big).
\]

\end{proposition}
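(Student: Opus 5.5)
We apply Proposition~\ref{prop:W2_discrete} with $b_t\equiv 0$, so $B=0$, $U_k=2(A_{\bar X}+2\sqrt d)\le 2(A+2)\sqrt d$, and the stochastic term $C_{t_k}\sqrt d B h_k^{1/2}$ vanishes. The bound then reads
\[
W_2\le \sum_{k=0}^{N-1} h_k \exp\Big(\int_{t_{k+1}}^\tau L_s\,ds\Big)\, U_k\int_{t_k}^{t_{k+1}}\big(M_t+C_{t_k}F_t\big)\,dt .
\]
The stability factor is bounded by $e^{A}$ using $\sup_z\int_z^\tau L_t\le A$. It remains to check the side condition $\int_{t_k}^{t_{k+1}}F_t\,dt\le 1/2$, and then to bound each local term using the geometric structure $h_k=(1-r)(1-t_k)$ and $1-r=h_{\max}$ (since $T=1$).

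The key observation concerns a single step. For $t\in[t_k,t_{k+1}]$ we have $1-t\ge 1-t_{k+1}=r(1-t_k)$. Once $h_{\max}\le \bar h\le 1/2$, we get $r\ge 1/2$, so $1-t\ge (1-t_k)/2$ on the whole step. Consequently
\[
\int_{t_k}^{t_{k+1}}F_t\,dt\le \frac{2A h_k}{1-t_k}=2A h_{\max}.
\]
This is at most $1/2$ provided $\bar h\le 1/(4A)$, which settles the side condition. By the same reasoning,
\[
\int_{t_k}^{t_{k+1}}M_t\,dt\le \frac{4A h_k}{(1-t_k)^2}=\frac{4A h_{\max}}{1-t_k},
\]
and
\[
C_{t_k}\int_{t_k}^{t_{k+1}}F_t\,dt\le \frac{A}{1-t_k}\cdot 2Ah_{\max}.
\]
Each local bracket is therefore $\lesssim \sqrt d\, h_{\max}/(1-t_k)$.

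Multiplying by $h_k$ and summing gives
\[
W_2\lesssim \sqrt d\, h_{\max}\sum_k \frac{h_k}{1-t_k}.
\]
It remains to show that $\sum_k h_k/(1-t_k)$ is comparable to $\int_0^\tau \frac{dt}{1-t}=\log\frac{1}{1-\tau}$. Since $1-t\ge (1-t_k)/2$ fails in the wrong direction for a direct comparison, we use instead $1-t\le 1-t_k$ on $[t_k,t_{k+1}]$. This gives $\frac{h_k}{1-t_k}\le \int_{t_k}^{t_{k+1}}\frac{dt}{1-t}$, and summing yields exactly $\log\frac{1}{1-\tau}$. (Alternatively, note $\sum_k h_k/(1-t_k)=N h_{\max}$ and $N\log(1/r)=\log\frac1{1-\tau}$ with $h_{\max}\le\log(1/r)$.)

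The main point of care is the uniform comparison $1-t\asymp 1-t_k$ within a step, which is what forces the dimension-free smallness threshold $\bar h$ depending only on $A$. Everything else is bookkeeping, and the constant $C$ depends only on $A$.
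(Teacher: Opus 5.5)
Your proof is correct and follows the paper's argument: you apply Proposition~\ref{prop:W2_discrete} with $B=0$, bound the stability factor by $e^{A}$ and $U_k\lesssim\sqrt d$, and then use $h_k=h_{\max}(1-t_k)$ on the geometric grid to reduce everything to $N h_{\max}^2\le h_{\max}\log\frac{1}{1-\tau}$. The only cosmetic difference is that the paper integrates $1/(1-t)$ and $1/(1-t)^2$ exactly over each step, whereas you use the within-step comparison $1-t\ge r(1-t_k)\ge (1-t_k)/2$.
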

The proof of Proposition~\ref{prop:geom_mesh_general} can be found in Section~\ref{sec:prop:geom_mesh_general}.
The assumptions in \eqref{eq:lipman_envelopes_general} are exactly of the singular type that appears in flow matching: the drift and its spatial Lipschitz constant behave like $(1-t)^{-1}$, while the time derivative behaves like $(1-t)^{-2}$ near $t=1$. The conclusion shows that the geometric mesh compensates for these singularities at the discretization level: one recovers a linear dependence in $h_{\max}$ (up to the logarithmic factor, which corresponds to the accumulation of the terminal singular layer). The estimate contains the natural factor $\sqrt d$, which is the correct scale for $W_2$ in this setting.

Now by using additional structure (symmetry of $\nabla a_t$ and a
second-order bound on $\Delta a_t$) we show in the next result that we can also obtain the linear dependence in the SDE case. 

\begin{theorem}[Geometric mesh discretization bound of SDE]\label{theo:geom_mesh_generaldiff}
Fix $T>1$, $\tau\in(T/2,T)$, an integer $N\ge 1$ and consider the geometric grid with parameters $(\tau,T,N)$ of Definition~\ref{def:geometricgrid}. Let $b:[0,T]\rightarrow [0,B]$ for some $B>0$ and consider the SDE
\[
dX_t = a_t(X_t) dt+ b_t dW_t,
\qquad t\in[0,T],
\]
and the Euler-Maruyama scheme with exact Gaussian increments 
\[
\bar X_{k+1}=\bar X_k+h_k a_{t_k}(\bar X_k)+\xi_{k+1},
\qquad \bar X_0=X_0,
\]
where
$
\xi_{k+1}:=\int_{t_k}^{t_{k+1}} b_t dW_t$. Define $\rho(t):=\min\{1,T-t\}$ and assume that the drift $a_t$ satisfies Assumption~\ref{assum:sdebounds} with 
\begin{equation}\label{eq:diff_envelopes_general_T}
\sup_{z\in [0,\tau]}\int_z^\tau L_t dt\leq A,\quad
F_t\le \frac{A}{\rho(t)},\quad
C_t\le \frac{A}{\rho(t)},\quad
M_t\le \frac{A}{\rho(t)^2}.
\end{equation}
Assume additionally that the Jacobian $\nabla a_t(x)$ is symmetric and
\[
\sup_{t\in [0,\tau]} \|X_t\|_{L_2}\leq A\sqrt{d},
\qquad
\|\Delta a_t(x)\|\leq A \frac{\sqrt{d}+\|x\|}{\rho(t)^2}.
\]
Then there exist dimension-free constants $\bar h\in(0,1)$ and $C>0$, depending only on $A$ and $B$,
such that if $h_{\max}\le \bar h$,
\[
W_2\big(\text{Law}(X_\tau),\text{Law}(\bar X_N)\big)\le C \sqrt d  h_{\max} 
\Bigl(T+\log \Big(\frac{T}{T-\tau}\Big)\Bigr).
\]
\end{theorem}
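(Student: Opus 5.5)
The proof couples the exact solution $X$ and the scheme $\bar X$ through the same Brownian motion. The error $e_k:=X_{t_k}-\bar X_k$ then obeys an exact one-step decomposition:
\[
e_{k+1}=e_k+h_k\big(a_{t_k}(X_{t_k})-a_{t_k}(\bar X_k)\big)+R_k,\qquad R_k:=\int_{t_k}^{t_{k+1}}\big(a_s(X_s)-a_{t_k}(X_{t_k})\big)\,ds .
\]
Rather than the crude stability bound of Proposition~\ref{prop:W2_discrete}, I will carry out the recursion in $L^2$ and exploit cancellations in $R_k$. This works as follows.

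\textbf{Step 1: split the local error.} By It\^o's formula applied to $s\mapsto a_s(X_s)$ on $[t_k,s]$,
\[
a_s(X_s)-a_{t_k}(X_{t_k})=\int_{t_k}^s\Big(\partial_r a_r+\nabla a_r\,a_r+\tfrac{b_r^2}{2}\Delta a_r\Big)(X_r)\,dr+\int_{t_k}^s b_r\nabla a_r(X_r)\,dW_r .
\]
Hence $R_k=D_k+\mathcal M_k$, where $D_k$ is the drift part and $\mathcal M_k$ is the stochastic-integral part.

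For the drift part, (A3), (A2)+(A4), the Laplacian bound and the moment bound $\|X_r\|_{L_2}\le A\sqrt d$ give
\[
\|D_k\|_{L_2}\lesssim h_k^2\,\frac{\sqrt d}{\rho(t_k)^2}.
\]
For this I use that on the geometric grid $\rho$ is comparable on $[t_k,t_{k+1}]$, since $h_k=(1-r)(T-t_k)$ with $1-r$ small.

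The martingale part has $\|\mathcal M_k\|_{L_2}\lesssim B\,h_k^{3/2}\sqrt d/\rho(t_k)$. This is only of order $h^{3/2}$, so it must not be summed in absolute value. The point is that $\E[\mathcal M_k\mid\mathcal F_{t_k}]=0$.

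\textbf{Step 2: energy recursion.} I expand
\[
\E\|e_{k+1}\|^2=\E\|e_k+h_k\Delta_k\|^2+2\E\langle e_k+h_k\Delta_k,R_k\rangle+\E\|R_k\|^2 ,
\]
where $\Delta_k=a_{t_k}(X_{t_k})-a_{t_k}(\bar X_k)$ is $\mathcal F_{t_k}$-measurable. Therefore the cross term with $\mathcal M_k$ vanishes.

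The first term is bounded by $(1+2h_kL_{t_k}+h_k^2C_{t_k}^2)\E\|e_k\|^2$ using (A1)–(A2). Since $h_k^2C_{t_k}^2\lesssim h_k(1-r)/\rho$ and the sum of these is summable with log factors, the product of these factors is bounded by $\exp\big(2\int L+C h_{\max}\log\frac{T}{T-\tau}\big)\lesssim 1$.

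The remaining cross term $2\E\langle e_k,D_k\rangle$ is handled by Cauchy–Schwarz, which gives $2\|e_k\|\,\|D_k\|$. The variance term $\E\|\mathcal M_k\|^2\lesssim d\,h_k^3/\rho(t_k)^2$ contributes after summation $\sum_k d\,h_k^3\rho_k^{-2}\lesssim d\,h_{\max}^2\sum_k h_k\lesssim d\,h_{\max}^2(T+\log)$, using $h_k\le h_{\max}\rho_k$ (for $\rho_k=1$, $h_k\le h_{\max}$). Its square root is of the desired order.

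\textbf{Step 3: close.} I set $E_k=\|e_k\|_{L_2}$. The recursion reads
\[
E_{k+1}^2\le(1+\theta_k)E_k^2+2E_k\delta_k+\eta_k,\qquad \delta_k\lesssim \sqrt d\,h_k^2/\rho_k^2,\quad \sum\eta_k\lesssim d\,h_{\max}^2(T+\log).
\]
A discrete Gronwall/Bihari argument then yields $\max_k E_k\lesssim \sum_k\delta_k+(\sum_k\eta_k)^{1/2}$. Here $\sum_k\delta_k\lesssim\sqrt d\,h_{\max}\sum_k h_k/\rho_k\lesssim \sqrt d\,h_{\max}(T+\log\frac{T}{T-\tau})$: on the part where $\rho=1$ the sum of $h_k$ is at most $T$, and on the terminal part the geometric grid gives about $\log$ many steps each contributing $h_{\max}$. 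Taking $W_2\le E_N$ concludes.

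The \textbf{main obstacle} is the one-sided step. To control $h_k^2\E\|\Delta_k\|^2$ together with $2h_k\E\langle e_k,\Delta_k\rangle$, one must use one-sided Lipschitz information at the frozen time $t_k$ rather than integrated in time, and show that the $O(h^2C^2)$ loss does not accumulate. A related question is where the symmetry of $\nabla a_t$ enters: I expect it is used in writing $\nabla a_r a_r$ and in the Itô correction, to express $\mathcal M_k$ with a symmetric integrand so that its covariance with $e_{k}$ over later steps stays controlled. If the frozen-time one-sided bound is not directly available, I would instead compare against the exact flow with the integrated $L$ via an interpolated process on $[t_k,t_{k+1}]$.
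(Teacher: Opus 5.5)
Your architecture is essentially the paper's. It uses a synchronous coupling, then an It\^o splitting of the local error $R_k$ (the paper's $V_k$) into a drift part summed in $\ell^1$ and a martingale part summed in $\ell^2$, by orthogonality against the $\mathcal F_{t_k}$-measurable vector $e_k+h_k\Delta_k$. It ends with geometric-grid sums and a closing induction equivalent to Proposition~\ref{prop:biasvar_no_propagation}.

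The gap is in the one-step stability. You bound $\E\|e_k+h_k\Delta_k\|^2\le(1+2h_kL_{t_k}+h_k^2C_{t_k}^2)\E\|e_k\|^2$ and assert that the product of these factors is $\lesssim 1$; it is not. With $C_t\le A/\rho(t)$, the hypotheses only give $\sum_k h_k^2C_{t_k}^2\lesssim A^2h_{\max}\bigl(T+T^{-1}\log\frac{1}{T-\tau}\bigr)$. The bulk part where $\rho\equiv 1$ contributes the $h_{\max}T$ term, which your exponent omits.

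Since $T>1$ and $T-\tau>0$ are unrestricted, this sum is not bounded under $h_{\max}\le\bar h$ alone. You therefore only get $W_2\lesssim\sqrt d\,\epsilon\,e^{C\epsilon}$ with $\epsilon:=h_{\max}\bigl(T+\log\frac{T}{T-\tau}\bigr)$. That is the theorem under the extra condition $\epsilon\lesssim 1$, which suffices for Proposition~\ref{theo:dicdiff} but not for the statement.

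No argument that ignores symmetry can do better. Take $a_t(x)=\frac{A}{\rho(t)}Jx$ with $J$ skew-symmetric and $J^\top J=\mathrm{Id}$ ($d$ even), $b\equiv 0$ and $X_0\sim\gamma_d$. Every other hypothesis holds: $L\equiv 0$, $C_t=F_t=A/\rho(t)$, $M_t\le A/\rho(t)^2$ and $\Delta a_t=0$. The flow is an isometry, yet each Euler step multiplies norms by exactly $\sqrt{1+h_k^2A^2/\rho(t_k)^2}$. Hence $\|\bar X_N\|_{L^2}\ge e^{cA^2h_{\max}T}\|X_\tau\|_{L^2}$ for $T\ge 2$, and $W_2$ grows exponentially rather than linearly in $h_{\max}T$.

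The missing idea is that symmetry is used exactly at this step. It is not needed in the It\^o formula or for $\mathcal M_k$, whose orthogonality needs only measurability, as you noted. Write $e_k+h_k\Delta_k=(\mathrm{Id}+h_k\bar G_k)e_k$ with $\bar G_k:=\int_0^1\nabla a_{t_k}(\bar X_k+s e_k)\,ds$; this matrix is symmetric with spectrum in $[-C_{t_k},L_{t_k}]$. On the geometric grid $h_k/\rho(t_k)\le h_{\max}$, so taking $\bar h\le 1/A$ forces $h_kC_{t_k}\le 1$. Hence $\mathrm{Id}+h_k\bar G_k$ has spectrum in $[0,1+h_kL_{t_k}]$, and $\|e_k+h_k\Delta_k\|\le(1+h_kL_{t_k})\|e_k\|$ pathwise, with no $h_k^2C_{t_k}^2$ term. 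With this substitution your recursion closes exactly as in the paper.

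The frozen-time versus integrated-$L$ issue you single out as the main obstacle is not what separates you from the paper. Its Step~1 likewise yields the factor $e^{h_kL_{t_k}}$ and then uses the weights $\exp\bigl(\int_{t_{k+1}}^{\tau}L_s\,ds\bigr)\le e^A$.
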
 
The proof of Theorem~\ref{theo:geom_mesh_generaldiff} can be found in Section~\ref{sec:theo:geom_mesh_generaldiff}. 
The proof follows the bias/variance decomposition strategy introduced in \cite{arsenyan2025assessing} for
the reverse Ornstein-Uhlenbeck diffusion, and extends it to the present non-homogeneous SDE setting with
general drift $a_t$ and time-dependent diffusion amplitude $b_t$.
More precisely, instead of estimating the local truncation error $V_k := \int_{t_k}^{t_{k+1}}\Big(a_t(X_t)-a_{t_k}(X_{t_k})\Big) dt$ only by its $L^2$ norm as in Proposition~\ref{prop:W2_discrete},
we write $V_k$ as
\[
V_k = \E[V_k\mid \mathcal F_k] + \big(V_k-\E[V_k\mid \mathcal F_k]\big),
\]
and propagate separately:
(i) the conditional mean in $\ell^1$ with the stability weights, and
(ii) the centered martingale fluctuation in $\ell^2$.
This is formalized in Proposition~\ref{prop:biasvar_no_propagation}. The assumptions in Theorem~\ref{theo:geom_mesh_generaldiff} are exactly those needed to implement this program:
the bounds on $\partial_t a_t$ and $\Delta a_t$ control the It\^o drift term (hence the bias), while the
Lipschitz bound controls the stochastic integral term, and the symmetry of $\nabla a_t$ allows a sharp
one-step stability estimate in terms of $\lambda_{\max}(\nabla a_t)$.
\begin{remark}
The use of exact Gaussian increments allows us to assume only that $b$ is bounded in time. If one instead considers the standard Euler-Maruyama scheme with frozen diffusion coefficient,
\[
\bar X_{k+1}=\bar X_k+h_k a_{t_k}(\bar X_k)+b_{t_k}(W_{t_{k+1}}-W_{t_k}),
\]
then the same estimate should still hold provided $b$ has sufficient time regularity, for instance if $b$ is Lipschitz in time.
\end{remark}

\subsubsection{Lipman and Stochastic interpolant flow matching settings}\label{sec:fhifdoiedospzp}

We now specialize the general discretization principle of the previous section to the ODE samplers arising from flow matching. The key point is that the regularity estimates proved in Section~\ref{sec:lipreguv} provide exactly the inputs required by the abstract Euler error bound. The important consequence is that, once the regularity theory is established, the discretization argument is completely uniform: the Lipman and Stochastic-interpolant settings differ in the construction of the drift, but the Euler analysis only sees the regularity bounds.

\begin{proposition}\label{prop:geomeshLIp} Let $p^\star=\exp(-u+a)$ be  $(\alpha,\beta,K)$-weakly log-concave measure such that
for $m:=\arg\min_{\mathbb R^d} u$ we have $\|m\|\le A\sqrt d.$
Let $(X_t)_{t\in[0,\tau]}$ be the solution of the ODE
\[
\dot X_t=v_t(X_t),\qquad X_0\sim \mathcal{N}(0,\text{Id}),
\]
where $v_t$ satisfies either:
\begin{itemize}
    \item[-]the Lipman flow matching setting of Assumption~\ref{assum:lipman} or
    \item[-]the Stochastic-interpolant flow matching setting of Assumption~\ref{assum:sto-int}.
\end{itemize}
Suppose moreover the schedule hypotheses of Assumptions~\ref{assum:lipmanliptime}
hold. Fix $\tau\in(1/2,1)$, an integer $N\ge 1$ and consider the geometric grid with parameters $(\tau,1,N)$ of Definition~\ref{def:geometricgrid}.
Let $(\bar X_k)_{k=0}^N$ be the explicit Euler scheme
\[
\bar X_{k+1}=\bar X_k+h_k v_{t_k}(\bar X_k),\qquad \bar X_0=X_0.
\]
Then there exist dimension-free constants $\bar h\in(0,1)$ and $C>0$
such that, if $h_{\max}\le \bar h \log \Big(\frac1{1-\tau}\Big)^{-1}$, one has
\[
W_2 \left(\text{Law}(X_\tau), \text{Law}(\bar X_N)\right) \le C \sqrt d  h_{\max} \log \Big(\frac1{1-\tau}\Big).
\]
\end{proposition}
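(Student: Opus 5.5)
The plan is to apply Proposition~\ref{prop:geom_mesh_general} directly, checking its four envelope conditions \eqref{eq:lipman_envelopes_general} and the moment bound $\sup_k(\E\|\bar X_k\|^2)^{1/2}\le A'\sqrt d$ for some dimension-free $A'$. Three of the conditions come straight from Section~\ref{sec:lipreguv}. Take $L_t=\sup_x\lambda_{\max}(\nabla v_t(x))$. Corollaries~\ref{coro:lipmafm} and~\ref{coro:notlipmafm} give $\sup_z\int_z^\tau L_t\,dt\le C$, and since $L_t$ is a one-sided Lipschitz constant this is (A1). Corollaries~\ref{cor:lipman-global} and~\ref{coro:global-Lipschitz-v} give $C_t\le C/(1-t)$ for (A2). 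Here I must check that Assumption~\ref{assum:lipmanliptime} implies Assumption~\ref{assum:lipmanlip}; this holds because $f_t\le 1$ and $|f'|$ is bounded on both $[t_0,1)$ and $(0,t_0]$, so $f_tf_t'\le A\lesssim(1-t)^{-1}$. Corollaries~\ref{coro:dtv_rate} and~\ref{coro:timeestimatesi} give $M_t\le C/(1-t)^2$ for (A3). The hypothesis $\|m\|\le A\sqrt d$ is exactly what feeds the centering term $\|\E_{\nu_{t,0}}[W]\|\lesssim\sqrt d$ used there.

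For the linear growth (A4), I will write $v_t(x)=v_t(0)+\int_0^1\nabla v_t(sx)x\,ds$, which gives $\|v_t(x)\|\le\|v_t(0)\|+C\|x\|/(1-t)$. It then suffices to show $\|v_t(0)\|\lesssim\sqrt d/(1-t)$. From \eqref{eq:vf}, $v_t(0)=\E[\,c_t Y\mid X_t=0]$ up to the explicit factor $c_t=f_t'-a_tf_t$, which is $O((1-t)^{-1})$. The posterior mean of $Y$ given $X_t=0$ is at distance $O(\sqrt{d/\gamma_t})$ from the minimizer of the surrogate potential. That potential is $u+\|f_ty\|^2/(2\bar g_t^2)$, whose minimizer has norm at most $\|m\|\lesssim\sqrt d$, and the Hölder tilt is handled by the posterior estimates already used in Theorem~\ref{theo:partialtvt}. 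Hence $F_t\lesssim(1-t)^{-1}$. Existence of solutions follows from the local Lipschitz property on $[0,\tau]$.

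The step I expect to be the main obstacle is the moment bound for the Euler iterates, since Proposition~\ref{prop:geom_mesh_general} needs $\sup_k\|\bar X_k\|_{L^2}\lesssim\sqrt d$ for the scheme, not only for the exact flow. For the exact flow, $X_t\sim p_t$, so $\|X_t\|_{L^2}\le f_t\|Y\|_{L^2}+\bar g_t\sqrt d\lesssim\sqrt d$, using sub-Gaussianity of $p^\star$ and $\|m\|\lesssim\sqrt d$. For the scheme, I will argue by induction on $k$ together with the error bound. One option is to compare directly: $\|\bar X_k\|_{L^2}\le\|X_{t_k}\|_{L^2}+W$-type coupling error, where the coupling error on $[0,t_k]$ is controlled by applying the stability/local-error recursion from the proof of Proposition~\ref{prop:W2_discrete} under the inductive hypothesis. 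The other option is a direct one-step estimate of the form
\[
\|\bar X_{k+1}\|\le\|\bar X_k\|+h_k\big(\|v_{t_k}(0)\|+C_{t_k}\|\bar X_k\|\big).
\]
Either way, the cumulative error up to step $k$ is at most $C\sqrt d\,h_{\max}\log(1/(1-\tau))$. This is where the strengthened step-size condition $h_{\max}\le\bar h\log(1/(1-\tau))^{-1}$ is used: it makes that error at most $\sqrt d$, so the hypothesis $\|\bar X_j\|_{L^2}\le 2A'\sqrt d$ propagates to $j=k+1$. As a fallback for the second option, I note that $h_kC_{t_k}\le A(1-r)$ on the geometric grid. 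The product $\prod_k(1+h_kC_{t_k})$ is then $\exp(AN(1-r))\approx(1-\tau)^{-A}$, which is too crude, and this is why I would prefer the comparison-with-the-true-flow induction.

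With all hypotheses verified, Proposition~\ref{prop:geom_mesh_general} (possibly after shrinking $\bar h$) gives $W_2\le C\sqrt d\,h_{\max}\log(1/(1-\tau))$.
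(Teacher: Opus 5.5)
Your proposal is correct and has the same architecture as the paper's proof. (A1)--(A3) come from the one-sided, global and time-Lipschitz corollaries; your check that Assumption~\ref{assum:lipmanliptime} implies Assumption~\ref{assum:lipmanlip}, which the global Lipschitz corollaries need, is something the paper leaves implicit. (A4) comes from the posterior-mean bounds of Lemmas~\ref{lemma:bounfmut} and~\ref{lem:mean_bound_nu_t0}. The paper applies them at arbitrary $x$ in $v_t(x)=a_tx+c_t\mu_t(x)$; you apply them only at $x=0$ and let (A2) carry the $\|x\|$-dependence, which is equivalent. In the stochastic-interpolant case that affine identity comes from the Gaussian rotation in the proof of Lemma~\ref{lemma:boundnormvt}, not directly from \eqref{eq:vf}. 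Finally, the Euler moment bound is obtained by comparison with the exact flow, and this is where $h_{\max}\le\bar h\log(1/(1-\tau))^{-1}$ is spent.

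The real difference is how that comparison is carried out. The paper runs a continuous-time Gr\"onwall argument with the frozen-time flow $\dot{\hat X}_t=v_{t_k}(\hat X_t)$ on $[t_k,t_{k+1})$. That flow does not land on $\bar X_{k+1}=\bar X_k+h_kv_{t_k}(\bar X_k)$. So its identity $\hat X_{t_k}=\bar X_k$ and its decomposition of $\dot\Delta_t$ omit the spatial-freezing error $v_{t_k}(\hat X_t)-v_{t_k}(\bar X_k)$. That term is of order $h_{\max}^2\sqrt d$ per step, so it is harmless, but it is missing.

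Your induction through the restarted-flow recursion in the proof of Proposition~\ref{prop:W2_discrete} contains that term automatically through $C_{t_k}F_t$, so it is the more robust route. To make it airtight, state explicitly that in that proof the local error at step $j$ depends on the scheme only through $\|\bar X_j\|_{L^2}$. For $b\equiv 0$ one has $\sup_t\|Y^{(j)}_t\|_{L^2}\le 2\|\bar X_j\|_{L^2}+\sqrt d$, whereas the proposition as stated uses the global quantity $\sup_k\|\bar X_k\|_{L^2}$. Then fix the inductive constant $A'$ (e.g.\ $A'=\sup_{t\le\tau}\|X_t\|_{L^2}/\sqrt d$, with hypothesis $\|\bar X_j\|_{L^2}\le 2A'\sqrt d$) before choosing $\bar h$. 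The induction closes because the accumulated error is at most $c(1+A')\sqrt d\,h_{\max}\log(1/(1-\tau))$, with $c$ independent of $A'$.

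You are right to discard the direct one-step growth estimate, which only yields a factor of order $(1-\tau)^{-A}$.
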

The proof of Proposition~\ref{prop:geomeshLIp}  can be found in Section~\ref{sec:prop:geomeshLIp}. This proposition is the population-discretization input used later in Corollary~\ref{cor:lipman_sampling_error_log2_over_N}, and shows that the regularity bounds from Section~\ref{sec:discretizationerror} are strong enough to yield a first-order Euler error  on a geometric grid in both the Lipman and Stochastic-interpolant settings.

\subsubsection{Diffusion models setting}
We now turn to the reverse-time Ornstein-Uhlenbeck sampler used in diffusion models. In contrast with Section~\ref{sec:fhifdoiedospzp}, the dynamics is stochastic and the discretization is an Euler-Maruyama scheme, so the relevant abstract result is Theorem~\ref{theo:geom_mesh_generaldiff} rather than the ODE Euler bound. The key point is that the regularity estimates proved in Section~\ref{sec:lipreguv} for diffusion-model scores provide the required inputs for the reverse drift
after the OU time-reversal parametrization. This yields the following discretization estimate on a geometric grid.

\begin{proposition}\label{theo:dicdiff}
Assume that $p^\star=\exp(-u+a)$ is $(\alpha,\beta,K)$-weakly log-concave with $\alpha,K>0$, $\beta\in(0,1]$,
and for $m:=\arg\min_{\R^d} u$, that $\|m\|\le A\sqrt d$. Fix a time horizon $T>1$ and let $(q_s)_{s\in[0,T]}$ be the Ornstein-Uhlenbeck path
and define the reversed path on $[0,T]$ by
\[
p_t := q_{T-t},\qquad t\in[0,T].
\]
Let $s_t(x)=\nabla_x\log p_t(x)$ be its score, and define the reverse drift
\[
a_t(x):=x+2s_t(x).
\]

Fix $\tau\in (T-1,T)$ such that $T\leq A \log((T-\tau)^{-1})$,  an integer $N\ge1$ and consider the geometric grid with parameters $(\tau,T,N)$ of Definition~\ref{def:geometricgrid}.
Consider the reverse SDE on $[0,\tau]$
\[
dX_t=a_t(X_t) dt+\sqrt{2} dW_t,\qquad X_0\sim q_T,
\]
with $\|X_0\|_{L_2}\leq A\sqrt{d}$ and its Euler scheme with exact Gaussian increments on the same grid:
\[
\bar X_{k+1}=\bar X_k+h_k a_{t_k}(\bar X_k)+\sqrt{2} (W_{t_{k+1}}-W_{t_k}),\qquad \bar X_0=X_0.
\]
Then, there exist constants $\bar h\in(0,T/2]$ and $C>0$, depending only on $(\alpha,\beta,K,A)$, such that if $
h_{\max}\le \bar h\log \Big(\frac{1}{T-\tau}\Big)^{-1},$
\[
W_2\bigl(\text{Law}(X_\tau),\text{Law}(\bar X_N)\bigr)
 \le  C \sqrt d  h_{\max}\log \Big(\frac{1}{T-\tau}\Big).
\]
\end{proposition}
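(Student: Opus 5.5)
The plan is to apply Theorem~\ref{theo:geom_mesh_generaldiff} with $b_t\equiv\sqrt2$, so $B=\sqrt2$. The work lies in checking its hypotheses for $a_t(x)=x+2s_t(x)$ from the regularity results of Section~\ref{sec:lipreguv}, after a change of time. Under $t'=e^{-(T-t)}$, the OU marginal $q_{T-t}$ is the law of $t'Y+\sqrt{1-t'^2}\,\xi$, i.e.\ the rescaled path of Assumption~\ref{assum:diffusion}. Moreover $1-t'\asymp\min\{1,T-t\}=\rho(t)$ for $T-t\le 1$, while for $T-t\ge1$ the reverse time $t'$ stays in $[e^{-T},e^{-1}]$. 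The symmetry of $\nabla a_t=\mathrm{Id}+2\nabla^2\log p_t$ is automatic.

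\textbf{Checking the envelopes.} I would obtain the four bounds in order.
\begin{itemize}
\item \emph{One-sided bound (A1).} Write $\lambda_{\max}(\nabla a_t)=2\lambda_{\max}(\mathrm{Id}+\nabla s)-1$. Corollary~\ref{coro:score_osl_selfcontained} gives $\int\frac1{t'}\sup\lambda_{\max}(\mathrm{Id}+\nabla s)\,dt'\le C$. Since $dt=dt'/t'$, this turns into $\int_z^\tau \sup_x\lambda_{\max}(\mathrm{Id}+\nabla s_t)\,dt\le C$, hence $\int_z^\tau L_t\le 2C$ uniformly in $T$. The $-1$ only helps.
\item \emph{Global Lipschitz bound (A2).} Corollary~\ref{coro:difflipbound} gives $\|\mathrm{Id}+\nabla s\|_{\mathrm{op}}\le Ct'/(1-t')$, so $C_t\le A/\rho(t)$.
\item \emph{Time regularity (A3).} Use Corollary~\ref{coro:timeregudiffu}; this is where the hypothesis $\|m\|\le A\sqrt d$ enters. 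The chain rule contributes the factor $dt'/dt=t'\le1$.
\item \emph{Linear growth (A4).} Bound $\|a_t(x)\|\le\|a_t(0)\|+C_t\|x\|$, and $\|s_t(0)\|\lesssim\sqrt d/\rho(t)$ via the posterior-mean representation $s_t(x)=(t'\E[Y\mid X=x]-x)/(1-t'^2)$. The posterior concentration used in Theorem~\ref{theo:partialtvt}, namely $\|\E_{\nu_{t,0}}W\|+\sqrt{d/\gamma}$ together with $\|m\|\lesssim\sqrt d$, gives $\|\E[Y\mid X=0]\|\lesssim\sqrt d$.
\end{itemize}
The moment bound $\sup_t\|X_t\|_{L_2}\le A\sqrt d$ holds because $X_t\sim p_t$ is an OU marginal and $\E\|Y\|^2\lesssim d$ by strong log-concavity of $u$ plus the Hölder tilt. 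In the far region $T-t\ge1$ all constants are uniform, because $\rho=1$ and $t'$ stays away from $1$.

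\textbf{The Laplacian bound.} This is the main obstacle: we need $\|\Delta a_t(x)\|\le A(\sqrt d+\|x\|)/\rho(t)^2$, and no earlier corollary states it. My first attempt would use the PDE structure instead of third derivatives. The reversed OU density satisfies $\partial_t p_t=-\nabla\cdot(p_t(-x))-\Delta p_t$, so the score obeys
\[
\partial_t s_t=-\nabla(\Delta\log p_t+\|s_t\|^2)+\dots,
\]
where the dots are linear terms in $x$ and $s_t$. Since $\nabla s_t$ is symmetric, $\nabla\Delta\log p_t=\Delta s_t$, and $\nabla\|s\|^2=2\nabla s\, s$. This gives
\[
\Delta s_t=-\partial_t s_t-2\nabla s_t\,s_t+\text{(linear terms)}.
\]
Each term is then bounded by known estimates: $\|\partial_t s\|\lesssim(\sqrt d+\|x\|)/\rho^2$ by (A3), and $\|\nabla s\,s\|\le C_t F_t(\sqrt d+\|x\|)\lesssim(\sqrt d+\|x\|)/\rho^2$. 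This yields the required bound on $\Delta a_t=2\Delta s_t$ with no third-derivative estimate. I expect the sign and time-rescaling bookkeeping in this identity to need care, but the mechanism should work.

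\textbf{Conclusion.} Theorem~\ref{theo:geom_mesh_generaldiff} then gives the bound $C\sqrt d\,h_{\max}\bigl(T+\log\frac{T}{T-\tau}\bigr)$. The assumption $T\le A\log((T-\tau)^{-1})$ absorbs both $T$ and $\log T$ into $\log\frac1{T-\tau}$. The step-size restriction $h_{\max}\le\bar h\log(\frac1{T-\tau})^{-1}$ ensures the theorem's $h_{\max}\le\bar h$, and also gives the condition $\int_{t_k}^{t_{k+1}}F_t\le\frac12$, since on the geometric grid $h_k/\rho(t_k)\lesssim h_{\max}$.
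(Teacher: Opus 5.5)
Your proposal is correct and follows essentially the same route as the paper: apply Theorem~\ref{theo:geom_mesh_generaldiff} with $b\equiv\sqrt2$ after the time change $\theta(t)=e^{-(T-t)}$, read off (A1)--(A3) from Corollaries~\ref{coro:score_osl_selfcontained}, \ref{coro:difflipbound} and~\ref{coro:timeregudiffu}, and get (A4) from the posterior-mean representation (the paper simply cites Lemma~\ref{lemma:boundnormvt}); the Laplacian bound comes from the score PDE, and $T+\log\frac{T}{T-\tau}$ is absorbed using $T\le A\log((T-\tau)^{-1})$. One bookkeeping fix: the reversed OU density satisfies $\partial_t p_t=-\nabla\cdot(xp_t)-\Delta p_t$ (your drift term has the opposite sign), which gives $\Delta s_t=-\partial_t s_t-s_t-(\nabla s_t)(x+2s_t)$, so the remainder contains $(\nabla s_t)x$ and is not purely linear in $(x,s_t)$, but that term is bounded by $C_t\|x\|\lesssim\|x\|/\rho(t)$ and the conclusion is unchanged.
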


The proof of Proposition~\ref{theo:dicdiff} can be found in Section~\ref{sec:theo:dicdiff}. This is the diffusion-model analogue of Proposition~\ref{prop:geomeshLIp}: it converts the score regularity estimates of Section~\ref{sec:fhifdoiedospzp} into an Euler–Maruyama discretization bound for the reverse OU SDE, and provides the discretization term that is later inserted into Corollary~\ref{cor:diffusion_sde_sampling_logN}.

\subsection{Sampling error}\label{sec:samplingerror}
\subsubsection{General sampling analysis}
We now isolate a simple decomposition of the terminal sampling error for a learned-drift sampler.
The purpose of this section is to separate the three different sources of error that appear in all our
applications:

\begin{itemize}
    \item[(i)] the \emph{early-stopping error} (we stop the continuous dynamics at time $\tau$ before the terminal time),
    \item[(ii)] the \emph{drift-learning error} (the learned drift $\hat a_t$ differs from the ideal drift $a_t$),
    \item[(iii)] the \emph{time-discretization error} (we simulate the learned dynamics with an Euler-type scheme).
\end{itemize}
Each term will then be controlled by a different argument:
the first term is model-specific (and uses the explicit form of the interpolation/smoothing path),
the second term is a stability estimate driven by a one-sided Lipschitz bound,
and the third term is precisely the object estimated in Section~\ref{sec:discretizationerror} by our abstract discretization results.
In this sense, the present lemma is the interface between learning error and numerical error. 

\begin{lemma}
\label{lemma:sampling_error_sde}
Let $0<\tau<T$ and let $(b_t)_{t\in[0,T]}$ be a   matrix-valued function
satisfying
$B := \sup_{t\in[0,T]} \|b_t\| < \infty.$
Let $X$ be the (target) solution of the SDE
\[
dX_t = a_t(X_t) dt + b_t dW_t,\qquad t\in[0,T],
\]
with $\text{Law}(X_T)=p^\star$ the target distribution.
Define the drift approximation error along the true process:
\begin{equation}
\label{eq:eps_drift}
\epsilon_{\mathrm{drift}}(t) := \bigl\|a_t(X_t)-\hat a_t(X_t)\bigr\|_{L^2}.
\end{equation}
Let $\hat X$ be the (learned) solution of
\[
d\hat X_t = \hat a_t(\hat X_t) dt + b_t dW_t,\qquad t\in[0,\tau],
\]
and let $0=t_0<\dots<t_N=\tau$ be a grid with steps $h_k=t_{k+1}-t_k$, and 
$\bar X$ be the Euler-Maruyama scheme for the learned SDE with exact Gaussian increments:
\[
\bar X_{k+1} = \bar X_k + h_k \hat a_{t_k}(\bar X_k) + \xi_{k+1},\]
where $
\xi_{k+1}:=\int_{t_k}^{t_{k+1}} b_t dW_t$ and $\bar X_0 = \hat{X}_0$. Assume moreover that $\hat a$ is one-sided Lipschitz with constant
$\hat L_t$. Then the terminal sampling error admits the bound
\begin{align*}
W_2 \bigl(p^\star,\text{Law}(\bar X_N)\bigr)
\le& W_2 \bigl(\text{Law}(X_T),\text{Law}(X_\tau)\bigr)+
\int_0^\tau
\exp \Bigl(\int_t^\tau \hat L_s ds\Bigr) 
\epsilon_{\mathrm{drift}}(t) dt+ W_2\bigl(\text{Law}(\hat{X}_\tau),\text{Law}(\bar X_N)\bigr).
\end{align*}
\end{lemma}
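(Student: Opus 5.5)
The plan is a two-fold application of the triangle inequality for $W_2$, followed by a synchronous-coupling stability estimate for the middle term. Since $\mathrm{Law}(X_T)=p^\star$, we first write
\[
W_2\bigl(p^\star,\mathrm{Law}(\bar X_N)\bigr)\le W_2\bigl(\mathrm{Law}(X_T),\mathrm{Law}(X_\tau)\bigr)+W_2\bigl(\mathrm{Law}(X_\tau),\mathrm{Law}(\hat X_\tau)\bigr)+W_2\bigl(\mathrm{Law}(\hat X_\tau),\mathrm{Law}(\bar X_N)\bigr).
\]
The first and last terms appear verbatim in the statement, so it remains to show
\[
W_2\bigl(\mathrm{Law}(X_\tau),\mathrm{Law}(\hat X_\tau)\bigr)\le \int_0^\tau \exp\Bigl(\int_t^\tau\hat L_s ds\Bigr)\epsilon_{\mathrm{drift}}(t)dt.
\]
Here we take $\hat X_0=X_0$. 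This is implicit in the statement, which imposes no initialization error, or we couple them optimally if they differ.

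For the middle term we use a synchronous coupling: $X$ and $\hat X$ are driven by the same Brownian motion $W$ from the same initial condition. Set $e_t:=X_t-\hat X_t$. The noise cancels, because $b_t$ is deterministic and identical in both equations, so $e$ is absolutely continuous with
\[
\dot e_t=a_t(X_t)-\hat a_t(\hat X_t)=\bigl(\hat a_t(X_t)-\hat a_t(\hat X_t)\bigr)+\bigl(a_t(X_t)-\hat a_t(X_t)\bigr).
\]
The point of this splitting is that the mismatch is evaluated along the true process $X_t$, which matches the definition of $\epsilon_{\mathrm{drift}}$. Using the one-sided Lipschitz property of $\hat a$ (not of $a$), we get pathwise
\[
\tfrac{d}{dt}\tfrac12\|e_t\|^2\le \hat L_t\|e_t\|^2+\|e_t\|\,\|a_t(X_t)-\hat a_t(X_t)\|.
\]

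The main obstacle is passing to $L^2$ without losing the exponential weight. One might differentiate $\|e_t\|$ directly where it is nonzero, but the cleaner route works with $\phi_t:=\E\|e_t\|^2$. Taking expectations and applying Cauchy–Schwarz gives
\[
\tfrac12\phi_t'\le \hat L_t\phi_t+\sqrt{\phi_t}\,\epsilon_{\mathrm{drift}}(t).
\]
To handle the nondifferentiability of the square root at $0$, we set $\psi_t:=\sqrt{\phi_t+\eta}$ with $\eta>0$. Then $\psi_t'\le \hat L_t^+\eta/\psi_t+\hat L_t\psi_t+\epsilon_{\mathrm{drift}}(t)$, or more carefully one applies the bound to $\phi_t+\eta e^{2\int_0^t\hat L}$. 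Grönwall then yields
\[
\psi_\tau\le \sqrt{\eta}\,e^{\int_0^\tau \hat L}+\int_0^\tau e^{\int_t^\tau\hat L_s ds}\epsilon_{\mathrm{drift}}(t)dt,
\]
and letting $\eta\to0$ gives $\|e_\tau\|_{L^2}\le \int_0^\tau e^{\int_t^\tau\hat L_s ds}\epsilon_{\mathrm{drift}}(t)dt$.

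Since $(X_\tau,\hat X_\tau)$ is a coupling of the two laws, this bounds the middle $W_2$ term and concludes the proof. Routine points we assume throughout are the existence of strong solutions and the integrability needed to exchange expectation and derivative; if necessary these can be handled by a localization or stopping-time argument.
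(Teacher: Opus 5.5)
Your proposal is correct and follows the paper's proof step for step: two triangle inequalities, a synchronous coupling with common noise and initial condition, and the splitting $a_t(X_t)-\hat a_t(\hat X_t)=(\hat a_t(X_t)-\hat a_t(\hat X_t))+(a_t(X_t)-\hat a_t(X_t))$, so that only the one-sided Lipschitz property of $\hat a$ is used. These are followed by Cauchy--Schwarz and the nonlinear Gr\"onwall step, which your regularization $\phi_t+\eta e^{2\int_0^t\hat L_s ds}$ makes rigorous; your first variant should carry $\hat L_t^-$ rather than $\hat L_t^+$, but the second variant is fine.

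One small caveat: the stated bound genuinely requires $\hat X_0\overset{d}{=}X_0$, as the paper assumes. Coupling different initial laws optimally would add a term $e^{\int_0^\tau\hat L_s ds}W_2(\mathrm{Law}(X_0),\mathrm{Law}(\hat X_0))$ that does not appear in the statement.
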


The proof of Lemma~\ref{lemma:sampling_error_sde} can be found in Section~\ref{sec:lemma:sampling_error_sde}. This result provides the error decomposition used for the final sampling results in Corollaries \ref{cor:lipman_sampling_error_log2_over_N} and \ref{cor:diffusion_sde_sampling_logN}.

As emphasized by \cite{beyler2025convergence}, if one wants the drift-learning error to be measured along the true trajectory $X_t$ (rather than along the learned/discrete trajectory), then the discretization term must be written for the learned dynamics. Controlling the propagated error through this decomposition then requires regularity assumptions on the learned drift $\hat{a}_t$. Therefore we are going to assume that $\hat{a}_t$ satisfies
the same structural regularity and moment properties as $a_t$ in the sense of the following definition.
\begin{definition}\label{defi:sameassum}
We say that $(\hat{a}_t)_{t\in[0,\tau]}$ satisfies the same regularity properties as $(a_t)_{t\in[0,\tau]}$ if there exists a dimension-free constant $A>0$ such that for all
$t\in[0,\tau]$ and all $x,y\in\R^d$:
$$
\langle x-y,  \hat a_t(x)-\hat a_t(y)\rangle \le A \langle x-y,  a_t(x)-a_t(y)\rangle, \qquad \|\hat{a}_t(x)-\hat a_t(y)\|\leq A \|a_t(x)-a_t(y)\|,$$
and $$\|\partial_t \hat a_t(x)\|\leq A \|\partial_t a_t(x)\|, \qquad \|\hat  a_t(x)\|\leq A \|a_t(x)\|.$$
\end{definition}

\subsubsection{Lipman and Stochastic-interpolant flow matching settings}
We now specialize the general sampling decomposition of Lemma~\ref{lemma:sampling_error_sde} to the ODE samplers arising from flow matching.
The key point is that, once the regularity estimates of Section~\ref{sec:lipreguv} are available, the argument is the same in the
Lipman and Stochastic-interpolant settings.

\begin{corollary}
\label{cor:lipman_sampling_error_log2_over_N}
Let $p^\star=\exp(-u+a)$ be an $(\alpha,\beta,K)$-weakly log-concave measure such that
for $m:=\arg\min_{\mathbb R^d} u$ we have $\|m\|\le A\sqrt d.$
Let $(X_t)_{t\in[0,\tau]}$ be the solution of the ODE
\[
\dot X_t=v_t(X_t),\qquad X_0\sim \mathcal{N}(0,\text{Id}),
\]
where the vector field $v_t$ satisfies either:
\begin{itemize}
    \item[-]the Lipman flow matching setting of Assumption~\ref{assum:lipman} or
    \item[-]the Stochastic-interpolant flow matching setting of Assumption~\ref{assum:sto-int}.
\end{itemize}
Assume moreover that the schedule hypotheses of Assumptions~\ref{assum:lipmanliptime}
hold with $f'\in L^\infty(0,1)$. Let $(\hat v_t)_{t\in[0,\tau]}$ be a (learned) drift defined on $[0,\tau]$ and assume that it satisfies
the same regularity  properties as $(v_t)_t$ in the sense of Definition~\ref{defi:sameassum}.
Define the drift learning error
\[
\varepsilon_{\mathrm{drift}}(t)
:= \bigl\|v_t(X_t)-\hat v_t(X_t)\bigr\|_{L^2},\qquad t\in[0,\tau]
\]
and suppose that $\int_0^\tau \varepsilon_{\rm drift}(t)\leq A\sqrt{d}$.
For an integer $N\ge 2$, le $p$ be the exponent from Assumption \ref{assum:lipmanlip}, set $q:=p\wedge 1$ and $
\tau:= 1-\Bigl(\frac{\log^2 N}{N}\Bigr)^{1/q}$
and consider the geometric grid with parameters $(\tau,1,N)$ of Definition~\ref{def:geometricgrid}.
Let $(\bar X_k)_{k=0}^N$ be the explicit Euler scheme driven by $\hat a$:
\[
\bar X_{k+1} = \bar X_k + h_k \hat v_{t_k}(\bar X_k),\qquad \bar X_0\stackrel{d}=X_0.
\]
Then, for $N$ large enough,
there exists a dimension-free constant $C>0$ such that
\[
W_2\bigl(p^\star,\text{Law}(\bar X_N)\bigr)
 \le 
C\frac{\sqrt d}{N}\log^2 N
 + 
C\int_0^\tau \varepsilon_{\mathrm{drift}}(t) dt.
\]
\end{corollary}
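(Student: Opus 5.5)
We apply Lemma~\ref{lemma:sampling_error_sde} in the ODE case ($b_t\equiv 0$, $T=1$, $a_t=v_t$, $\hat a_t=\hat v_t$, $\hat X_0=X_0$). This splits $W_2(p^\star,\mathrm{Law}(\bar X_N))$ into an early-stopping term $W_2(\mathrm{Law}(X_1),\mathrm{Law}(X_\tau))$, a propagated learning term $\int_0^\tau \exp(\int_t^\tau \hat L_s ds)\varepsilon_{\rm drift}(t)dt$, and a discretization term $W_2(\mathrm{Law}(\hat X_\tau),\mathrm{Law}(\bar X_N))$ for the learned ODE. We bound the three terms separately.

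\textbf{Learning term.} By Definition~\ref{defi:sameassum}, $\langle x-y,\hat v_t(x)-\hat v_t(y)\rangle\le A\langle x-y,v_t(x)-v_t(y)\rangle\le A\sup_z\lambda_{\max}(\nabla v_t(z))_+\|x-y\|^2$. So we may take $\hat L_t=A\sup_x\lambda_{\max}(\nabla v_t(x))_+$. We would like $\int_0^1\hat L_t\le C$, and Corollaries~\ref{coro:lipmafm} and~\ref{coro:notlipmafm} give this for the signed quantity. The positive part needs a small extra argument. On $[t_0,1)$, Theorem~\ref{theo:onesidedlip2} bounds $\lambda_{\max}$ by a nonnegative integrable majorant minus a dissipative term, so the majorant controls the positive part. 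On $(0,t_0)$, Corollary~\ref{cor:lipman-global} (or~\ref{coro:global-Lipschitz-v}) gives $|\lambda_{\max}|\le C$. Together these give a dimension-free factor $e^C$, and the learning term is at most $C\int_0^\tau\varepsilon_{\rm drift}$.

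\textbf{Early stopping.} We couple $X_\tau=f_\tau Y+\bar g_\tau\xi$ with $X_1=Y$. This gives $W_2\le |1-f_\tau|\,\|Y\|_{L^2}+\bar g_\tau\sqrt d$. Weak log-concavity with $\|\arg\min u\|\le A\sqrt d$ gives $\|Y\|_{L^2}\lesssim\sqrt d$. Assumption~\ref{assum:lipmanliptime} gives $\bar g_\tau\lesssim(1-\tau)^p$, and $f'\in L^\infty$ gives $1-f_\tau\lesssim 1-\tau$. Hence the term is $\lesssim\sqrt d(1-\tau)^{q}=\sqrt d\log^2N/N$ by the choice of $\tau$.

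\textbf{Discretization.} We apply Proposition~\ref{prop:geom_mesh_general} to $\hat v$. Definition~\ref{defi:sameassum} transfers the envelopes of $v$ up to the factor $A$:
\begin{itemize}
\item $C_t\lesssim(1-t)^{-1}$ from Corollaries~\ref{cor:lipman-global} and~\ref{coro:global-Lipschitz-v};
\item $M_t\lesssim(1-t)^{-2}$ from Corollaries~\ref{coro:dtv_rate} and~\ref{coro:timeestimatesi};
\item $F_t\lesssim(1-t)^{-1}$, since $v_t(0)\lesssim\sqrt d/(1-t)$ via \eqref{eq:vf} and posterior-mean bounds, combined with the Lipschitz bound;
\item the one-sided bound from the learning step.
\end{itemize}
These match the hypotheses of Proposition~\ref{prop:geom_mesh_general}. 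Its proof is carried out for the learned ODE $\hat X$, exactly as in Proposition~\ref{prop:geomeshLIp}.

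It remains to check $\sup_k\|\bar X_k\|_{L^2}\le A\sqrt d$, which we regard as the main obstacle. We would prove it by induction. Use $\|\bar X_{k+1}\|\le\|\bar X_k\|+h_kF_{t_k}(\sqrt d+\|\bar X_k\|)$ with $h_kF_{t_k}\lesssim 1-r$, which is valid because the geometric grid has $h_k=(1-r)(1-t_k)$. Then $\prod_k(1+C(1-r))\le e^{CN(1-r)}$. Since $N(1-r)=\log(1/(1-\tau))=q^{-1}\log(N/\log^2N)$, this factor is only polynomial in $N$. Avoiding it requires instead comparing with the continuous flow, whose moments are $\lesssim\sqrt d$ because $\hat X_t$ stays close to $X_t\sim p_t$. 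That comparison uses the discretization bound itself in a bootstrap, and closes once $h_{\max}\log(1/(1-\tau))\le\bar h$. We verify this using
\[
h_{\max}=1-r\approx \frac{\log(1/(1-\tau))}{N}\asymp\frac{\log N}{N},
\]
so that $h_{\max}\log(1/(1-\tau))\asymp\log^2N/N\to 0$ for $N$ large. The same bootstrap requires $\int_0^\tau\varepsilon_{\rm drift}\le A\sqrt d$ to keep $\|\hat X_t\|_{L^2}\lesssim\sqrt d$.

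The discretization term is then at most
\[
C\sqrt d\,h_{\max}\log\Big(\frac1{1-\tau}\Big)\lesssim\sqrt d\,\frac{\log^2N}{N}.
\]
Summing the three bounds gives the claim.
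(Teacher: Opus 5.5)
Your proposal is correct and follows the paper's proof. It uses the same three-term split from Lemma~\ref{lemma:sampling_error_sde}, the same coupling of $X_1=Y$ with $X_\tau=f_\tau Y+\bar g_\tau\xi$ for early stopping, and Proposition~\ref{prop:geom_mesh_general} applied to $\hat v$, with the envelopes transferred through Definition~\ref{defi:sameassum} and the moment bound obtained from $\|\hat X_t-X_t\|_{L^2}\le C\int_0^\tau\varepsilon_{\rm drift}\le C\sqrt d$.

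The remaining differences are minor. Your positive-part detour is unnecessary: multiplying by $A>0$ already gives the signed constant $\hat L_t=A\sup_x\lambda_{\max}(\nabla v_t(x))$. Your bootstrap on the discrete recursion of Proposition~\ref{prop:W2_discrete} replaces the paper's Gr\"onwall argument on a continuous interpolation (Step~3 of Proposition~\ref{prop:geomeshLIp}). It closes by induction on $k$ because $U_k$ depends linearly on $\|\bar X_k\|_{L^2}$ alone, and it avoids having to build a continuous-time interpolation of the Euler iterates.
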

The proof of Corollary~\ref{cor:lipman_sampling_error_log2_over_N} can be found in Section~\ref{sec:cor:lipman_sampling_error_log2_over_N}. This corollary gives (to our knowledge) the first \emph{linear-in-$N$} Wasserstein sampling bound for flow matching
(up to logarithmic factors) under the weak log-concavity assumptions of Definition~\ref{defi:weakconcave}.
Compared with prior stability-based analyses, this removes the polynomial amplification in $N$ of the learning error.
The proof is uniform in the Lipman and Stochastic-interpolant settings; only the regularity inputs from Section~\ref{sec:lipreguv} differ.

\subsubsection{Diffusion models setting}
We now specialize the general sampling decomposition of Lemma~\ref{lemma:sampling_error_sde} to the reverse-time Ornstein--Uhlenbeck sampler. 
\begin{corollary}
\label{cor:diffusion_sde_sampling_logN}
Let $p^\star=\exp(-u+a)$ be an $(\alpha,\beta,K)$-weakly log-concave density on $\mathbb{R}^d$ with
$\alpha,K>0$, $\beta\in(0,1]$, and let $m:=\arg\min_{\mathbb{R}^d} u$ satisfy $\|m\|\le A\sqrt d$.
Let $(q_s)_{s\ge 0}$ be the Ornstein--Uhlenbeck (OU) smoothing path of $p^\star$, i.e.
\[
q_s = \text{Law} \Big(e^{-s}Y+\sqrt{1-e^{-2s}} \xi\Big),
\qquad Y\sim p^\star,\ \xi\sim \mathcal N(0,I_d),\ Y\perp \xi.
\]
Fix an integer $N\in \mathbb{N}_{>0}$ large enough and set $
T := \log N,
\tau := T-\frac{1}{N^2}$ and consider the geometric grid with parameters $(\tau,T,N)$ of Definition~\ref{def:geometricgrid}.
Define the reversed OU interpolation on $[0,T]$ by $p_t := q_{T-t}$, let $s_t(x):=\nabla_x\log p_t(x)$,
and define the reverse drift $a_t(x):=x+2s_t(x)$.
Let $(X_t)_{t\in[0,\tau]}$ solve the reverse SDE
\[
dX_t = a_t(X_t) dt + \sqrt{2} dW_t,
\qquad X_0 \sim q_T.
\]
Let $(\hat s_t)_{t\in[0,\tau]}$ be a function satisfying
the same structural regularity  properties as $s_t$ on $[0,\tau]$ in the sense of Definition~\ref{defi:sameassum} and  additionally $\|\Delta \hat s_t(x)\|\leq A \|\Delta  s_t(x)\|$ and $\nabla \hat s_t$ is symmetric.
Define the drift learning error
\[
\varepsilon_{\rm drift}(t):=\|s_t(X_t)-\hat s_t(X_t)\|_{L^2},\qquad t\in[0,\tau].
\]
and suppose that $\int_0^\tau \varepsilon_{\rm drift}(t)\leq A\sqrt{d}$. Let
\[
d\hat X_t = (\hat X_t+ 2\hat s_t(\hat X_t)) dt + \sqrt{2} dW_t,
\qquad \hat X_0 \sim \mathcal{N}(0,\text{Id}),
\]
and consider the Euler-Maruyama scheme with exact Gaussian increments
\[
\bar X_{k+1} = \bar X_k + h_k (\bar X_k+2\hat s_{t_k}(\bar X_k)) + \sqrt{2}\big(W_{t_{k+1}}-W_{t_k}\big),
\qquad \bar X_0 = \hat X_0.
\]
Then, there exists a constant $C>0$ depending only on $(\alpha,\beta,K,A)$ such that
\[
W_2\bigl(p^\star,\text{Law}(\bar X_N)\bigr) \le  \frac{C\sqrt d}{N}\log^3 N+
C\int_0^\tau \varepsilon_{\rm drift}(t) dt.
\]
\end{corollary}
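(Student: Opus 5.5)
The plan is to apply the decomposition of Lemma~\ref{lemma:sampling_error_sde} with $b_t\equiv\sqrt2$, the true process $X$ on $[0,T]$ with $\text{Law}(X_T)=q_0=p^\star$, and the learned drift $\hat a_t=x+2\hat s_t$. The learned process starts from $\hat X_0\sim\mathcal N(0,\mathrm{Id})$ rather than $q_T$, so an initialization term appears. I would absorb it by adding $\exp(\int_0^\tau \hat L_s ds)\,W_2(q_T,\gamma_d)$ through the same Grönwall coupling used in the Lemma, with synchronous noise and $\hat X_0,X_0$ optimally coupled. The bound then reads
\[
W_2(p^\star,\text{Law}(\bar X_N))\le W_2(q_0,q_{T-\tau})+e^{\int_0^\tau\hat L}\,W_2(q_T,\gamma_d)+2\int_0^\tau e^{\int_t^\tau \hat L_s ds}\varepsilon_{\rm drift}(t)\,dt+W_2(\text{Law}(\hat X_\tau),\text{Law}(\bar X_N)).
\]
I will bound each of the four terms in turn.

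\textbf{Stability weight.} After the reparametrization $t'=e^{-(T-t)}$ that maps the reversed OU path to the rescaled diffusion setting of Assumption~\ref{assum:diffusion}, the one-sided constant of $a_t=x+2s_t$ is $\sup_x\lambda_{\max}(\mathrm{Id}+2\nabla s_t)$. This is at most $2\sup_x\lambda_{\max}(\mathrm{Id}+\nabla s_t)-1$. Corollary~\ref{coro:score_osl_selfcontained}, after the change of variables $dt'/t'=dt$, gives $\int_z^\tau \sup_x\lambda_{\max}(\mathrm{Id}+\nabla s_t)\,dt\le C$. Hence $\int_z^\tau L_t\le C$ uniformly in $z$ and $T$, and Definition~\ref{defi:sameassum} transfers this to $\hat L$ up to the factor $A$. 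The $-1$ part even yields a contraction $e^{-(\tau-z)}$ on early times, but I do not need it. This is the point where dimension-freeness enters.

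\textbf{Initialization and early stopping.} For the initialization term, the synchronous coupling $e^{-T}Y+\sqrt{1-e^{-2T}}\xi$ versus $\xi$ gives
\[
W_2(q_T,\gamma_d)\le e^{-T}\|Y\|_{L^2}+(1-\sqrt{1-e^{-2T}})\sqrt d\lesssim e^{-T}\sqrt d .
\]
Here $\|Y\|_{L^2}\lesssim\sqrt d$ because $\|m\|\le A\sqrt d$ and weak log-concavity gives sub-Gaussian concentration around $m$. With $T=\log N$ this term is $C\sqrt d/N$. For early stopping, with $\delta=T-\tau=N^{-2}$,
\[
W_2(q_0,q_\delta)\le(1-e^{-\delta})\|Y\|_{L^2}+\sqrt{1-e^{-2\delta}}\sqrt d\lesssim\sqrt{\delta d}=\sqrt d/N .
\]
The learning term is at most $2e^{AC}\int_0^\tau\varepsilon_{\rm drift}$.

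\textbf{Discretization (main obstacle).} Proposition~\ref{theo:dicdiff} is stated for the true drift started at $q_T$, while here I need it for $\hat a$ started at $\gamma_d$. My plan is to re-run Theorem~\ref{theo:geom_mesh_generaldiff} directly for $\hat a$, checking its hypotheses one by one:
\begin{itemize}
\item The envelopes $F,C,M$ and the Laplacian bound hold for $\hat a$ by Definition~\ref{defi:sameassum} together with the extra assumptions ($\|\Delta\hat s_t\|\le A\|\Delta s_t\|$, $\nabla\hat s_t$ symmetric).
\item The corresponding true bounds come from Corollaries~\ref{coro:difflipbound} and~\ref{coro:timeregudiffu}, written in the $t$-variable with $\rho(t)=\min\{1,T-t\}$, together with the Laplacian bound established in the proof of Proposition~\ref{theo:dicdiff}.
\item The moment condition $\sup_t\|\hat X_t\|_{L_2}\le C\sqrt d$ follows by comparing $\hat X_t$ with $X_t$, whose law is $p_t$ with second moment $\lesssim d$. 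The difference is controlled by the stability weight times $W_2(q_T,\gamma_d)+\int\varepsilon_{\rm drift}\le C\sqrt d$; this is exactly why the hypothesis $\int\varepsilon_{\rm drift}\le A\sqrt d$ is imposed.
\end{itemize}
With these in place, the geometric grid gives $h_{\max}=T(1-r)$ with $r=(\delta/T)^{1/N}$, so $h_{\max}\le T\log(T/\delta)/N\lesssim\log^2N/N$. The smallness condition $h_{\max}\le\bar h/\log(1/\delta)$ holds for $N$ large. Theorem~\ref{theo:geom_mesh_generaldiff} then yields
\[
C\sqrt d\,h_{\max}\bigl(T+\log(T/\delta)\bigr)\lesssim\sqrt d\,\log^3N/N .
\]
Summing the four contributions gives the claim.
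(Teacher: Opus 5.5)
Your proposal is correct and follows essentially the same route as the paper's proof. It uses the same three-term split: early stopping $W_2(q_0,q_{1/N^2})\lesssim\sqrt d/N$; learning error, with the initialization term $W_2(q_T,\gamma_d)\lesssim e^{-T}\sqrt d$ absorbed through the synchronous Gr\"onwall coupling; and discretization of the learned SDE, by re-verifying the hypotheses of Theorem~\ref{theo:geom_mesh_generaldiff} for $\hat a_t=x+2\hat s_t$ (including the moment bound on $\hat X_t$ obtained by comparison with $X_t$). The grid arithmetic $h_{\max}\le \frac{T}{N}\log\frac{T}{T-\tau}$, leading to $\log^3 N/N$, is also the same.

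Your explicit factor $2$ on the learning term is correct; the paper absorbs it into $C$. One step that both you and the paper pass over: Definition~\ref{defi:sameassum} is imposed on $\hat s$ rather than on $\hat a$, so the transfer is not simply ``up to the factor $A$''. It actually gives $\hat L_t\le 2A\sup_x\lambda_{\max}(\mathrm{Id}+\nabla s_t)+(1-2A)$. Hence $\int_z^\tau\hat L_t\,dt$ is bounded independently of $T=\log N$ once $A\ge 1/2$, while for $A<1/2$ an extra term $(1-2A)(\tau-z)$ remains.
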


The proof of Corollary~\ref{cor:diffusion_sde_sampling_logN} can be found in Section~\ref{sec:cor:diffusion_sde_sampling_logN}. This result gives a sampling bound for diffusion models under the same weak log-concavity assumptions used throughout the paper, with no a priori Lipschitz assumption on the exact score. The learning term is measured along the \emph{true} reverse process, $\|s_t(X_t)-\hat s_t(X_t)\|_{L^2(pt)}$, which matches the natural denoising score-matching risk and cleanly separates learning and discretization effects. Up to logarithmic factors, the leading discretization term has the sharp $\sqrt d/N$ scaling; an extra logarithm here comes from using a single geometric grid on the whole interval (a two-stage arithmetic-geometric grid could improve it as shown in \cite{arsenyan2025assessing}).

\section{Application to functional inequalities}
In this section we explain how the integrability of the maximal eigenvalue
\(\lambda_{\max}(\nabla v_t)\) obtained in Section~\ref{sec:onesidedlipreguv} yields a Lipschitz transport map
\(T:\R^d\to\R^d\) pushing the standard Gaussian measure \(\gamma_d\) onto the weakly log-concave
measure $p^\star$, and how to transfer classical functional inequalities from
\(\gamma_d\) to $p^\star$.

\subsection{From $L^1$ $\lambda_{\max}(\nabla v_t)$ to Lipschitz flow maps}\label{sec:hjkroezpm}
Let \(X_t(x)\) be the deterministic flow generated by \((v_t)_{t\in[0,1]}\), i.e.
\begin{align}\label{eq:hfjkddhfjiesklpdsmms}
    \partial_t X_t(x)=v_t(X_t(x)),\quad X_0(x)=x.
\end{align}
A crucial point is that to control the Lipschitz constant of \(X_t\), one does not need a bound on the full operator norm
\(\|\nabla v_t\|_{\mathrm{op}}\). It is enough to control the maximal eigenvalue of the symmetric part of \(\nabla v_t\),
denoted \(\lambda_{\max}(\nabla v_t)\) as this is  the quantity that governs the expansion of the flow.

\begin{lemma}[Lipschitz bound for the flow map]
\label{lem:flow-lip}
Let $X_t$ be the solution to \eqref{eq:hfjkddhfjiesklpdsmms}
and assume there exists a   function \(\theta:[0,1]\to\R\) such that, for every \(t\in[0,1]\) and every \(x\in\R^d\),
\[
\lambda_{\max}\big(\nabla v_t(x)\big)  \le  \theta_t.
\]
Then, for all \(t\in[0,1]\),
\[
\|\nabla X_t(x)\|_{\mathrm{op}}
 \le 
\exp  \Big(\int_0^t \theta_s   ds\Big),
\]
and in particular the map \(X_1\) is \(L\)-Lipschitz with
\[
L := \exp  \Big(\int_0^1 \theta_s   ds\Big).
\]
\end{lemma}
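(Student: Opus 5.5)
We will differentiate the flow equation \eqref{eq:hfjkddhfjiesklpdsmms} in the initial condition $x$. Writing $J_t(x):=\nabla X_t(x)$, the Jacobian solves the linear matrix ODE
\[
\partial_t J_t(x)=\nabla v_t\big(X_t(x)\big) J_t(x),\qquad J_0(x)=\mathrm{Id},
\]
which is justified as soon as $v_t$ is $C^1$ in space with a locally integrable (in time) bound on $\nabla v_t$ on the region visited by the flow. In our settings this follows from the two-sided estimates of Section~\ref{sec:lipschitz} on $[0,1-\epsilon]$. Standard ODE theory (variational equation) then gives differentiability of $x\mapsto X_t(x)$.

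Fix $x$ and a vector $h\in\mathbb R^d$, and set $w_t:=J_t(x)h$. Then $\partial_t w_t=\nabla v_t(X_t(x))w_t$, so
\[
\frac{d}{dt}\|w_t\|^2=2\langle w_t,\nabla v_t(X_t(x))w_t\rangle=2\Big\langle w_t,\tfrac{\nabla v_t+\nabla v_t^\top}{2}(X_t(x))\,w_t\Big\rangle\le 2\theta_t\|w_t\|^2,
\]
using the convention $\lambda_{\max}(A)=\lambda_{\max}((A+A^\top)/2)$ from the notations section. Grönwall's lemma, applied in its absolutely continuous form since $\theta$ is only integrable, yields
\[
\|w_t\|\le \exp\Big(\int_0^t\theta_s\,ds\Big)\|h\|.
\]
Taking the supremum over unit $h$ gives the operator-norm bound on $\nabla X_t(x)$.

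For the Lipschitz property of $X_1$, we integrate along segments. For $x,y$, we have $X_t(y)-X_t(x)=\int_0^1\nabla X_t(x+s(y-x))(y-x)\,ds$, hence $\|X_t(y)-X_t(x)\|\le \exp(\int_0^t\theta)\|x-y\|$. Alternatively, and more robustly, we can skip differentiability altogether. Since $\lambda_{\max}(\nabla v_t)\le\theta_t$ everywhere, the mean value theorem along segments gives the one-sided condition $\langle v_t(a)-v_t(b),a-b\rangle\le\theta_t\|a-b\|^2$. Then
\[
\frac{d}{dt}\|X_t(x)-X_t(y)\|^2\le 2\theta_t\|X_t(x)-X_t(y)\|^2,
\]
and Grönwall gives the same conclusion directly.

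The main obstacle is the endpoint $t=1$, where $v_t$ may be singular (e.g.\ $\|\nabla v_t\|_{\mathrm{op}}\lesssim (1-t)^{-1}$ is not integrable). So the flow and $X_1$ must be defined as a limit. We first prove the bound for all $t<1$, where everything is classical. We then define $X_1:=\lim_{t\to1}X_t$: the limit exists, e.g.\ in $L^2(\gamma_d)$ or pointwise, using the linear-growth bound on $v_t$. Since the $X_t$ are uniformly $L$-Lipschitz with $L=\exp(\int_0^1\theta_s\,ds)$, and Lipschitz bounds pass to pointwise limits, $X_1$ is $L$-Lipschitz. This is the only point requiring care; the rest is the Grönwall computation above.
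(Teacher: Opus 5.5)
Your argument is correct and is essentially the paper's proof: differentiate the flow in $x$, set $w_t=\nabla X_t(x)h$, bound $\frac{d}{dt}\|w_t\|^2=2\langle w_t,\nabla v_t(X_t(x))w_t\rangle\le 2\theta_t\|w_t\|^2$ and apply Gr\"onwall. Your segment (or trajectory-wise one-sided Lipschitz) argument then spells out the passage to the Lipschitz bound on $X_1$, which the paper leaves implicit.

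The one slip is in your optional endpoint aside. The lemma does not need it, since it presupposes the flow on $[0,1]$. The linear-growth envelope $F_t\lesssim(1-t)^{-1}$ is not integrable, so it cannot yield existence of $\lim_{t\to1}X_t$. What works instead is conditional Jensen, $\|v_s\|_{L^2(p_s)}\le\|m_s'(Z)+\sigma_s'\xi\|_{L^2}$. This bound is integrable up to $t=1$ in all three models and hence gives convergence in $L^2(\gamma_d)$.
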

The proof of Lemma~\ref{lem:flow-lip} can be found in Section~\ref{sec:lem:flow-lip}. The results of
Section~\ref{sec:onesidedlipreguv} show that for a $(\alpha,\beta,K)$-weakly log-concave measure $p^\star$, the flow matching vector field \(v_t\) transporting the Gaussian towards \(p^\star\) satisfies a dimension-free one-sided estimate:
there exists a dimension-free constant \(C_{\mathrm{flow}}>0\) such that
\[
\int_0^1 \sup_{x\in\R^d} \lambda_{\max}\big(\nabla v_t(x)\big)  dt
 \le 
C_{\mathrm{flow}}.
\]
Applying Lemma~\ref{lem:flow-lip} with
\[
\theta_t := \sup_{x\in\R^d}\lambda_{\max}\big(\nabla v_t(x)\big)
\]
shows that the terminal flow map is globally Lipschitz with a dimension-free constant.

\begin{corollary}[Lipschitz transport map from flow matching]
\label{cor:Lip-transport}
Let $p^\star$ be an $(\alpha,\beta,K)$-weakly log-concave measure with $\alpha,K>0$ and $\beta\in (0,1]$.
Let \(v\) be any flow matching vector field satisfying either the conditions of Theorem~\ref{theo:onesidedlipinteg} or Theorem~\ref{theo:onesidedlipinteg0}, and define the final map \(T(x):=X_1(x)\), where \(X_t\) is the flow matching solution:
\begin{align*}
    \partial_t X_t(x)=v_t(X_t(x)),\quad X_0(x)=x.
\end{align*}
Then \(T\) satisfies
\[
T_{\#}\gamma_d = p^\star,
\qquad
\mathrm{Lip}(T) \le  e^{C_{\mathrm{flow}}},
\]
where \(C_{\mathrm{flow}}\) is independent of the dimension \(d\).
\end{corollary}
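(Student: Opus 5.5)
The proof combines three ingredients: the integrated one-sided bound of Theorem~\ref{theo:onesidedlipinteg} or Theorem~\ref{theo:onesidedlipinteg0}, the Grönwall estimate of Lemma~\ref{lem:flow-lip}, and a justification that the terminal map of the flow pushes $\gamma_d$ onto $p^\star$.

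\textbf{Lipschitz bound.} Set $\theta_t:=\sup_{x\in\R^d}\lambda_{\max}(\nabla v_t(x))$. Under the hypotheses of either theorem (taking $z=0$), we have $\int_0^1\theta_t\,dt\le C_{\mathrm{flow}}$, with $C_{\mathrm{flow}}$ depending only on the structural constants and not on $d$. For every $t<1$, Lemma~\ref{lem:flow-lip} then gives
\[
\mathrm{Lip}(X_t)\le \exp\Big(\int_0^t\theta_s\,ds\Big)\le e^{C_{\mathrm{flow}}}.
\]
Since $v_t$ is only controlled for $t<1$ (Corollary~\ref{cor:lipman-global} gives $\|\nabla v_t\|_{\mathrm{op}}\lesssim (1-t)^{-1}$, which is not integrable), I would not apply the lemma directly at $t=1$. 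Instead I would work on $[0,1-\epsilon]$, where $v_t$ is locally Lipschitz in space by Corollary~\ref{coro:lipregugeneral}, so the flow is well defined and $C^1$. I would define $T$ as a limit of $X_{1-\epsilon}$ as $\epsilon\to0$.

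\textbf{Existence of the limit.} The maps $X_t$ are uniformly $e^{C_{\mathrm{flow}}}$-Lipschitz, so it suffices to show that $X_t(x)$ converges pointwise as $t\uparrow1$. I would bound $\int_{1-\epsilon}^1\|v_t(X_t(x))\|\,dt$. Using the decomposition \eqref{eq:vf}, $\mathbb E[\|v_t(X_t)\|]$ is bounded by $\mathbb E\|v_t^c(Z,X_t)\|$, which is finite and integrable on $[0,1]$ because the schedules are $C^1$ a.e. with bounded derivatives in the regimes considered. This yields $L^1$-in-time integrability of $\|v_t(X_t(x))\|$ for $\gamma_d$-a.e.\ $x$, hence a.e.\ convergence of $X_t(x)$. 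The uniform Lipschitz bound upgrades this to convergence everywhere, by density of a full-measure set and Arzelà--Ascoli. The limit $T$ is then $e^{C_{\mathrm{flow}}}$-Lipschitz as a pointwise limit of uniformly Lipschitz maps.

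\textbf{Pushforward.} Since $v$ solves the continuity equation \eqref{eq:continuity_equation} with $p_0=\gamma_d$, and the flow is well posed on $[0,1-\epsilon]$, uniqueness for the continuity equation with locally Lipschitz velocity gives $(X_t)_\#\gamma_d=p_t$. As $t\to1$, $p_t=\mathrm{Law}(f_tY+\sigma_t\xi)$ (or $f_tY+\bar g_t\xi$) converges weakly to $p^\star$, since $f_1=1$ and $\sigma_1=0$. Meanwhile $X_t\to T$ pointwise, so $(X_t)_\#\gamma_d\to T_\#\gamma_d$ weakly by dominated convergence. Hence $T_\#\gamma_d=p^\star$.

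The main obstacle is the endpoint $t=1$: making the limiting map well defined and confirming the pushforward identity when the velocity field is singular there. The one-sided integrability is exactly what keeps the Lipschitz constant bounded through this singular layer.
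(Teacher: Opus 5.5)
Your argument is correct and follows the paper's route: the paper takes $\theta_t=\sup_x\lambda_{\max}(\nabla v_t(x))$, uses the integrated bound of Theorem~\ref{theo:onesidedlipinteg} or~\ref{theo:onesidedlipinteg0}, and applies Lemma~\ref{lem:flow-lip} directly up to $t=1$. It takes $T_\#\gamma_d=p^\star$ for granted from the continuity equation, so your limiting procedure at $t=1$ is extra rigor the paper does not supply. That extra layer has three points to tighten.

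(i) The theorems control $\int_z^1\theta_t\,dt$. Taking $z=0$ gives $\int_0^1\theta_t\,dt\le C_{\mathrm{flow}}$, but not $\int_0^t\theta_s\,ds\le C_{\mathrm{flow}}$ for $t<1$, since $\theta$ can be negative on parts of $[t,1]$. The uniform bound on $\mathrm{Lip}(X_t)$ that your density argument needs should be taken from the proofs. There, $\theta_t$ is dominated by a nonnegative function of integral at most $C$, and $\int_0^t\theta_s\,ds\to\int_0^1\theta_s\,ds$ then still yields $\mathrm{Lip}(T)\le e^{C_{\mathrm{flow}}}$.

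(ii) Time-integrability of $\E\|\dot X_t\|$ comes from monotonicity of the schedules, which gives $\int_0^1(|f_t'|+|g_t'|+|\sigma_t'|)\,dt<\infty$. It does not come from bounded derivatives, which fail for $\sigma_t=(1-t)^p\ell(t)$ with $p<1$. Also, passing from this to integrability along $\gamma_d$-a.e.\ trajectory uses $(X_t)_\#\gamma_d=p_t$ for $t<1$. That identity, from your last step, must therefore be established first.

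(iii) When $\sigma_0=0$, Corollary~\ref{coro:lipregugeneral} bounds $\|\nabla v_t\|_{\mathrm{op}}$ only by a quantity at least $|\sigma_t'|/\sigma_t$, which is not integrable at $t=0$. So it does not give a well-posed flow starting from $t=0$. In the stochastic-interpolant case, apply it instead to the reduced model $X_t=f_tY+\bar g_t\xi$ with $\bar g_t\ge\gamma$ near $0$, as in the proof of Corollary~\ref{coro:global-Lipschitz-v}. Alternatively, run the Gr\"onwall estimate on $\|X_t(x)-X_t(y)\|^2$ using the integrable one-sided bound, rather than on $\nabla X_t$.
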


\subsection{From Lipschitz flow maps to functional inequalities}
We now show how the dimension-free Lipschitz control of the terminal flow map obtained in Section~\ref{sec:hjkroezpm}
yields dimension-free functional inequalities for the target measure $p^\star$.
The argument is standard: Gaussian $\Psi$-Sobolev inequalities are stable under Lipschitz pushforwards.
\subsubsection{\texorpdfstring{$\Psi$}{Psi}-entropies and Lipschitz changes of variables}

Let us first recall the notion of \(\Psi\)-entropy following \cite{chafai2004entropies}.

\begin{definition}[\(\Psi\)-entropy]
Let \(I\subset\R\) be a closed interval and let
\(\Psi:I\to\R\) be twice differentiable. We say that \(\Psi\) is a
\emph{divergence} if the three functions \(\Psi\), \(\Psi''\) and \(-1/\Psi''\) are
convex on \(I\). Given a probability measure \(\nu(dx)=p(x) dx\) on \(\R^d\) and 
\(\zeta:\R^d\to I\) such that \(\int_{\R^d}\zeta(x)p(x) dx\in I\), the
\(\Psi\)-entropy of \(\zeta\) with respect to \(\nu\) is
\[
\text{Ent}_{\Psi,\nu}(\zeta)
:=
\int_{\R^d}\Psi(\zeta(x))p(x) dx
-
\Psi  \Big(\int_{\R^d}\zeta(x)p(x) dx\Big).
\]
\end{definition}

Typical examples are:
\begin{itemize}
\item \(\Psi(x)=x^2\) on \(I=\R\), for which \(\text{Ent}_{\Psi,p}(f)=\text{Var}_p(f)\);
\item \(\Psi(x)=x\log x\) on \(I=\R_+\), for which \(\text{Ent}_{\Psi,p}(g)=\text{Ent}_p(g)\)
  is the usual entropy.
\end{itemize}

The standard Gaussian measure \(\gamma_d\) satisfies (dimension-free) \(\Psi\)-Sobolev
inequalities: for every divergence \(\Psi\) and every smooth \(F:\R^d\to I\),
\begin{equation}
\label{eq:gaussian-Psi-Sobolev}
\text{Ent}_{\Psi,\gamma_d}(F)
 \le 
\frac12
\int_{\R^d}\Psi''(F(y)) |\nabla F(y)|^2 d\gamma_d(y).
\end{equation}
This is a particular case of the general \(\Phi\)-Sobolev inequalities for
diffusions satisfying a \(CD(1,\infty)\) curvature-dimension condition. We can now transfer \eqref{eq:gaussian-Psi-Sobolev} from \(\gamma_d\) to any
pushforward measure of the form \(p=T_{\#}\gamma_d\).

\begin{proposition}[\(\Psi\)-Sobolev inequality under a Lipschitz transport]
\label{prop:Psi-Sobolev-p}
Let \(T:\R^d\to\R^d\) be \(C^1\) with \(\text{Lip}(T)\le L\) and
\(T_{\#}\gamma_d = p\). Let \(\Psi\) be a divergence on \(I\) and let
\(\zeta:\R^d\to I\) be \(C^1\) with \(\int\zeta dp\in I\).
Then
\[
\text{Ent}_{\Psi,p}(\zeta)
 \le 
\frac{L^2}{2}
\int_{\R^d}\Psi''(\zeta(x)) |\nabla\zeta(x)|^2 p(x) dx.
\]
\end{proposition}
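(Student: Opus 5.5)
The plan is to apply the Gaussian $\Psi$-Sobolev inequality \eqref{eq:gaussian-Psi-Sobolev} to the composed function $F:=\zeta\circ T$ and then change variables through $T_{\#}\gamma_d=p$.

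\textbf{Step 1: the left-hand side transfers exactly.} Since $T_{\#}\gamma_d=p$, we have $\int \zeta\,dp=\int \zeta\circ T\,d\gamma_d$, and this value lies in $I$ by hypothesis. Likewise $\int\Psi(\zeta)\,dp=\int \Psi(\zeta\circ T)\,d\gamma_d$. Hence $\text{Ent}_{\Psi,p}(\zeta)=\text{Ent}_{\Psi,\gamma_d}(F)$. Moreover $F$ is $C^1$ and takes values in $I$, so it is admissible in \eqref{eq:gaussian-Psi-Sobolev}.

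\textbf{Step 2: the gradient term.} By the chain rule, $\nabla F(y)=\nabla T(y)^\top\nabla\zeta(T(y))$. Because $T$ is $C^1$ with $\text{Lip}(T)\le L$, we have $\|\nabla T(y)\|_{\mathrm{op}}\le L$ for every $y$. Therefore $|\nabla F(y)|^2\le L^2|\nabla\zeta(T(y))|^2$. Using $\Psi''\ge 0$ (convexity of $\Psi$), this gives
\[
\tfrac12\int\Psi''(F)|\nabla F|^2\,d\gamma_d\le \tfrac{L^2}{2}\int \Psi''(\zeta\circ T)\,|\nabla\zeta\circ T|^2\,d\gamma_d .
\]
Pushing forward once more by $T$, the right-hand side equals $\tfrac{L^2}{2}\int\Psi''(\zeta)|\nabla\zeta|^2\,p\,dx$. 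Combining this with Step 1 and \eqref{eq:gaussian-Psi-Sobolev} yields the claim.

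\textbf{Main obstacle.} The only delicate point is applying \eqref{eq:gaussian-Psi-Sobolev} to $F$, which is merely $C^1$ rather than smooth and possibly unbounded. I would handle this by the standard approximation argument: truncate and mollify $F$, apply the inequality to the approximants, and pass to the limit. The left-hand side converges by Fatou's lemma or dominated convergence together with continuity of $\Psi$. If the right-hand side is infinite there is nothing to prove. Alternatively, I would take \eqref{eq:gaussian-Psi-Sobolev} as valid for all $C^1$ (locally Lipschitz) functions with the integrals well defined, which is how it is usually stated.
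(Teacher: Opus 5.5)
Your proposal is correct and follows the paper's proof exactly: set $F=\zeta\circ T$, use $T_{\#}\gamma_d=p$ to identify $\text{Ent}_{\Psi,p}(\zeta)=\text{Ent}_{\Psi,\gamma_d}(F)$, apply the Gaussian $\Psi$-Sobolev inequality, bound $|\nabla F|\le L|\nabla\zeta\circ T|$, and change variables back. Your explicit use of $\Psi''\ge 0$ and your remark on extending the Gaussian inequality to merely $C^1$ functions fill in details the paper leaves implicit; they do not constitute a different route.
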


The proof of Proposition~\ref{prop:Psi-Sobolev-p} can be found in Section~\ref{sec:prop:Psi-Sobolev-p}.

\subsubsection{Functional inequalities for weakly log concave measures}
We now combine Proposition~\ref{prop:Psi-Sobolev-p} with the dimension-free Lipschitz transport map obtained in Corollary~\ref{cor:Lip-transport}.

\begin{theorem} 
\label{theo:functional-ineq-flowmatching}
Let $p^\star$ be an $(\alpha,\beta,K)$-weakly log-concave measure with $\alpha,K>0$ and $\beta\in (0,1]$. Then there exists $C>0$ independent of the dimension such that:
for any divergence \(\Psi\) and smooth \(f:\R^d\to I\),
\[
\text{Ent}_{\Psi,p^\star}(f)
 \le 
C
\int_{\R^d}\Psi''(f(x)) |\nabla f(x)|^2 dp^\star(x).
\]
In particular,
\[
\text{Var}_{p^\star}(f)\le 2C\int|\nabla f|^2 dp^\star,
\qquad
\text{Ent}_{p^\star}(f^2)\le 4C\int|\nabla f|^2 dp^\star
\]
so $p^\star$ satisfies the Poincaré inequality and the Log-Sobolev inequality.
\end{theorem}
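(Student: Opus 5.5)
The plan is to combine Corollary~\ref{cor:Lip-transport} with Proposition~\ref{prop:Psi-Sobolev-p}. First I fix a concrete flow-matching construction for the given $p^\star$. The simplest choice is the Lipman setting of Assumption~\ref{assum:lipman} with schedules $f_t=t$ and $\sigma_t=1-t$. For these schedules the last bullet holds with a universal $\gamma\in(0,1)$: we have $\sigma_{1/2}=f_{1/2}=1/2$, and $f_{1-\gamma}^2=(1-\gamma)^2\ge \gamma=\sigma_{1-\gamma}$ for $\gamma$ small, for instance $\gamma=1/4$.

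Corollary~\ref{coro:lipmafm} with $z=0$ then gives
\[
\int_0^1 \sup_{x}\lambda_{\max}(\nabla v_t(x))\,dt\le C_{\mathrm{flow}},
\]
where $C_{\mathrm{flow}}$ depends only on $(\alpha,\beta,K)$, since $\gamma$ and $\sigma_{1-\gamma}$ are now universal. By Lemma~\ref{lem:flow-lip}, the flow map $T=X_1$ is then $e^{C_{\mathrm{flow}}}$-Lipschitz. Since $X_0\sim\gamma_d=p_0$ and $v$ solves the continuity equation \eqref{eq:continuity_equation} along $(p_t)$, we get $T_\#\gamma_d=p^\star$; this is exactly Corollary~\ref{cor:Lip-transport}.

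\textbf{Main obstacle: regularity at $t=1$.} Proposition~\ref{prop:Psi-Sobolev-p} asks for $T$ to be $C^1$. But $v_t$ is only controlled for $t<1$, and when $\beta<1$ the target is not smooth, so $T$ need not be $C^1$. I would avoid this by approximation. For $\tau<1$, the map $T_\tau:=X_\tau$ is $C^1$, since $v_t$ is smooth in $x$ for $\sigma_t>0$ and Gaussian mixtures have smooth scores. It pushes $\gamma_d$ to $p_\tau$ and satisfies $\mathrm{Lip}(T_\tau)\le e^{C_{\mathrm{flow}}}$ uniformly in $\tau$. Proposition~\ref{prop:Psi-Sobolev-p} then gives the $\Psi$-Sobolev inequality for $p_\tau$ with constant $L^2/2$.

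Next I let $\tau\to1$. We have $p_\tau\to p^\star$ weakly, in fact in $W_2$, since $X_\tau=\tau Y+(1-\tau)\xi$. For smooth $f$ with bounded gradient and with $\Psi(f)$ and $\Psi''(f)$ controlled, both sides pass to the limit by dominated or weak convergence. General $f$ then follows by a standard truncation and approximation argument. Alternatively, one can apply the inequality on $\gamma_d$ to $F=f\circ T$ with $T$ merely Lipschitz. By Rademacher's theorem the chain rule $|\nabla F|\le L|\nabla f\circ T|$ holds a.e., and the Gaussian $\Psi$-Sobolev inequality extends to locally Lipschitz $F$ by density. I would present this second route, as it avoids limits in $\tau$. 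This step is only technical and carries no dimension dependence, so the constant is $C=e^{2C_{\mathrm{flow}}}/2$.

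\textbf{Special cases.} Finally I specialize $\Psi$. Taking $\Psi(x)=x^2$ gives $\Psi''=2$, hence $\mathrm{Var}_{p^\star}(f)\le 2C\int|\nabla f|^2dp^\star$. Taking $\Psi(x)=x\log x$ applied to $f^2$ gives $\Psi''(f^2)|\nabla f^2|^2=4|\nabla f|^2$, hence $\mathrm{Ent}_{p^\star}(f^2)\le 4C\int|\nabla f|^2dp^\star$.
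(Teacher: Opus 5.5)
Your proposal is correct and follows the paper's proof. Corollary~\ref{cor:Lip-transport}, obtained from Lemma~\ref{lem:flow-lip} and the dimension-free one-sided bound for a Lipman-type field, gives a Lipschitz transport; Proposition~\ref{prop:Psi-Sobolev-p} then yields $C=L^2/2$, and the same two choices of $\Psi$ give the Poincar\'e and log-Sobolev inequalities. The differences are either cosmetic or in your favour: the paper uses the probability-flow schedule $f_t=t$, $\sigma_t=\sqrt{1-t^2}$ (checked in the proof of Corollary~\ref{coro:score_osl_selfcontained}) instead of your $\sigma_t=1-t$, and it applies Proposition~\ref{prop:Psi-Sobolev-p} without verifying that $T$ is $C^1$, which your Rademacher-plus-density argument takes care of.
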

The proof of Theorem~\ref{theo:functional-ineq-flowmatching} can be found in Section~\ref{sec:theo:functional-ineq-flowmatching}. Compared with \cite{brigati2024heat}, Theorem~\ref{theo:functional-ineq-flowmatching} extends the log-Lipschitz perturbative regime ($\beta=1$) to the general Hölder case $\beta\in (0,1]$.

\section{Discussion and open problems}
In this paper, we established a unified regularity theory for the drift fields arising in flow matching and diffusion models under weak log-concavity assumptions. These estimates yield two main applications: optimal-order discretization guarantees for Euler-type samplers and the construction of globally Lipschitz transport maps, leading to functional inequalities for weakly log-concave measures. Several of the main technical novelties of the proofs are discussed alongside the corresponding tailored arguments: for the one-sided Lipschitz estimates, see Section~\ref{sec:technicalone} and for the global and time-Lipschitz estimates, see Section~\ref{sec:technicalglobal}. This work also opens a number of questions and possible extensions.
\subsection{Higher-order regularity and minimax convergence rates}

A natural next step is to complement the Lipschitz theory developed here with a higher-order regularity theory for the exact drift. Indeed, a complete convergence theory for flow-based generative models has two distinct ingredients. First, one needs stability of the sampling dynamics, in order to propagate an error on the learned drift to an error on the generated law. Second, one needs approximation properties of the exact drift itself, in order to quantify how well this drift can be learned from data within a given model class. The first ingredient is precisely what the present paper provides at the Lipschitz level. The second one naturally requires higher-order regularity, since approximation rates for neural networks are governed by Besov-type smoothness. In this sense, drift regularity seems to be one of the key quantities behind these methods: low-order regularity gives stability, while higher-order regularity makes the drift easy to approximate.

In the diffusion setting, this program is already visible in recent work. In \cite{Stephanovitch2025regularity} we prove higher-order regularity estimates for the  score function on sets $A_t^\varepsilon\subset \mathbb{R}^d$ satisfying $p_t(A_t^\varepsilon)\geq 1-\varepsilon$. These bounds adapt to the Hölder smoothness $\beta$ of the target and show in particular that if $\beta\geq 1$, on such high-probability sets, the derivatives of order $\gamma\leq \beta-1$ exhibit no singularity with respect to time. For approximation, the relevant error is measured under the true marginals $p_t$, hence on the region actually visited by the process. From this viewpoint, the fact that higher-order estimates hold only on high-probability sets is not a defect but rather the correct level of regularity for statistical purposes.  Building on this localized space-time regularity, \cite{stephanovitch2025generalization} obtain minimax convergence rates for score-based generative models, for both deterministic and stochastic samplers.

For the present paper, an important point is that all the flow-based methods considered here are built from the same mechanism: the exact drift is obtained as a conditional projection of an explicit field along a Gaussian-mixture bridge. This common structure strongly suggests that minimax optimality should admit a unified proof for all these methods. Once the right localized higher-order regularity theorem is available for such conditional drifts, the rest of the argument should be largely model-independent: approximate the exact drift in the natural $L^2(p_t)$ metric and propagate this approximation error by the Lipschitz stability estimates established in the present paper.

\subsection{The manifold case}

A natural next step is to extend the present regularity theory to targets that are not full-dimensional in the
ambient space. More precisely, one would like to study the case where $p^\star$ is absolutely continuous with
respect to the volume measure of a compact submanifold $\mathcal{M} \subset \mathbb{R}^d$ of dimension $m<d$.
This regime is particularly natural from the viewpoint of the manifold hypothesis, and it is also one of the
main reasons for working in Wasserstein distance rather than with $f$-divergences, since $W_2$ remains
meaningful for singular targets.

The first difficulty is that the ambient regularity quantities used throughout Sections~2--4 are no longer the
right ones near terminal time. Indeed, after Gaussian regularization, the intermediate marginals $p_t$ are
still smooth densities on $\mathbb{R}^d$, but as $t \to 1$ they concentrate in a tubular neighborhood of $\mathcal{M}$
whose radius is of the order of the noise scale. In the normal directions, the drift must collapse the mass
onto the manifold, so some derivatives of $v_t$ are necessarily singular. However, this singularity is not
automatically unstable from the point of view of one-sided Lipschitz estimates. For instance, in the flat case
where $\mathcal{M}$ is a linear subspace, one may contract the normal directions at rate $(1-t)^{-1}$ while keeping
$\lambda_{\max}(\nabla v_t)$ uniformly bounded from above. Thus, the real obstruction is not the collapse
onto a lower-dimensional set itself, but rather the fact that for curved manifolds the ambient supremum
\[
\sup_{x\in\mathbb{R}^d} \lambda_{\max}(\nabla v_t(x))
\]
may be dominated by regions that are far from the manifold, even though these regions carry negligible mass under $p_t$. In particular, one may well have
\[
\int_0^1 \sup_{x\in\mathbb{R}^d} \lambda_{\max}(\nabla v_t(x)) dt = +\infty,
\]
while the dynamics remains stable along the trajectories that are actually relevant for sampling. The correct heuristic is that, when $t$ is close to $1$, the law $p_t$ should be almost entirely concentrated
inside a small tube around $\mathcal{M}$. In such a neighborhood, one expects a tangential/normal decomposition of
the drift: the normal component should be strongly contracting, while the tangential component should be
governed by the intrinsic geometry of $\mathcal{M}$ and by the density of $p^\star$ with respect to the
volume measure. 

\paragraph{Toy example: the sphere.}
This phenomenon can already be seen explicitly in the toy model where $p^\star$ is the uniform measure on the unit sphere $\mathbb{S}^{d-1}\subset\mathbb R^d$. Consider the diffusion-model interpolation
\[
X_t=tY+\sigma_t \xi,
\qquad
\sigma_t=\sqrt{1-t^2},
\]
where $Y\sim \mathrm{Unif}(\mathbb{S}^{d-1})$ and $\xi\sim N(0,I_d)$ are independent. The associated probability flow ODE is driven by
\[
v_t(x)=\frac{1}{\sigma_t^2}\Bigl(\mathbb E[Y\mid X_t=x]-tx\Bigr).
\]

For $x\neq 0$, simple calculations show
\[
\mathbb E[Y\mid X_t=x]
=
A_d \Bigl(\frac{t\|x\|}{\sigma_t^2}\Bigr)\frac{x}{\|x\|},
\qquad
A_d(\kappa)=\frac{I_{d/2}(\kappa)}{I_{d/2-1}(\kappa)},
\]
where $I_\nu$ denotes the modified Bessel function of the first kind  \citep[Section~2.1]{BanerjeeDhillonGhoshSra2005}. Hence
\[
v_t(x)
=
\frac{1}{\sigma_t^2}
\left(
\frac{A_d \bigl(t\|x\|/\sigma_t^2\bigr)}{\|x\|}-t
\right)x.
\]
For every $x\neq 0$, the Jacobian has two eigenvalues; one in the tangential directions to the sphere and one in the radial direction:
\[
\lambda_{\mathrm{tan}}(x,t)
=
\frac{1}{\sigma_t^2}
\left(
\frac{A_d \bigl(t\|x\|/\sigma_t^2\bigr)}{\|x\|}-t
\right),
\qquad
\lambda_{\mathrm{rad}}(x,t)
=
\frac{1}{\sigma_t^2}
\left(
\frac{t}{\sigma_t^2}
A_d' \bigl(t\|x\|/\sigma_t^2\bigr)-t
\right).
\]

To understand the singular behavior as $t\uparrow 1$, we use the classical asymptotics
\[
A_d(\kappa)=1-\frac{d-1}{2\kappa}+O(\kappa^{-2}),
\qquad
A_d'(\kappa)=\frac{d-1}{2\kappa^2}+O(\kappa^{-3}),
\qquad \kappa\to\infty.
\]
Now, if $\|x\|=1+O(\sigma_t)$, that is, if $x$ remains in the natural tube of width comparable to the noise scale around the sphere, then
\[
\lambda_{\mathrm{tan}}(x,t)=O(\sigma_t^{-1}),
\qquad
\lambda_{\mathrm{rad}}(x,t)=-\sigma_t^{-2}+O(1).
\]
Hence along the relevant tube the largest eigenvalue grows at most like $\sigma_t^{-1}\sim (1-t)^{-1/2}$, which is integrable in time, while the stronger singular term $\sigma_t^{-2}\sim (1-t)^{-1}$ appears only in the radial direction and with the good sign.

By contrast, the ambient worst-case behavior is much worse. Indeed, using $A_d(\kappa)\sim \kappa/d$ as $\kappa\downarrow 0$, one gets
\[
\nabla v_t(0)
=
\left(
\frac{t}{d \sigma_t^4}-\frac{t}{\sigma_t^2}
\right)I_d,
\]
so in particular
\[
\lambda_{\max}(\nabla v_t(0))\sim \frac{1}{d \sigma_t^4}.
\]
Thus the ambient supremum of $\lambda_{\max}(\nabla v_t)$ is dominated by points that are far from the sphere and blows up much faster than what is seen along the trajectories relevant for sampling. This example therefore supports the idea that, in the manifold regime, the relevant notion of stability should be localized to the $O(\sigma_t)$-tube around the manifold rather than based on the ambient supremum over all $x\in\mathbb R^d$.

This suggests that the Gr\"onwall argument used in Section~\ref{sec:applicampl} should be refined. Rather than controlling the flow by a uniform supremum over all $x$, one would like a stability estimate driven by distribution-weighted or trajectory-wise quantities, localized to the tubular region effectively visited by the process. For instance, one may hope for controls involving averages under $p_t$, or more naturally localized versions  of $\lambda_{\max}(\nabla v_t)$ along the relevant coupling. Once the right notion of regularity has been identified, the next step will be to prove that it indeed holds in the manifold setting.

\subsection{Implicit regularization through training}

A central question left open by our sampling analysis is to understand what kind of regularity is actually induced by the training procedure. In Section~\ref{sec:samplingerror}, our perturbation bounds are written in terms of the population error
\begin{equation}\label{eq:fdppemdpdmdpmepe}
\epsilon_{\mathrm{drift}}(t):=\|a_t(X_t)-\hat a_t(X_t)\|_{L^2},
\end{equation}
where $X_t\sim p_t$ denotes the exact process. This choice is natural from the point of view of statistical learning: it is an $L^2(p_t)$-error under the true marginal, and therefore matches the kind of regression risk that is minimized during training. In particular, it cleanly separates the approximation error of the learned drift from the discretization and initialization errors of the sampler.

However, once one chooses this point of view, one also fixes a particular perturbative route. Indeed, the proof compares the exact continuous-time dynamics to a learned continuous-time dynamics driven by $\hat a_t$, and the corresponding stability estimate propagates the error through the learned vector field itself. This is why the analysis requires that the learned drift inherits the same structural regularity bounds as the exact drift on the sampling interval.

There is, however, a second possible point of view. Instead of comparing the exact process directly to a learned continuous-time process, one may compare the learned Euler scheme $\bar X$ to the Euler scheme $\tilde X$ built with the exact drift  $a_t$. In that case, for drift with symmetric Jacobian and step size small enough compared to the inverse of the Lipschitz constant, one obtains a decomposition of the form
\[
W_2\bigl(p^\star,\text{Law}(\bar X_N)\bigr)
\le
\underbrace{W_2\bigl(\text{Law}(X_1),\text{Law}(X_\tau)\bigr)}_{\text{early stopping}}
+
\underbrace{W_2\bigl(\text{Law}(X_\tau),\text{Law}(\tilde X_N)\bigr)}_{\text{discretization of the true process}}
+
e^{\sum_{m=0}^{N-1} h_mL_m}\sum \limits_{k=0}^{N-1} h_k\,\hat{\epsilon}_{\mathrm{drift}}(t_k),
\]
where $L_t$ is the one-sided Lipschitz modulus of the exact drift $a_t$, and where the learning error is now measured along the sampled trajectory:
\[
\hat{\epsilon}_{\mathrm{drift}}(t_k):=
\|a_{t_k}(\bar X_{t_k})-\hat a_{t_k}(\bar X_{t_k})\|_{L^2}.
\]
In this formulation, only the regularity of the exact drift $a_t$ is used; no regularity of $\hat a_t$ is needed. The price is that the error is now endogenous, since it is evaluated along the learned trajectory itself, whose law depends on $\hat a_t$. As a consequence, its statistical meaning is much less transparent than that of the population error~\eqref{eq:fdppemdpdmdpmepe}.

These two decompositions therefore correspond to two different interfaces between learning theory and sampling theory. The first one is statistically natural but requires regularity of the learned drift in order to propagate this error through the dynamics. The second one is dynamically natural, because it measures directly the drift mismatch along the trajectory actually produced by the sampler, and only uses regularity of the exact drift $a_t$, but it leads to a learning error that is harder to relate to the objective optimized in training. At present, it is not clear which of these two points of view is the most relevant one in practice.

This suggests a concrete empirical question. One should investigate numerically whether the trained drifts $\hat a_t$ appearing in realistic models actually inherit the same kind of regularity as the exact drift. If this seems to be the case, the main theoretical problem becomes to explain this phenomenon as a form of implicit regularization induced by the training algorithm, the architecture, or the noise structure of the objective. In that regime, the discussion of the present section should be understood as asking why stochastic gradient descent on neural networks selects vector fields whose dynamics remain stable enough for the population-error analysis to apply.

If, on the contrary, the learned drift does not appear to satisfy such regularity bounds, then the second decomposition becomes more relevant, and one is led to study the endogenous error $
\|a_{t_k}(\bar X_{t_k})-\hat a_{t_k}(\bar X_{t_k})\|_{L^2}.$
In that case, the main challenge is no longer to prove regularity of $\hat a_t$, but rather to understand statistically why this trajectory-based error should be small. This question is nontrivial. Indeed, one can construct simple examples in which the population error
\[
\|a_{t_k}(X_{t_k})-\hat a_{t_k}(X_{t_k})\|_{L^2}
\]
is arbitrarily small while the corresponding error along the learned trajectory is not. Therefore, if numerical experiments show that
\[
\|a_{t_k}(\bar X_{t_k})-\hat a_{t_k}(\bar X_{t_k})\|_{L^2}
\]
is nevertheless small in practice even without regularity of $\hat a_t$, this would indicate the presence of another form of implicit regularization in training, not captured by the usual population-risk point of view. Such a mechanism would not necessarily correspond to classical Lipschitz regularization of the learned vector field; rather, it would mean that training favors predictors whose errors remain small on the region of state space effectively explored by their own induced dynamics.

From this perspective, the most likely scenario is that training induces some effective regularity on the learned drift $\hat a_t$. Indeed, this would not only justify the population-error analysis of Section~\ref{sec:samplingerror}, but also explain why the trajectory-based quantity
\[
\|a_{t_k}(\bar X_{t_k})-\hat a_{t_k}(\bar X_{t_k})\|_{L^2}
\]
can remain small in practice: once the learned dynamics are sufficiently stable, the sampled trajectory stays in regions where the predictor has been trained to be accurate. Regularity of $\hat a_t$ would therefore unify the two points of view discussed above, and the real question becomes whether this phenomenon is indeed produced implicitly by training. Understanding this mechanism, both theoretically and numerically, seems to be a central direction for future work.

\bibliography{bib}

\clearpage
\appendix
\appendixtoctrue

\appendixtableofcontents

\section{Technical Lemmas}
\subsection{Measure concentration}
\subsubsection{Concentration of Lipschitz functions under strong log-concavity}
This appendix gathers a few concentration estimates that we use repeatedly when the reference measure is
strongly log-concave. The main intuition is that if
\[
d\mu(x)=e^{-V(x)}dx
\qquad\text{and}\qquad
\nabla^2V\succeq \gamma \mathrm{Id},
\]
then $\mu$ behaves like a Gaussian at spatial scale $\gamma^{-1/2}$: typical fluctuations of a
$1$-Lipschitz observable are of order $\gamma^{-1/2}$, uniformly in the dimension. As a consequence, if
a function is only $\beta$-Hölder, its typical oscillation under $\mu$ is of order
$(\gamma^{-1/2})^\beta=\gamma^{-\beta/2}$. This is the scale that later appears in our bounds for
tilted measures and posterior quantities.

The following Lemma is the standard Herbst argument: a logarithmic Sobolev inequality controls the
moment generating function of any Lipschitz observable. In practice this is our entry point to
dimension-free concentration.

\begin{lemma}[Herbst argument: Section~5.1 of \cite{ledoux-book}]\label{lem:herbst}
Let $\mu$ be a probability measure on $\R^d$. Assume that $\mu$ satisfies the logarithmic Sobolev
inequality: there exists $C_{\mathrm{LS}}>0$ such that for every smooth $h:\R^d\to(0,\infty)$,
\begin{equation}\label{eq:LSI_h2}
\text{Ent}_\mu(h^2)
:= \int h^2 \log \Big(\frac{h^2}{\int h^2 d\mu}\Big) d\mu
 \le C_{\mathrm{LS}} \int \|\nabla h\|^2 d\mu.
\end{equation}
Then for every $L$-Lipschitz function $g:\R^d\to\R$ and every $\lambda\in\R$,
\begin{equation}\label{eq:herbst_mgf}
\int \exp \Big(\lambda\Big(g-\int g d\mu\Big)\Big) d\mu
 \le \exp \Big(\frac{C_{\mathrm{LS}}L^2}{4} \lambda^2\Big).
\end{equation}
\end{lemma}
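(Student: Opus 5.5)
The plan is the classical Herbst argument. By homogeneity and replacing $g$ by $g/L$ and $\lambda$ by $\lambda L$, it suffices to treat $L=1$. We may also assume $g$ is smooth and bounded with $\|\nabla g\|\le 1$ pointwise. A general $1$-Lipschitz $g$ can then be recovered by truncation $g_n=(g\wedge n)\vee(-n)$ and mollification. The final inequality passes to the limit by Fatou's lemma on the left-hand side together with dominated convergence of $\int g_n\,d\mu\to\int g\,d\mu$; the latter requires integrability of $g$, which itself follows from the bounded case, since the uniform exponential bound on $g_n-\int g_n d\mu$ forces $\int g_n\,d\mu$ to stay bounded. Without loss of generality set $\int g\,d\mu=0$ and treat $\lambda>0$; the case $\lambda<0$ follows by applying the result to $-g$.

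Define $H(\lambda):=\int e^{\lambda g}\,d\mu$, which is finite and smooth since $g$ is bounded. Apply \eqref{eq:LSI_h2} with $h=e^{\lambda g/2}$, which is smooth and positive. The left-hand side is
\[
\text{Ent}_\mu(e^{\lambda g})=\lambda H'(\lambda)-H(\lambda)\log H(\lambda).
\]
On the right-hand side, $\|\nabla h\|^2=\frac{\lambda^2}{4}\|\nabla g\|^2 e^{\lambda g}\le \frac{\lambda^2}{4}e^{\lambda g}$. This yields the differential inequality
\[
\lambda H'(\lambda)-H(\lambda)\log H(\lambda)\le \frac{C_{\mathrm{LS}}\lambda^2}{4}H(\lambda).
\]

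Set $K(\lambda):=\lambda^{-1}\log H(\lambda)$ for $\lambda>0$. Dividing the inequality by $\lambda^2 H$ gives
\[
K'(\lambda)=\frac{H'}{\lambda H}-\frac{\log H}{\lambda^2}\le \frac{C_{\mathrm{LS}}}{4}.
\]
As $\lambda\downarrow0$, $K(\lambda)\to H'(0)/H(0)=\int g\,d\mu=0$. Integrating from $0$ to $\lambda$ then gives $K(\lambda)\le C_{\mathrm{LS}}\lambda/4$, that is $H(\lambda)\le \exp(C_{\mathrm{LS}}\lambda^2/4)$, which is \eqref{eq:herbst_mgf} for $L=1$.

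The only delicate point is the approximation step, namely passing from bounded smooth functions to general Lipschitz $g$ while justifying that $\int g\,d\mu$ is finite. The route described above (the uniform exponential bound for the truncations, followed by Fatou's lemma) handles this, and the rest of the argument is a routine ODE comparison.
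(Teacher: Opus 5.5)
Your proposal is correct and follows the same route as the paper: apply the log-Sobolev inequality to $h=e^{\lambda g/2}$, derive $\lambda H'-H\log H\le \frac{C_{\mathrm{LS}}\lambda^2}{4}H$, integrate the bound on the derivative of $\lambda^{-1}\log H$ from $0$, and handle $\lambda<0$ via $-g$. Your approximation step is more careful than the paper's: you first treat bounded smooth $g$, then pass to general $g$ via truncation, mollification and Fatou, and show that $\int g_n\,d\mu$ stays bounded. The paper instead works directly with smooth Lipschitz $g$ and never checks that $\int e^{\lambda g}\,d\mu$ is finite or that $g$ is integrable.
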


\begin{proof}
We first prove \eqref{eq:herbst_mgf} for smooth globally Lipschitz $g$; the general Lipschitz case then
follows by standard smooth approximation together with dominated convergence.

Fix such a $g$ and define, for $\lambda\in\R$,
\[
Z(\lambda):=\int e^{\lambda g} d\mu,
\qquad
\psi(\lambda):=\log Z(\lambda).
\]
Apply \eqref{eq:LSI_h2} with $h = e^{\lambda g/2}$,
\begin{equation}\label{eq:LSI_applied}
\text{Ent}_\mu(e^{\lambda g})
 \le \frac{C_{\mathrm{LS}}\lambda^2}{4}\int e^{\lambda g}\|\nabla g\|^2 d\mu
 \le \frac{C_{\mathrm{LS}}\lambda^2L^2}{4} Z(\lambda),
\end{equation}
since $\|\nabla g\|\le L$. On the other hand, by the definition of entropy and the identities
$Z'(\lambda)=\int g e^{\lambda g} d\mu$ and $\psi'(\lambda)=Z'(\lambda)/Z(\lambda)$, we have
\[
\text{Ent}_\mu(e^{\lambda g})
= \int e^{\lambda g}\log \Big(\frac{e^{\lambda g}}{Z(\lambda)}\Big) d\mu
= \lambda \int g e^{\lambda g} d\mu - Z(\lambda)\log Z(\lambda)
= Z(\lambda)\big(\lambda\psi'(\lambda)-\psi(\lambda)\big).
\]
Combining this with \eqref{eq:LSI_applied}, we obtain
\begin{equation}\label{eq:key_ode}
\lambda\psi'(\lambda)-\psi(\lambda) \le \frac{C_{\mathrm{LS}}L^2}{4} \lambda^2.
\end{equation}
For $\lambda>0$, define $\Phi(\lambda):=\psi(\lambda)/\lambda$, then
\[
\Phi'(\lambda)=\frac{\lambda\psi'(\lambda)-\psi(\lambda)}{\lambda^2}
 \le \frac{C_{\mathrm{LS}}L^2}{4}.
\]
Integrating from $0$ to $\lambda$ and using $\lim_{\lambda\to0}\psi(\lambda)/\lambda=\psi'(0)=\int g d\mu$
gives
\[
\frac{\psi(\lambda)}{\lambda} - \int g d\mu  \le \frac{C_{\mathrm{LS}}L^2}{4} \lambda,
\qquad\text{i.e.}\qquad
\psi(\lambda) - \lambda\int g d\mu  \le \frac{C_{\mathrm{LS}}L^2}{4} \lambda^2.
\]
Exponentiating yields \eqref{eq:herbst_mgf} for $\lambda\ge0$:
\[
\int e^{\lambda(g-\int g d\mu)} d\mu
= \exp \big(\psi(\lambda)-\lambda\int g d\mu\big)
\le \exp \Big(\frac{C_{\mathrm{LS}}L^2}{4}\lambda^2\Big).
\]
For $\lambda<0$, apply the already proved case to $-g$ to conclude the same bound.
\end{proof}

When $\nabla^2V\succeq\gamma\mathrm{Id}$, the measure $\mu$ satisfies a logarithmic Sobolev inequality
with constant of order $1/\gamma$. Plugging this into Lemma~\ref{lem:herbst} yields a sub-Gaussian tail
for $g-\int g d\mu$, and integrating the tail gives the dimension-free moment bound below. The
scaling $L^\alpha\gamma^{-\alpha/2}$ matches the heuristic ``typical fluctuation
$\sim L\gamma^{-1/2}$''.

\begin{lemma}\label{lem:lip-moments}
Let $\mu$ be a probability measure on $\mathbb{R}^d$ of the form
$
d\mu(x) = e^{-V(x)} dx,$
with $V:\mathbb{R}^d\to\mathbb{R}$ is $C^2$ and satisfies
\[
\nabla^2 V(x) \succeq \gamma \text{Id} \quad \text{for all } x\in\mathbb{R}^d
\]
for some $\gamma>0$. Then for $\alpha \ge 1$, we have for every $L$-Lipschitz function
$g:\mathbb{R}^d\to\mathbb{R}$ that
\[
\int \bigl|g - \int g d\mu\bigr|^\alpha d\mu
 \le 
C_\alpha L^\alpha \gamma^{-\alpha/2}.
\]
\end{lemma}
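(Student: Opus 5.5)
The plan is to combine the Bakry–Émery criterion with the Herbst argument of Lemma~\ref{lem:herbst}, and then integrate the resulting sub-Gaussian tail. First, since $\nabla^2V\succeq\gamma\,\mathrm{Id}$, the Bakry–Émery theorem (the $CD(\gamma,\infty)$ condition for the Langevin generator $\Delta-\nabla V\cdot\nabla$) gives that $\mu$ satisfies the logarithmic Sobolev inequality \eqref{eq:LSI_h2} with constant $C_{\mathrm{LS}}=2/\gamma$. We take this as a standard fact, as the paper does when it cites the $\Phi$-Sobolev inequalities under curvature-dimension conditions. Lemma~\ref{lem:herbst} then yields, for every $L$-Lipschitz $g$ and every $\lambda\in\R$,
\[
\int e^{\lambda(g-\int g\,d\mu)}\,d\mu\le \exp\Big(\frac{L^2\lambda^2}{2\gamma}\Big).
\]
Lemma~\ref{lem:herbst} is stated for $g$ Lipschitz and handles the smoothing itself, so no extra approximation step is needed here.

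Second, I would convert this moment generating function bound into a tail bound by Chernoff's inequality. Write $G:=g-\int g\,d\mu$. For $r>0$, optimizing over $\lambda=\gamma r/L^2$ and applying the bound to both $g$ and $-g$ gives
\[
\mu(|G|\ge r)\le 2\exp\Big(-\frac{\gamma r^2}{2L^2}\Big).
\]

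Third, I would integrate the tail using the layer-cake formula:
\[
\int|G|^\alpha d\mu=\int_0^\infty \alpha r^{\alpha-1}\mu(|G|\ge r)\,dr\le 2\alpha\int_0^\infty r^{\alpha-1}e^{-\gamma r^2/(2L^2)}dr.
\]
The substitution $r=L\gamma^{-1/2}s$ turns the right-hand side into $L^\alpha\gamma^{-\alpha/2}\cdot 2\alpha\int_0^\infty s^{\alpha-1}e^{-s^2/2}ds$. The last integral equals $2^{\alpha/2-1}\Gamma(\alpha/2)$, so one may take $C_\alpha=\alpha\,2^{\alpha/2}\Gamma(\alpha/2)$, which does not depend on $d$.

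The only nontrivial input is the first step, namely the log-Sobolev constant $2/\gamma$ for $\gamma$-strongly log-concave measures. If a self-contained argument were required, I would go either through Caffarelli's contraction theorem, which makes $\mu$ a $\gamma^{-1/2}$-Lipschitz pushforward of $\gamma_d$ and lets one apply Gaussian concentration directly, or through the Bakry–Émery $\Gamma_2$ semigroup computation. I would simply cite it. Everything after that step is routine.
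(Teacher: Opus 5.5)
Your proposal is correct and matches the paper's proof step for step: log-Sobolev with constant of order $1/\gamma$ from the curvature bound, the sub-Gaussian moment generating function bound from Lemma~\ref{lem:herbst}, a Chernoff tail bound, and integration of the Gaussian tail via the Gamma function. You even arrive at the same constant $C_\alpha=\alpha\,2^{\alpha/2}\Gamma(\alpha/2)$. The paper carries an extra factor $c^{\alpha/2}$ only because it leaves the log-Sobolev constant unspecified, whereas you make it explicit as $2/\gamma$.
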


\begin{proof}
The curvature lower bound
$\nabla^2V \succeq \gamma \text{Id}$ implies that $\mu$ satisfies a logarithmic
Sobolev inequality with constant of order $1/\gamma$: there exists a universal
constant $C_{\mathrm{LS}}>0$ such that for every smooth $h$,
\begin{equation}\label{eq:lsi}
\mathrm{Ent}_\mu(h^2)
:= \int h^2\log\frac{h^2}{\int h^2 d\mu} d\mu
 \le 
\frac{C_{\mathrm{LS}}}{\gamma} \int \|\nabla h\|^2 d\mu.
\end{equation}

By Lemma~\ref{lem:herbst}, 
\eqref{eq:lsi} implies that for
$g$ be $L$-Lipschitz and $X := g - \int g d\mu$, there exists a constant
$c>0$, such that
\begin{equation}\label{eq:mgf}
\int e^{\lambda X} d\mu
 \le 
\exp  \Bigl(\frac{c L^2}{\gamma} \frac{\lambda^2}{2}\Bigr)
\quad\text{for all }\lambda\in\mathbb{R}.
\end{equation}
Let us write
\[
\sigma^2 := c \frac{L^2}{\gamma}.
\]
Then \eqref{eq:mgf} says exactly that $X$ is sub-Gaussian with parameter
$\sigma^2$:
\[
\mathbb{E}_\mu\bigl[e^{\lambda X}\bigr]
\le
\exp\Bigl(\frac{\sigma^2\lambda^2}{2}\Bigr)
\quad\forall\lambda\in\mathbb{R}.
\]

By Chernoff's bound, this  inequality implies the usual Gaussian tail:
for any $t>0$,
\[
\mu\bigl(|X|\ge t\bigr)
\le
2\exp\Bigl(-\frac{t^2}{2\sigma^2}\Bigr).
\]
Now fix $\alpha\ge 1$, using the tail-integral representation of moments,
\[
\mathbb{E}_\mu\bigl[|X|^\alpha\bigr]
=
\int_0^\infty \alpha t^{\alpha-1} \mu\bigl(|X|\ge t\bigr) dt,
\]
we obtain
\[
\mathbb{E}_\mu\bigl[|X|^\alpha\bigr]
\le
2\alpha \int_0^\infty t^{\alpha-1}
\exp\Bigl(-\frac{t^2}{2\sigma^2}\Bigr) dt.
\]
Applying a change of variable, we get
\[
\int_0^\infty t^{\alpha-1}e^{-t^2/(2\sigma^2)} dt
=
\sigma^\alpha 2^{(\alpha-2)/2}
\int_0^\infty s^{\alpha/2 -1}e^{-s} ds
=
\sigma^\alpha 2^{(\alpha-2)/2}\Gamma(\alpha/2).
\]
Thus
\[
\mathbb{E}_\mu\bigl[|X|^\alpha\bigr]
\le
2\alpha \sigma^\alpha 2^{(\alpha-2)/2}\Gamma(\alpha/2)
=
2^{\alpha/2} \alpha \Gamma(\alpha/2) \sigma^\alpha.
\]

Recalling that $\sigma^2 = c L^2/\gamma$, we have
\[
\sigma^\alpha
=
c^{\alpha/2} L^\alpha \gamma^{-\alpha/2},
\]
and hence
\[
\mathbb{E}_\mu\bigl[|X|^\alpha\bigr]
\le
C_\alpha L^\alpha \gamma^{-\alpha/2},\qquad \text{with } C_\alpha
:=
2^{\alpha/2} \alpha \Gamma(\alpha/2) c^{\alpha/2}.
\]
\end{proof}
In the discretization arguments, we need a sharp control of moments of the target law $p^\star$. The next Proposition shows that the structure of probability measures satisfying Definition~\ref{defi:weakconcave} suffices to ensure the ``Gaussian-scale'' second-moment bound $\mathbb{E}_{p^\star}\|X\|^2 \le C d$ assuming that the norm of the minimizer $\arg\min u$ is $O(\sqrt d)$.

\begin{proposition}[Second moment bound]\label{prop:second-moment}
Assume that there exist $\alpha>0$, $\beta\in(0,1]$, $K>0$ such that
\[
p^\star(x)=\frac1Z \exp\big(-u(x)+a(x)\big),\qquad \nabla^2 u(x)\succeq \alpha I_d,
\qquad |a(x)-a(y)|\le K\|x-y\|^\beta.
\]
Let $y_0:=\arg\min u$ and assume $\|y_0\|\le A\sqrt d$ for some $A>0$. Then there exists a constant
$C>0$ independent of $d$ such that
\[
\mathbb{E}_{p^\star}\big[\|X\|^2\big]\le C  d.
\]
\end{proposition}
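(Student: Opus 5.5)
The plan is to compare $p^\star$ with the strongly log-concave reference $\mu(dx)\propto e^{-u(x)}dx$. We bound the second moment of $\|X-y_0\|$ under $p^\star$ by a change of measure, and then use $\|y_0\|\le A\sqrt d$. Throughout, $\mu$ satisfies the dimension-free concentration of Lemma~\ref{lem:lip-moments} with $\gamma=\alpha$. (If $u=+\infty$ outside a convex set, we approximate by smooth strongly convex potentials, or simply note that the LSI still holds by Bakry--\'Emery on convex domains.)

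\textbf{Step 1: the reference measure $\mu$.} Since $\nabla^2u\succeq\alpha\,\mathrm{Id}$ and $y_0$ minimizes $u$, the standard moment estimate for strongly log-concave measures gives
\[
\mathbb E_\mu\|X-y_0\|^2\le \|\mathbb E_\mu X-y_0\|^2+\mathrm{Tr}\,\mathrm{Cov}_\mu\le C d/\alpha .
\]
Here $\mathrm{Tr}\,\mathrm{Cov}_\mu\le d/\alpha$ follows from Brascamp--Lieb. The bound $\|\mathbb E_\mu X-y_0\|\lesssim\sqrt{d/\alpha}$ follows from the integration by parts identity $\mathbb E_\mu[\langle \nabla u(X),X-y_0\rangle]=d$ combined with $\langle\nabla u(x),x-y_0\rangle\ge\alpha\|x-y_0\|^2$. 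This directly yields $\mathbb E_\mu\|X-y_0\|^2\le d/\alpha$. Moreover, $g(x)=\|x-y_0\|$ is $1$-Lipschitz, so by Lemma~\ref{lem:herbst} and Lemma~\ref{lem:lip-moments} it satisfies $\mathbb E_\mu e^{\lambda(g-\mathbb E_\mu g)}\le e^{c\lambda^2/\alpha}$ with $\mathbb E_\mu g\le\sqrt{d/\alpha}$.

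\textbf{Step 2: handle the tilt $e^{a}$ (main obstacle).} Because $a$ may be unbounded, we cannot simply bound $e^{a}$ above and below. Instead, we use the Hölder bound relative to the center: $a(x)-a(y_0)\le K\|x-y_0\|^\beta\le K(1+\|x-y_0\|)$ since $\beta\le1$, and similarly $a(x)-a(y_0)\ge -K\|x-y_0\|^\beta$. Writing $g=\|x-y_0\|$, we get
\[
\mathbb E_{p^\star}g^2=\frac{\mathbb E_\mu[g^2e^{a-a(y_0)}]}{\mathbb E_\mu[e^{a-a(y_0)}]}\le \frac{\mathbb E_\mu[g^2e^{Kg^\beta}]}{\mathbb E_\mu[e^{-Kg^\beta}]}.
\]
The denominator is at least $e^{-K(\mathbb E_\mu g)^\beta}\ge e^{-K(d/\alpha)^{\beta/2}}$ by Jensen, since $s\mapsto e^{-Ks^\beta}$ is convex. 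This lower bound is dimension dependent, so the naive approach fails, and this is the delicate point. The fix is to center at the typical radius $r:=\mathbb E_\mu g$. Using concavity, $g^\beta\le r^\beta+\beta r^{\beta-1}(g-r)$ and $g^\beta\ge r^\beta-|g-r|^\beta$, the common factor $e^{Kr^\beta}$ cancels between numerator and denominator, leaving
\[
\mathbb E_{p^\star}g^2\le \frac{\mathbb E_\mu[g^2e^{K\beta r^{\beta-1}(g-r)}]}{\mathbb E_\mu[e^{-K|g-r|^\beta}]}.
\]
If $r\ge1$, then $\beta r^{\beta-1}\le1$. Applying Cauchy--Schwarz and the sub-Gaussian mgf bound for $g-r$, the numerator is at most $(\mathbb E_\mu g^4)^{1/2}e^{cK^2/\alpha}\le C d/\alpha$. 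By Jensen and Lemma~\ref{lem:lip-moments}, the denominator is at least $e^{-K\mathbb E|g-r|^\beta}\ge e^{-CK\alpha^{-\beta/2}}$. If $r<1$, we bound $g^\beta\le 1+g$ directly, with the same result. This gives $\mathbb E_{p^\star}\|X-y_0\|^2\le C(\alpha,\beta,K)\,d$.

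\textbf{Step 3: conclude.} We have $\mathbb E_{p^\star}\|X\|^2\le2\mathbb E_{p^\star}\|X-y_0\|^2+2\|y_0\|^2\le C d+2A^2d$.
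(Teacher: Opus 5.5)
Your Steps 1 and 3 are fine, and Step 1 is the same integration-by-parts bound $\E_\mu\|X-y_0\|^2\le d/\alpha$ that the paper uses. Step 2, however, has a genuine gap.

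The claimed cancellation of $e^{Kr^\beta}$ is a sign error. For the numerator, concavity gives $\E_\mu[g^2e^{Kg^\beta}]\le e^{Kr^\beta}\E_\mu[g^2e^{K\beta r^{\beta-1}(g-r)}]$. To bound the denominator from \emph{below} you need an \emph{upper} bound on $g^\beta$. The inequality $g^\beta\ge r^\beta-|g-r|^\beta$ you quote is true, but it bounds the denominator from above. The right tool is $g^\beta\le r^\beta+|g-r|^\beta$, which yields $\E_\mu[e^{-Kg^\beta}]\ge e^{-Kr^\beta}\E_\mu[e^{-K|g-r|^\beta}]$. Dividing, the two factors combine into $e^{2Kr^\beta}$ with $r=\E_\mu g\approx\sqrt{d/\alpha}$; they do not cancel.

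No recentering can repair this, because the intermediate bound
\[
\E_{p^\star}g^2\le\frac{\E_\mu[g^2e^{Kg^\beta}]}{\E_\mu[e^{-Kg^\beta}]}
\]
is itself dimension dependent. For $\mu=\mathcal N(y_0,\alpha^{-1}\mathrm{Id})$, $g$ concentrates at $\sqrt{d/\alpha}$ with $O(\alpha^{-1/2})$ fluctuations. The right-hand side then grows like $(d/\alpha)\exp\bigl(2K(d/\alpha)^{\beta/2}\bigr)$, up to lower-order terms in the exponent.

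The loss occurs when you replace $a-a(y_0)$ by $+Kg^\beta$ in the numerator and by $-Kg^\beta$ in the denominator. This decouples the two occurrences of the same function $a$. Relative to a fixed point, $a$ may oscillate by order $K(d/\alpha)^{\beta/2}$ over the shell where $\mu$ concentrates.

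The missing idea is to measure the fluctuations of $a$ around its $\mu$-average rather than around $a(y_0)$; that average-centered scale is the dimension-free $K\alpha^{-\beta/2}$. Write $p^\star=e^{\bar a}\mu$ with $\bar a:=a-\log\E_\mu e^{a}$, so that $\E_\mu e^{\bar a}=1$, and apply Cauchy--Schwarz:
\[
\E_{p^\star}\|X\|^2=\E_\mu\bigl[\|X\|^2e^{\bar a}\bigr]\le\bigl(\E_\mu\|X\|^4\bigr)^{1/2}\bigl(1+\mathrm{Var}_\mu(e^{\bar a})\bigr)^{1/2}.
\]
The fourth moment is at most $C(A^4+\alpha^{-2})d^2$, from your Step 1 together with Lemma~\ref{lem:lip-moments} applied to $\|x-y_0\|$. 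Lemma~\ref{lem:var_exp_concentration} (inf-convolution of $a$ at scale $\alpha^{-1/2}$, then the Herbst bound) gives $1+\mathrm{Var}_\mu(e^{\bar a})\le\exp\bigl(4K\alpha^{-\beta/2}+CK^2\alpha^{-\beta}\bigr)$, which is independent of $d$. This is precisely the paper's proof; your sub-Gaussian control of $g$ is then needed only for the fourth moment.
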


\begin{proof} We present the proof in the case where $u$ is finite-valued and of class $C^2$ on $\mathbb{R}^d$, so that the integrations by parts used below are fully justified. The general case allowed by Definition~\ref{defi:weakconcave}, including compactly supported measures, follows by a standard approximation argument: one replaces the convex potential $u$ by a sequence of smooth strongly convex potentials $(u_\varepsilon)_{\varepsilon>0}$, applies the argument below to the corresponding smooth reference measures, and then lets $\varepsilon\to0$.

Let
\[
\mu_0(dx):=\frac1{Z_0}e^{-u(x)} dx,
\]
so $\mu_0$ is $\alpha$-strongly log-concave and
\[
p^\star(dx)=\frac{e^{a(x)}}{\E_{\mu_0}[e^{a}]} \mu_0(dx).
\]
Define the centered perturbation
\[
\bar a(x):=a(x)-\log \E_{\mu_0}[e^{a}],
\]
so that $p^\star(dx)=e^{\bar a(x)}\mu_0(dx)$ and $\E_{\mu_0}[e^{\bar a}]=1$, while $\bar a$ remains $\beta$-Hölder with
constant $K$.
For each coordinate $i$, by integration by parts under $\mu_0$,
\[
\E_{\mu_0}\big[(X_i-y_{0,i}) \partial_i u(X)\big]=1.
\]
Summing over $i=1,\dots,d$ gives
\[
\E_{\mu_0}\langle X-y_0,\nabla u(X)\rangle=d.
\]
Since $\nabla^2u\succeq \alpha I_d$ and $\nabla u(y_0)=0$, we have the strong monotonicity inequality
\[
\langle x-y_0,\nabla u(x)\rangle \ge \alpha \|x-y_0\|^2,
\]
hence
\[
\E_{\mu_0}\|X-y_0\|^2\le \frac d\alpha.
\]
Therefore,
\[
\E_{\mu_0}\|X\|^2\le 2\|y_0\|^2 + 2\E_{\mu_0}\|X-y_0\|^2
\le \Big(2A^2+\frac{2}{\alpha}\Big)d.
\]
Moreover, applying  Lemma~\ref{lem:lip-moments} to $g(x)=\|x-y_0\|$ yields
\begin{equation}\label{eq:fourth-moment}
\E_{\mu_0}\|X\|^4 \le C_0\Big(A^4+\alpha^{-2}\Big)d^2.
\end{equation}
By Lemma~\ref{lem:var_exp_concentration},
using that $\E_{\mu_0}[e^{\bar a}]=1$,
\[
\E_{\mu_0}[e^{2\bar a}] = 1+\text{Var}_{\mu_0}(e^{\bar a})
\le \exp \Big(4K\alpha^{-\beta/2}+C_1 K^2\alpha^{-\beta}\Big)=:M,
\]
where $C_1>0$ is universal. In particular $\mathcal{M}$ is independent of $d$. Now, 
since $p^\star(dx)=e^{\bar a(x)}\mu_0(dx)$,
\[
\E_{p^\star}\|X\|^2=\E_{\mu_0}\big[\|X\|^2 e^{\bar a(X)}\big]
\le \Big(\E_{\mu_0}\|X\|^4\Big)^{1/2}\Big(\E_{\mu_0}e^{2\bar a}\Big)^{1/2}.
\]
Combining \eqref{eq:fourth-moment} with $\E_{\mu_0}e^{2\bar a}\le M$ gives
\[
\E_{p^\star}\|X\|^2 \le \sqrt{C\Big(A^4+\alpha^{-2}\Big)} \sqrt{M}\; d.
\]
\end{proof}
\subsubsection{Concentration of Hölder functions under strong log-concavity}
Throughout the paper, we control quantities that are only Hölder, but Lemma~\ref{lem:herbst} applies to
Lipschitz observables. The next result provides a deterministic ``smoothing at scale $r$'':
the approximation error is of order $r^\beta$, while the Lipschitz constant deteriorates like
$r^{\beta-1}$. This trade-off is exactly what will generate the two terms in the variance bound
of Lemma~\ref{lem:var_exp_concentration}.

\begin{lemma}[Inf-convolution Lipschitz approximation]
\label{lem:infconv}
Let $0<\beta\leq 1$, and suppose $f:\mathbb{R}^d\to\mathbb{R}$ is $\beta$-Hölder with constant $L>0$.
Fix $r>0$ and set $L':=L r^{\beta-1}$. Define the inf-convolution
\[
\tilde g_r(x)  :=  \inf_{y\in\mathbb{R}^d} \big\{  f(y) + L'\|x-y\|  \big\}.
\]
Then $\tilde g_r$ is $L'$-Lipschitz and $\|\tilde g_r-f\|_\infty \le L r^\beta$.
\end{lemma}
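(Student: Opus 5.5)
The plan is to verify the two properties directly from the definition of $\tilde g_r$ as an infimum of $L'$-Lipschitz functions of $x$.

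\textbf{Finiteness and the lower bound.} Fix $x$. For any $y$, the Hölder assumption gives $f(y)\ge f(x)-L\|x-y\|^\beta$. Hence
\[
f(y)+L'\|x-y\|\ge f(x)+\phi(\|x-y\|),\qquad \phi(s):=L' s-Ls^\beta .
\]
So it suffices to bound $\inf_{s\ge0}\phi(s)$ from below by $-Lr^\beta$. I would split into two regimes.
\begin{itemize}
\item For $s\ge r$, we have $s^{\beta}=s\cdot s^{\beta-1}\le s\,r^{\beta-1}$ because $\beta\le1$. Then $\phi(s)\ge L r^{\beta-1}s-L r^{\beta-1}s=0$.
\item For $s\le r$, we have $\phi(s)\ge -Ls^\beta\ge -Lr^\beta$.
\end{itemize}
Thus $\tilde g_r(x)\ge f(x)-Lr^\beta>-\infty$. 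For the upper bound, choosing $y=x$ gives $\tilde g_r(x)\le f(x)$. Together these yield $\|\tilde g_r-f\|_\infty\le Lr^\beta$, and in particular $\tilde g_r$ is real-valued everywhere.

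\textbf{Lipschitz property.} Each map $x\mapsto f(y)+L'\|x-y\|$ is $L'$-Lipschitz by the triangle inequality. An infimum of a family of $L'$-Lipschitz functions that is finite at every point is again $L'$-Lipschitz. To see this, note that for each $y$,
\[
f(y)+L'\|x-y\|\le f(y)+L'\|x'-y\|+L'\|x-x'\| .
\]
Taking the infimum over $y$ gives $\tilde g_r(x)\le \tilde g_r(x')+L'\|x-x'\|$, and the symmetric inequality follows by exchanging $x$ and $x'$.

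\textbf{Main obstacle.} The only delicate point is ensuring that the infimum is finite, which is exactly the regime split of $\phi$ at scale $s=r$. This is where the choice $L'=Lr^{\beta-1}$ is used, together with $\beta\le1$: the linear penalty dominates the Hölder growth beyond scale $r$, and below scale $r$ the Hölder increment is at most $Lr^\beta$. Everything else is routine.
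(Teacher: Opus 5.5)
Your proof is correct and follows essentially the same route as the paper: the Lipschitz bound comes from the triangle inequality inside the infimum, the upper bound from choosing $y=x$, and the lower bound from $Lr^{\beta-1}s-Ls^\beta\ge -Lr^\beta$. The paper writes this last inequality as $u-u^\beta\ge -1$ with $u=s/r$, which is exactly your two-regime split at $s=r$. Your explicit check that the infimum is finite before running the Lipschitz argument is a small point of extra care that the paper leaves implicit.
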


\begin{proof}
For any $x,x',y\in\mathbb{R}^d$ we have
$$f(y)+L'\|x-y\| \le f(y)+L'\|x'-y\| + L'\|x-x'\|,$$
so taking the infimum over $y$ on the right yields
$\tilde g_r(x) \le \tilde g_r(x') + L'\|x-x'\|$. Exchanging $x$ and $x'$ proves the Lipschitz bound.

The upper bound $\tilde g_r(x)\le f(x)$ follows by choosing $y=x$ in the infimum. Now for the lower bound $\tilde g_r(x)\ge f(x)-L r^\beta$,
we have
\[
f(y) + L'\|x-y\|
 \ge f(x) - L\|x-y\|^\beta + L r^{\beta-1}\|x-y\|.
\]
and writing $t:=\|x-y\|\ge0$ and  $u:=t/r$, we get
\[
- L t^\beta + L r^{\beta-1} t  =  L r^\beta \big(-u^\beta + u\big).
\]
For all $u\ge0$ and $\beta\in(0,1]$, we have $-u^\beta + u \ge -1$ so
\[
f(y)+L'\|x-y\|  \ge f(x) - L r^\beta \qquad \forall y\in\mathbb{R}^d.
\]
Taking the infimum over $y$ gives $\tilde g_r(x)\ge f(x)-L r^\beta$, as claimed.
\end{proof}

In several places we tilt $\mu$ by a weight $e^{v}$ and need to control how far this tilted measure is
from $\mu$. A convenient proxy is $\text{Var}_\mu(e^{v})$. When $v$ is only Hölder, the idea is to replace
it by the Lipschitz approximation $\tilde v_r$ from Lemma~\ref{lem:infconv} at scale $r$:
(i) the approximation error contributes a term of order $Kr^\beta$, while
(ii) concentration for the Lipschitz function $\tilde v_r$ contributes a term of order
$\frac{1}{\gamma}(Kr^{\beta-1})^2$.

\begin{lemma}[Exponential variance bound]\label{lem:var_exp_concentration}
Let $\mu$ be a probability measure on $\mathbb{R}^d$ which is $\gamma$-strongly log-concave for some
$\gamma>0$. Let $c>0$ be the universal constant $C_{\mathrm{LS}}$ from Lemma~\ref{lem:herbst}.
Assume $v:\mathbb{R}^d\to\mathbb{R}$ is $\beta$-Hölder for some $\beta\in(0,1]$ with constant $K>0$ and
$
\int e^{v} d\mu = 1.$
Then for every $r>0$,
\begin{equation}\label{eq:var_bound_r}
\text{Var}_ \mu(e^v)\le
\exp \Big( 4K r^\beta + \frac{2c}{\gamma}K^2 r^{2\beta-2}\Big)-1.
\end{equation}
\end{lemma}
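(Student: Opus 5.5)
We approximate $v$ at scale $r$ by the Lipschitz function of Lemma~\ref{lem:infconv}, and apply the Herbst bound of Lemma~\ref{lem:herbst} to that approximation.

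\textbf{Step 1: Lipschitz approximation.} Let $\tilde v_r$ be the inf-convolution of Lemma~\ref{lem:infconv} applied to $f=v$ with Hölder constant $K$. Then $\tilde v_r$ is $Kr^{\beta-1}$-Lipschitz and $\|\tilde v_r - v\|_\infty\le Kr^\beta$. Since $\int e^v d\mu=1$, we have $\mathrm{Var}_\mu(e^v)=\int e^{2v}d\mu-1$. It therefore suffices to show
\[
\int e^{2v}d\mu\le \exp\Big(4Kr^\beta+\tfrac{2c}{\gamma}K^2r^{2\beta-2}\Big).
\]

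\textbf{Step 2: compare with $\tilde v_r$ and normalize.} Set $m:=\int \tilde v_r d\mu$. The sup bound gives $e^{2v}\le e^{2Kr^\beta}e^{2\tilde v_r}$, hence
\[
\int e^{2v}d\mu\le e^{2Kr^\beta}e^{2m}\int e^{2(\tilde v_r-m)}d\mu .
\]
To control $e^{2m}$, use the normalization and Jensen:
\[
1=\int e^v d\mu\ge e^{-Kr^\beta}\int e^{\tilde v_r}d\mu\ge e^{-Kr^\beta}e^{m}.
\]
So $m\le Kr^\beta$, and therefore $e^{2m}\le e^{2Kr^\beta}$. Together the two factors give the $4Kr^\beta$ term in the exponent.

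\textbf{Step 3: Herbst.} Since $\mu$ is $\gamma$-strongly log-concave, Bakry--Émery gives a log-Sobolev inequality with constant $C_{\mathrm{LS}}=c/\gamma$. Apply Lemma~\ref{lem:herbst} with $\lambda=2$ and $L=Kr^{\beta-1}$:
\[
\int e^{2(\tilde v_r-m)}d\mu\le \exp\Big(\tfrac{c}{4\gamma}K^2r^{2\beta-2}\cdot 4\Big)=\exp\Big(\tfrac{c}{\gamma}K^2r^{2\beta-2}\Big),
\]
which is within the claimed $\tfrac{2c}{\gamma}$ factor. Combining Steps 1--3 gives \eqref{eq:var_bound_r}.

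\textbf{Main obstacle.} The only delicate point is the constant bookkeeping: matching the log-Sobolev normalization of Lemma~\ref{lem:herbst} to the constant $c$ named in the statement. The slack between the $\tfrac{c}{\gamma}$ obtained and the $\tfrac{2c}{\gamma}$ stated absorbs any factor-of-two convention in the definition of the log-Sobolev constant. If $v$ takes infinite values or $\mu$ has restricted support, we first approximate $\mu$ by smooth strongly log-concave measures and pass to the limit, as in Proposition~\ref{prop:second-moment}.
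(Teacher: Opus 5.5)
Your proof is correct and follows the paper's argument step for step. You use the inf-convolution at scale $r$, then Jensen together with the normalization $\int e^v\,d\mu=1$ to get $\int \tilde v_r\,d\mu\le Kr^\beta$, then Lemma~\ref{lem:herbst} with $\lambda=2$ on the centered Lipschitz approximation, and finally $v\le \tilde v_r+Kr^\beta$. The only difference is in the constant: reading Lemma~\ref{lem:herbst} literally gives $\frac{c}{\gamma}K^2r^{2\beta-2}$ in the exponent, whereas the paper writes the sub-Gaussian bound as $\exp\big(\frac{c(L')^2}{\gamma}\frac{\lambda^2}{2}\big)$ and gets $\frac{2c}{\gamma}$. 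So your bound is, if anything, slightly sharper and implies the stated one.
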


\begin{proof}
Fix $r>0$ and set $L':=K r^{\beta-1}$. Define the inf-convolution
\[
\tilde v_r(x) := \inf_{y\in\mathbb{R}^d}\big\{ v(y) + L'\|x-y\| \big\}.
\]
By Lemma~\ref{lem:infconv}, $\tilde v_r$ is $L'$-Lipschitz and satisfies the uniform approximation bound
\begin{equation}\label{eq:approx_bound}
\|\tilde v_r - v\|_\infty \le K r^\beta.
\end{equation}
From \eqref{eq:approx_bound}, we have
\[
\int e^{\tilde v_r} d\mu \le e^{Kr^\beta}\int e^{v} d\mu = e^{Kr^\beta}.
\]
Let $m_r:=\int \tilde v_r d\mu$, by Jensen's inequality,
\[
m_r \le \log\int e^{\tilde v_r} d\mu \le Kr^\beta.
\]
Hence,
\begin{equation*}
e^{2m_r}\le e^{2Kr^\beta}.
\end{equation*}
Since $\tilde v_r$ is $L'$-Lipschitz, we may apply Lemma~\ref{lem:herbst} with
$g=\tilde v_r$ and $\lambda=2$ to get
\[
\int \exp \Big(2(\tilde v_r-m_r)\Big) d\mu
\le \exp \Big(\frac{c(L')^2}{\gamma}\frac{2^2}{2}\Big)
= \exp \Big(\frac{2c(L')^2}{\gamma}\Big).
\]
Therefore,
\[
\int e^{2\tilde v_r} d\mu
= e^{2m_r}\int \exp \Big(2(\tilde v_r-m_r)\Big) d\mu
\le \exp \Big(2Kr^\beta + \frac{2c}{\gamma}(L')^2\Big).
\]
Recalling $L'=K r^{\beta-1}$, this becomes
\begin{equation}\label{eq:int_e2vtilde}
\int e^{2\tilde v_r} d\mu
\le \exp \Big(2Kr^\beta + \frac{2c}{\gamma}K^2 r^{2\beta-2}\Big).
\end{equation}
From \eqref{eq:approx_bound} we have $v \le \tilde v_r + Kr^\beta$, thus by \eqref{eq:int_e2vtilde},
\[
\int e^{2v} d\mu
\le e^{2Kr^\beta}\int e^{2\tilde v_r} d\mu
\le \exp \Big(4Kr^\beta + \frac{2c}{\gamma}K^2 r^{2\beta-2}\Big).
\]
Finally, using $\int e^v d\mu=1$,
\[
\text{Var}_ \mu(e^v)=\int e^{2v} d\mu - \Big(\int e^v d\mu\Big)^2
\le \exp \Big(4Kr^\beta + \frac{2c}{\gamma}K^2 r^{2\beta-2}\Big)-1,
\]
which proves \eqref{eq:var_bound_r}.
\end{proof}

\subsection{Estimates for convolution}
\subsubsection{Regularity and curvature}
This subsection gathers two facts about Gaussian smoothing of weakly log-concave measures.
They will be used repeatedly to control the geometry of intermediate distributions appearing in the flow:
\begin{itemize}
\item Proposition~\ref{prop:bondhessut} is a \emph{curvature statement}: convolving a strongly log-concave density with a Gaussian
produces a smooth density whose log remains uniformly curved, with explicit two-sided Hessian bounds.
\item Lemmas~\ref{lemma:holderexp}-\ref{lem:mgf_holder} are \emph{regularity statements}: if we further perturb the strongly log-concave core by a Hölder factor
$e^{a}$, then the induced log-normalization $a_t$ remains Hölder in the observation variable.
\end{itemize}
Together, these results explain why, along the bridge, one can work with decompositions of the form
$ \exp\big(-u_t(x)+a_t(x)\big),$
where $u_t$ is strongly convex and $a_t$ is Hölder continuous. 
\begin{proposition}[Pointwise Hessian bounds for the log-convolution \(u_t\)]\label{prop:bondhessut}
Let $u:\mathbb{R}^d\to\mathbb{R}$ be $\alpha$-strongly convex 
and $t\in(0,1)$ with $g_t>0$ and $f_t>0$.
Define
\[
\bar u_t(z) := u \left(\frac{z}{f_t}\right) + d\log f_t
\qquad\text{and}\qquad
u_t(x) := -\log \int_{\mathbb{R}^d}\exp \Big(-\bar u_t(z) - \frac{\|x-z\|^2}{2g_t^2}\Big) dz.
\]
Then for all $x\in\mathbb{R}^d$,
\begin{equation}\label{eq:ut-hessian-two-sided}
\frac{\alpha}{\alpha g_t^2 + f_t^2} \text{Id}  \preceq  \nabla^2 u_t(x)  \preceq  \frac{1}{g_t^2} \text{Id}.
\end{equation}
\end{proposition}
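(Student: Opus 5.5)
The plan is to differentiate under the integral and express $\nabla^2 u_t$ through a posterior covariance. Write $\nu_x(dz)\propto \exp\big(-\bar u_t(z)-\|x-z\|^2/(2g_t^2)\big)dz$. A direct computation, justified by dominated convergence since $\bar u_t$ is strongly convex, gives
\[
\nabla u_t(x)=\frac{x-\E_{\nu_x}[Z]}{g_t^2},\qquad
\nabla^2 u_t(x)=\frac{1}{g_t^2}\text{Id}-\frac{1}{g_t^4}\mathrm{Cov}_{\nu_x}(Z).
\]
The upper bound $\nabla^2 u_t\preceq g_t^{-2}\text{Id}$ then follows at once, because a covariance matrix is positive semidefinite.

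For the lower bound I will use the Brascamp--Lieb inequality. The potential of $\nu_x$ is $V_x(z)=\bar u_t(z)+\|x-z\|^2/(2g_t^2)$. Since $\nabla^2\bar u_t(z)=f_t^{-2}\nabla^2u(z/f_t)\succeq (\alpha/f_t^2)\text{Id}$, we get
\[
\nabla^2V_x\succeq \kappa\,\text{Id},\qquad \kappa:=\frac{\alpha}{f_t^2}+\frac1{g_t^2}=\frac{\alpha g_t^2+f_t^2}{f_t^2g_t^2}.
\]
Applying Brascamp--Lieb to the linear test functions $z\mapsto\langle h,z\rangle$ gives $\mathrm{Cov}_{\nu_x}(Z)\preceq (\nabla^2V_x)^{-1}$ in mean, hence $\mathrm{Cov}_{\nu_x}(Z)\preceq\kappa^{-1}\text{Id}$. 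Alternatively, the Poincaré inequality with constant $1/\kappa$ for $\kappa$-strongly log-concave measures, which follows from the log-Sobolev inequality used in Lemma~\ref{lem:lip-moments}, gives the same bound.

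Substituting this into the covariance formula yields
\[
\nabla^2 u_t(x)\succeq \frac1{g_t^2}\Big(1-\frac{f_t^2}{\alpha g_t^2+f_t^2}\Big)\text{Id}=\frac{\alpha}{\alpha g_t^2+f_t^2}\text{Id},
\]
which is the claimed lower bound.

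The only delicate point is regularity. Definition~\ref{defi:weakconcave} allows $u$ to equal $+\infty$ outside a convex set, and then $\bar u_t$ is not $C^2$ everywhere. I would handle this as in Proposition~\ref{prop:second-moment}: approximate $u$ by smooth $\alpha$-strongly convex $u_\varepsilon$ (for instance Moreau-type regularizations plus $\alpha\|\cdot\|^2/2$), prove the bounds for these, and pass to the limit. The convergence of $\nabla^2u_{t,\varepsilon}$ to $\nabla^2u_t$ is locally uniform, because the Gaussian kernel gives uniform control of the posterior moments. The variance bound itself is the substantive step, and Brascamp--Lieb settles it, so I expect no real obstacle beyond this approximation.
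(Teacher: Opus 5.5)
Your proposal is correct and follows the paper's proof step by step: the same posterior-covariance identity $\nabla^2 u_t(x)=g_t^{-2}\text{Id}-g_t^{-4}\mathrm{Cov}(Z)$, positive semidefiniteness of the covariance for the upper bound, and Brascamp--Lieb with curvature $\alpha/f_t^2+1/g_t^2$ for the lower bound. The approximation step you add is not needed for the statement as written, because there $u$ is assumed finite-valued on $\mathbb{R}^d$.
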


\begin{proof}
Fix $t$ with $g_t>0$. For $x\in\mathbb{R}^d$ set
\[
\Phi_{t,x}(z) := \bar u_t(z) + \frac{\|x-z\|^2}{2g_t^2} ,
\qquad
Z_t(x) := \int_{\mathbb{R}^d} e^{-\Phi_{t,x}(z)} dz,
\]
so that $u_t(x)=-\log Z_t(x)$. Define the probability measure
\[
q_{t,x}(dz) := \frac{1}{Z_t(x)} e^{-\Phi_{t,x}(z)} dz.
\]
We have
\[
\nabla Z_t(x) = \int_{\mathbb{R}^d} \Big(-\nabla_x \Phi_{t,x}(z)\Big) e^{-\Phi_{t,x}(z)} dz
 = -\frac{1}{g_t^2}\int_{\mathbb{R}^d} (x-z) e^{-\Phi_{t,x}(z)} dz,
\]
hence
\begin{align*}
\nabla u_t(x)
&= -\frac{\nabla_x Z_t(x)}{Z_t(x)}
= \frac{1}{g_t^2}\left(x - \mathbb{E}_{q_{t,x}}[Z]\right).
\end{align*}

Differentiating the previous identity again gives
\[
\nabla^2 u_t(x) = \frac{1}{g_t^2}\text{Id} - \frac{1}{g_t^2}\mathbb{E}_{q_{t,x}} \big[Z\otimes \nabla_x \log q_{t,x}(Z)\big].
\]
Now,
\[
\nabla_x \log q_{t,x}(z) = -\nabla_x \Phi_{t,x}(z) - \nabla_x \log Z_t(x)
= -\frac{1}{g_t^2}(x-z) + \frac{1}{Z_t(x)}\nabla_x Z_t(x).
\]
so
\[
\nabla_x \log q_{t,x}(z)  = -\frac{1}{g_t^2}(x-z) + \frac{1}{g_t^2}\big(x-\mathbb{E}_{q_{t,x}}[Z]\big)
 = \frac{1}{g_t^2}\big(z-\mathbb{E}_{q_{t,x}}[Z]\big).
\]
Therefore,
\[
 \mathbb{E}_{q_{t,x}} \Big[ Z\otimes \nabla_x \log q_{t,x}(Z) \Big]
 = \frac{1}{g_t^2} \text{Cov}_{q_{t,x}}(Z),
\]
so we deduce that
\begin{equation}\label{eq:hessian-identity}
\nabla^2 u_t(x) = \frac{1}{g_t^2}\text{Id} - \frac{1}{g_t^4} \text{Cov}_{q_{t,x}}(Z).
\end{equation}
which in particular yields
\[
\nabla^2 u_t(x) \preceq \frac{1}{g_t^2}\text{Id}.
\]
Let us now use Brascamp-Lieb inequality to bound the covariance term.
Since,
\[
\nabla^2 \Phi_{t,x}(z) = \nabla^2 \bar u_t(z) + g_t^{-2} \text{Id}.
\]
 and $\bar u_t(z)=u(z/f_t)+d\log f_t$, we have
\[
\nabla^2 \Phi_{t,x}(z) \succeq \Big(\frac{\alpha}{f_t^2} + \frac{1}{g_t^2}\Big) \text{Id},
\]
so we deduce that
$$\text{Cov}_{q_{t,x}}(Z) \preceq \Big(\frac{\alpha}{f_t^2} + \frac{1}{g_t^2}\Big)^{-1} \text{Id}.$$ Plugging this into \eqref{eq:hessian-identity} gives
\[
\nabla^2 u_t(x) \succeq \frac{1}{g_t^2}\text{Id} - \frac{1}{g_t^4}\Big(\frac{\alpha}{f_t^2} + \frac{1}{g_t^2}\Big)^{-1} \text{Id}
\]
and
\[
\frac{1}{g_t^2} - \frac{1}{g_t^4}\Big(\frac{\alpha}{f_t^2} + \frac{1}{g_t^2}\Big)^{-1}
 = \frac{\alpha}{\alpha g_t^2 + f_t^2},
\]
which yields the stated bound.
\end{proof}

In the Stochastic-interpolant setting, controlling the Jacobian of the projected drift  requires a quantitative bound on how much the path derivative can fluctuate under the posterior coupling. The following result shows that one can first bound the variance under a strongly log-concave ``surrogate'' posterior using Brascamp--Lieb, and then transfer the estimate to $\pi^{t,x}$ at the cost of an exponential-moment.

\begin{proposition}\label{prop:boundvarmprime}
Fix $t\in[0,1]$, $f_t,g_t,\sigma_t>0$, and define
\[
m_t(x_0,x_1)=g_t x_0+f_t x_1.
\]
For $x\in\mathbb R^d$, let $\pi^{t,x}$ be the probability measure on $\mathbb{R}^{2d}$ having its density proportional to
\[
\exp \Big(-\tfrac{\|x-(g_t x_0+f_t x_1)\|^2}{2\sigma_t^2}\Big)  d\gamma(x_0)  p^\star(x_1),
\]
with $p^\star=e^{-u+a}$ being $(\alpha,\beta,K)$-weakly log-concave (no boundedness assumption on $a$).

Set $\bar g_t^2:=\sigma_t^2+g_t^2$ and, for $x\in\R^d$, define the probability measure on $\R^d$
\[
\widetilde\nu^{t,x}(dy) \propto \exp \Big(-u(y)-\frac{\|x-f_t y\|^2}{2\bar g_t^2}\Big) dy,
\]
and the exponential-moment quantity
\[
M_{t,2}
:=\sup_{x\in\R^d}\mathbb{E}_{\widetilde\nu^{t,x}} \Big[\exp\big(2(a(Y)-\mathbb{E}_{\widetilde\nu^{t,x}}a(Y))\big)\Big].
\]
Then there exists a universal constant $C>0$ such that for every unit vector $h\in\mathbb S^{d-1}$,
\[
\operatorname{Var}_{\pi^{t,x}} \big(h^\top m_t'(Z)\big)
 \le 
C  M_{t,2}^{1/2} 
\frac{ f_t'^2 + \alpha g_t'^2 + \dfrac{(g_t' f_t - f_t' g_t)^2}{\sigma_t^2} }
{ \alpha + \dfrac{f_t^2 + \alpha g_t^2}{\sigma_t^2} }.
\]
\end{proposition}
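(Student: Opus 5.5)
The plan is to compare $\pi^{t,x}$ with the strongly log-concave surrogate $\nu^{t,x}$ on $\mathbb{R}^{2d}$, obtained by dropping the Hölder factor $e^{a}$:
\[
\nu^{t,x}(dx_0,dx_1)\propto \exp\Big(-\tfrac{\|x_0\|^2}{2}-u(x_1)-\tfrac{\|x-g_tx_0-f_tx_1\|^2}{2\sigma_t^2}\Big)dx_0\,dx_1,
\]
so that $d\pi^{t,x}=\frac{e^{a(x_1)}}{\E_{\nu^{t,x}}e^{a}}\,d\nu^{t,x}$. We split the argument into three steps: (i) a Brascamp--Lieb variance bound under $\nu^{t,x}$, (ii) a transfer to $\pi^{t,x}$ through Cauchy--Schwarz, and (iii) an identification of the resulting exponential moment with $M_{t,2}$.

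\textbf{Step (i): variance under the surrogate.} Since $\operatorname{Var}_{\pi}(\phi)\le \E_\pi(\phi-c)^2$ for every constant $c$, we take $c=\E_{\nu^{t,x}}\phi$ with $\phi(z)=h^\top m_t'(z)=g_t'h^\top x_0+f_t'h^\top x_1$. This is linear with gradient $w=(g_t'h,f_t'h)$. The Hessian of the potential of $\nu^{t,x}$ is bounded below by
\[
H=\begin{pmatrix}(1+g_t^2/\sigma_t^2)\mathrm{Id} & (g_tf_t/\sigma_t^2)\mathrm{Id}\\ (g_tf_t/\sigma_t^2)\mathrm{Id} & (\alpha+f_t^2/\sigma_t^2)\mathrm{Id}\end{pmatrix}.
\]
Brascamp--Lieb then gives $\operatorname{Var}_{\nu}(\phi)\le w^\top H^{-1}w$. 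We use the Brascamp--Lieb form where a matrix lower bound suffices, which holds because $H$ is constant. Because of the Gaussian-type tails we also get $\E_\nu(\phi-c)^4\lesssim (w^\top H^{-1}w)^2$, using the matrix version of the Lipschitz-moment bound of Lemma~\ref{lem:lip-moments}. This is obtained by the linear change of variables $z\mapsto H^{1/2}z$, which reduces to a $1$-strongly log-concave measure with $\phi$ Lipschitz of constant $\|H^{-1/2}w\|$.

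The explicit $2\times2$ block computation uses
\[
\det = \alpha+\frac{f_t^2+\alpha g_t^2}{\sigma_t^2}
\]
(factor $1$ in the $(1,1)$ block after expansion) and gives
\[
w^\top H^{-1}w=\frac{g_t'^2(\alpha+f_t^2/\sigma_t^2)-2g_t'f_t'g_tf_t/\sigma_t^2+f_t'^2(1+g_t^2/\sigma_t^2)}{\alpha+(f_t^2+\alpha g_t^2)/\sigma_t^2}.
\]
The numerator equals $\alpha g_t'^2+f_t'^2+(g_t'f_t-f_t'g_t)^2/\sigma_t^2$, which is exactly the claimed ratio.

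\textbf{Step (ii): transfer to $\pi^{t,x}$.} Writing $\bar a=a-\E_{\nu}a$, Cauchy--Schwarz gives
\[
\E_{\pi}(\phi-c)^2=\frac{\E_\nu[(\phi-c)^2e^{\bar a}]}{\E_\nu e^{\bar a}}\le \frac{(\E_\nu(\phi-c)^4)^{1/2}(\E_\nu e^{2\bar a})^{1/2}}{\E_\nu e^{\bar a}}.
\]
Jensen's inequality gives $\E_\nu e^{\bar a}\ge 1$, so the denominator can be dropped.

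\textbf{Step (iii): identifying the exponential moment.} This is the main obstacle. The quantity $\E_\nu e^{2\bar a}$ involves only the $x_1$-marginal of $\nu^{t,x}$, and we must check that this marginal is exactly $\widetilde\nu^{t,x}$. Integrating out $x_0$ is a Gaussian integral: $g_tx_0+\sigma_t\eta$ with $x_0\sim\gamma_d$ has law $\mathcal N(0,\bar g_t^2\mathrm{Id})$. Hence the $x_1$-marginal is $\propto e^{-u(x_1)}\varphi^{0,\bar g_t}(x-f_tx_1)$, which is $\widetilde\nu^{t,x}$, and the centering $\E_\nu a=\E_{\widetilde\nu}a$ matches. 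Therefore $\E_\nu e^{2\bar a}\le M_{t,2}$, and the bound follows with $C$ coming from the fourth-moment constant.

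If the case $u=+\infty$ (compact support) causes trouble for Brascamp--Lieb, we argue with smooth approximations $u_\varepsilon$ as in Proposition~\ref{prop:second-moment}.
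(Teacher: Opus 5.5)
Your proposal is correct and follows the paper's proof essentially step for step. You use the same strongly log-concave surrogate on $\mathbb{R}^{2d}$ (obtained by dropping $e^{a}$), the same Brascamp--Lieb $2\times 2$ block computation, the same Cauchy--Schwarz and Jensen transfer to $\pi^{t,x}$, and the same identification of the $x_1$-marginal with $\widetilde\nu^{t,x}$. The one minor difference is the fourth-moment step: you bound it through the change of variables $z\mapsto H^{1/2}z$ and Lemma~\ref{lem:lip-moments}, whereas the paper uses the moment comparison for one-dimensional log-concave laws followed by Brascamp--Lieb; both give $(w^\top H^{-1}w)^2$ up to a universal constant.
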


\begin{proof}
Consider the surrogate log-concave measure $\rho^{t,x}$ whose negative log-density can be written as
\[
V(X_0,X_1)
=\frac12\|X_0\|^2 + u(X_1) + \frac{1}{2\sigma_t^2} \big\|g_t X_0 + f_t X_1 - x\big\|^2,
\]
up to normalizing constant.
Then
\[
\nabla^2 V  = 
\begin{bmatrix}
\text{Id} & 0\\[2pt]
0 & \nabla^2 u(X_1)
\end{bmatrix}
+\frac{1}{\sigma_t^2}
\begin{bmatrix}
g_t^2 \text{Id} & g_t f_t \text{Id}\\[2pt]
g_t f_t \text{Id} & f_t^2 \text{Id}
\end{bmatrix}.
\]
Using the curvature assumption $\nabla^2 u(X_1)\succeq \alpha \text{Id}$ we get
\[
\nabla^2 V  \succeq  A_{\mathrm{low}}
  :=  
\begin{bmatrix}
\text{Id} & 0\\[2pt]
0 & \alpha \text{Id}
\end{bmatrix}
+\frac{1}{\sigma_t^2}
\begin{bmatrix}
g_t^2 \text{Id} & g_t f_t \text{Id}\\[2pt]
g_t f_t \text{Id} & f_t^2 \text{Id}
\end{bmatrix}.
\tag{$\star$}
\]

Let $F(x_0,x_1):=h^\top m_t'(x_0,x_1)= g_t' h^\top x_0 + f_t' h^\top x_1$.
Brascamp--Lieb inequality (constant gradient $\nabla F=(g_t' h, f_t' h)$) yields
\[
\mathrm{Var}_{\rho^{t,x}}(F(Z)) \le \mathbb{E}_{\rho^{t,x}} \big[\nabla F^\top (\nabla^2 V)^{-1}\nabla F\big]
 \le  \nabla F^\top A_{\mathrm{low}}^{-1}\nabla F.
\]
The matrix $A_{\mathrm{low}}$ is a direct sum over the $d$ coordinates of identical $2\times2$ blocks, so the
quadratic form above is determined by the $2\times2$ matrix
\[
 M(\alpha)
:=
\begin{bmatrix}
1+\dfrac{g_t^2}{\sigma_t^2} & \dfrac{g_t f_t}{\sigma_t^2}\\[8pt]
\dfrac{g_t f_t}{\sigma_t^2} & \alpha+\dfrac{f_t^2}{\sigma_t^2}
\end{bmatrix},
\qquad
w:=
\begin{bmatrix}
g_t'\\
f_t'
\end{bmatrix},
\]
so
\[
\mathrm{Var}_{\rho^{t,x}}(F) \le  w^\top M(\alpha)^{-1} w
=
\frac{ f_t'^2 + \alpha g_t'^2 + \dfrac{(g_t' f_t - f_t' g_t)^2}{\sigma_t^2} }
{ \alpha+\dfrac{f_t^2+\alpha g_t^2}{\sigma_t^2} }.
\]

Now note that $d\pi^{t,x}\propto e^{a(X_1)} d\rho^{t,x}$.
Let
\[
L:=\frac{e^{a(X_1)}}{\mathbb{E}_{\rho^{t,x}}[e^{a(X_1)}]},
\qquad \text{so that}\qquad
\mathbb{E}_{\rho^{t,x}}[L]=1
 \text{ and }\
\mathbb{E}_{\pi^{t,x}}[\cdot]=\mathbb{E}_{\rho^{t,x}}[L \cdot].
\]
Using $\text{Var}_{\pi^{t,x}}(F)=\inf_{c\in\R}\mathbb{E}_{\pi^{t,x}}[(F-c)^2]$ and taking $c=\mathbb{E}_{\rho^{t,x}}F$, we get
\[
\text{Var}_{\pi^{t,x}}(F)\le \mathbb{E}_{\pi^{t,x}} \big[(F-\mathbb{E}_{\rho^{t,x}}F)^2\big]
=\mathbb{E}_{\rho^{t,x}} \big[L(F-\mathbb{E}_{\rho^{t,x}}F)^2\big].
\]
By Cauchy-Schwarz,
\[
\mathbb{E}_{\rho^{t,x}} \big[L(F-\mathbb{E}_{\rho^{t,x}}F)^2\big]
\le
\big(\mathbb{E}_{\rho^{t,x}}[L^2]\big)^{1/2} 
\big(\mathbb{E}_{\rho^{t,x}}[(F-\mathbb{E}_{\rho^{t,x}}F)^4]\big)^{1/2}.
\]
Since $\rho^{t,x}$ is log-concave on $\R^{2d}$, the one-dimensional law of
$F-\mathbb{E}_{\rho^{t,x}}F$ is log-concave on $\R$, hence there is a universal constant $C>0$ such that
\[
\big(\mathbb{E}_{\rho^{t,x}}[(F-\mathbb{E}_{\rho^{t,x}}F)^4]\big)^{1/2}
\le
C \text{Var}_{\rho^{t,x}}(F).
\]
Moreover, writing $Y:=X_1$ under $\rho^{t,x}$, we have
\[ 
\mathbb{E}_{\rho^{t,x}}[L^2]
=
\frac{\mathbb{E}_{\rho^{t,x}}[e^{2a(Y)}]}{\mathbb{E}_{\rho^{t,x}}[e^{a(Y)}]^2}
=
\frac{\mathbb{E}_{\rho^{t,x}}[e^{2(a(Y)-\E a(Y))}]}{\mathbb{E}_{\rho^{t,x}}[e^{a(Y)-\E a(Y)}]^2}
\le
\mathbb{E}_{\rho^{t,x}}[e^{2(a(Y)-\E a(Y))}],
\]
since $\mathbb{E}_{\rho^{t,x}}[e^{a(Y)-\E a(Y)}]\ge 1$ by Jensen.
Finally, the marginal of $Y=X_1$ under $\rho^{t,x}$ is precisely $\widetilde\nu^{t,x}$ (with $\bar g_t^2=\sigma_t^2+g_t^2$),
so
\[
\mathbb{E}_{\rho^{t,x}}[L^2]\le M_{t,2}.
\]
Putting everything together yields
\[
\text{Var}_{\pi^{t,x}}(F)
\le
C  M_{t,2}^{1/2} \text{Var}_{\rho^{t,x}}(F),
\]
and inserting the bound on $\text{Var}_{\rho^{t,x}}(F)$ gives the claim.
\end{proof}

We now turn to the ``regularity through tilting'' mechanism.
Given a strongly log-concave baseline $e^{-u}$, we introduce a rough perturbation $e^{a}$ and consider the log-normalization
\[
a_t(x)=\log\Big(\mathbb E_{\nu^{t,x}}[e^{a(Y)}]\Big),
\]
where $\nu^{t,x}$ is the (strongly log-concave) surrogate posterior.
The point is that moving $x$ only changes $\nu^{t,x}$ through a smooth Gaussian likelihood, and strong convexity prevents the posterior
from moving too much. This stability is summarized by
\[
\kappa_t=\frac{f}{\alpha g^2+f^2},
\]
which becomes small when the curvature $\alpha$ is large or the noise level $g$ is large.

\begin{lemma}[Hölder regularity of $a_t$ under exponential moment bounds]\label{lemma:holderexp}
Fix $t \in (0,1)$ and let $f,g>0$.
Assume that $p^\star=e^{-u+a}$ where $u:\R^d\to\R$ is $\alpha$-strongly convex
for some $\alpha> 0$ 
and $a:\R^d\to\R$ is $\beta$-Hölder with constant $K>0$.
For $x\in\R^d$, define the probability measure
\[
\nu^{t,x}(dy)\propto e^{-u(y)} \varphi^{x,g}(fy) dy,
\]
and set
\[
a_t(x):=\log\Big(\mathbb{E}_{\nu^{t,x}} e^{a(Y)}\Big).
\]
Assume moreover that for every $q\ge 0$ there exists $M_q<\infty$ such that
\begin{equation}
\sup_{x\in\R^d}\mathbb{E}_{\nu^{t,x}} \Big[e^{q(a(Y)-\mathbb{E}_{\nu^{t,x}}a(Y))}\Big]
+\sup_{x\in\R^d}\mathbb{E}_{\nu^{t,x}} \Big[e^{-q(a(Y)-\mathbb{E}_{\nu^{t,x}}a(Y))}\Big]
\le M_q. \label{eq:exp-mom-assump}
\end{equation}
Then, writing
\[
\kappa_t:=\frac{f}{\alpha g^2+f^2},
\]
for all $x,x'\in\R^d$ we have:
\begin{enumerate}[(i)]
\item If $\beta\in(0,1)$, then
\[
|a_t(x)-a_t(x')|\le K  M_{2/(1-\beta)}^{ 1-\beta}  \kappa_t^{\beta}  \|x-x'\|^\beta.
\]
\item If $\beta=1$, then
\[
|a_t(x)-a_t(x')|\le K  M_{4}^{ 1/2}  \kappa_t  \|x-x'\|.
\]
\end{enumerate}
\end{lemma}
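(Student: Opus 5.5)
We will prove a slightly stronger statement, namely $|a_t(x)-a_t(x')|\le K\kappa_t^\beta\|x-x'\|^\beta$ for every $\beta\in(0,1]$. This implies both (i) and (ii), because $M_q\ge 1$ for every $q\ge0$ by Jensen's inequality. The approach is a coupling argument: we compare the two surrogate posteriors $\nu^{t,x}$ and $\nu^{t,x'}$ through a coupling $(Y,Y')$ with $\|Y-Y'\|\le\kappa_t\|x-x'\|$ almost surely, and then use only the Hölder property of $a$.

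\emph{Step 1 (structure of the two posteriors).} Write $\nu^{t,x}\propto e^{-V_x}$ with $V_x(y)=u(y)+\frac{\|x-fy\|^2}{2g^2}$. Then $\nabla^2V_x\succeq\gamma\,\mathrm{Id}$ with $\gamma:=\alpha+f^2/g^2$. Moreover $V_{x'}-V_x$ is an affine function of $y$ with gradient $b:=\frac{f}{g^2}(x-x')$. Consequently $\|b\|/\gamma=\frac{f}{\alpha g^2+f^2}\|x-x'\|=\kappa_t\|x-x'\|$, which is exactly the scale in the statement.

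\emph{Step 2 (synchronous coupling).} Assume first that $u$ is finite and $C^2$. We run the two Langevin diffusions $dY_s=-\nabla V_x(Y_s)ds+\sqrt2\,dB_s$ and $dY'_s=-\nabla V_{x'}(Y'_s)ds+\sqrt2\,dB_s$, driven by the same Brownian motion and started at the same point. The noise cancels in the difference. Using $\gamma$-monotonicity of $\nabla V_x$ and $\nabla V_{x'}=\nabla V_x+b$, we get
\[
\tfrac{d}{ds}\|Y_s-Y'_s\|^2\le -2\gamma\|Y_s-Y'_s\|^2+2\|b\|\|Y_s-Y'_s\|,
\]
so $\|Y_s-Y'_s\|\le\|b\|/\gamma$ for all $s$. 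Both marginals converge to $\nu^{t,x}$ and $\nu^{t,x'}$ by strong log-concavity. The joint laws are then tight, and any weak limit is a coupling of $\nu^{t,x}$ and $\nu^{t,x'}$ supported on the closed set $\{\|y-y'\|\le\kappa_t\|x-x'\|\}$.

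\emph{Step 3 (conclusion).} On this coupling we have $a(Y')\le a(Y)+K\|Y-Y'\|^\beta\le a(Y)+K\kappa_t^\beta\|x-x'\|^\beta$. Taking exponentials, expectations and logarithms gives $a_t(x')\le a_t(x)+K\kappa_t^\beta\|x-x'\|^\beta$. Exchanging the roles of $x$ and $x'$ gives the two-sided bound. If one prefers to recover literally the stated form with $M_q$, an alternative route is to differentiate along $x_s=x+s(x'-x)$, which gives $\partial_s a_t(x_s)=\frac{f}{g^2}\mathrm{Cov}_{\nu^{t,x_s}}(e^{a(Y)},\langle Y,x'-x\rangle)/\mathbb E e^{a}$, and then control this covariance with Hölder's inequality and the exponential moments \eqref{eq:exp-mom-assump}. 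The coupling route avoids this computation entirely.

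\emph{Main obstacle: removing the smoothness assumption on $u$.} The general case allows $u=+\infty$ off a convex set. Here we approximate $u$ by smooth $\alpha$-strongly convex potentials $u_\varepsilon$, for instance a Moreau envelope plus $\frac\alpha2\|\cdot\|^2$ corrections, so that the corresponding posteriors $\nu_\varepsilon^{t,x}\to\nu^{t,x}$ weakly. Steps 2--3 then apply uniformly in $\varepsilon$. The delicate point is passing to the limit in $\mathbb E_{\nu_\varepsilon^{t,x}}e^{a}$ when $a$ is unbounded. We handle it with the sub-Gaussian tails of the $\gamma$-log-concave measures (Lemma~\ref{lem:lip-moments}), uniformly in $\varepsilon$, against the at most linear growth of the $\beta$-Hölder function $a$; this gives uniform integrability. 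Alternatively, we pass to the limit directly on the couplings, which remain supported in the same closed set.
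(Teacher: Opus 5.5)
Your argument is correct, and it takes a genuinely different and sharper route than the paper.

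\textbf{The paper's route.} The paper first proves an $L^p$ stability estimate for the log-Laplace functional,
\[
|\log\mathbb E_\pi e^{A}-\log\mathbb E_\pi e^{B}|\le M_{2q}^{1/q}\|A-B\|_{L^p(\pi)} .
\]
It obtains this by differentiating $F(s)=\log\mathbb E_\pi e^{(1-s)A+sB}$ and controlling the tilted laws with H\"older's inequality and the exponential moments \eqref{eq:exp-mom-assump}. It then applies this estimate to a coupling of $\nu^{t,x}$ and $\nu^{t,x'}$, splitting into two cases:
\begin{itemize}
\item For $\beta<1$, it takes $p=1/\beta$ and bounds $W_1(\nu^{t,x},\nu^{t,x'})\le\kappa_t\|x-x'\|$ by Kantorovich--Rubinstein duality plus a Brascamp--Lieb covariance estimate.
\item For $\beta=1$, it takes $p=2$ and bounds $W_2$ with the same synchronous Langevin coupling you use, but only in mean square.
\end{itemize}

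\textbf{Your route.} You observe that the synchronous coupling controls $\|Y_s-Y'_s\|$ pathwise, since the noise cancels. This gives a coupling with $\|Y-Y'\|\le\kappa_t\|x-x'\|$ almost surely, i.e.\ a $W_\infty$ bound. After that, comparing $\log\mathbb E e^{a}$ is just monotonicity of the log-Laplace functional, with no change-of-measure cost. This buys you three things:
\begin{itemize}
\item the constant $K\kappa_t^\beta$ with no $M_q$ factor (which implies (i) and (ii), since $M_q\ge 1$ by Jensen);
\item a single argument covering all $\beta\in(0,1]$;
\item the observation that \eqref{eq:exp-mom-assump} is only needed to ensure $\mathbb E e^{a}<\infty$.
\end{itemize}

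\textbf{What the paper's route buys.} Its log-Laplace stability principle only needs averaged transport bounds ($W_1$ or $W_2$), so it would still work when no almost-sure coupling is available. Its $\beta<1$ branch also rests on a Poincar\'e/Brascamp--Lieb inequality rather than on a diffusion. Your route pays for this with the smoothing step for nonsmooth $u$. You handle that correctly by passing to the limit directly on the couplings: the constraint $\{\|y-y'\|\le\kappa_t\|x-x'\|\}$ is a closed set, so it survives weak limits, and the curvature $\alpha+f^2/g^2$ is uniform along the approximation.
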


\begin{proof}
Let $(\Omega,\pi)$ be a probability space and let $A,B$ be integrable real random variables on $\Omega$.
Fix $p>1$ and set $q=\frac{p}{p-1}$. Assume that for all $r\ge 0$,
\begin{equation}\label{eq:mgfA-mgfB}
\mathbb{E}_ \pi\big[e^{r(A-\mathbb{E}_ \pi A)}\big]\le M_r,
\qquad
\mathbb{E}_ \pi\big[e^{r(B-\mathbb{E}_ \pi B)}\big]\le M_r,
\end{equation}
for some finite constants $(M_r)_{r\ge 0}$.
(If $(M_r)$ is not nondecreasing, replace it by $\widetilde M_r:=\sup_{0\le s\le r}M_s$; this only worsens constants and makes it nondecreasing.)

Define for $s\in[0,1]$ the function
\[
F(s):=\log \mathbb{E}_ \pi\big[e^{(1-s)A+sB}\big].
\]
A standard differentiation under the integral (justified here by Hölder and the exponential moment bounds below)
gives
\[
F'(s)=\frac{\mathbb{E}_ \pi\big[(B-A)e^{(1-s)A+sB}\big]}{\mathbb{E}_ \pi\big[e^{(1-s)A+sB}\big]}
=\mathbb{E}_ {\mu_s}[B-A],
\]
where $\mu_s$ is the tilted law with density proportional to $e^{(1-s)A+sB}$ with respect to $\pi$.
Hence
\[
|F(1)-F(0)|\le \int_0^1 \mathbb{E}_ {\mu_s}[|A-B|] ds.
\]
Let $W_s:=e^{(1-s)A+sB}$ and $Z_s:=\mathbb{E}_ \pi[W_s]$. By Hölder with exponents $(p,q)$,
\[
\mathbb{E}_ {\mu_s}[|A-B|]
=\frac{\mathbb{E}_ \pi[|A-B|W_s]}{Z_s}
\le
\frac{\|A-B\|_{L^p(\pi)} \|W_s\|_{L^q(\pi)}}{Z_s}.
\]
We bound the ratio $\|W_s\|_{L^q}/Z_s$ uniformly in $s$.
First, Jensen yields
\[
\log Z_s=\log \mathbb{E}_ \pi[e^{(1-s)A+sB}]
\ge \mathbb{E}_ \pi[(1-s)A+sB]=(1-s)\mathbb{E}_ \pi A+s\mathbb{E}_ \pi B,
\]
so
\[
Z_s\ge e^{(1-s)\mathbb{E}_ \pi A+s\mathbb{E}_ \pi B}.
\]
Next, by Cauchy-Schwarz,
\[
\|W_s\|_{L^q(\pi)}^q
=\mathbb{E}_ \pi\big[e^{q((1-s)A+sB)}\big]
\le
\Big(\mathbb{E}_  \pi\big[e^{2q(1-s)A}\big]\Big)^{1/2}
\Big(\mathbb{E}_ \pi\big[e^{2qsB}\big]\Big)^{1/2}.
\]
Using $\mathbb{E}[e^{rA}]=e^{r\mathbb{E} A}\mathbb{E}[e^{r(A-\mathbb{E} A)}]\le e^{r\mathbb{E} A}M_r$ from \eqref{eq:mgfA-mgfB} (and similarly for $B$), 
\[
\mathbb{E}_ \pi[e^{2q(1-s)A}] \le e^{2q(1-s)\mathbb{E}_ \pi A} M_{2q(1-s)},
\qquad
\mathbb{E}_ \pi[e^{2qsB}] \le e^{2qs\mathbb{E}_ \pi B} M_{2qs}.
\]
Therefore,
\[
\|W_s\|_{L^q(\pi)}
\le
e^{(1-s)\mathbb{E}_ \pi A+s\mathbb{E}_ \pi B} \big(M_{2q(1-s)}M_{2qs}\big)^{1/(2q)}
\le
e^{(1-s)\mathbb{E}_ \pi A+s\mathbb{E}_ \pi B} M_{2q}^{1/q},
\]
where we used the (w.l.o.g.) monotonicity of $r\mapsto M_r$ on $[0,2q]$.
Combining with the lower bound on $Z_s$ gives
\[
\frac{\|W_s\|_{L^q(\pi)}}{Z_s}\le M_{2q}^{1/q}.
\]
Hence $\mathbb{E}_ {\mu_s}[|A-B|]\le M_{2q}^{1/q}\|A-B\|_{L^p(\pi)}$ for all $s\in[0,1]$, and integrating yields
\begin{equation}\label{eq:loglap-stab}
\big|\log \mathbb{E}_ \pi[e^A]-\log \mathbb{E}_ \pi[e^B]\big|
\le
M_{2q}^{1/q} \|A-B\|_{L^p(\pi)}.
\end{equation}

Fix $x,x'\in\R^d$, and let $\pi$ be any coupling of $\nu^{t,x}$ and $\nu^{t,x'}$.
Let $(Y,Y')\sim\pi$.

Apply \eqref{eq:loglap-stab} on $(\Omega,\pi)$ with $A=a(Y)$ and $B=a(Y')$.
The assumption \eqref{eq:exp-mom-assump} implies \eqref{eq:mgfA-mgfB} for $A$ and $B$ (uniformly in $x,x'$),
because under $\pi$ the marginals of $Y$ and $Y'$ are $\nu^{t,x}$ and $\nu^{t,x'}$ respectively.
Thus, for any $p>1$ (with $q=\frac{p}{p-1}$),
\[
|a_t(x)-a_t(x')|
=
\Big|\log \mathbb{E}_ {\nu^{t,x}}[e^{a(Y)}]-\log \mathbb{E}_ {\nu^{t,x'}}[e^{a(Y')}]\Big|
\le
M_{2q}^{1/q} \|a(Y)-a(Y')\|_{L^p(\pi)}.
\]
By the $\beta$-Hölder property of $a$,
\[
\|a(Y)-a(Y')\|_{L^p(\pi)}
\le
K \big(\mathbb{E}_ \pi[\|Y-Y'\|^{\beta p}]\big)^{1/p}.
\]

\medskip
\noindent\textbf{Case $\beta\in(0,1)$.}

Now choose $p=\frac{1}{\beta}$ (so $p>1$ because $\beta\in(0,1)$), hence $q=\frac{1}{1-\beta}$ and $\beta p=1$.
We obtain
\[
|a_t(x)-a_t(x')|
\le
K M_{\frac{2}{1-\beta}}^{ 1-\beta} \big(\mathbb{E}_ \pi[\|Y-Y'\|]\big)^\beta.
\]
Taking the infimum over all couplings $\pi$ yields
\begin{equation}\label{eq:at-W1}
|a_t(x)-a_t(x')|
\le
K M_{\frac{2}{1-\beta}}^{ 1-\beta} W_1(\nu^{t,x},\nu^{t,x'})^\beta.
\end{equation}

Write, up to an additive constant in $y$,
\[
\nu^{t,x}(dy) \propto \exp\Big(-U(y)+\Big\langle \frac{f}{g^2}x, y\Big\rangle\Big) dy,
\qquad
U(y):=u(y)+\frac{f^2}{2g^2}\|y\|^2.
\]
Then
\[
\nabla^2 U \succeq \Big(\alpha+\frac{f^2}{g^2}\Big)I_d =:\gamma \text{Id}.
\]
Let $h:\R^d\to\R$ be $1$-Lipschitz, and define for $s\in[0,1]$ the interpolation $x_s:=(1-s)x+s x'$ and
\[
\Phi(s):=\mathbb{E}_ {\nu^{t,x_s}}[h(Y)].
\]
A standard computation (differentiating the log-partition function) gives
\[
\Phi'(s)
=
\text{Cov}_{\nu^{t,x_s}}\Big(h(Y), \Big\langle \frac{f}{g^2}(x'-x), Y\Big\rangle\Big).
\]
By Cauchy-Schwarz,
\[
|\Phi'(s)|
\le
\sqrt{\text{Var}_ {\nu^{t,x_s}}(h(Y))} 
\sqrt{\text{Var}_ {\nu^{t,x_s}}\Big(\Big\langle \frac{f}{g^2}(x'-x),Y\Big\rangle\Big)}.
\]
By the Brascamp--Lieb inequality under $\gamma$-strong log-concavity, for locally Lipschitz $\psi$,
\[
\text{Var}_ {\nu^{t,x_s}}(\psi(Y)) \le \frac{1}{\gamma}\mathbb{E}_ {\nu^{t,x_s}}[\|\nabla \psi(Y)\|^2].
\]
Applying this with $\psi=h$ and using $\|\nabla h\|\le 1$ a.e. (since $h$ is $1$-Lipschitz) gives
$\text{Var}_ {\nu^{t,x_s}}(h(Y))\le \frac{1}{\gamma}$.
Applying it with $\psi(y)=\langle v,y\rangle$ where $v:=\frac{f}{g^2}(x'-x)$ gives
$\text{Var}_ {\nu^{t,x_s}}(\langle v,Y\rangle)\le \frac{\|v\|^2}{\gamma}$.
Therefore,
\[
|\Phi'(s)| \le \frac{\|v\|}{\gamma}=\frac{f}{g^2}\frac{\|x'-x\|}{\alpha+f^2/g^2}
=
\frac{f}{\alpha g^2+f^2} \|x'-x\|.
\]
Integrating $s\in[0,1]$ yields
\[
\Big|\mathbb{E}_ {\nu^{t,x}}[h(Y)]-\mathbb{E}_ {\nu^{t,x'}}[h(Y)]\Big|
\le
\frac{f}{\alpha g^2+f^2} \|x-x'\|.
\]
Taking the supremum over all $1$-Lipschitz $h$ and using Kantorovich--Rubinstein duality,
\[
W_1(\nu^{t,x},\nu^{t,x'})
\le
\frac{f}{\alpha g^2+f^2} \|x-x'\|.
\]
Plugging this into \eqref{eq:at-W1} concludes the proof.

\medskip
\noindent\textbf{Case $\beta = 1$.}
Fix $x,x' \in \R^d$, let $\pi$ be any coupling of $\nu^{t,x}$ and $\nu_{t,x'}$, and let $(Y,Y') \sim \pi$.
In Step~1, choose $p=2$ so that $q = \frac{p}{p-1}=2$. Then \eqref{eq:loglap-stab} yields
\[
|a_t(x) - a_t(x')|
=
\Big|
\log \mathbb{E}_{\nu^{t,x}}[e^{a(Y)}]
-
\log \mathbb{E}_{\nu_{t,x'}}[e^{a(Y')}]
\Big|
\le
M_{2q}^{1/q} \|a(Y)-a(Y')\|_{L^p(\pi)}
=
M_4^{1/2} \|a(Y)-a(Y')\|_{L^2(\pi)}.
\]
Since $\beta=1$, the function $a$ is Lipschitz with constant $K$, hence
\[
\|a(Y)-a(Y')\|_{L^2(\pi)}
\le
K \big(\mathbb{E}_\pi\|Y-Y'\|^2\big)^{1/2}.
\]
Therefore,
\[
|a_t(x) - a_t(x')|
\le
K M_4^{1/2}\big(\mathbb{E}_\pi\|Y-Y'\|^2\big)^{1/2}.
\]
Taking the infimum over all couplings $\pi$ gives
\[
|a_t(x) - a_t(x')|
\le
K M_4^{1/2}  W_2(\nu^{t,x},\nu_{t,x'}).
\]

It remains to bound $W_2(\nu^{t,x},\nu_{t,x'})$.
Write, up to an additive constant,
\[
\nu^{t,x}(dy) \propto \exp \left(-U(y)+\left\langle \theta_x,y\right\rangle\right) dy,
\qquad
U(y):=u(y)+\frac{f^2}{2g^2}\|y\|^2,
\qquad
\theta_x := \frac{f}{g^2}x.
\]
Then $\nabla^2 U \succeq \gamma \text{Id}$ with $\gamma := \alpha + \frac{f^2}{g^2}$.
Consider the overdamped Langevin diffusions
\[
dY_s = \theta_x ds - \nabla U(Y_s) ds + \sqrt{2} dB_s,
\qquad
dY'_s = \theta_{x'} ds - \nabla U(Y'_s) ds + \sqrt{2} dB_s,
\]
driven by the same Brownian motion $(B_s)_{s\ge 0}$ and started from an arbitrary coupling of
$\nu^{t,x}$ and $\nu_{t,x'}$. Their marginals remain $(\nu^{t,x},\nu_{t,x'})$, so $(Y_s,Y'_s)$ is a
coupling of $(\nu^{t,x},\nu_{t,x'})$ for all $s\ge 0$.
Let $\Delta_s := Y_s - Y'_s$. Then
\[
d\Delta_s = (\theta_x-\theta_{x'}) ds - (\nabla U(Y_s)-\nabla U(Y'_s)) ds,
\]
and by $\gamma$-strong convexity of $U$,
\[
\frac{d}{ds}\|\Delta_s\|^2
=2\langle \Delta_s,\theta_x-\theta_{x'}\rangle
-2\langle \Delta_s,\nabla U(Y_s)-\nabla U(Y'_s)\rangle
\le
2\|\Delta_s\|\|\theta_x-\theta_{x'}\| - 2\gamma \|\Delta_s\|^2.
\]
Setting $r_s := \big(\E\|\Delta_s\|^2\big)^{1/2}$ and taking expectations yields
\[
\frac{d}{ds} r_s \le -\gamma r_s + \|\theta_x-\theta_{x'}\|,
\]
hence $r_s \le e^{-\gamma s}r_0 + (1-e^{-\gamma s})\frac{\|\theta_x-\theta_{x'}\|}{\gamma}$.
Since $W_2(\nu^{t,x},\nu_{t,x'}) \le r_s$ for all $s$, letting $s\to\infty$ gives
\[
W_2(\nu^{t,x},\nu_{t,x'})
\le
\frac{\|\theta_x-\theta_{x'}\|}{\gamma}
=
\frac{\frac{f}{g^2}\|x-x'\|}{\alpha+\frac{f^2}{g^2}}
=
\frac{f}{\alpha g^2 + f^2} \|x-x'\|.
\]
Combining the two bounds concludes the case $\beta=1$:
\[
|a_t(x)-a_t(x')|
\le
K M_4^{1/2} \frac{f}{\alpha g^2 + f^2} \|x-x'\|.
\]
\end{proof}
It remains to justify that the exponential-moment assumption~\eqref{eq:exp-mom-assump} is not restrictive in our setting.
The following result shows that it is automatic as soon as $a$ is H\"older and the surrogate posteriors $\nu^{t,x}$
are uniformly strongly log-concave: one first approximates $a$ by a Lipschitz function at a controlled uniform error,
and then applies standard concentration under strong log-concavity.

\begin{lemma}[Condition \eqref{eq:exp-mom-assump} holds for Hölder $a$]\label{lem:mgf_holder}
Assume that $u:\R^d\to\R$ is $\alpha$-strongly convex for some $\alpha>0$.
Fix $f\in\R$ and $g>0$, and for each $x\in\R^d$ let $\nu^{t,x}$ be the probability
measure on $\R^d$ with density
\[
\nu^{t,x}(dy) \propto \exp \Big(-u(y)-\frac{\|x-fy\|^2}{2g^2}\Big) dy.
\]
Let $a:\R^d\to\R$ be $\beta$-Hölder with constant $K>0$
for some $\beta\in(0,1]$.
Then for every $r\ge 0$ there exists $M_r<\infty$ such that for all $x\in\R^d$
\[
\mathbb E_{\nu^{t,x}} \Big[\exp\big(r(a(Y)-\mathbb E_{\nu^{t,x}}a(Y))\big)\Big]\le M_r.
\]
In particular, condition \eqref{eq:exp-mom-assump} in Lemma~\ref{lemma:holderexp} holds for $A=a(Y)$ with $Y\sim \nu^{t,x}$.
Moreover one may take
\[
M_r  := \exp \Big(2Kr+\frac{C K^2}{2\gamma} r^2\Big),
\qquad
\gamma:=\alpha+\frac{f^2}{g^2},
\]
where $C>0$ is a universal constant independent of the dimension.
\end{lemma}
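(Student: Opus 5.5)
The plan is to reduce the statement to the exponential-variance machinery already developed, namely the inf-convolution approximation (Lemma~\ref{lem:infconv}) combined with the Herbst bound (Lemma~\ref{lem:herbst}). The first step is to note that $\nu^{t,x}$ has negative log-density
\[
V_x(y)=u(y)+\frac{\|x-fy\|^2}{2g^2},
\]
whose Hessian satisfies $\nabla^2 V_x\succeq (\alpha+f^2/g^2)\mathrm{Id}=\gamma\,\mathrm{Id}$ uniformly in $x$. By Bakry--\'Emery, each $\nu^{t,x}$ therefore satisfies a log-Sobolev inequality with constant $c/\gamma$, and $c$ is universal. This is the only place where $x$ enters, so every bound obtained below will be uniform in $x$. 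If $u$ takes the value $+\infty$ outside a convex set, I would handle it by the same smooth approximation invoked in Proposition~\ref{prop:second-moment}.

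Next I fix a scale $\rho>0$ and let $\tilde a_\rho$ be the inf-convolution of Lemma~\ref{lem:infconv}. It is $K\rho^{\beta-1}$-Lipschitz and satisfies $\|\tilde a_\rho-a\|_\infty\le K\rho^\beta$. Writing $A=a(Y)-\E a(Y)$ and $\tilde A=\tilde a_\rho(Y)-\E\tilde a_\rho(Y)$, the centering shifts by at most $K\rho^\beta$, so $|A-\tilde A|\le 2K\rho^\beta$ pointwise. Applying Lemma~\ref{lem:herbst} to $\tilde a_\rho$ gives, for $r\ge0$,
\[
\E_{\nu^{t,x}}e^{rA}\le e^{2Kr\rho^\beta}\,\E e^{r\tilde A}\le \exp\Big(2Kr\rho^\beta+\frac{c}{4\gamma}K^2\rho^{2\beta-2}r^2\Big).
\]

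The last step is to choose $\rho$, and this is where the difficulty lies: the target form $\exp(2Kr+CK^2r^2/(2\gamma))$ has no $\gamma^{-\beta}$ scaling. Taking $\rho=1$ makes the first exponent exactly $2Kr$ and the second $\tfrac{c}{4\gamma}K^2r^2$, which already gives the stated $M_r$ with $C=c/2$. The choice $\rho=1$ is legitimate for every $\beta\in(0,1]$, including $\beta=1$, where the approximation is trivial. One could improve the dimension-free constants by taking $\rho=\gamma^{-1/2}$ instead, which gives $\exp(2Kr\gamma^{-\beta/2}+cK^2r^2\gamma^{-\beta}/4)$, but the statement only requires the cruder bound.

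The same argument applied to $-a$, which is also $\beta$-H\"older with constant $K$, yields the bound on $\E e^{-rA}$. This gives condition~\eqref{eq:exp-mom-assump} with $M_r$ replaced by $2M_r$, and that $M_r$ is nondecreasing in $r$, as used in Lemma~\ref{lemma:holderexp}. The only genuinely delicate points are the uniformity in $x$, which the uniform Hessian lower bound settles, and the justification of the log-Sobolev inequality for nonsmooth $u$, which the approximation argument settles.
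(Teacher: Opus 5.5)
Your proposal is correct and is essentially the paper's proof: an inf-convolution at unit scale ($\rho=1$) gives a $K$-Lipschitz approximant within $K$ of $a$, the Herbst bound is applied under the $x$-uniform $\gamma$-strong log-concavity, and re-centering costs the $2Kr$ term. Your extra step of running the argument on $-a$, to cover the two-sided condition \eqref{eq:exp-mom-assump} at the cost of a factor $2$, fills in a detail the paper's proof leaves implicit.
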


\begin{proof}
Up to an additive constant, the negative log-density of $\nu^{t,x}$ equals
\[
V_x(y)=u(y)+\frac{\|x-fy\|^2}{2g^2}.
\]
Hence
\[
\nabla^2 V_x(y)=\nabla^2 u(y)+\frac{f^2}{g^2}I_d \succeq
\Big(\alpha+\frac{f^2}{g^2}\Big) \text{Id} =:\gamma \text{Id},
\]
so $\nu^{t,x}$ is $\gamma$-strongly log-concave uniformly in $x$.
Define the inf-convolution
\[
\tilde a(x) := \inf_{y\in\R^d}\Big\{a(y)+K\|x-y\|\Big\}.
\]
Then from Lemma~\ref{lem:infconv} we have that $\tilde a$ is $K$-Lipschitz and satisfies the uniform bound
\[
\|\tilde a-a\|_\infty \le K.
\]
Since $\nu^{t,x}$ is $\gamma$-strongly log-concave, Lemma~\ref{lem:herbst}
implies that for any $L$-Lipschitz function $g$ and any $\lambda\in\R$,
\[
\mathbb E_{\nu^{t,x}} \Big[\exp\big(\lambda(g-\mathbb E_{\nu^{t,x}}g)\big)\Big]
 \le \exp \Big(\frac{C L^2}{2\gamma}\lambda^2\Big).
\]

Apply this with $g=\tilde a$ and $\lambda=r\ge 0$, we get
\[
\mathbb E_{\nu^{t,x}} \Big[\exp\big(r(\tilde a-\mathbb E_{\nu^{t,x}}\tilde a)\big)\Big]
 \le \exp \Big(\frac{C K^2}{2\gamma}r^2\Big).
\]
Let $Y\sim \nu^{t,x}$, since $\|a-\tilde a\|_\infty\le K$, we have $|a(Y)-\tilde a(Y)|\le K$ a.s. and
$|\E a(Y)-\E\tilde a(Y)|\le \E|a(Y)-\tilde a(Y)|\le K$. Therefore
\[
a(Y)-\E a(Y)
=
\big(\tilde a(Y)-\E\tilde a(Y)\big)
+\big(a(Y)-\tilde a(Y)\big)
+\big(\E\tilde a(Y)-\E a(Y)\big)
 \le \big(\tilde a(Y)-\E\tilde a(Y)\big)+2K.
\]
Exponentiating and taking expectations yields
\[
\E\exp\big(r(a(Y)-\E a(Y))\big)
 \le e^{2Kr} \E\exp\big(r(\tilde a(Y)-\E\tilde a(Y))\big)
 \le \exp \Big(2Kr+\frac{C K^2}{2\gamma}r^2\Big).
\]
\end{proof}

\subsubsection{Bounds for Gaussian-mixture bridges}
This subsection collects identities and estimates for the Gaussian-mixture bridges that underlie our flow-matching and diffusion-model constructions. In order to apply our Wasserstein discretization bounds, we need a linear growth estimate to ensure uniform moment bounds for the
continuous dynamics and its Euler scheme. The next lemma records yields a dimension-sharp affine bound on \(\|v_t(x)\|\), which we will use
repeatedly to verify the linear-growth hypothesis in our numerical analysis.
\begin{lemma}\label{lemma:boundnormvt}
Let
\[
X_t = f_t Y + g_t X_0  + \sigma_t \varepsilon,
\qquad
X_0,\ \varepsilon \sim \mathcal N(0,I_d)\ \text{i.i.d. and independent of }Y\sim p^\star.
\]
Suppose that $p^\star$ is $(\alpha,\beta,K)$-weakly log-concave with $\alpha,K>0, \beta\in (0,1]$, write $p_t=\text{Law}(X_t)$ and
$s_t(x):=\nabla\log p_t(x)$. Define the effective Gaussian scale
\[
\bar g_t := \sqrt{g_t^2+\sigma_t^2} ,
\qquad
a_t := \frac{\bar g_t'}{\bar g_t},\qquad
c_t := f_t' - a_t f_t,\qquad
\gamma_t := \alpha + \frac{f_t^2}{\bar g_t^2}.
\]
Then, for every $t$ such that $f_t,\bar g_t>0$ and all $x\in\R^d$,
\begin{equation}\label{eq:fgheiu}
v_t(x)
= \frac{f_t'}{f_t} x
+ \Big(\frac{f_t'\bar g_t^{2}}{f_t}-\bar g_t\bar g_t'\Big) s_t(x).
\end{equation}
Moreover, supposing that $\|\arg\min u\|\le A\sqrt d$, there exists a dimension-free constant
$C>0$ depending only on $\alpha,\beta,K,A$ such that for all $t$ and $x$,
\[
\|v_t(x)\|\le F_t\big(f_t\sqrt d+\|x\|\big),
\qquad \text{with }
F_t := C\left(\left|\frac{f_t'}{f_t}\right|+
\frac{1}{\bar g_t^2}\left|\frac{f_t'\bar g_t^{2}}{f_t}-\bar g_t\bar g_t'\right|\right).
\]
\end{lemma}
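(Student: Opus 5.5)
The proof has two parts: an algebraic identity for $v_t$, and a moment estimate for the score, which I will reduce to a bound on the posterior mean.

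\textbf{Step 1: the identity \eqref{eq:fgheiu}.} First reduce the marginal: $g_tX_0+\sigma_t\varepsilon$ has the same law as $\bar g_t\xi$, so $X_t=f_tY+\bar g_t\xi$ and $v_t(x)=\E[f_t'Y+\bar g_t'\xi\mid X_t=x]$. The derivative of $g_tX_0+\sigma_t\varepsilon$ is not exactly $\bar g_t'\xi$. However, its conditional expectation given $(Y,X_t)$ equals $\bar g_t'\xi$, because $(X_0,\varepsilon)$ given $g_tX_0+\sigma_t\varepsilon=\bar g_t\xi$ is jointly Gaussian. The mixed term $(g_t'f_t-f_t'g_t)$ of the general $m_t$-formula only affects the posterior of $X_0$, and it averages out in this way.

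Next use Tweedie's formula: $\E[\xi\mid X_t=x]=-\bar g_t s_t(x)$ and $f_t\E[Y\mid X_t=x]=x+\bar g_t^2 s_t(x)$. Substituting both gives
\[
v_t(x)=\frac{f_t'}{f_t}\bigl(x+\bar g_t^2 s_t(x)\bigr)-\bar g_t\bar g_t' s_t(x),
\]
which is exactly \eqref{eq:fgheiu}.

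\textbf{Step 2: reduction to a posterior-mean bound.} The first term of \eqref{eq:fgheiu} is already linear in $x$. It therefore suffices to prove
\[
\|s_t(x)\|\le \frac{C}{\bar g_t^2}\bigl(f_t\sqrt d+\|x\|\bigr).
\]
Since $\bar g_t^2 s_t(x)=f_t m(x)-x$ with $m(x):=\E[Y\mid X_t=x]$, this reduces to showing
\[
f_t\|m(x)\|\le C\bigl(f_t\sqrt d+\|x\|\bigr).
\]
The posterior of $Y$ is $\pi^x\propto e^{a}\nu^{t,x}$, where $\nu^{t,x}\propto e^{-u(y)-\|x-f_ty\|^2/(2\bar g_t^2)}$ is $\gamma_t$-strongly log-concave.

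\textbf{Step 3a: posterior mean versus surrogate mean.} Write $m(x)-\E_{\nu^{t,x}}Y=\mathrm{Cov}_{\nu^{t,x}}(Y,L)$ with $L=e^{a}/\E_{\nu^{t,x}}e^{a}$. By Cauchy--Schwarz together with Brascamp--Lieb ($\mathrm{Var}(\langle h,Y\rangle)\le\gamma_t^{-1}$ for unit $h$), and Lemmas~\ref{lem:var_exp_concentration}/\ref{lem:mgf_holder} for $\mathrm{Var}(L)$, the difference is at most $C\gamma_t^{-1/2}\le C\alpha^{-1/2}$. This is dimension-free, because the bound holds in each unit direction $h$ and so controls the norm.

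\textbf{Step 3b: the surrogate mean.} The surrogate mean $\bar m(x):=\E_{\nu^{t,x}}Y$ is Lipschitz in $x$ with constant $\kappa_t=f_t/(\alpha\bar g_t^2+f_t^2)$. This follows from the covariance computation in the proof of Lemma~\ref{lemma:holderexp}, via $\nabla\bar m=\frac{f_t}{\bar g_t^2}\mathrm{Cov}_{\nu^{t,x}}(Y)\preceq \frac{f_t}{\bar g_t^2\gamma_t}$. Hence
\[
f_t\|\bar m(x)\|\le f_t\|\bar m(0)\|+f_t\kappa_t\|x\|\le f_t\|\bar m(0)\|+\|x\|,
\]
using $f_t\kappa_t\le 1$.

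For $\bar m(0)$, the potential of $\nu^{t,0}$ is $u+\frac{f_t^2}{2\bar g_t^2}\|y\|^2$. Its minimizer $y_*$ satisfies $\|y_*\|\le\|y_0\|\le A\sqrt d$, since adding a convex quadratic centered at $0$ moves the minimizer toward the origin by strong monotonicity: $\langle \nabla u(y_*),y_*-y_0\rangle\ge\alpha\|y_*-y_0\|^2$ and $\nabla u(y_*)=-c\,y_*$. The integration-by-parts argument of Proposition~\ref{prop:second-moment} then gives $\E\|Y-y_*\|^2\le d/\gamma_t\le d/\alpha$, so $\|\bar m(0)\|\le C\sqrt d$.

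\textbf{Step 4: combining.} Putting the pieces together,
\[
f_t\|m(x)\|\le C\bigl(f_t\sqrt d+f_t\alpha^{-1/2}+\|x\|\bigr).
\]
Absorbing $\alpha^{-1/2}\le C\sqrt d$ gives the required bound, and inserting it into \eqref{eq:fgheiu} yields the stated $F_t$.

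The main obstacle is the identity step: correctly handling the three-variable interpolant so that only $\bar g_t$ appears. I would verify it by computing $\E[g_t'X_0+\sigma_t'\varepsilon\mid g_tX_0+\sigma_t\varepsilon]$ explicitly as a Gaussian regression.
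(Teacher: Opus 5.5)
Your proof is correct and follows the paper's route: your Gaussian-regression step is the paper's orthogonal rotation $(\xi_t,\eta_t)$ in disguise ($\eta_t$ is the regression residual, independent of $X_t$), and the posterior-mean bound uses the same ingredients, namely the surrogate $\nu^{t,x}$, the Lipschitz surrogate mean via $\nabla\bar m=\frac{f_t}{\bar g_t^2}\mathrm{Cov}_{\nu^{t,x}}(Y)$, and Lemma~\ref{lem:mean_bound_nu_t0}. The only variation is that you compare $\mu_t(x)$ to $\E_{\nu^{t,x}}Y$ additively through $\mathrm{Cov}_{\nu^{t,x}}(Y,L)$, whereas the paper (Lemma~\ref{lemma:bounfmut}) uses the multiplicative Cauchy--Schwarz bound $\|\mu_t(x)\|\le(\E_{\nu^{t,x}}L^2)^{1/2}(\E_{\nu^{t,x}}\|Y\|^2)^{1/2}$; both give the same dimension-free estimate.
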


\begin{proof}
Fix $t$ such that $f_t,\bar g_t>0$ and $f_t',g_t',\sigma_t'$ exist.
Introduce the Gaussian rotation
\[
\xi_t := \frac{g_tX_0+\sigma_t\varepsilon}{\bar g_t},
\qquad
\eta_t := \frac{\sigma_tX_0-g_t\varepsilon}{\bar g_t}.
\]
Then $(\xi_t,\eta_t)$ are i.i.d. $\mathcal N(0,I_d)$, independent of $Y$, and
\[
X_t = f_tY+\bar g_t \xi_t.
\]
Differentiating $X_t$ yields
\[
\dot X_t = g_t'X_0 + f_t'Y + \sigma_t'\varepsilon.
\]
Moreover,
\[
g_t'X_0+\sigma_t'\varepsilon
= \bar g_t' \xi_t + \frac{g_t'\sigma_t-\sigma_t'g_t}{\bar g_t} \eta_t.
\]
Since $X_t$ depends only on $(Y,\xi_t)$, we have $\eta_t\perp X_t$ and hence
$\E[\eta_t | X_t]=0$. Therefore, by linearity of conditional expectation,
\[
v_t(x):=\E[\dot X_t | X_t=x]
= f_t'\E[Y | X_t=x] + \bar g_t'\E[\xi_t | X_t=x].
\]
Writing $\mu_t(x):=\E[Y | X_t=x]$ and using $\xi_t=(X_t-f_tY)/\bar g_t$, we obtain
\[
\E[\xi_t | X_t=x]=\frac{x-f_t\mu_t(x)}{\bar g_t},
\qquad\text{hence}\qquad
v_t(x)=a_t x + c_t \mu_t(x).
\]
Next, differentiating under the integral sign in the representation
\[
p_t(x)=\int_{\R^d} p(y) \varphi^{x,\bar g_t}(f_ty) dy
\]
gives the standard score/posterior-mean identity
\begin{equation}\label{eq:fgheiu2}
s_t(x)=\nabla\log p_t(x)=\frac{1}{\bar g_t^2}\big(f_t\mu_t(x)-x\big),
\qquad\text{hence}\qquad
\mu_t(x)=\frac{x+\bar g_t^2 s_t(x)}{f_t}.
\end{equation}
Plugging this into $v_t(x)=a_tx+c_t\mu_t(x)$ yields
\[
v_t(x)=\Big(a_t+\frac{c_t}{f_t}\Big)x+\frac{c_t\bar g_t^2}{f_t}s_t(x).
\]
Since $a_t+c_t/f_t=f_t'/f_t$ and $c_t\bar g_t^2/f_t=(f_t'\bar g_t^2/f_t)-\bar g_t\bar g_t'$,
we obtain \eqref{eq:fgheiu}.

We now prove the linear growth bound.
From \eqref{eq:fgheiu2},
\[
\|s_t(x)\|\le \frac{1}{\bar g_t^2}\big(f_t\|\mu_t(x)\|+\|x\|\big).
\]
Using Lemma~\ref{lemma:bounfmut} we get
\[
\|\mu_t(x)\|\le C\left(\big\|\E_{\nu_{t,0}}[W]\big\|+\sqrt{\frac{d}{\gamma_t}}
+\frac{f_t}{\bar g_t^2\gamma_t}\|x\|\right).
\]
Plugging this into the previous display yields
\[
\|s_t(x)\|
\le \frac{C}{\bar g_t^2}\left(
f_t\Big(\big\|\E_{\nu_{t,0}}[W]\big\|+\sqrt{\frac{d}{\gamma_t}}\Big)
+\Big(1+\frac{f_t^2}{\bar g_t^2\gamma_t}\Big)\|x\|
\right)
\le \frac{C}{\bar g_t^2}\big(f_t\sqrt d+\|x\|\big),
\]
where we used Lemma~\ref{lem:mean_bound_nu_t0} to bound $\|\E_{\nu_{t,0}}[W]\|\lesssim \sqrt d$ under
$\|\arg\min u\|\le A\sqrt d$, and that $f_t^2/(\bar g_t^2\gamma_t)\le 1$.
Finally, applying \eqref{eq:fgheiu} and the last bound on $\|s_t(x)\|$ gives
\[
\|v_t(x)\|
\le \left|\frac{f_t'}{f_t}\right|\|x\|
+\left|\frac{f_t'\bar g_t^{2}}{f_t}-\bar g_t\bar g_t'\right|\|s_t(x)\|
\le F_t\big(f_t\sqrt d+\|x\|\big),
\]
with $F_t$ as stated.
\end{proof}
Several estimates in the sequel 
involve the auxiliary probability measure $\nu_{t,0}$ obtained by tilting the base potential $u$ with the
quadratic term coming from the Gaussian kernel at $x=0$. The quantity $\|\E_{\nu_{t,0}}[W]\|$ then appears
as a centering term in conditional-expectation bounds, and it must be controlled at the natural $\sqrt d$
scale with constants independent of the ambient dimension. The next lemma provides precisely such a bound.

\begin{lemma}[A $\sqrt d$ bound on the mean under $\nu_{t,0}$]
\label{lem:mean_bound_nu_t0}
Assume that $u:\R^d\to\R$ is $\mathcal C^2$ and $\alpha$-strongly convex for some $\alpha>0$.
For $t\in(0,1]$, let $\nu_{t,0}$ be the probability measure on $\R^d$ with density
\[
\nu_{t,0}(dy) = \frac{1}{Z_t}\exp  \Big(-u(y)-\frac{\|f_t y\|^2}{2\bar g_t^{ 2}}\Big) dy.
\]
Let $y_0:=\argmin u$ and $\lambda_t:=\frac{f_t^2}{\bar g_t^{ 2}}$.
Then
\begin{equation*}
\big\|\E_{\nu_{t,0}}[W]\big\| \le \|y_0\|+\sqrt{\frac{d}{\alpha+\lambda_t}}.
\end{equation*}
\end{lemma}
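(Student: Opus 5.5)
The bound has two pieces: the distance from the mean to the minimizer $y_0$ of the full potential, and the distance from that minimizer to $y_0$. I would write $\nu_{t,0}\propto e^{-V}$ with
\[
V(y):=u(y)+\frac{\lambda_t}{2}\|y\|^2 .
\]
This $V$ is $(\alpha+\lambda_t)$-strongly convex. Let $y_t:=\argmin V$ and $\gamma:=\alpha+\lambda_t$. The triangle inequality gives
\[
\|\E_{\nu_{t,0}}[W]\|\le \|y_t\|+\|\E_{\nu_{t,0}}[W]-y_t\|,
\]
and I would bound each term separately.

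\textbf{First term.} I claim $\|y_t\|\le\|y_0\|$. The optimality conditions read $\nabla u(y_t)=-\lambda_t y_t$ and $\nabla u(y_0)=0$. By monotonicity of $\nabla u$,
\[
0\le\langle \nabla u(y_t)-\nabla u(y_0),\,y_t-y_0\rangle=-\lambda_t\langle y_t,\,y_t-y_0\rangle .
\]
Hence $\|y_t\|^2\le\langle y_t,y_0\rangle\le\|y_t\|\,\|y_0\|$, which gives the claim. This is the proximal-point contraction property. When $\lambda_t=0$ we simply have $y_t=y_0$.

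\textbf{Second term.} By Jensen, $\|\E[W]-y_t\|\le(\E\|W-y_t\|^2)^{1/2}$. I would then repeat the integration-by-parts argument of Proposition~\ref{prop:second-moment} under $\nu_{t,0}$:
\[
\E_{\nu_{t,0}}\langle W-y_t,\nabla V(W)\rangle=d .
\]
Strong monotonicity together with $\nabla V(y_t)=0$ gives $\langle w-y_t,\nabla V(w)\rangle\ge\gamma\|w-y_t\|^2$. Therefore $\E\|W-y_t\|^2\le d/\gamma$, and so $\|\E[W]-y_t\|\le\sqrt{d/(\alpha+\lambda_t)}$. Adding the two bounds yields the statement.

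\textbf{Main obstacle.} The only delicate point is justifying the integration by parts, namely that the boundary terms vanish. Since $u\in\mathcal C^2$ is finite and strongly convex, $e^{-V}$ decays like a Gaussian, so the boundary terms vanish. Alternatively, one can avoid integration by parts altogether: the Brascamp--Lieb inequality gives $\mathrm{Var}\le d/\gamma$ around the mean. One then needs $\|\E W-y_t\|$, which is bounded using $\E\nabla V(W)=0$ and strong monotonicity: $\gamma\|\E W-y_t\|^2\le\langle \E W-y_t,\nabla V(\E W)\rangle$, combined with Jensen-type control. I would use the first route as the main argument.
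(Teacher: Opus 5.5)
Your proposal is correct and matches the paper's proof essentially step for step. The paper also sets $U_t=u+\frac{\lambda_t}{2}\|\cdot\|^2$ with minimizer $y_t$, uses monotonicity of $\nabla u$ to show $\|y_t\|\le\|y_0\|$, and obtains $\E_{\nu_{t,0}}\|W-y_t\|^2\le d/(\alpha+\lambda_t)$ from the integration-by-parts identity $\E_{\nu_{t,0}}\langle W-y_t,\nabla U_t(W)\rangle=d$ and strong convexity, before concluding with Jensen and the triangle inequality.
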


\begin{proof}For simplicity, we give the proof only in the \(C^2\) full-support case; the extension to the general case where \(u=+\infty\) outside a closed convex support is straightforward and follows from the same argument, with an additional boundary term of favorable sign.

Define
\[
U_t(y):=u(y)+\frac{\lambda_t}{2}\|y\|^2,\qquad y_t:=\argmin U_t.
\]
The optimality conditions of $y_0$ and $y_t$ read $\nabla u(y_0)=0$ and $\nabla u(y_t)+\lambda_t y_t=0$.
By convexity of $u$,
\[
0\le \langle \nabla u(y_t)-\nabla u(y_0), y_t-y_0\rangle
= \langle -\lambda_t y_t, y_t-y_0\rangle
= -\lambda_t\big(\|y_t\|^2-\langle y_t,y_0\rangle\big).
\]
Hence $\|y_t\|^2\le \langle y_t,y_0\rangle\le \|y_t\| \|y_0\|$, so $\|y_t\|\le \|y_0\|$. Since $\nabla^2 U_t=\nabla^2 u+\lambda_t \text{Id}\succeq (\alpha+\lambda_t)I_d$,
the measure $\nu_{t,0}(dy)\propto e^{-U_t(y)}dy$ is $(\alpha+\lambda_t)$--strongly log-concave.
Using integration by parts for each $i\in\{1,\dots,d\}$, we have
\[
\E_{\nu_{t,0}}\big[(W_i-y_{t,i}) \partial_i U_t(W)\big]=1,
\]
and summing over $i$ gives
\begin{equation}
\label{eq:ibp_sum}
\E_{\nu_{t,0}}\big[\langle W-y_t,\nabla U_t(W)\rangle\big]=d.
\end{equation}
By $(\alpha+\lambda_t)$--strong convexity and $\nabla U_t(y_t)=0$,
\[
\langle y-y_t,\nabla U_t(y)\rangle
=\langle y-y_t,\nabla U_t(y)-\nabla U_t(y_t)\rangle
\ge (\alpha+\lambda_t)\|y-y_t\|^2.
\]
Taking expectation and using \eqref{eq:ibp_sum} yields
\[
d\ge (\alpha+\lambda_t) \E_{\nu_{t,0}}\|W-y_t\|^2,
\qquad\text{so}\qquad
\E_{\nu_{t,0}}\|W-y_t\|^2\le \frac{d}{\alpha+\lambda_t}.
\]
Therefore,
\[
\|\E_{\nu_{t,0}}[W]-y_t\|
\le \sqrt{\E_{\nu_{t,0}}\|W-y_t\|^2}
\le \sqrt{\frac{d}{\alpha+\lambda_t}}.
\]

Therefore,
\[
\|\E_{\nu_{t,0}}[W]\|
\le \|y_t\|+\|\E_{\nu_{t,0}}[W]-y_t\|
\le \|y_0\|+\sqrt{\frac{d}{\alpha+\lambda_t}}.
\]
\end{proof}
In our sampling guarantees we stop the continuous-time dynamics at some $\tau<1$, so we need to control
the early stopping error $W_2(\text{Law}(X_1),\text{Law}(X_\tau))$. The next Lemma provides a bound on this error in our setting.

\begin{lemma}[Early stopping error]
\label{lem:general-coupling}
Let $Y$ be an $\mathbb{R}^d$-valued random vector with $\E\|Y\|^2<\infty$ and let $A\in\mathbb{R}^{d\times d}$ be deterministic.
Let $\eta$ be an $\mathbb{R}^d$-valued random vector such that $\E\|\eta\|^2<\infty$, $\mathbb{E}[\eta]=0$ and $\eta$ is independent of $Y$.
Define
\[
Z := AY + \eta.
\]
Then
\[
W_2  \bigl(\text{Law}(Y),\text{Law}(Z)\bigr)
 \le 
\Bigl(\E\bigl\|(I-A)Y\bigr\|^2  +  \E\|\eta\|^2\Bigr)^{1/2}.
\]
\end{lemma}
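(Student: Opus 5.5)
The plan is to exhibit an explicit coupling of $\mathrm{Law}(Y)$ and $\mathrm{Law}(Z)$ and compute its transport cost. The natural choice is the \emph{synchronous} coupling $(Y,Z)=(Y,AY+\eta)$, which is built on the same probability space as the random vectors in the statement. By definition of $W_2$ as an infimum over couplings,
\[
W_2^2\bigl(\mathrm{Law}(Y),\mathrm{Law}(Z)\bigr)\le \E\|Y-Z\|^2=\E\bigl\|(I-A)Y-\eta\bigr\|^2 .
\]

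Next, expand the square:
\[
\E\bigl\|(I-A)Y-\eta\bigr\|^2=\E\|(I-A)Y\|^2-2\,\E\langle (I-A)Y,\eta\rangle+\E\|\eta\|^2 .
\]
To kill the cross term, use independence of $Y$ and $\eta$. Both $(I-A)Y$ and $\eta$ are square integrable, since $A$ is deterministic and $\E\|Y\|^2<\infty$. Hence the product is integrable by Cauchy--Schwarz, and
\[
\E\langle (I-A)Y,\eta\rangle=\langle \E[(I-A)Y],\E[\eta]\rangle=0,
\]
using $\E[\eta]=0$. Taking square roots gives the claimed bound.

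There is no real obstacle here; the only point needing care is this integrability justification for factorizing the cross term. In the later application, $A=f_\tau\,\mathrm{Id}$ and $\eta=\bar g_\tau\xi$, so the bound reads $\bigl((1-f_\tau)^2\E\|Y\|^2+\bar g_\tau^2 d\bigr)^{1/2}$. Combined with Proposition~\ref{prop:second-moment}, this yields an early-stopping error of order $\sqrt d\,\bigl((1-f_\tau)+\bar g_\tau\bigr)$.
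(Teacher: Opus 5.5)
Your proposal is correct and matches the paper's proof: the synchronous coupling $(Y,AY+\eta)$, the expansion of $\E\|(I-A)Y-\eta\|^2$, and the vanishing of the cross term by independence and $\E[\eta]=0$. The paper writes the last step as $\E[\eta\mid Y]=0$, and your Cauchy--Schwarz justification that the cross term is integrable is a small detail the paper omits.
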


\begin{proof}
Consider the coupling $(Y,Z)$ on the same probability space, where $Z=AY+\eta$.
By definition of the $2$-Wasserstein distance,
\[
W_2^2  \bigl(\text{Law}(Y),\text{Law}(Z)\bigr)
 = 
\inf_{\pi\in\Pi(\text{Law}(Y),\text{Law}(Z))}
\int \|x-x'\|^2 d\pi(x,x')
 \le 
\E\|Y-Z\|^2,
\]
since the law of $(Y,Z)$ is an element of $\Pi(\text{Law}(Y),\text{Law}(Z))$.
Now,
\[
Y-Z = (I-A)Y-\eta,
\]
hence
\[
\E\|Y-Z\|^2
= \E\bigl\|(I-A)Y-\eta\bigr\|^2
= \E\bigl\|(I-A)Y\bigr\|^2 + \E\|\eta\|^2
-2 \E\bigl\langle (I-A)Y,\eta\bigr\rangle.
\]
Using the independence of $\eta$ and $Y$ and the assumption $\E[\eta]=0$ (or more generally $\E[\eta\mid Y]=0$),
\[
\E\bigl\langle (I-A)Y,\eta\bigr\rangle
=
\E\Bigl[\bigl\langle (I-A)Y, \E[\eta\mid Y]\bigr\rangle\Bigr]
=0.
\]
Therefore,
\[
W_2^2  \bigl(\text{Law}(Y),\text{Law}(Z)\bigr)
\le
\E\|Y-Z\|^2
=
\E\bigl\|(I-A)Y\bigr\|^2 + \E\|\eta\|^2,
\]
and taking square roots yields the claim.
\end{proof}

\section{Proofs of the Lipschitz estimates}
\subsection{Preliminaries and notations}\label{sec:hgfjkfhdjk}

In this section, we introduce the posterior measures that naturally appear when differentiating the projected vector field \(v_t\). The main point is that the derivatives of \(v_t\) can be written in terms of posterior covariances, which will be estimated in the next section.

For \(\mu \in\mathcal{P}(\mathbb{R}^k)\), \(f:\mathbb{R}^k\rightarrow \mathbb{R}^m\) and \(z\in \mathbb{R}^k\), define the centering operator
\begin{equation}
    H_\mu^f(z):=f(z)-\int f(z') d\mu(z').
\end{equation}
In particular, \(H_\mu^f\) is the fluctuation of \(f\) around its \(\mu\)-mean. Fix \(t\) with \(\sigma_t>0\). Recall that in the probabilistic representation, one observes
\begin{equation}
X_t = m_t(Z)+\sigma_t \xi,
\end{equation}
where \(Z\sim \pi\) and \(\xi\sim \mathcal N(0,\mathrm{Id})\) is independent of \(Z\). Hence, conditionally on \(Z=z\),
\[
X_t |  (Z=z)\sim \mathcal N(m_t(z),\sigma_t^2\mathrm{Id}).
\]
Recalling the notation $\varphi^{x,\sigma}$ for the density of $\mathcal N(x,\sigma^2\text{Id})$, we define the probability measure
\begin{equation}
    \pi^{t,x}(z):=\frac{ \varphi^{m_t(z),\sigma_t}(x)\pi(z)}{\int \varphi^{m_t(z'),\sigma_t}(x) d\pi(z')},
\end{equation}
which is the posterior law of \(Z\) given \(X_t=x\), i.e.
\[
\pi^{t,x}=\mathrm{Law}(Z\mid X_t=x).
\]
With this notation, the differential of \(v_t\) can be expressed through posterior covariance terms.

\begin{proposition}\label{propnablav}
The differential of the vector field \(v_t\) from \eqref{eq:vf} is
    $$\nabla v_t(x)=\frac{\sigma_t'}{\sigma_t} \mathrm{Id}-\frac{\sigma_t'}{\sigma_t^3}\int H_{\pi^{t,x}}^{m_t}(z)^{\otimes 2} d\pi^{t,x}(z)+\frac{1}{\sigma_t^2}\int H_{\pi^{t,x}}^{m_t' }(z)\otimes H_{\pi^{t,x}}^{m_t}(z) d\pi^{t,x}(z).$$
or equivalently
 $$\nabla v_t(x)=\frac{\sigma_t'}{\sigma_t} \mathrm{Id}-\frac{\sigma_t'}{\sigma_t^3}\mathrm{Var}_{Z|X_t=x}\big[m_t(Z)\big]+\frac{1}{\sigma_t^2}\mathrm{Cov}_{Z|X_t=x}\big[m_t'(Z),m_t (Z)\big].$$ 
\end{proposition}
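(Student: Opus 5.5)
The plan is to differentiate the first line of \eqref{eq:vf} directly in $x$, treating $v_t$ as a posterior average. Write
\[
v_t(x)=\frac{\sigma_t'}{\sigma_t}x+\int \Big(m_t'(z)-\frac{\sigma_t'}{\sigma_t}m_t(z)\Big)\,d\pi^{t,x}(z).
\]
The first term contributes $\frac{\sigma_t'}{\sigma_t}\mathrm{Id}$. What remains is to compute the Jacobian of $x\mapsto \int F(z)\,d\pi^{t,x}(z)$ for a vector-valued $F$.

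\textbf{Step 1 (the Jacobian of a posterior mean).} The posterior density relative to $\pi$ is $\varphi^{m_t(z),\sigma_t}(x)/Z(x)$. Its $x$-dependence is through $\exp(-\|x-m_t(z)\|^2/(2\sigma_t^2))$ together with the normalization $Z(x)$. Hence
\[
\nabla_x\log\frac{d\pi^{t,x}}{d\pi}(z)=-\frac{x-m_t(z)}{\sigma_t^2}+\frac{x-\E_{\pi^{t,x}}m_t}{\sigma_t^2}=\frac{1}{\sigma_t^2}H^{m_t}_{\pi^{t,x}}(z).
\]
Differentiating under the integral, which is the same mechanism as in the proof of Proposition~\ref{prop:bondhessut}, gives
\[
\nabla_x\int F\,d\pi^{t,x}=\frac{1}{\sigma_t^2}\int F(z)\otimes H^{m_t}_{\pi^{t,x}}(z)\,d\pi^{t,x}(z).
\]
Since $\int H^{m_t}_{\pi^{t,x}}\,d\pi^{t,x}=0$, we may replace $F$ by its centered version $H^F_{\pi^{t,x}}$ inside this integral. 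The result is $\sigma_t^{-2}\mathrm{Cov}_{\pi^{t,x}}(F,m_t)$, with the convention that the row index comes from $F$ and the column index from $x$.

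\textbf{Step 2 (apply to the two pieces).} Apply Step 1 with $F=m_t'$ to get $\sigma_t^{-2}\int H^{m_t'}\otimes H^{m_t}\,d\pi^{t,x}$. Apply it with $F=-\frac{\sigma_t'}{\sigma_t}m_t$ to get $-\frac{\sigma_t'}{\sigma_t^3}\int (H^{m_t})^{\otimes 2}\,d\pi^{t,x}$. Adding the isotropic term gives the first displayed formula. The second form is just a rewriting, since
\[
\int (H^{m_t})^{\otimes2}\,d\pi^{t,x}=\mathrm{Var}_{Z|X_t=x}[m_t(Z)],\qquad \int H^{m_t'}\otimes H^{m_t}\,d\pi^{t,x}=\mathrm{Cov}_{Z|X_t=x}[m_t'(Z),m_t(Z)].
\]

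\textbf{Main obstacle.} The only delicate point is justifying differentiation under the integral sign, uniformly on a neighbourhood of $x$. This requires that $\|m_t(z)\|^2$ and $\|m_t'(z)\|\|m_t(z)\|$ be integrable against $\varphi^{m_t(z),\sigma_t}(x')\,\pi(dz)$, locally uniformly in $x'$.

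To handle it, I would bound the Gaussian factor and its $x$-derivative by $C(1+\|m_t(z)\|)\exp(-\|x'-m_t(z)\|^2/(2\sigma_t^2))$. For $x'$ in a ball this is bounded by an integrable envelope, because the Gaussian factor is bounded. The needed moments are finite in all three models: $m_t$ and $m_t'$ are affine in $(x_0,x_1)$, and $\pi=\gamma_d\otimes p^\star$ has finite second moments by Proposition~\ref{prop:second-moment}, which applies after centering with the sub-Gaussian tails of Definition~\ref{defi:weakconcave}. Dominated convergence then justifies differentiating both the numerator and $Z(x)$. I would state this integrability as a standing assumption, and note explicitly that it holds under Assumptions~\ref{assum:lipman}--\ref{assum:diffusion}.
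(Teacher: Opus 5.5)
Your proposal is correct and follows essentially the same route as the paper. The paper differentiates the posterior-average formula \eqref{eq:vf} by the quotient rule and regroups the terms into posterior covariances. Your identity $\nabla_x\int F\,d\pi^{t,x}=\sigma_t^{-2}\mathrm{Cov}_{\pi^{t,x}}(F,m_t)$, applied to $F=m_t'-\frac{\sigma_t'}{\sigma_t}m_t$, is a tidier packaging of that same computation. Your justification of differentiation under the integral sign, which the paper leaves implicit, is also sound: for $x'$ in a ball the Gaussian factor and its gradient are dominated by $C(1+\|m_t(z)\|)$, and the relevant second moments under $\pi=\gamma_d\otimes p^\star$ are finite.
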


The proof of Proposition~\ref{propnablav} can be found in Section~\ref{sec:propnablav}. An important point is the probabilistic interpretation: the nontrivial terms in \(\nabla v_t(x)\) are posterior fluctuation terms under the law \(\mathrm{Law}(Z\mid X_t=x)\).
It is convenient to push this posterior structure from the latent variable \(Z\) to the random mean \(m_t(Z)\). For \(\mu\) a probability measure, define
\begin{equation}
    \label{eq:qt}
    Q_t\mu(x):=\int \varphi^{w,\sigma_t}(x) d\mu(w),
\end{equation}
the Gaussian convolution of \(\mu\) at scale \(\sigma_t\). In particular, if \(\mu=(m_t)_{\#}\pi\), then \(Q_t\mu\) is the density of \(X_t\). Define the probability measure
\begin{equation}\label{eq:ptx}
    p^{t,x}(w):=\frac{ \varphi^{w,\sigma_t}(x)(m_t)_{\# \pi}(w)}{Q_{t}(m_t)_{\# \pi}(x)}.
\end{equation}
This is precisely the posterior law of \(m_t(Z)\) given \(X_t=x\):
\[
p^{t,x}=\mathrm{Law}(m_t(Z)\mid X_t=x).
\]
Thus, the variance term in Proposition~\ref{propnablav} can be viewed as a covariance under \(p^{t,x}\), which is often easier to estimate.
Let us take \(u_t\) strongly convex and \(a_t\) $\beta$-Hölder as in Definition~\ref{defi:weakconcave} such that
\[
\exp(-u_t+a_t)=(m_t)_{\#} \pi .
\]
Let 
\begin{equation}\label{eq:q}
q_t(y) := \exp\left(-u_t(y)\right),
\end{equation}
which is the log-concave part of \((m_t)_{\#}\pi\). We then define
\begin{equation}\label{eq:nutx}
\nu^{t,x}(w) := \frac{q_t(w)\varphi^{w,\sigma_t}(x)}{Q_t q_t(x)} = \frac{\exp\left(-u_t(w) \right)\varphi^{w,\sigma_t}(x)}{\int \exp\left(-u_t(y)\right) \varphi^{y,\sigma_t}(x) dy},
\end{equation}
which plays the role of a log-concave approximation of \( p^{t,x} \). The probabilistic meaning is the same as above: \(\nu^{t,x}\) is the posterior law associated with the observation model
\[
X_t=W+\sigma_t\xi
\]
when the prior law of \(W\) has density proportional to \(q_t\). Since \(u_t\) is strongly convex, \(\nu^{t,x}\) is strongly log-concave. This is exactly the setting where Brascamp-Lieb type inequalities provide sharp covariance bounds. Finally, \(p^{t,x}\) is a tilt of \(\nu^{t,x}\) by the perturbation \(a_t\):
\[
p^{t,x}(dw)=\frac{e^{a_t(w)}}{\int e^{a_t} d\nu^{t,x}}\;\nu^{t,x}(dw).
\]
Therefore, the strategy in the next section is to first control covariances under the log-concave measure \(\nu^{t,x}\), and then quantify the effect of the perturbation \(e^{a_t}\).

\subsection{Proofs of Theorem \ref{theo:onesidedlip2}}\label{sec:prooftheo12}
\subsubsection{General covariance bounds}

This subsection gathers the covariance estimates that will be used in the proof of Theorems \ref{theo:onesidedlip2}.
The goal is to separate the different mechanisms contributing to the one-sided Lipschitz bound for $\nabla v_t(x)$. We use the notations of Section~\ref{sec:hgfjkfhdjk}.

\textbf{A sharper bound under bounded perturbation.} Under the stronger bounded-perturbation regime of Assumption~\ref{assum:atbounded} below, one can remove the exponential prefactor in Theorem~\ref{theo:onesidedlip2}; we record this variant now, and the rest of the section is devoted to the common proof of Theorems~\ref{theo:onesidedlip2} and~\ref{theo:onesidedlip}.

\begin{assumption}\label{assum:atbounded} The probability density $(m_t)_{\#}\pi:\mathbb{R}^d\rightarrow \mathbb{R}$ is $(\alpha_t,\beta,K_t)$-weakly log-concave and writing
$(m_t)_{\#} \pi(x)=\exp(-u_t(x)+a_t(x))$, the $\beta$-perturbation $a_t$ satisfies $\|a_t\|_\infty\leq K_1$.
\end{assumption} 

Under the additional Assumption~\ref{assum:atbounded} we obtain the following refined bound.

\begin{theorem}\label{theo:onesidedlip}Let $v:[0,1]\times \mathbb{R}^d\rightarrow \mathbb{R}^d$ of the form \eqref{eq:vf} with $\sigma\in C^1_{\text{a.e.}}([0,1],[0,1])$ and $(m_t)_t$ satisfying Assumptions~\ref{assum:varmprime} and \ref{assum:atbounded}. Then for $x\in \mathbb{R}^d$ and $t\in [0,1]$ such that $\sigma_t>0$, we have for all $\xi \in \{0,1\}$,
\begin{itemize}
    \item if $\sigma_t'\leq 0$, then $$\lambda_{\max}\left(\nabla v_t(x)\right)\leq C \frac{1}{\alpha_t\sigma_t^2+1}\left(L_t-\frac{\sigma_t'}{\sigma_t}(K_t\sigma_t^\beta)^\xi\right)+\frac{\sigma_t\sigma_t'\alpha_t}{\alpha_t\sigma_t^2+1},$$
    \item if $\sigma_t'\geq 0$ and $\nabla^2 u_t \preceq A_t \text{Id}$ in the decomposition $(m_t)_{\#} \pi =\exp(-u_t+a_t)$ with $u_t$ convex and $a_t$ $\beta$-Hölder (from Assumption~\ref{assum:1}), then
    $$\lambda_{\max}\left(\nabla v_t(x)\right)\leq C \frac{1}{\alpha_t\sigma_t^2+1}\left(L_t+\frac{\sigma_t'}{\sigma_t}(K_t\sigma_t^\beta)^\xi\right)+\frac{\sigma_t\sigma_t'A_t}{\alpha_t\sigma_t^2+1}.$$
\end{itemize}
with $C>0$ depending only on $K_1,\beta$ and not the dimension $d$.
\end{theorem}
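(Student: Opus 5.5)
We start from the Jacobian formula of Proposition~\ref{propnablav}. For a unit vector $h$,
\[
h^\top\nabla v_t(x)h=\frac{\sigma_t'}{\sigma_t}-\frac{\sigma_t'}{\sigma_t^3}\mathrm{Var}_{p^{t,x}}(h^\top W)+\frac{1}{\sigma_t^2}\mathrm{Cov}_{\pi^{t,x}}\big(h^\top m_t'(Z),h^\top m_t(Z)\big).
\]
Here the variance is taken under the posterior $p^{t,x}=\mathrm{Law}(m_t(Z)\mid X_t=x)$. We treat the last term and the first two terms separately.

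\emph{Mixed term.} By Cauchy--Schwarz and Assumption~\ref{assum:varmprime}, the mixed covariance is at most
\[
L_t\Big(\tfrac{\sigma_t^2}{\alpha_t\sigma_t^2+1}\Big)^{1/2}\mathrm{Var}_{p^{t,x}}(h^\top W)^{1/2}.
\]
So it suffices to show $\mathrm{Var}_{p^{t,x}}(h^\top W)\le C\sigma_t^2/(\alpha_t\sigma_t^2+1)$. We get this by transferring from the log-concave surrogate $\nu^{t,x}$ of \eqref{eq:nutx}, which is $(\alpha_t+\sigma_t^{-2})$-strongly log-concave. Brascamp--Lieb gives the surrogate bound $(\alpha_t+\sigma_t^{-2})^{-1}$. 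Since $p^{t,x}\propto e^{a_t}\nu^{t,x}$ and $\|a_t\|_\infty\le K_1$, the density ratio lies in $[e^{-2K_1},e^{2K_1}]$, so the variance of $p^{t,x}$ is within a factor $e^{2K_1}$ of that of $\nu^{t,x}$. This yields the $C L_t/(\alpha_t\sigma_t^2+1)$ contribution with $C=C(K_1)$.

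\emph{Score terms, case $\sigma_t'\le0$.} We need a lower bound on $\mathrm{Var}_{p^{t,x}}(h^\top W)$. Write
\[
\mathrm{Var}_{p^{t,x}}=\mathrm{Var}_{\nu^{t,x}}+\big(\mathrm{Var}_{p^{t,x}}-\mathrm{Var}_{\nu^{t,x}}\big).
\]
For the $\nu^{t,x}$ part, use the identity \eqref{eq:hessian-identity} from the proof of Proposition~\ref{prop:bondhessut}, applied with prior $q_t$ and $g=\sigma_t$:
\[
\sigma_t^{-4}\mathrm{Cov}_{\nu^{t,x}}=\sigma_t^{-2}\mathrm{Id}-\nabla^2\bar u_t(x),
\]
where $\bar u_t=-\log Q_tq_t$ satisfies $\nabla^2\bar u_t\succeq \alpha_t/(\alpha_t\sigma_t^2+1)$. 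Hence
\[
\frac{\sigma_t'}{\sigma_t}-\frac{\sigma_t'}{\sigma_t^3}\mathrm{Var}_{\nu^{t,x}}(h^\top W)=\sigma_t'\sigma_t\,h^\top\nabla^2\bar u_t h\le \frac{\sigma_t\sigma_t'\alpha_t}{\alpha_t\sigma_t^2+1}.
\]
This is exactly the log-concave term. For the $\nu^{t,x}$ bound alone we may instead use the lower bound $\mathrm{Var}_{p^{t,x}}\ge e^{-2K_1}\mathrm{Var}_{\nu^{t,x}}$, which loses a constant and gives the case $\xi=0$. The exact log-concave coefficient is then restored via the correction bound below with $\xi=0$: the correction, multiplied by $|\sigma_t'|\sigma_t^{-3}$, is at most $C|\sigma_t'|\sigma_t^{-1}/(\alpha_t\sigma_t^2+1)$.

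For $\xi=1$ we must show
\[
\big|\mathrm{Var}_{p^{t,x}}(h^\top W)-\mathrm{Var}_{\nu^{t,x}}(h^\top W)\big|\le C\frac{\sigma_t^2}{\alpha_t\sigma_t^2+1}\,K_t\sigma_t^\beta.
\]
The difference is a covariance under $\nu^{t,x}$ of $(h^\top W-c)^2$ with the density $e^{a_t}/\int e^{a_t}d\nu^{t,x}$. By Cauchy--Schwarz it is bounded by
\[
\|(h^\top W-c)^2\|_{L^2(\nu)}\cdot\mathrm{Var}_{\nu}\big(e^{a_t}/\textstyle\int e^{a_t}\big)^{1/2}.
\]
The first factor is $\lesssim\sigma_t^2/(\alpha_t\sigma_t^2+1)$ by Lemma~\ref{lem:lip-moments}. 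For the second, use Lemma~\ref{lem:var_exp_concentration} with $\gamma=\alpha_t+\sigma_t^{-2}$ and $r=\gamma^{-1/2}$, which gives
\[
\exp\big(CK_t\gamma^{-\beta/2}+CK_t^2\gamma^{-\beta}\big)-1.
\]
Since $K_t\gamma^{-\beta/2}\le K_t\sigma_t^\beta$, and exponentials of bounded arguments are controlled using $\|a_t\|_\infty\le K_1$ (the variance is also trivially $\le e^{4K_1}$), this is $\le C(K_t\sigma_t^\beta\wedge1)$. The square root then gives $(K_t\sigma_t^\beta)^{1/2}$ rather than the power $1$ we need.

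\textbf{Main obstacle.} This square-root loss is the main difficulty. To fix it, we would linearize: write $e^{a_t}/\int e^{a_t}=1+\tilde a+O(\tilde a^2)$ with $\tilde a=a_t-\mathbb E_\nu a_t$ replaced by its Lipschitz inf-convolution (Lemma~\ref{lem:infconv}). Then bound $\mathrm{Cov}_\nu\big((h^\top W-c)^2,\tilde a\big)$ by $\|\,\cdot\,\|_{L^2}\,\|\tilde a\|_{L^2}$. Here $\|\tilde a\|_{L^2(\nu)}\lesssim K_t\gamma^{-\beta/2}$ holds by Lemma~\ref{lem:lip-moments} directly, without any square root, and the quadratic remainder is of order $(K_t\sigma_t^\beta)^2\le C K_t\sigma_t^\beta$ on $\{K_t\sigma_t^\beta\le1\}$. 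On the complementary regime the $\xi=0$ bound already suffices.

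\emph{Case $\sigma_t'\ge0$.} This is symmetric. Now $-\sigma_t'\sigma_t^{-3}\mathrm{Var}$ has the good sign only after using the upper Hessian bound: $\nabla^2 u_t\preceq A_t$ implies, via Brascamp--Lieb applied in the reverse direction (or Cramér--Rao for the Gaussian observation model),
\[
\mathrm{Cov}_{\nu^{t,x}}\succeq (A_t+\sigma_t^{-2})^{-1},
\]
equivalently $\nabla^2\bar u_t\preceq A_t/(A_t\sigma_t^2+1)\le A_t/(\alpha_t\sigma_t^2+1)$. The remaining estimates are identical to the decreasing case.
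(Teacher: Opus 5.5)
Your argument is correct in substance and follows the paper's architecture: Proposition~\ref{propnablav}, a log-concave core under $\nu^{t,x}$ treated by Brascamp--Lieb or Cram\'er--Rao according to the sign of $\sigma_t'$, a tilt correction, and Cauchy--Schwarz with Assumption~\ref{assum:varmprime} for the mixed term. Your Hessian identity from Proposition~\ref{prop:bondhessut} is just Lemmas~\ref{lemma:boundeasy} and~\ref{lem:lemma3_cramer_rao} rewritten.

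You differ from the paper in how you control the tilt. For the mixed term and for $\xi=0$ you use the sandwich $e^{-2K_1}\mathrm{Var}_{\nu^{t,x}}\le\mathrm{Var}_{p^{t,x}}\le e^{2K_1}\mathrm{Var}_{\nu^{t,x}}$. This is more elementary than going through Propositions~\ref{prop:firstuglybound} and~\ref{prop:finalestimatecompcov}, but it rests entirely on $\|a_t\|_\infty\le K_1$. The paper's reduction to scalar tilted moments is built so that the same proof yields Theorem~\ref{theo:onesidedlip2} by replacing Lemma~\ref{lemma:missigestimate} with Lemma~\ref{lem:lemma10-unbounded}.

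For $\xi=1$, your ``main obstacle'' is an artefact of invoking Lemma~\ref{lem:var_exp_concentration}, which is lossy at first order. The paper disposes of it in one line. Since $\|a_t\|_\infty\le K_1$, the normalized weight $e^{v}=e^{a_t}/\int e^{a_t}\,d\nu^{t,x}$ is itself $\beta$-H\"older with constant $e^{2K_1}K_t$. Theorem~\ref{thm:WPI} with exponent $2$ then gives $\mathrm{Var}_{\nu^{t,x}}(e^{v})\lesssim K_t^2\gamma^{-\beta}$, where $\gamma:=\alpha_t+\sigma_t^{-2}$. Cauchy--Schwarz (Lemma~\ref{lemma:missigestimate}) then produces the factor $K_t\gamma^{-\beta/2}\le K_t\sigma_t^\beta$ with no square-root loss.

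Your linearization also works, but closing it needs two more ingredients. First, you need $L^4(\nu^{t,x})$ control of the centered exponent for the quadratic remainder (Lemma~\ref{lem:lip-moments} or Theorem~\ref{thm:WPI} with exponent $4$). Second, you need a separate estimate for the inf-convolution error $\delta$. It only satisfies $\|\delta\|_\infty\le\varepsilon:=K_t\gamma^{-\beta/2}$, the same order as the target. Its contribution is at most $\|G\|_{L^2(\nu^{t,x})}\,O(\varepsilon)\lesssim\gamma^{-1}\varepsilon$, with $G=(h^\top W-c)^2$. This Lipschitz-part/uniformly-small-part splitting is exactly what the paper does in Lemma~\ref{lem:lemma10-unbounded}.

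Two slips need fixing. First, the variance difference is not a single covariance. With $F=h^\top W$, $\rho=dp^{t,x}/d\nu^{t,x}$ and $c=\mathbb{E}_{\nu^{t,x}}F$,
\[
\mathrm{Var}_{p^{t,x}}(F)-\mathrm{Var}_{\nu^{t,x}}(F)=\mathrm{Cov}_{\nu^{t,x}}\big(\rho,(F-c)^2\big)-\mathrm{Cov}_{\nu^{t,x}}(\rho,F)^2 .
\]
The last term is the squared term of Proposition~\ref{prop:firstuglybound}. It has the favourable sign when $\sigma_t'\le0$, and it is bounded by $\gamma^{-1}\mathrm{Var}_{\nu^{t,x}}(\rho)$ when $\sigma_t'\ge0$. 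So it is harmless, but it must be accounted for.

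Second, for $\sigma_t'\le0$ the coefficient $-\sigma_t'/\sigma_t^3$ is nonnegative, so you need an \emph{upper} bound on $\mathrm{Var}_{p^{t,x}}(h^\top W)$, not a lower one. The bound $\mathrm{Var}_{p^{t,x}}\le e^{2K_1}\sigma_t^2/(\alpha_t\sigma_t^2+1)$ then gives the case $\xi=0$ directly, with constant $e^{2K_1}-1$ and the exact log-concave term. The lower bound $e^{-2K_1}\mathrm{Var}_{\nu^{t,x}}$, combined with Cram\'er--Rao, is what handles $\sigma_t'\ge0$.
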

Compared to Theorem~\ref{theo:onesidedlip2}, the structure of the bound is unchanged, but the uniform bound on $a_t$ removes the
exponential envelope. This makes the estimate closer to the ``intrinsic'' scaling suggested by posterior concentration:
the sensitivity of $v_t$ is attenuated by $(\alpha_t\sigma_t^2+1)^{-1}$, while the score contribution retains the natural factor
$\sigma_t'/\sigma_t$.
We will not use this refined estimate in the remainder of the paper as our main focus is the unbounded weakly log-concave setting of Definition~\ref{defi:weakconcave}, for which the perturbation is only assumed to be Hölder and may be unbounded. Theorem~\ref{theo:onesidedlip} is included only as a benchmark result: it shows that, under the stronger bounded-perturbation assumption, the exponential prefactor appearing in Theorem~\ref{theo:onesidedlip2} can be removed, while preserving the same geometric scalings of the estimate.

We now prove Theorems~\ref{theo:onesidedlip2} and~\ref{theo:onesidedlip} simultaneously. The two proofs follow the same lines, the only difference being that Theorem~\ref{theo:onesidedlip} uses the improved estimates available under Assumption~\ref{assum:atbounded}.

\paragraph{Beginning of the proof.} Let us first recall the formula given by Proposition~\ref{propnablav}:
$$\nabla v_t(x)=\frac{\sigma_t'}{\sigma_t} \mathrm{Id}-\frac{\sigma_t'}{\sigma_t^3}\int H_{\pi^{t,x}}^{m_t}(z)^{\otimes 2} d\pi^{t,x}(z)+\frac{1}{\sigma_t^2}\int H_{\pi^{t,x}}^{m_t' }(z)\otimes H_{\pi^{t,x}}^{m_t}(z) d\pi^{t,x}(z).$$
Using the subadditivity of the maximum eigenvalue, we  obtain
\begin{align}\label{align:vfdihsksgs2}
&\lambda_{\max}\left( \nabla v_t(x)-\frac{1}{\sigma_t^2}\int H_{\pi^{t,x}}^{m_t' }(y)\otimes H_{\pi^{t,x}}^{m_t}(y)d\pi^{t,x}(y) \right)\\
&
= \lambda_{\max}\left( \frac{\sigma_t'}{\sigma_t} \text{Id}-\frac{\sigma_t'}{\sigma_t^3}\int H_{p^{t,x}}^{\text{Id}}(y)^{\otimes 2}dp^{t,x}(y) \right) \nonumber \\
& \leq  \lambda_{\max}\left(\frac{-\sigma_t'}{\sigma_t^3} \int H_{p^{t,x}}^{\text{Id}}(y)^{\otimes 2}   dp^{t,x}(y) - \frac{-\sigma_t'}{\sigma_t^3}\int H_{\nu^{t,x}}^{\text{Id}}(y)^{\otimes 2}   d\nu^{t,x}(y) \right)\nonumber \\
&+  \lambda_{\max}\left( \frac{-\sigma_t'}{\sigma_t^3}\int H_{\nu^{t,x}}^{\text{Id}}(y)^{\otimes 2}   d\nu^{t,x}(y) \right) + \frac{\sigma_t'}{\sigma_t}. \nonumber
\end{align}

Equation \eqref{align:vfdihsksgs2} isolates two pieces:
\begin{itemize}
    \item the difference between the covariance under $p^{t,x}$ and under $\nu^{t,x}$ (this is the perturbative term);
    \item the covariance under $\nu^{t,x}$ alone (this is the log-concave core term).
\end{itemize}
We treat these two terms separately.

\paragraph{The log-concave core term.}
The measure $\nu^{t,x}$ is strongly log-concave because it combines the convex part of $(m_t)_\#\pi$ with a Gaussian likelihood.
The sign of $\sigma_t'$ determines which inequality is useful.

In the case where $\sigma_t'\leq 0$, the prefactor $-\sigma_t'/\sigma_t^3$ is nonnegative, so an upper bound on the covariance matrix gives the desired estimate.
This is exactly the setting of the Brascamp--Lieb inequality.

\begin{lemma}\label{lemma:boundeasy} Let  $\nu^{t,x}$ the  probability measure defined in \eqref{eq:nutx}. Suppose that $m$ satisfies Assumption~\ref{assum:1}, then  for all $t>0$ such that $\sigma_t'\leq 0$ and $x\in \mathbb{R}^d$ we have
$$\frac{-\sigma_t'}{\sigma_t^3}\lambda_{\max}\left(\int H_{\nu^{t,x}}^{\text{Id}}(y)^{\otimes 2}d\nu^{t,x}(y)\right)+\frac{\sigma_t'}{\sigma_t}\leq  \sigma_t\sigma_t'\frac{\alpha_t}{\alpha_t\sigma_t^2+1}.$$
\end{lemma}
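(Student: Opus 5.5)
The plan is to bound the covariance of the strongly log-concave measure $\nu^{t,x}$ by Brascamp--Lieb, and then do a scalar computation. Throughout, the prefactor $-\sigma_t'/\sigma_t^3$ is nonnegative, so an upper bound on $\lambda_{\max}$ of the covariance is exactly what is needed.

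\textbf{Step 1: curvature of $\nu^{t,x}$.} By \eqref{eq:nutx}, up to normalization the negative log-density of $\nu^{t,x}$ is
\[
w\mapsto u_t(w)+\frac{\|x-w\|^2}{2\sigma_t^2}.
\]
Assumption~\ref{assum:1} gives $\nabla^2 u_t\succeq \alpha_t\mathrm{Id}$. Hence the Hessian of this potential is bounded below by $(\alpha_t+\sigma_t^{-2})\mathrm{Id}$ on the interior of the (convex) support. So $\nu^{t,x}$ is $(\alpha_t+\sigma_t^{-2})$-strongly log-concave.

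\textbf{Step 2: covariance bound.} Apply the Brascamp--Lieb inequality to the linear test functions $w\mapsto \langle h,w\rangle$, exactly as in the proof of Proposition~\ref{prop:bondhessut}. For every unit $h$ this gives
\[
h^\top \mathrm{Cov}_{\nu^{t,x}}\, h \le (\alpha_t+\sigma_t^{-2})^{-1} = \frac{\sigma_t^2}{\alpha_t\sigma_t^2+1}.
\]
Hence
\[
\lambda_{\max}\Big(\int H_{\nu^{t,x}}^{\mathrm{Id}}(y)^{\otimes 2}\,d\nu^{t,x}(y)\Big)\le \frac{\sigma_t^2}{\alpha_t\sigma_t^2+1}.
\]

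\textbf{Step 3: scalar algebra.} Multiply by the nonnegative factor $-\sigma_t'/\sigma_t^3$ and add $\sigma_t'/\sigma_t$:
\[
-\frac{\sigma_t'}{\sigma_t}\,\frac{1}{\alpha_t\sigma_t^2+1}+\frac{\sigma_t'}{\sigma_t}
=\frac{\sigma_t'}{\sigma_t}\,\frac{\alpha_t\sigma_t^2}{\alpha_t\sigma_t^2+1}
=\sigma_t\sigma_t'\,\frac{\alpha_t}{\alpha_t\sigma_t^2+1},
\]
which is the claimed bound.

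\textbf{Main obstacle.} The only delicate point is justifying Brascamp--Lieb when $u_t=+\infty$ outside a convex support, where the potential is not $C^2$ everywhere. I would handle this as the paper does elsewhere: approximate $u_t$ by smooth $\alpha_t$-strongly convex potentials, which gives the covariance bound for each approximant, and pass to the limit in the covariance. Alternatively, one can use the version of Brascamp--Lieb for log-concave measures on convex domains, which holds with the same constant.
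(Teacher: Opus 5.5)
Your proposal is correct and follows the paper's own proof essentially verbatim: you observe that $\nu^{t,x}$ is $(\alpha_t+\sigma_t^{-2})$-strongly log-concave, bound its covariance via Brascamp--Lieb applied to linear test functions, and then carry out the same scalar algebra using $-\sigma_t'/\sigma_t^3\ge 0$. Your remark about justifying Brascamp--Lieb when $u_t=+\infty$ outside a convex support is a reasonable technical addition; the paper's proof does not address it.
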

The proof of Lemma~\ref{lemma:boundeasy} can be found in Section~\ref{sec:lemma:boundeasy}. The right-hand side is nonpositive when $\sigma_t'\le 0$, which reflects the fact that the strongly log-concave core contributes a contractive term in this regime.

\medskip

In the case where $\sigma_t'\geq 0$, the prefactor changes sign, and the same covariance term may become expansive.
One now needs a lower bound on the covariance, which is obtained through a Cram\'er-Rao type inequality.
This explains the additional upper curvature assumption on $u_t$ in Theorem~\ref{theo:onesidedlip}.

\begin{lemma}
\label{lem:lemma3_cramer_rao}
Let $\nu^{t,x}$ the probability measure defined in \eqref{eq:nutx}.
Suppose that $m$ satisfies Assumption~\ref{assum:1} and that  $(m_t)_\#\pi=\exp(-u_t+a_t)$ with $u_t$ convex and $C^2$ on $\mathbb{R}^d$ satisfying $\nabla^2 u_t \preceq A_t\text{Id}$.
Then for all $x\in\R^d$ and $t>0$ such that $\sigma_t'\geq 0$,  we have 
\[
\lambda_{\max}  \left(-\frac{\sigma_t'}{\sigma_t^3}\int  H^{\text{Id}}_{\nu^{t,x}}(y)^{\otimes 2} d\nu^{t,x}(y)\right)
+\frac{\sigma_t'}{\sigma_t}
\le
\frac{\sigma_t\sigma_t' A_t}{A_t\sigma_t^2+1}.
\]
\end{lemma}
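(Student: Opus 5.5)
We need a lower bound on the posterior covariance $\Sigma:=\mathrm{Cov}_{\nu^{t,x}}$, since the prefactor $-\sigma_t'/\sigma_t^3$ is nonpositive when $\sigma_t'\ge 0$. Concretely, we aim for
\[
\Sigma\succeq \Big(A_t+\sigma_t^{-2}\Big)^{-1}\mathrm{Id}.
\]
Then
\[
\lambda_{\max}\Big(-\tfrac{\sigma_t'}{\sigma_t^3}\Sigma\Big)=-\tfrac{\sigma_t'}{\sigma_t^3}\lambda_{\min}(\Sigma)\le -\tfrac{\sigma_t'}{\sigma_t^3}\cdot\tfrac{\sigma_t^2}{A_t\sigma_t^2+1},
\]
and adding $\sigma_t'/\sigma_t$ gives
\[
\tfrac{\sigma_t'}{\sigma_t}\Big(1-\tfrac{1}{A_t\sigma_t^2+1}\Big)=\tfrac{\sigma_t\sigma_t'A_t}{A_t\sigma_t^2+1},
\]
which is exactly the claim. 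So the whole proof reduces to the covariance lower bound.

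To prove it, write $\nu^{t,x}(dw)\propto e^{-V(w)}dw$ with
\[
V(w)=u_t(w)+\tfrac{\|x-w\|^2}{2\sigma_t^2},\qquad \nabla^2V\preceq (A_t+\sigma_t^{-2})\mathrm{Id}.
\]
Apply the Cramér--Rao inequality: for a fixed unit $h$, take the test function $w\mapsto\langle h,w\rangle$ and integrate by parts,
\[
\mathbb E_\nu[\langle h,\nabla V\rangle\langle h,W\rangle]=\|h\|^2=1.
\]
Since $\mathbb E_\nu[\nabla V]=0$, Cauchy--Schwarz gives
\[
1\le \mathrm{Var}_\nu(\langle h,W\rangle)\,\mathbb E_\nu[\langle h,\nabla V\rangle^2].
\]
Integrating by parts again,
\[
\mathbb E_\nu[\langle h,\nabla V\rangle^2]=\mathbb E_\nu[h^\top\nabla^2V h]\le A_t+\sigma_t^{-2}.
\]
Hence $h^\top\Sigma h\ge (A_t+\sigma_t^{-2})^{-1}$ for every unit $h$, which is the desired lower bound.

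The main obstacle is justifying these two integrations by parts. They need $u_t\in C^2$ on all of $\mathbb R^d$ (assumed here, so there is no boundary) and decay of $e^{-V}$ times polynomial factors, which strong convexity provides. I would therefore state the argument with finite $u_t$ and note the approximation step if needed. Everything after that is the scalar algebra above.
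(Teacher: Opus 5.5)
Your proof is correct and follows the paper's route. The paper also writes $\nu^{t,x}\propto e^{-V_{t,x}}$, bounds $\nabla^2 V_{t,x}\preceq (A_t+\sigma_t^{-2})\mathrm{Id}$, invokes the Cram\'er--Rao inequality $\mathrm{Cov}_{\nu^{t,x}}\succeq J^{-1}$ with $J=\int \nabla^2 V_{t,x}\,d\nu^{t,x}$, and finishes with the same algebra; your two integrations by parts in a fixed direction $h$ (with boundary terms controlled by the full-support $C^2$ hypothesis and Gaussian decay) simply give a self-contained proof of that directional Cram\'er--Rao step.
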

The proof of Lemma~\ref{lem:lemma3_cramer_rao} can be found in Section~\ref{sec:lem:lemma3_cramer_rao}.

\paragraph{Comparing covariances under an exponential tilt.}
We now turn to the first term in \eqref{align:vfdihsksgs2}, namely the difference between the covariance under $p^{t,x}$ and under $\nu^{t,x}$.
The next proposition expresses covariance comparison in terms of tilted moments of centered linear observables.

\begin{proposition}\label{prop:firstuglybound}Let $\mu_1,\mu_2\in \mathcal{P}(\mathbb{R}^d)$ having second moments and such that $\frac{d\mu_1(y)}{d\mu_2}=e^{v(y)}$. Then, for all $h\in \mathbb{S}^{d-1}$ we have
\begin{align*}
    |h^\top &\left(\int  H_{\mu_1}^{\text{Id}}(y)^{\otimes 2}d\mu_1(y)-\int H_{\mu_2}^{\text{Id}}(y)^{\otimes 2}d\mu_2(y)\right)h|\\
\leq &C|\int  h^\top H_{\mu_2}^{\text{Id}}(y)\left(e^{v(y)}-1\right)d\mu_2(y)|^2\ 
+|\int  (h^\top H_{\mu_2}^{\text{Id}}(y))^{ 2}\left(e^{v(y)}-1\right)d\mu_2(y)|.
\end{align*}
\end{proposition}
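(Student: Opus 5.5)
This is an exact algebraic identity for the change of mean and second moment under a density ratio, and I would not use any concentration at this stage. Fix $h\in\mathbb S^{d-1}$ and write $\phi(y):=h^\top H_{\mu_2}^{\text{Id}}(y)=h^\top\big(y-\int y'\,d\mu_2(y')\big)$. This is a centered linear observable under $\mu_2$, so $\int\phi\,d\mu_2=0$. Because $d\mu_1=e^{v}d\mu_2$ and $\int e^v d\mu_2=1$, every $\mu_1$-integral can be rewritten as a $\mu_2$-integral weighted by $e^v=1+(e^v-1)$. The centered quadratic form under $\mu_1$ differs from the one under $\mu_2$ only through the shift of the mean,
\[
\delta:=h^\top\Big(\int y\,d\mu_1-\int y\,d\mu_2\Big)=\int \phi\, e^{v}\,d\mu_2=\int\phi\,(e^v-1)\,d\mu_2 ,
\]
where the last equality uses the centering of $\phi$.

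Next I would compute the $\mu_1$-variance of $h^\top y$ by centering at the $\mu_2$-mean and correcting. Since $h^\top H_{\mu_1}^{\text{Id}}(y)=\phi(y)-\delta$,
\[
h^\top\!\int H_{\mu_1}^{\text{Id}}(y)^{\otimes2}d\mu_1\,h=\int(\phi-\delta)^2e^v\,d\mu_2=\int\phi^2e^v\,d\mu_2-2\delta\!\int\phi\, e^v d\mu_2+\delta^2=\int\phi^2e^v\,d\mu_2-\delta^2 .
\]
The second equality expands the square and uses $\int e^v\,d\mu_2=1$.

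Subtracting $\int\phi^2\,d\mu_2=h^\top\int H_{\mu_2}^{\text{Id}}(y)^{\otimes2}d\mu_2\,h$ gives the exact identity
\[
h^\top\Big(\int H_{\mu_1}^{\text{Id}}(y)^{\otimes2}d\mu_1-\int H_{\mu_2}^{\text{Id}}(y)^{\otimes2}d\mu_2\Big)h=\int\phi^2(e^v-1)\,d\mu_2-\Big(\int\phi\,(e^v-1)\,d\mu_2\Big)^2 .
\]
The triangle inequality then yields the claim with $C=1$.

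The only step that needs care is integrability. Finiteness of second moments under both $\mu_1$ and $\mu_2$ makes $\int\phi^2e^v\,d\mu_2=\int\phi^2\,d\mu_1$ and $\int|\phi|e^v\,d\mu_2$ finite. Together with $\int\phi^2\,d\mu_2<\infty$, this justifies splitting the integrals of $\phi(e^v-1)$ and $\phi^2(e^v-1)$ term by term. There is no real obstacle here; the substantive work comes later, when these tilted moments are bounded using Lemma~\ref{lem:var_exp_concentration} and Cauchy--Schwarz.
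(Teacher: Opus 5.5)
Your proof is correct and uses the same idea as the paper. You recenter the $\mu_1$-covariance at the $\mu_2$-mean and write the mean shift as $\int H_{\mu_2}^{\text{Id}}(1-e^{v})\,d\mu_2$; the identities $\int H_{\mu_2}^{\text{Id}}\,d\mu_2=0$ and $\int e^{v}d\mu_2=1$ then remove the cross terms.

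The only difference is bookkeeping. The paper splits the matrix difference as $A_1+A_2+A_3$ and bounds $h^\top A_1 h$ and $h^\top A_2 h$ separately by the triangle inequality, which gives $C=3$. Your scalar computation shows that these two pieces sum exactly to $-\delta^2$, so you get an exact identity and $C=1$.
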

The proof of Proposition~\ref{prop:firstuglybound} can be found in Section~\ref{sec:prop:firstuglybound}. This proposition reduces the matrix-valued covariance comparison problem to scalar moment estimates, which can then be handled using functional inequalities for strongly log-concave measures.

\paragraph{From Hölder regularity to concentration.}
The perturbation $a_t$ is only assumed to be Hölder continuous, so we need an estimate that converts Hölder regularity into moment control under strong log-concavity.
This is the role of the generalized Poincar\'e inequality below.

\begin{theorem}[Generalized Poincaré inequality]
\label{thm:WPI}
Let $\mu$ be a probability measure on $\mathbb{R}^d$ that is $\gamma$-strongly log-concave for some $\gamma>0$.
Let $\alpha\ge 0$, $\beta\in (0,1]$ and suppose $f:\mathbb{R}^d\to\mathbb{R}$ is $\beta$-Hölder with constant $L>0$. Then,
\[
\int \bigl|f - \int f d\mu \bigr|^\alpha d\mu
 \le 
C_\alpha L^\alpha \gamma^{-\alpha\beta/2}.
\]
\end{theorem}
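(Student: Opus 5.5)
The plan is to reduce to the Lipschitz moment bound of Lemma~\ref{lem:lip-moments} through the inf-convolution smoothing of Lemma~\ref{lem:infconv}, choosing the smoothing scale equal to the natural concentration scale $r=\gamma^{-1/2}$ of $\mu$. For $\alpha\ge1$, set $L':=Lr^{\beta-1}$ and let $\tilde f_r(x):=\inf_y\{f(y)+L'\|x-y\|\}$. By Lemma~\ref{lem:infconv}, $\tilde f_r$ is $L'$-Lipschitz and $\|\tilde f_r-f\|_\infty\le Lr^\beta$.

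We then decompose
\[
f-\int f\,d\mu=\Big(\tilde f_r-\int \tilde f_r\,d\mu\Big)+(f-\tilde f_r)-\int (f-\tilde f_r)\,d\mu .
\]
The last two terms are bounded in absolute value by $2Lr^\beta$ pointwise. Using $|a+b|^\alpha\le 2^{\alpha-1}(|a|^\alpha+|b|^\alpha)$ and Lemma~\ref{lem:lip-moments} applied to $\tilde f_r$, this gives
\[
\int \Big|f-\int f\,d\mu\Big|^\alpha d\mu\le 2^{\alpha-1}\Big(C_\alpha (L')^\alpha\gamma^{-\alpha/2}+2^\alpha L^\alpha r^{\alpha\beta}\Big)
=2^{\alpha-1}L^\alpha\Big(C_\alpha r^{\alpha(\beta-1)}\gamma^{-\alpha/2}+2^\alpha r^{\alpha\beta}\Big).
\]
With $r=\gamma^{-1/2}$ both terms are of order $L^\alpha\gamma^{-\alpha\beta/2}$, since $r^{\alpha(\beta-1)}\gamma^{-\alpha/2}=\gamma^{-\alpha(\beta-1)/2-\alpha/2}=\gamma^{-\alpha\beta/2}$. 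This yields the claim with a constant depending only on $\alpha$, and it is uniform in $\beta$.

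For $0\le\alpha<1$, Jensen's inequality (concavity of $s\mapsto s^{\alpha}$ applied to $|\cdot|$, or Hölder with exponent $1/\alpha$) gives $\int|f-\int f|^\alpha d\mu\le(\int|f-\int f|\,d\mu)^\alpha$. We then apply the $\alpha=1$ case; the case $\alpha=0$ is trivial.

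One point needs care. Lemma~\ref{lem:lip-moments} is stated for $\mu=e^{-V}$ with $V$ of class $C^2$, whereas ``$\gamma$-strongly log-concave'' may allow non-smooth or restricted-support potentials. I expect this to be the main, though mild, obstacle. I would handle it either by approximating $V$ with smooth $\gamma$-strongly convex potentials and passing to the limit, since $f$ has polynomial growth and the moments are uniformly controlled, or by invoking directly the log-Sobolev inequality with constant of order $1/\gamma$ for general strongly log-concave measures (Bakry--Émery, extended by approximation) and rerunning the Herbst argument of Lemma~\ref{lem:herbst}. A second point is that $f$ must be $\mu$-integrable; this follows because $\beta$-Hölder functions grow at most like $\|x\|^\beta$ and strongly log-concave measures have all moments.
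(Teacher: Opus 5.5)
Your proposal is correct and follows the paper's proof essentially verbatim: inf-convolution at the scale $r=\gamma^{-1/2}$ via Lemma~\ref{lem:infconv}, Lemma~\ref{lem:lip-moments} applied to the Lipschitz part, and the uniform $Lr^\beta$ bound on the approximation error. The only differences are cosmetic. The paper uses a three-term split with factor $3^{\alpha-1}$, treats $\alpha\ge 2$ directly and reduces $0<\alpha<2$ by H\"older, whereas you treat $\alpha\ge 1$ directly and reduce $\alpha<1$ by Jensen. Your caveat about the $C^2$ hypothesis in Lemma~\ref{lem:lip-moments} is a point the paper passes over silently, and either of your proposed fixes settles it.
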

The proof of Theorem~\ref{thm:WPI} can be found in Section~\ref{sec:thm:WPI}. The scaling $\gamma^{-\beta/2}$ is the key point: under $\gamma$-strong log-concavity, fluctuations occur at spatial scale $\gamma^{-1/2}$, and a $\beta$-Hölder function therefore fluctuates at scale $(\gamma^{-1/2})^\beta=\gamma^{-\beta/2}$.

\paragraph{Tilted moment estimates (bounded and unbounded perturbations).}
We now apply Theorem~\ref{thm:WPI} to the exponential tilt.
The next two lemmas give the estimates needed in Proposition~\ref{prop:firstuglybound}. The bounded perturbation case corresponds to Assumption~\ref{assum:atbounded} and yields the cleanest bound.

\begin{lemma}[Covariance bounds with bounded perturbation]\label{lemma:missigestimate}
Let $\mu\in \mathcal{P}(\mathbb{R}^d)$ a $\gamma_t$-strongly log concave measure with $\gamma_t>0$ and $v\in \mathcal{H}^\beta$ with $\|v\|_\infty\leq K_1$ and $\|v\|_{\mathcal{H}^\beta}\leq K_t$ and such that $\int e^v d\mu=1$. Then, there exists $C>0$ independent of the dimension $d$ such that for all $h\in \mathbb{S}^{d-1}$, $\xi\in [0,1]$ and $j\in \{1,2\}$,
\begin{align*}
   &|\int  (h^\top H_\mu^{\text{Id}}(y))^{ j}\left(e^{v(y)}-1\right)d\mu(y)|\leq C K_t^\xi\gamma_t^{-(j+\beta\xi)/2}.
\end{align*}
\end{lemma}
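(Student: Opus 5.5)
The estimate has two regimes, corresponding to $\xi=0$ and $\xi=1$. I would prove the two endpoints and obtain intermediate $\xi\in(0,1)$ by interpolation: take the minimum of the two bounds and use $\min(a,b)\le a^{1-\xi}b^{\xi}$. Throughout, write $g_j(y):=(h^\top H_\mu^{\text{Id}}(y))^j$. Since $y\mapsto h^\top y$ is $1$-Lipschitz, Lemma~\ref{lem:lip-moments} gives $\int |g_j|^p\,d\mu\le C_p\gamma_t^{-jp/2}$ for every $p\ge1$, independently of $d$.

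\textbf{Case $\xi=0$.} Since $\|v\|_\infty\le K_1$, we have $|e^{v}-1|\le e^{K_1}+1$. Then $|\int g_j(e^v-1)\,d\mu|\le (e^{K_1}+1)\int |g_j|\,d\mu\le C\gamma_t^{-j/2}$, with $C$ depending only on $K_1$.

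\textbf{Case $\xi=1$.} The goal is to exploit centering. Because $\int e^v\,d\mu=1$, we have $\int (e^v-1)\,d\mu=0$. For $j=1$, also $\int g_1\,d\mu=0$. In either case I would subtract the constant $e^{c}-1$, where $c:=\int v\,d\mu$:
\[
\int g_j(e^v-1)\,d\mu=\int g_j\,(e^v-e^{c})\,d\mu+(e^{c}-1)\int g_j\,d\mu .
\]
For the first term, use $|e^v-e^c|\le e^{\max(v,c)}|v-c|\le e^{2K_1}|v-c|$ and Cauchy--Schwarz, which gives
\[
\Big|\int g_j\,(e^v-e^c)\,d\mu\Big|\le e^{2K_1}\Big(\int g_j^2\,d\mu\Big)^{1/2}\Big(\int|v-c|^2\,d\mu\Big)^{1/2}.
\]
By Theorem~\ref{thm:WPI} with $\alpha=2$, the second factor is at most $CK_t\gamma_t^{-\beta/2}$, so the whole term is at most $CK_t\gamma_t^{-(j+\beta)/2}$.

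For the second term: when $j=1$ it vanishes. When $j=2$, we have $\int g_2\,d\mu\le C\gamma_t^{-1}$, so it suffices to show $|e^c-1|\le CK_t\gamma_t^{-\beta/2}$. Jensen's inequality gives $c\le\log\int e^v\,d\mu=0$, hence $e^c\le1$. Conversely, $1-e^c=\int (e^v-e^c)\,d\mu\le e^{2K_1}\int|v-c|\,d\mu\le CK_t\gamma_t^{-\beta/2}$, again by Theorem~\ref{thm:WPI}. This gives the required bound.

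The main obstacle is this $j=2$ centering step. Since $g_2$ is not mean-zero, the bound requires controlling the deviation of the normalizing constant, namely $|e^c-1|$, at the Hölder scale $K_t\gamma_t^{-\beta/2}$. The bounded-perturbation assumption is what makes this step, and the linearization $|e^v-e^c|\lesssim|v-c|$, dimension-free with constants depending only on $K_1$ and $\beta$.
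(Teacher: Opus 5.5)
Your proof is correct, but it is organized differently from the paper's. The paper starts from the identity $\int g_j(e^v-1)\,d\mu=\mathrm{Cov}_\mu(g_j,e^v)$, which holds for both $j$ because $\int e^v\,d\mu=1$. It then applies Cauchy--Schwarz in the form $|\mathrm{Cov}_\mu(g_j,e^v)|\le \mathrm{Var}_\mu(g_j)^{1/2}\,\mathrm{Var}_\mu(e^v)^{1/2}$ and bounds $\mathrm{Var}_\mu(g_j)\lesssim\gamma_t^{-j}$ by Brascamp--Lieb. For $\mathrm{Var}_\mu(e^v)$ it observes that $e^v$ is itself $\beta\xi$-H\"older with constant $e^{K_1}(2K_1)^{1-\xi}K_t^\xi$, using $|v(x)-v(y)|\le (2K_1)^{1-\xi}|v(x)-v(y)|^{\xi}$, and applies Theorem~\ref{thm:WPI} to $e^v$ directly. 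This handles every $\xi\in(0,1]$ in one step, with the interpolation in $\xi$ done at the level of the H\"older modulus.

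You instead linearize $e^v-e^c$ around $c=\int v\,d\mu$ and apply Theorem~\ref{thm:WPI} only to $v$, with exponents $1$ and $2$. You then interpolate between the two endpoint bounds via $\min(a,b)\le a^{1-\xi}b^{\xi}$. This is equally dimension-free, and it only ever invokes the generalized Poincar\'e inequality at the native exponent $\beta$.

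The one place your route is heavier than necessary is the $j=2$ normalization term. The covariance viewpoint shows it is not a genuine obstacle: for any constant $c$,
\[
\int g_j(e^v-1)\,d\mu=\int \Big(g_j-\int g_j\,d\mu\Big)\big(e^v-e^{c}\big)\,d\mu ,
\]
so both factors can be centered for free. Your separate estimate of $|1-e^{c}|$ therefore becomes unnecessary, though as written it is correct.
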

 The proof of Lemma~\ref{lemma:missigestimate} can be found in Section~\ref{sec:lemma:missigestimate}. The parameter $\xi\in[0,1]$ provides an interpolation between a rough bound ($\xi=0$) and a sharper Hölder-scale bound ($\xi=1$). This flexibility is useful later when optimizing the final estimate. Without boundedness of $v$, the same geometric scaling remains true, but one pays an exponential envelope.
This is the price of controlling the exponential tilt using only Hölder regularity.

\begin{lemma}[Covariance bounds without bounded perturbation]\label{lem:lemma10-unbounded}
Let $\mu\in \mathcal{P}(\mathbb{R}^d)$ a $\gamma_t$-strongly log concave measure with $\gamma_t>0$ and $v\in \mathcal{H}^\beta$ with $\|v\|_{\mathcal{H}^\beta}\leq K_t$ and such that $\int e^v d\mu=1$. Then, there exists $C>0$ independent of the dimension $d$ such that for all $h\in \mathbb{S}^{d-1}$, $\xi\in \{0,1\}$ and $j\in \{1,2\}$,

\[
\Big|\int (h^\top H^{\text{Id}}_\mu)^{j} (e^{v}-1) d\mu\Big|
 \le 
C K_t^{\xi} \gamma_t^{-(j+\beta\xi)/2} 
\exp \big(C K_t^2 \gamma_t^{-\beta}\big).
\]
\end{lemma}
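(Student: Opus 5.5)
The plan is to bound the tilted moment by Cauchy--Schwarz, separating the polynomial factor from the exponential factor. Write $X:=h^\top H^{\text{Id}}_\mu(y)$. Then
\[
\Big|\int X^j(e^v-1)\,d\mu\Big|\le \Big(\int X^{2j}d\mu\Big)^{1/2}\Big(\int (e^v-1)^2 d\mu\Big)^{1/2}=\Big(\int X^{2j}d\mu\Big)^{1/2}\operatorname{Var}_\mu(e^v)^{1/2},
\]
where the last equality uses $\int e^v d\mu=1$.

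\textbf{The polynomial factor.} Since $y\mapsto h^\top y$ is $1$-Lipschitz, Lemma~\ref{lem:lip-moments} gives $\big(\int X^{2j}d\mu\big)^{1/2}\le C\gamma_t^{-j/2}$. This already settles the case $\xi=0$ once we have $\operatorname{Var}_\mu(e^v)\le \exp(CK_t^2\gamma_t^{-\beta})$ up to constants.

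\textbf{The exponential factor.} We apply Lemma~\ref{lem:var_exp_concentration} with the scale $r=\gamma_t^{-1/2}$. This yields
\[
\operatorname{Var}_\mu(e^v)\le \exp\big(4K_t\gamma_t^{-\beta/2}+2cK_t^2\gamma_t^{-\beta}\big)-1.
\]
Write $s:=K_t\gamma_t^{-\beta/2}$, so the exponent is $4s+2cs^2$. Because $4s\le 2+2s^2$, the right-hand side is at most $e^{2}e^{Cs^2}$, which gives the $\xi=0$ bound.

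\textbf{The case $\xi=1$.} Here we need the refined estimate $e^{4s+2cs^2}-1\le Cs^2e^{Cs^2}$ (using $e^{w}-1\le we^{w}$), which controls the variance by $s^2e^{Cs^2}$ when $s\le 1$. Taking the square root gives $Cs\,e^{Cs^2}=CK_t\gamma_t^{-\beta/2}e^{CK_t^2\gamma_t^{-\beta}}$. When $s\ge1$, the $\xi=0$ bound already suffices, since $1\le s$ lets us insert the factor $s$ for free.

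I expect the $\xi=1$ small-$s$ regime to be the main obstacle: the crude bound $e^{w}-1\le we^w$ is exactly what is needed there, and one must check that the additive $e^{2}$ constant arising in the large-$s$ case does not spoil the linear dependence on $s$. An alternative route for this regime is to write $e^v-1$ through the $\beta$-Hölder oscillation, via $|e^v-1|\le |v-\log\!\int e^v|\,(e^v+1)$, and then use Theorem~\ref{thm:WPI} together with Hölder's inequality. This leads to the same scaling $K_t\gamma_t^{-\beta/2}$ with an exponential envelope coming from Lemma~\ref{lem:mgf_holder}.
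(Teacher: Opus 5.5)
Your Cauchy--Schwarz reduction, the bound $(\int X^{2j}\,d\mu)^{1/2}\le C\gamma_t^{-j/2}$, the case $\xi=0$, and the reduction of the case $\xi=1$, $s\ge 1$ to the case $\xi=0$ are all correct. The gap is the case $\xi=1$, $s<1$. There the inequality $e^{4s+2cs^2}-1\le Cs^2e^{Cs^2}$ is false, since the left-hand side behaves like $4s$ as $s\to 0$. Applying $e^{y}-1\le ye^{y}$ with $y=4s+2cs^2$ only gives $\operatorname{Var}_\mu(e^v)\lesssim s$. After the square root this yields a factor $s^{1/2}=K_t^{1/2}\gamma_t^{-\beta/4}$ instead of $K_t\gamma_t^{-\beta/2}$.

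Choosing $r$ differently does not help. The linear term $4K_tr^\beta$ in Lemma~\ref{lem:var_exp_concentration} comes from paying the uniform error $\|\tilde v_r-v\|_\infty\le K_tr^\beta$ multiplicatively, once in $e^{2m_r}$ and once in $e^{2v}\le e^{2K_tr^\beta}e^{2\tilde v_r}$. For $\beta<1$, writing $r=\lambda\gamma_t^{-1/2}$, the exponent $4s\lambda^\beta+2cs^2\lambda^{2\beta-2}$ is at best of order $s^{2/(2-\beta)}\gg s^2$. Only for $\beta=1$ could you rescue the argument by letting $r\to0$.

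The missing idea, which is what the paper does, is to remove this linear term by a product decomposition. Let $\tilde v$ be the inf-convolution of $v$ at scale $\gamma_t^{-1/2}$, and split $e^v=e^{w}Z$ with $w:=\tilde v-\log\int e^{\tilde v}\,d\mu$ and $Z:=e^{v-w}\in[e^{-s},e^{s}]$. Here $w$ is normalized ($\int e^{w}\,d\mu=1$) and $K_t\gamma_t^{(1-\beta)/2}$-Lipschitz. Then $\operatorname{Var}_\mu(e^{w}Z)\le 2e^{2s}\operatorname{Var}_\mu(e^{w})+2\operatorname{Var}_\mu(Z)$. Lemma~\ref{lem:herbst} together with $\int w\,d\mu\le 0$ gives $\operatorname{Var}_\mu(e^{w})\le e^{2c_0s^2}-1$, and Popoviciu's inequality gives $\operatorname{Var}_\mu(Z)\le\sinh^2 s$. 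Both bounds are quadratic in $s$: the approximation error now enters only through the variance of a bounded factor, not through a mean shift.

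The alternative you mention at the end is actually viable, and would give a different and arguably simpler proof than the paper's. As written, however, it has two holes.

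\begin{itemize}
\item Since $\int e^v\,d\mu=1$, your inequality reads $|e^v-1|\le|v|(e^v+1)$, and Hölder with exponents $(3,3,3)$ requires $\|v\|_{L^3(\mu)}\lesssim s\,e^{Cs^2}$. Theorem~\ref{thm:WPI} only controls $v-\bar v$ with $\bar v:=\int v\,d\mu$, so you must also bound $0\le -\bar v=\log\int e^{v-\bar v}\,d\mu$.
\item This bound, and the bound $\int e^{3v}\,d\mu\le\int e^{3(v-\bar v)}\,d\mu$, require exponential moments $\int e^{q(v-\bar v)}\,d\mu\le\exp(2qs+Cq^2s^2)$. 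You obtain these by rerunning the inf-convolution argument at scale $r=\gamma_t^{-1/2}$. Lemma~\ref{lem:mgf_holder} as stated works at scale $r=1$ and gives $\exp(2K_tq+CK_t^2q^2/\gamma_t)$. That envelope is not bounded by $C\exp(CK_t^2\gamma_t^{-\beta})$ uniformly in $(K_t,\gamma_t)$ (consider $e^{2K_tq}$ with $\gamma_t\to\infty$), so it cannot be quoted verbatim.
\end{itemize}

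With these two fixes you get $C\gamma_t^{-j/2}\cdot s\,e^{Cs^2}\cdot e^{Cs^2}$ for all $s$, which is the claim for $\xi=1$.
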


The proof of Lemma~\ref{lem:lemma10-unbounded} can be found in Section~\ref{sec:lem:lemma10-unbounded}.

\paragraph{Application to the posterior covariance comparison.}
We now apply Proposition~\ref{prop:firstuglybound} with $\mu_1=p^{t,x}$ and $\mu_2=\nu^{t,x}$, and then use Lemma~\ref{lemma:missigestimate} or Lemma~\ref{lem:lemma10-unbounded} depending on whether Assumption~\ref{assum:atbounded} is available.
Since $\nu^{t,x}$ is $(\alpha_t+\sigma_t^{-2})$-strongly log-concave, the previous estimates yield the following bound.

\begin{proposition}\label{prop:finalestimatecompcov} Let  $p^{t,x}$ and $\nu^{t,x}$ the  probability measures defined in \eqref{eq:ptx} and \eqref{eq:nutx} respectively. Then, if $m$ satisfies Assumption~\ref{assum:1}, there exists $C>0$ independent of the dimension $d$ such that for all $\xi\in \{0,1\}$ and $h\in\mathbb S^{d-1}$
 $$|h^\top \left(\int H_{p^{t,x}}^{\text{Id}}(y)^{\otimes 2}   dp^{t,x}(y) - \int H_{\nu^{t,x}}^{\text{Id}}(y)^{\otimes 2}   d\nu^{t,x}(y)\right) h|\leq C K_t^\xi(\alpha_t+\frac{1}{\sigma_t^2})^{-\frac{2+\beta\xi}{2}}\exp \big(C K_t^2 (\alpha_t+\frac{1}{\sigma_t^2})^{-\beta}\big).$$
If additionally $m$ satisfies Assumption~\ref{assum:atbounded}, $$|h^\top \left(\int H_{p^{t,x}}^{\text{Id}}(y)^{\otimes 2}   dp^{t,x}(y) - \int H_{\nu^{t,x}}^{\text{Id}}(y)^{\otimes 2}   d\nu^{t,x}(y)\right) h|\leq C K_t^\xi(\alpha_t+\frac{1}{\sigma_t^2})^{-\frac{2+\beta\xi}{2}}.$$

\end{proposition}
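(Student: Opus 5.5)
The plan is to apply Proposition~\ref{prop:firstuglybound} with $\mu_1=p^{t,x}$, $\mu_2=\nu^{t,x}$ and to control the two resulting scalar terms by Lemma~\ref{lem:lemma10-unbounded} (or Lemma~\ref{lemma:missigestimate} in the bounded case).

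\textbf{Step 1: identify the tilt.} Recall from Section~\ref{sec:hgfjkfhdjk} that
\[
p^{t,x}(dw)=\frac{e^{a_t(w)}}{\int e^{a_t}\,d\nu^{t,x}}\,\nu^{t,x}(dw).
\]
So $\frac{dp^{t,x}}{d\nu^{t,x}}=e^{v}$ with $v:=a_t-\log\int e^{a_t}\,d\nu^{t,x}$. This $v$ satisfies $\int e^{v}\,d\nu^{t,x}=1$. Since $v$ differs from $a_t$ by a constant, it is $\beta$-Hölder with constant $K_t$ by Assumption~\ref{assum:1}. Under Assumption~\ref{assum:atbounded} we additionally have $\|v\|_\infty\le 2K_1$, because the normalizing constant lies in $[e^{-K_1},e^{K_1}]$.

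\textbf{Step 2: strong log-concavity of the reference.} The negative log-density of $\nu^{t,x}$ is $u_t(w)+\|x-w\|^2/(2\sigma_t^2)$ up to a constant. Its Hessian is at least $(\alpha_t+\sigma_t^{-2})\mathrm{Id}$. Hence $\nu^{t,x}$ is $\gamma_t$-strongly log-concave with $\gamma_t:=\alpha_t+\sigma_t^{-2}$. If $u_t$ is only $C^2$ on a convex support, the same conclusion follows by approximation, as in the rest of the paper. Both measures also have second moments, since $\nu^{t,x}$ is strongly log-concave and the exponential moments of the tilt are finite by Lemma~\ref{lem:mgf_holder}.

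\textbf{Step 3: combine.} Proposition~\ref{prop:firstuglybound} bounds the quantity to estimate by $C|I_1|^2+|I_2|$, where
\[
I_j:=\int (h^\top H^{\mathrm{Id}}_{\nu^{t,x}})^j(e^{v}-1)\,d\nu^{t,x}.
\]

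For the $j=2$ term, Lemma~\ref{lem:lemma10-unbounded} gives
\[
|I_2|\le CK_t^\xi\gamma_t^{-(2+\beta\xi)/2}\exp\big(CK_t^2\gamma_t^{-\beta}\big),
\]
which is already the target bound.

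For the $j=1$ term, the $\xi=0$ instance of the lemma gives $|I_1|\le C\gamma_t^{-1/2}e^{CK_t^2\gamma_t^{-\beta}}$. The instance with general $\xi$ gives $|I_1|\le CK_t^\xi\gamma_t^{-(1+\beta\xi)/2}e^{CK_t^2\gamma_t^{-\beta}}$. Multiplying the two,
\[
|I_1|^2\le CK_t^\xi\gamma_t^{-(2+\beta\xi)/2}\exp\big(2CK_t^2\gamma_t^{-\beta}\big).
\]
Enlarging $C$ absorbs the factor $2$ in the exponent, which proves the first claim.

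In the bounded case we repeat the same argument with Lemma~\ref{lemma:missigestimate}, which carries no exponential factor. Its constant depends on $K_1$, which is allowed there.

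\textbf{Main obstacle.} The only delicate point is the quadratic term $|I_1|^2$. Squaring the general-$\xi$ bound would produce $K_t^{2\xi}\gamma_t^{-(1+\beta\xi)}$, which is not the stated form when $K_t^\xi\gamma_t^{-\beta\xi/2}$ is large. The fix is the splitting in Step~3: pair one $\xi=0$ factor with one general-$\xi$ factor, which yields exactly $K_t^\xi\gamma_t^{-(2+\beta\xi)/2}$. Beyond this, I need to confirm that the lemmas apply with $v$ normalized and only Hölder, and that the constants stay dimension-free. Both hold because $h$ is a unit vector and all previous estimates are dimension-free.
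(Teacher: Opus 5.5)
Your proposal is correct and follows the paper's proof. The paper uses the same normalized tilt $v$ with $p^{t,x}=e^{v}\nu^{t,x}$, applies Proposition~\ref{prop:firstuglybound}, and bounds the squared $j=1$ term by the product of the general-$\xi$ and $\xi=0$ estimates from Lemma~\ref{lemma:missigestimate} (bounded case) or Lemma~\ref{lem:lemma10-unbounded} (unbounded case), which is exactly the pairing you single out.
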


The proof of Proposition~\ref{prop:finalestimatecompcov} can be found in Section~\ref{sec:prop:finalestimatecompcov}. This proposition is the main estimate for the covariance replacement of $p^{t,x}$ by $\nu^{t,x}$ in \eqref{align:vfdihsksgs2}. After multiplying by $|\sigma_t'|/\sigma_t^3$, it produces the contribution involving $K_t$ in the final one-sided Lipschitz bound.

\paragraph{The mixed covariance term and the role of Assumption~\ref{assum:varmprime}.}
Let us now bound the other term appearing in Proposition~\ref{propnablav}.
The next proposition controls the covariance between $m_t(Z)$ and $m_t'(Z)$ under the posterior $\pi^{t,x}$.
The proof combines Cauchy--Schwarz with:
\begin{itemize}
    \item Assumption~\ref{assum:varmprime}, which controls the posterior variance of $m_t'(Z)$;
    \item the same covariance machinery as above, which controls the posterior variance of $m_t(Z)$.
\end{itemize}

\begin{proposition}\label{prop:boundsecondterm}
Supposing that $m$ satisfies Assumptions~\ref{assum:1} and \ref{assum:varmprime}, for any $x\in\mathbb R^d$ and $h\in\mathbb S^{d-1}$ we have
$$| \int h^\top H_{\pi^{t,x}}^{m_t' }(z) h^\top H_{\pi^{t,x}}^{m_t}(z)d\pi^{t,x}(z)|\leq CL_t\sigma_t^2 \frac{1}{\alpha_t\sigma_t^2+1}\exp \big(C K_t^2 (\alpha_t+\frac{1}{\sigma_t^2})^{-\beta}\big).$$
If additionally $m$ satisfies Assumption~\ref{assum:atbounded},
    $$| \int h^\top H_{\pi^{t,x}}^{m_t' }(z) h^\top H_{\pi^{t,x}}^{m_t}(z)d\pi^{t,x}(z)|\leq CL_t\sigma_t^2 \frac{1}{\alpha_t\sigma_t^2+1}.$$
    
\end{proposition}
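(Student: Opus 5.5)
The plan is a Cauchy--Schwarz argument under the posterior $\pi^{t,x}$. For fixed $h\in\mathbb S^{d-1}$ we have
\[
\Big|\int h^\top H_{\pi^{t,x}}^{m_t'}(z)\, h^\top H_{\pi^{t,x}}^{m_t}(z)\,d\pi^{t,x}(z)\Big|
\le \operatorname{Var}_{\pi^{t,x}}\big(h^\top m_t'(Z)\big)^{1/2}\,\operatorname{Var}_{\pi^{t,x}}\big(h^\top m_t(Z)\big)^{1/2}.
\]
Assumption~\ref{assum:varmprime} bounds the first factor by $L_t\,\sigma_t\,(\alpha_t\sigma_t^2+1)^{-1/2}$. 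It then suffices to show
\[
\operatorname{Var}_{\pi^{t,x}}\big(h^\top m_t(Z)\big)\le C\,\frac{\sigma_t^2}{\alpha_t\sigma_t^2+1}\exp\big(CK_t^2(\alpha_t+\sigma_t^{-2})^{-\beta}\big).
\]
Taking the square root of this bound only changes the constant inside the exponential, so the product gives the claimed estimate.

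To obtain this variance bound, I would first push forward by $m_t$. The law of $m_t(Z)$ under $\pi^{t,x}$ is $p^{t,x}$, so the variance equals $h^\top\big(\int H_{p^{t,x}}^{\mathrm{Id}}(y)^{\otimes 2}dp^{t,x}(y)\big)h$. Next, I would split this as the covariance of $\nu^{t,x}$ plus the difference of covariances.

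The first term is handled by Brascamp--Lieb. Indeed, $\nu^{t,x}$ has density proportional to $e^{-u_t(w)-\|x-w\|^2/(2\sigma_t^2)}$ with $\nabla^2u_t\succeq\alpha_t\mathrm{Id}$, so it is $(\alpha_t+\sigma_t^{-2})$-strongly log-concave. Hence its covariance is bounded by $(\alpha_t+\sigma_t^{-2})^{-1}=\sigma_t^2/(\alpha_t\sigma_t^2+1)$.

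The second term is exactly what Proposition~\ref{prop:finalestimatecompcov} controls, taking $\xi=0$. It is at most $C(\alpha_t+\sigma_t^{-2})^{-1}\exp(CK_t^2(\alpha_t+\sigma_t^{-2})^{-\beta})$. Adding the two bounds and using $1\le e^{(\cdot)}$ gives the variance estimate above. Under Assumption~\ref{assum:atbounded}, the second part of Proposition~\ref{prop:finalestimatecompcov} removes the exponential, and the same Cauchy--Schwarz step gives the refined bound.

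The only delicate point is making the identification of $\operatorname{Var}_{\pi^{t,x}}(h^\top m_t(Z))$ with the $p^{t,x}$-covariance rigorous. This needs $p^{t,x}=\mathrm{Law}(m_t(Z)\mid X_t=x)$, which holds because the likelihood $\varphi^{m_t(z),\sigma_t}(x)$ depends on $z$ only through $m_t(z)$. Beyond that, the argument just assembles earlier results, and I expect no real obstacle.
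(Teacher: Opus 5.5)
Your proposal is correct and follows the paper's proof step for step. Both arguments apply Cauchy--Schwarz under $\pi^{t,x}$ and bound the $m_t'$ factor by Assumption~\ref{assum:varmprime}. For the $m_t$ factor, both push forward to $p^{t,x}$, split off the covariance of $\nu^{t,x}$ (handled by Brascamp--Lieb), and control the difference by Proposition~\ref{prop:finalestimatecompcov}, whose bounded-perturbation version removes the exponential. Your explicit use of $\xi=0$ and the remark that the square root only rescales the constant in the exponent spell out what the paper compresses into ``the unbounded case follows the same steps.''
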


The proof of Proposition  \ref{prop:boundsecondterm} can be found in Section~\ref{sec:prop:boundsecondterm}.

\subsubsection{Concluding the proof of Theorems \ref{theo:onesidedlip2} and \ref{theo:onesidedlip}}
Let us prove Theorem~\ref{theo:onesidedlip2}, the proof of Theorem~\ref{theo:onesidedlip} follows the same steps.
From Proposition~\ref{propnablav} we have
$$\lambda_{\max}\left(\nabla v_t(x)\right)=\lambda_{\max}\left(\frac{\sigma_t'}{\sigma_t} \text{Id}-\frac{\sigma_t'}{\sigma_t^3}\int H_{\pi^{t,x}}^{m_t}(z)^{\otimes 2}d\pi^{t,x}(z)+\frac{1}{\sigma_t^2}\int H_{\pi^{t,x}}^{m_t'}(z)\otimes H_{\pi^{t,x}}^{m_t}(z)d\pi^{t,x}(z)\right),$$
so we are going to bound the different terms of the right-hand side. We have
\begin{align*}
&\lambda_{\max}\left( \nabla v_t(x)-\frac{1}{\sigma_t^2}\int H_{\pi^{t,x}}^{m_t'}(z)\otimes H_{\pi^{t,x}}^{m_t}(z)d\pi^{t,x}(z) \right)\\
&
= \lambda_{\max}\left( \frac{\sigma_t'}{\sigma_t} \text{Id}-\frac{\sigma_t'}{\sigma_t^3}\int H_{p^{t,x}}^{\text{Id}}(z)^{\otimes 2}dp^{t,x}(z) \right) \nonumber \\
& \leq  \lambda_{\max}\left(\frac{-\sigma_t'}{\sigma_t^3} \int H_{p^{t,x}}^{\text{Id}}(y)^{\otimes 2}   dp^{t,x}(y) - \frac{-\sigma_t'}{\sigma_t^3}\int H_{\nu^{t,x}}^{\text{Id}}(y)^{\otimes 2}   d\nu^{t,x}(y) \right)\nonumber \\
&+  \lambda_{\max}\left( \frac{-\sigma_t'}{\sigma_t^3}\int H_{\nu^{t,x}}^{\text{Id}}(y)^{\otimes 2}   d\nu^{t,x}(y) \right) + \frac{\sigma_t'}{\sigma_t}.
\end{align*}
Applying Proposition~\ref{prop:finalestimatecompcov}, we get
\begin{align}\label{align:finalcal}
     &\frac{|\sigma_t'|}{\sigma_t^3} \lambda_{\max}\left( \int H_{p^{t,x}}^{\text{Id}}(y)^{\otimes 2}   dp^{t,x}(y) - \int H_{\nu^{t,x}}^{\text{Id}}(y)^{\otimes 2}   d\nu^{t,x}(y) \right)\\
    & \leq C\frac{|\sigma_t'|}{\sigma_t^3} K_t^\xi(\alpha_t+\frac{1}{\sigma_t^2})^{-\frac{2+\beta\xi}{2}}\exp \big(C K_t^2 (\alpha_t+\frac{1}{\sigma_t^2})^{-\beta}\big)\nonumber\\
    & \leq CK_t^\xi\frac{|\sigma_t'|}{\sigma_t^{1-\beta\xi}}\frac{1}{\alpha_t\sigma_t^2+1}\exp \big(C K_t^2 (\alpha_t+\frac{1}{\sigma_t^2})^{-\beta}\big),
\end{align}
and applying Proposition~\ref{prop:boundsecondterm}
    we get for all $h\in \mathbb{S}^{d-1}$, we have 
    $$| \int h^\top H_{\pi^{t,x}}^{m_t'}(z) h^\top H_{\pi^{t,x}}^{m_t}(z)d\pi^{t,x}(z)|\leq C L_t\sigma_t^2\frac{1}{\alpha_t\sigma_t^2+1}\exp \big(C K_t^2 (\alpha_t+\frac{1}{\sigma_t^2})^{-\beta}\big).$$
In the case where $\sigma_t'<0$, we get
by Lemma~\ref{lemma:boundeasy}
$$\frac{-\sigma_t'}{\sigma_t^3}\lambda_{\max}\left(\int H_{\nu^{t,x}}^{\text{Id}}(y)^{\otimes 2}d\nu^{t,x}(y)\right)+\frac{\sigma_t'}{\sigma_t}\leq  \sigma_t\sigma_t'\frac{\alpha_t}{\alpha_t\sigma_t^2+1}.$$

so putting everything together we get 
\begin{align*}
\lambda_{\max}\left(\nabla v_t(x)\right)&=\lambda_{\max}\left(\frac{\sigma_t'}{\sigma_t} \text{Id}-\frac{\sigma_t'}{\sigma_t^3}\int H_{\pi^{t,x}}^{m_t}(z)^{\otimes 2}d\pi^{t,x}(z)+\frac{1}{\sigma_t^2}\int H_{\pi^{t,x}}^{m_t'}(z)\otimes H_{\pi^{t,x}}^{m_t}(z)d\pi^{t,x}(z)\right)\\
    & \leq C \frac{1}{\alpha_t\sigma_t^2+1}\left(L_t-\frac{\sigma_t'}{\sigma_t}(K_t\sigma_t^\beta)^\xi\right)\exp \big(C K_t^2 (\alpha_t+\frac{1}{\sigma_t^2})^{-\beta}\big)+\frac{\sigma_t\sigma_t'\alpha_t}{\alpha_t\sigma_t^2+1}.
\end{align*}
Now, in the case where $\sigma_t'>0$, we get
by Lemma~\ref{lem:lemma3_cramer_rao}
$$\frac{\sigma_t'}{\sigma_t^3}\lambda_{\max}\left(\int H_{\nu^{t,x}}^{\text{Id}}(y)^{\otimes 2}d\nu^{t,x}(y)\right)+\frac{\sigma_t'}{\sigma_t}\leq  \frac{\sigma_t\sigma_t' A_t}{A_t\sigma_t^2+1}\leq  \frac{\sigma_t\sigma_t' A_t}{\alpha_t\sigma_t^2+1} ,$$
so putting everything together we get 
\begin{align*}
\lambda_{\max}\left(\nabla v_t(x)\right)&=\lambda_{\max}\left(\frac{\sigma_t'}{\sigma_t} \text{Id}-\frac{\sigma_t'}{\sigma_t^3}\int H_{\pi^{t,x}}^{m_t}(z)^{\otimes 2}d\pi^{t,x}(z)+\frac{1}{\sigma_t^2}\int H_{\pi^{t,x}}^{m_t'}(z)\otimes H_{\pi^{t,x}}^{m_t}(z)d\pi^{t,x}(z)\right)\\
    & \leq C \frac{1}{\alpha_t\sigma_t^2+1}\left(L_t+\frac{\sigma_t'}{\sigma_t}(K_t\sigma_t^\beta)^\xi\right)\exp \big(C K_t^2 (\alpha_t+\frac{1}{\sigma_t^2})^{-\beta}\big)+\frac{\sigma_t\sigma_t'A_t}{\alpha_t\sigma_t^2+1}.
\end{align*}

\subsection{Technical novelty for one-sided Lipschitz estimates}\label{sec:technicalone}
An important conceptual point of this paper is that the relevant regularity object is not the terminal target density $p^\star$ itself, but rather the family of intermediate center laws
\[
\mu_t := (m_t)_{\#}\pi.
\]
Indeed, once the Gaussian path is written as $
X_t = m_t(Z) + \sigma_t \xi,$
the Jacobian of the projected drift $v_t$ is expressed through posterior fluctuation terms under $\mathrm{Law}(Z \mid X_t = x)$. Pushing this posterior from the latent variable $Z$ to the random center $m_t(Z)$ leads to the measure $
p^{t,x} = \mathrm{Law}(m_t(Z)\mid X_t = x),$
so that the key quantities are governed by the geometry of $\mu_t$, and not directly by that of $p^\star$. Writing
\[
\mu_t = \exp(-u_t + a_t),
\]
the proof then compares $p^{t,x}$ with the log-concave surrogate $\nu^{t,x}$ built from the convex part $u_t$, while the perturbation $a_t$ is treated as an exponential tilt. In this way, the Gaussian noise is handled explicitly, and the nontrivial regularity input is precisely the weak log-concavity and Hölder control of $(m_t)_{\#}\pi$. For the concrete diffusion and flow-matching models considered here, the assumptions on $p^\star$ are used only to verify that these intermediate laws $\mu_t$ enjoy the required structure. This is the reason why the argument is both model-independent and dimension-free at the level of the one-sided Lipschitz estimate.

Due to the large number of works we only compare the proof techniques with the closest to this paper, namely \cite{brigati2024heat} and \cite{Stephanovitch2025regularity}. The main new technical ingredient is the covariance comparison under an Hölder unbounded exponential tilt, achieved through Theorem~\ref{thm:WPI} + Lemma~\ref{lem:var_exp_concentration} + Lemma~\ref{lem:lemma10-unbounded}; this replaces the transport/pointwise-Hölder route used in prior works.

Compared with \cite{brigati2024heat}, the overlap is mainly at the level of the first reduction: in both arguments one introduces a strongly log-concave surrogate posterior $\nu^{t,x}$ and exploits Brascamp-Lieb on that reference measure. The proof of Theorem~\ref{theo:onesidedlip2} then departs from this scheme at the covariance comparison step between $p^{t,x}$ and $\nu^{t,x}$. In \cite{brigati2024heat} they use for $h\in \mathbb{S}^{d-1}$,
$$\langle h, \text{Cov}_{p^{t,x}} h\rangle \leq \Big(\sqrt{\langle h, \text{Cov}_{\nu^{t,x}} h\rangle} + W_2(p^{t,x},\nu^{t,x})\Big)^2, $$
and then invoke a $L^\infty$-transport estimate to bound the Wasserstein term.
Rather than transferring the covariance bound through a transport estimate, we work directly with covariance identities under exponential tilts. This requires a different input, namely quantitative moment bounds for Hölder observables under strongly log-concave measures, which are provided here by Theorem~\ref{thm:WPI} and Lemma~\ref{lem:var_exp_concentration}/\ref{lem:lemma10-unbounded}. These bounds make it  possible to control the effect of a merely Hölder perturbation on the posterior covariance. In addition, because the target quantity is a one-sided Lipschitz bound, the core covariance term has to be treated differently according to the sign of the noise derivative, leading to a Brascamp-Lieb bound in one regime and a Cramér-Rao bound in the other. This is the main structural difference with the log-Lipschitz perturbative argument of Brigati and Pedrotti.

Compared with \cite{Stephanovitch2025regularity}, the proof is reorganized so as to avoid the step where one uses the Hölder bound in a direct pointwise way. In that paper, the covariance difference is expanded first, and the Hölder bound is then used directly in pointwise form,
$$|a(x)-a(y)|\leq K \|x-y\|^{\beta},$$
which is precisely where dimension dependence enter.
Here the strategy is different. The proof first rewrites the covariance difference in terms of scalar tilted moments of centered linear observables, and only then uses Hölder regularity through dimension-free concentration for the strongly log-concave reference measure.

The same reorganization is also what makes the unbounded case manageable: the growth of the tilt is not treated at the first comparison step, but later, at the level of these scalar moments. More precisely, Lemma  \ref{lem:var_exp_concentration} shows that if $\mu$ is a $\gamma$-strongly log-concave probability measure and $v:\mathbb R^d\to\mathbb R$ is a $\beta$-Hölder function with Hölder constant $K$ satisfying $
\int e^{v} d\mu = 1,$
then 
\[
\text{Var}_\mu(e^v)
\le
\exp\!\left(CK\gamma^{-\beta/2}+CK^2\gamma^{-\beta}\right)-1.
\]
Thus, even when the tilt is unbounded, one still gets a dimension-free control of the exponential weight $e^v$. This rough variance bound is then sharpened in the proof of Lemma~\ref{lem:lemma10-unbounded} by decomposing $e^v$
 into a normalized Lipschitz part and a uniformly small multiplicative error, which removes the leading linear term and yields a quadratic gain.

Finally, the argument must also handle flow-matching schedules, including the regime where the noise may vanish at the initial time. These changes in the argument are what allows one to keep dimension-free constants, to treat unbounded perturbations, and to handle both diffusion and flow-matching regimes within the same scheme.

\subsection{Details for one-sided Lipschitz estimates}
\subsubsection{Proof of Proposition~\ref{propnablav}}\label{sec:propnablav}
\begin{proof}
From \eqref{eq:vf} we have
\begin{align*} 
    \nabla v_t(x)=&\frac{\sigma_t'}{\sigma_t} \text{Id}+\int \left(\frac{\sigma_t'}{\sigma_t}(x-m_t(z))+m_t' (z)\right)\frac{\nabla_{x} \varphi^{m_t(z),\sigma_t}(x)d\pi(z)}{\int \varphi^{m_t(z'),\sigma_t}(x)d\pi(z')}\\
    &-\int \left(\frac{\sigma_t'}{\sigma_t}(x-m_t(z))+m_t' (z)\right)\frac{\varphi^{m_t(z),\sigma_t}(x)d\pi(z)}{\left(\int \varphi^{m_t(z'),\sigma_t}(x)d\pi(z')\right)^2}\nabla_{x} \int \varphi^{m_t(z'),\sigma_t}(x)d\pi(z')\\
    =&\frac{\sigma_t'}{\sigma_t} \text{Id}+\int \left(\frac{\sigma_t'}{\sigma_t}(x-m_t(z))+m_t' (z)\right)\left(\frac{-(x-m_t(z))}{\sigma_t^2}\right)\frac{\varphi^{m_t(z),\sigma_t}(x)d\pi(z)}{\int \varphi^{m_t(z'),\sigma_t}(x)d\pi(z')}\\
    &-\int \left(\frac{\sigma_t'}{\sigma_t}(x-m_t(z))+m_t' (z)\right)\frac{ \varphi^{m_t(z),\sigma_t}(x)d\pi(z)}{\left(\int \varphi^{m_t(z'),\sigma_t}(x)d\pi(z')\right)^2}\int \left(\frac{-(x-m_t(z'))}{\sigma_t^2}\right)\varphi^{m_t(z'),\sigma_t}(x)d\pi(z')\\
    =&\frac{\sigma_t'}{\sigma_t} \text{Id}-\frac{\sigma_t'}{\sigma_t^3}\left(\int (x-m_t(z))^{\otimes 2}d\pi^{t,x}(z)-\left(\int (x-m_t(z))d\pi^{t,x}(z)\right)^{\otimes 2}\right)\\
    &-\frac{1}{\sigma_t^2}\left(\int m_t'(z)(x-m_t(z))d\pi^{t,x}(z)-\int m_t'(z)d\pi^{t,x}(z)\int (x-m_t(z))d\pi^{t,x}(z)\right)\\
    =&\frac{\sigma_t'}{\sigma_t} \text{Id}-\frac{\sigma_t'}{\sigma_t^3}\int(x-m_t(z))\left(x-m_t(z)-(x-\int m_t(z'd\pi^{t,x}(z'))\right) d\pi^{t,x}(z)\\
    &-\frac{1}{\sigma_t^2}\int \left(m_t'(z)-\int m_t'(z')d\pi^{t,x}(z')\right) (m_t(z)-x)d\pi^{t,x}(z)\\
    =&\frac{\sigma_t'}{\sigma_t} \text{Id}-\frac{\sigma_t'}{\sigma_t^3}\int H_{\pi^{t,x}}^{m_t}(z)^{\otimes 2}d\pi^{t,x}(z)+\frac{1}{\sigma_t^2}\int H_{\pi^{t,x}}^{m_t' }(z)\otimes H_{\pi^{t,x}}^{m_t}(z)d\pi^{t,x}(z).
\end{align*}
\end{proof}

\subsubsection{Proof of Lemma~\ref{lemma:boundeasy}}\label{sec:lemma:boundeasy}

\begin{proof} 
As the probability density $\frac{q}{\int q}$  is $\alpha_t$-log concave, we have that 
 $\nu^{t,x}$ is $(\alpha_t+\frac{1}{\sigma_t^2})$-log-concave.
Then, using the Brascamp-Lieb inequality applied to functions of the form $x\mapsto \langle x,w\rangle $ with $w\in \mathbb{S}^{d-1}$, we obtain
\begin{align*}
    \langle w, \int  H_{\nu^{t,x}}^{\text{Id}}(y)^{\otimes2} d\nu^{t,x}(y),w\rangle&= \int \langle w,H_{\nu^{t,x}}^{\text{Id}}(y)^{\otimes2} w\rangle d\nu^{t,x}(y)\\
    &= \int \langle H_{\nu^{t,x}}^{\text{Id}}(y) w\rangle^2 d\nu^{t,x}(y)\\
    &= \int \langle z-\int z' d\nu^{t,x}(z'),w\rangle^2 d\nu^{t,x}(z)\\
    & \leq (\alpha_t+\frac{1}{\sigma_t^2})^{-1}.
\end{align*} 
Then,
\begin{align*}  \frac{-\sigma_t'}{\sigma_t^3}\lambda_{\max}\left(\int  H_{\nu^{t,x}}^{\text{Id}}(y)^{\otimes2} d\nu^{t,x}(y)\right)+\frac{\sigma_t'}{\sigma_t}&\leq \frac{-\sigma_t'}{\sigma_t}(\frac{1}{\alpha_t\sigma_t^2+1}-1)\\
&\leq\sigma_t\sigma_t'\frac{\alpha_t}{\alpha_t\sigma_t^2+1}.
\end{align*}
\end{proof}

\subsubsection{Proof of Lemma~\ref{lem:lemma3_cramer_rao}}\label{sec:lem:lemma3_cramer_rao}
\begin{proof}
Recall that $\nu^{t,x}$ has density proportional to
\[
y \mapsto
\exp  \left(-u_t(y)-\frac{\|x-y\|^2}{2\sigma_t^2}\right)
=: \exp\big(-V_{t,x}(y)\big).
\]
As $\nabla^2 u_t \preceq A_t \text{Id}$,
\[
\nabla^2 V_{t,x}(y)=\nabla^2 u_t(y)+\frac{1}{\sigma_t^2}\text{Id}
 \preceq \Big(A_t+\frac{1}{\sigma_t^2}\Big)\text{Id}.
\]
Denote by $J$ the Fisher information matrix of $\nu^{t,x}$,
\[
J:=\int \nabla^2 V_{t,x}(y) d\nu^{t,x}(y).
\]
By the Cramer-Rao inequality, we have
\[
\int  H^{\text{Id}}_{\nu^{t,x}}(y)^{\otimes 2} d\nu^{t,x}(y)
=\mathrm{Cov}_{\nu^{t,x}}(Y)
 \succeq J^{-1},
\]
hence for any $w\in\mathbb S^{d-1}$,
\[
\Big\langle w, \int  H^{\text{Id}}_{\nu^{t,x}}(y)^{\otimes 2} d\nu^{t,x}(y) w\Big\rangle
\ge \Big(A_t+\frac{1}{\sigma_t^2}\Big)^{-1}.
\]
Therefore
\begin{align*}
\lambda_{\max}  \left(-\int  H^{\text{Id}}_{\nu^{t,x}}(y)^{\otimes 2} d\nu^{t,x}(y)\right)&=-\lambda_{\min}  \left(\int  H^{\text{Id}}_{\nu^{t,x}}(y)^{\otimes 2} d\nu^{t,x}(y)\right)\\
&\le -\Big(A_t+\frac{1}{\sigma_t^2}\Big)^{-1}\\
&\le - \frac{\sigma_t^2}{A_t\sigma_t^2+1}.
\end{align*}
Therefore,
\[
\lambda_{\max}  \left(-\frac{\sigma_t'}{\sigma_t^3} \int  H^{\text{Id}}_{\nu^{t,x}}(y)^{\otimes 2} d\nu^{t,x}(y)\right)
+\frac{\sigma_t'}{\sigma_t}
\le
-\frac{\sigma_t'}{\sigma_t^3}\cdot\frac{\sigma_t^2}{A_t\sigma_t^2+1}
+\frac{\sigma_t'}{\sigma_t}
=
\frac{\sigma_t\sigma_t' A_t}{A_t\sigma_t^2+1}.
\]
\end{proof}

\subsubsection{Proof of Proposition~\ref{prop:firstuglybound}}\label{sec:prop:firstuglybound}

\begin{proof}
We have
\begin{align}\label{align:jsdujzbsio}
    &\int  H_{\mu_1}^{\text{Id}}(y)^{\otimes 2}d\mu_1(y)-\int H_{\mu_2}^{\text{Id}}(y)^{\otimes 2}d\mu_2(y)\nonumber\\
    &=\int \left( H_{\mu_1}^{\text{Id}}(y)^{\otimes 2}e^{v(y)}- H_{\mu_2}^{\text{Id}}(y)^{\otimes 2}\right) d\mu_2(y)\nonumber\\
    &=\int \left( H_{\mu_1}^{\text{Id}}(y)^{\otimes 2}- H_{\mu_2}^{\text{Id}}(y)^{\otimes 2}\right)e^{v(y)}d\mu_2(y)+\int  H_{\mu_2}^{\text{Id}}(y)^{\otimes 2}\left(e^{v(y)}-1\right)d\mu_2(y)\nonumber\\
        &=\int \left( H_{\mu_1}^{\text{Id}}(y)^{\otimes 2}- H_{\mu_2}^{\text{Id}}(y)^{\otimes 2}\right)\left(e^{v(y)}-1+1\right)d\mu_2(y)+\int  H_{\mu_2}^{\text{Id}}(y)^{\otimes 2}\left(e^{v(y)}-1\right)d\mu_2(y)\nonumber\\
        &=A_1+A_2+A_3,
\end{align}
with $$A_1:=\int \left( H_{\mu_1}^{\text{Id}}(y)^{\otimes 2}- H_{\mu_2}^{\text{Id}}(y)^{\otimes 2}\right)\left(e^{v(y)}-1\right)d\mu_2(y),$$
$$A_2:=\int \left( H_{\mu_1}^{\text{Id}}(y)^{\otimes 2}- H_{\mu_2}^{\text{Id}}(y)^{\otimes 2}\right)d\mu_2(y)$$
and
$$A_3:=\int  H_{\mu_2}^{\text{Id}}(y)^{\otimes 2}\left(e^{v(y)}-1\right)d\mu_2(y).$$
Now,
\begin{align*}
    & H_{\mu_1}^{\text{Id}}(y)^{\otimes 2}- H_{\mu_2}^{\text{Id}}(y)^{\otimes 2}\\
    = &-H_{\mu_1}^{\text{Id}}(y)\otimes \int zd\mu_1(z)+\left(\int zd\mu_2(z)-\int zd\mu_1(z)\right)\otimes y+  H_{\mu_2}^{\text{Id}}(y)\otimes \int zd\mu_2(z)\\
    = &-H_{\mu_1}^{\text{Id}}(y)\otimes \left(\int zd\mu_1(z)-\int zd\mu_2(z)\right) +\left(\int zd\mu_2(z)-\int zd\mu_1(z)\right)\otimes  H_{\mu_2}^{\text{Id}}(y)\\
    = &\left(-H_{\mu_2}^{\text{Id}}(y)+\int zd\mu_1(z)-\int zd\mu_2(z)\right)\otimes \left(\int zd\mu_1(z)-\int zd\mu_2(z)\right)\\
    &+\left(\int zd\mu_2(z)-\int zd\mu_1(z)\right)\otimes  H_{\mu_2}^{\text{Id}}(y)\\
\end{align*}
and
\begin{align*}
    \int zd\mu_2(z)-\int zd\mu_1(z)& = \int  H_{\mu_2}^{\text{Id}}(z)d\mu_2(z)-\int  H_{\mu_2}^{\text{Id}}(z)d\mu_1(z)\\
    & = \int  H_{\mu_2}^{\text{Id}}(z)\left(1-e^{v(z)}\right)d\mu_2(z).
\end{align*}
Then, writing $A:=\int  H_{\mu_2}^{\text{Id}}(z)\left(1-e^{v(z)}\right)d\mu_2(z)$, we have
\begin{align*}
     & H_{\mu_1}^{\text{Id}}(y)^{\otimes 2}- H_{\mu_2}^{\text{Id}}(y)^{\otimes 2}=H_{\mu_2}^{\text{Id}}(y)\otimes A +A^{\otimes 2}+ A\otimes H_{\mu_2}^{\text{Id}}(y).
\end{align*}

Therefore, for $h\in \mathbb{S}^{d-1}$ we have
\begin{align*}
&|h^\top A_2h|\\
&=|h^\top\int  (H_{\mu_1}^{\text{Id}}(y)^{\otimes 2}- H_{\mu_2}^{\text{Id}}(y)^{\otimes 2})d\mu_2(y)h|\\
&\leq \left(\int  h^\top H_{\mu_2}^{\text{Id}}(y)\left(e^{v(y)}-1\right)d\mu_2(y)\right)^2 + 2|\int  h^\top H_{\mu_2}^{\text{Id}}(y)d\mu_2(y)\int  h^\top H_{\mu_2}^{\text{Id}}(y)\left(e^{v(y)}-1\right)d\mu_2(y)|\\
&\leq \left(\int  h^\top H_{\mu_2}^{\text{Id}}(y)\left(e^{v(y)}-1\right)d\mu_2(y)\right)^2\nonumber
\end{align*}
as
$$\int  h^\top H_{\mu_2}^{\text{Id}}(y)d\mu_2(y)=0$$
and
\begin{align*}
&|h^\top A_1h|\\
&=|h^\top\int  (H_{\mu_1}^{\text{Id}}(y)^{\otimes 2}- H_{\mu_2}^{\text{Id}}(y)^{\otimes 2})\left(e^{v(y)}-1\right)d\mu_2(y)h|\\
&\leq \left(\int  h^\top H_{\mu_2}^{\text{Id}}(y)\left(e^{v(y)}-1\right)d\mu_2(y)\right)^2|\int\left(e^{v(y)}-1\right)d\mu_2(y)|\\
    & + 2 \left(\int  h^\top H_{\mu_2}^{\text{Id}}(y)\left(e^{v(y)}-1\right)d\mu_2(y)\right)^2\\
    &\leq 2 \left(\int  h^\top H_{\mu_2}^{\text{Id}}(y)\left(e^{v(y)}-1\right)d\mu_2(y)\right)^2,
\end{align*}
as $\int e^{v(y)}d\mu_2(y)=1$. Then, from \eqref{align:jsdujzbsio} we have
\begin{align}
    |h^\top&\left(\int  H_{\mu_1}^{\text{Id}}(y)^{\otimes 2}d\mu_1(y)-\int H_{\mu_2}^{\text{Id}}(y)^{\otimes 2}d\mu_2(y)\right)h|\nonumber\\
    &=|h^\top(A_1+A_2+A_3)h|\\
     \leq &3\left(\int  h^\top H_{\mu_2}^{\text{Id}}(y)\left(e^{v(y)}-1\right)d\mu_2(y)\right)^2\nonumber\\
    &+|\int  (h^\top H_{\mu_2}^{\text{Id}}(y))^{ 2}\left(e^{v(y)}-1\right)d\mu_2(y)|.
\end{align}
\end{proof}

\subsubsection{Proof of Theorem~\ref{thm:WPI}}\label{sec:thm:WPI}
\begin{proof}
Fix $\alpha\ge 2$ and $r>0$. Set
\[
L' := L r^{\beta-1}
\]
and define
\[
\tilde g_r(x)
:= \inf_{y\in\mathbb{R}^d}\bigl\{
f(y) + L' \|x-y\|
\bigr\}.
\]
By  Lemma~\ref{lem:infconv}, we have that $\tilde g_r$ is $L'$-Lipschitz and $\|f - \tilde g_r\|_{\infty} \le L r^\beta$. Let
\[
m := \int f d\mu,
\qquad
m_r := \int \tilde g_r d\mu.
\]
Then
\[
f - m
= (f-\tilde g_r) + (\tilde g_r - m_r) + (m_r - m),
\]
so
we obtain
\begin{equation}\label{eq:decomp-alpha}
\int |f-m|^\alpha d\mu
\le
3^{\alpha-1}
\Bigl(
\int |f-\tilde g_r|^\alpha d\mu
+ \int |\tilde g_r - m_r|^\alpha d\mu
+ |m_r - m|^\alpha
\Bigr).
\end{equation}

We now bound each term on the right-hand side.

\emph{Approximation terms.}
From $|f(x)-\tilde g_r(x)|\le L r^\beta$ we deduce
\[
\int |f-\tilde g_r|^\alpha d\mu \le (L r^\beta)^\alpha.
\]
Furthermore,
\[
|m_r - m|
= \Bigl|\int (\tilde g_r - f) d\mu\Bigr|
\le \int |f-\tilde g_r| d\mu
\le L r^\beta,
\]
so
\[
|m_r - m|^\alpha
\le
(L r^\beta)^\alpha.
\]
\emph{Lipschitz term.}
The function $\tilde g_r$ is $L'$-Lipschitz so applying
Lemma~\ref{lem:lip-moments} with $g=\tilde g_r$
yields
\[
\int |\tilde g_r - m_r|^\alpha d\mu
\le
C_\alpha L^\alpha \gamma^{-\alpha/2} r^{\alpha(\beta-1)}.
\]
Plugging the bounds in \eqref{eq:decomp-alpha}, we obtain
\[
\int |f-m|^\alpha d\mu
\le
3^{\alpha-1} L^\alpha
\Bigl(
2 r^{\alpha\beta}
+ C_\alpha \gamma^{-\alpha/2} r^{\alpha(\beta-1)}
\Bigr).
\]
We now choose $r$ so as to balance the two terms in parentheses. 
Solving $r^{\alpha\beta} = \gamma^{-\alpha/2} r^{\alpha(\beta-1)}$ gives $
r := \gamma^{-1/2}.$
With this choice,
we get
\[
2 r^{\alpha\beta}
+ C_\alpha \gamma^{-\alpha/2} r^{\alpha(\beta-1)}
\le
C'_\alpha \gamma^{-\alpha\beta/2}
\]
for $C'_\alpha=2 + C_\alpha$. We conclude that
\[
\int |f-m|^\alpha d\mu
\le
3^{\alpha-1} C'_\alpha L^\alpha \gamma^{-\alpha\beta/2}.
\]
Absorbing the factor $3^{\alpha-1}$ into the constant, we obtain
\[
\int \bigl|f-\int f d\mu \bigr|^\alpha d\mu
\le
C_\alpha L^\alpha \gamma^{-\alpha\beta/2}.
\]
Now the case $0< \alpha<2$ is immediate from Hölder's inequality.
\end{proof}

\subsubsection{Proof of Lemma~\ref{lemma:missigestimate}}\label{sec:lemma:missigestimate}
\begin{proof}
We have
\begin{align*}
    &|\int  (h^\top H_\mu^{\text{Id}}(y))^{ j}\left(e^{v(y)}-1\right)d\mu(y)| =| \text{Cov}_{\mu}\big((h^\top H_\mu^{\text{Id}}(Y))^j,e^{v(Y)}\big)|\\
    & \leq \sqrt{\text{Var}_{\mu}\big((h^\top H_\mu^{\text{Id}}(Y))^j\big)\text{Var}_{\mu}(e^{v(Y)}\big) }.
\end{align*}
Now applying Brascamp-Lieb inequality we have
$$\text{Var}_{\mu}\big((h^\top H_\mu^{\text{Id}}(Y))^j\big)\leq C \gamma_t^{-j}.$$
On the other hand,
\begin{align*}
    |e^{v(x)}-e^{v(y)}|\leq e^{K_1}|v(x)-v(y)|\leq e^{K_1}(2K_1)^{1-\xi}|v(x)-v(y)|^{\xi}\leq e^{K_1}(2K_1)^{1-\xi}K_t^\xi\|x-y\|^{\beta\xi}
\end{align*}
so $e^v$ is $\xi\beta$-Hölder with constant $CK_t^\xi$. If $\xi = 0$, then
\[
\text{Var}_\mu(e^{v(Y)}) \le \int e^{2v(y)}\,d\mu(y) \le e^{2K_1}.
\]
If $\xi \in (0,1]$, then from Theorem~\ref{thm:WPI}, we have
\begin{align*}
    \text{Var}_{\mu}(e^{v(Y)})\leq C K_t^{2\xi} \gamma_t^{-\beta\xi},
\end{align*}
which gives the result.
\end{proof}

\subsubsection{Proof of Lemma~\ref{lem:lemma10-unbounded}}\label{sec:lem:lemma10-unbounded}
\begin{proof}
Fix $h\in\mathbb{S}^{d-1}$ and $j\in\{1,2\}$. Since $\int e^v d\mu=1$,
\[
\int (h^\top H^{\text{Id}}_\mu)^j (e^v-1) d\mu
= \text{Cov}_\mu \big((h^\top H^{\text{Id}}_\mu)^j, e^v\big).
\]
By Cauchy-Schwarz,
\begin{equation}\label{eq:CS-lem10}
\Big|\int (h^\top H^{\text{Id}}_\mu)^j (e^v-1) d\mu\Big|
\le
\sqrt{\text{Var}_\mu \big((h^\top H^{\text{Id}}_\mu)^j\big)} \sqrt{\text{Var}_\mu(e^v)}.
\end{equation}
Let $V$ be the potential of $\mu$, i.e. $d\mu(x)\propto e^{-V(x)}dx$, with $\nabla^2V\succeq \gamma_t I$.
The Brascamp-Lieb inequality yields for all $g\in C^1$
\[
\text{Var}_\mu(g)\le \int \langle (\nabla^2V)^{-1}\nabla g,\nabla g\rangle d\mu
\le \frac{1}{\gamma_t}\int \|\nabla g\|^2 d\mu.
\]
For $j=1$, take $g(x)=h^\top H^{\text{Id}}_\mu(x)$, so $\nabla g=h$ and
\[
\text{Var}_\mu(h^\top H^{\text{Id}}_\mu)\le \frac{1}{\gamma_t}.
\]
For $j=2$, take $g(x)=(h^\top H^{\text{Id}}_\mu(x))^2$, so $\nabla g=2(h^\top H^{\text{Id}}_\mu(x)) h$ and
\[
\text{Var}_\mu\big((h^\top H^{\text{Id}}_\mu)^2\big)
\le \frac{4}{\gamma_t}\int (h^\top H^{\text{Id}}_\mu)^2 d\mu
= \frac{4}{\gamma_t} \text{Var}_\mu(h^\top H^{\text{Id}}_\mu)
\le \frac{4}{\gamma_t^2}.
\]
Hence in both cases $j\in\{1,2\}$ we have
\begin{equation}\label{eq:VarGj}
\sqrt{\text{Var}_\mu \big((h^\top H^{\text{Id}}_\mu)^j\big)}\le C \gamma_t^{-j/2}.
\end{equation}

\medskip
\noindent\textbf{Case $\boldsymbol{\xi=0}$.}
Lemma~\ref{lem:var_exp_concentration} with $r=\gamma_t^{-1/2}$ gives
\[
\text{Var}_\mu(e^v)\le \exp \Big(4K_t\gamma_t^{-\beta/2} + C K_t^2\gamma_t^{-\beta}\Big)-1.
\]
Using $4u\le 2u^2+2$  with $u=K_t\gamma_t^{-\beta/2}$, we obtain
\[
\text{Var}_\mu(e^v)\le \exp \big(C + C K_t^2\gamma_t^{-\beta}\big)-1
\le C \exp \big(C K_t^2\gamma_t^{-\beta}\big).
\]
Taking square-roots and inserting into \eqref{eq:CS-lem10} together with \eqref{eq:VarGj}
gives the result for $\xi=0$.

\medskip
\noindent\textbf{Case $\boldsymbol{\xi=1}$.}
Let
\[
r:=\gamma_t^{-1/2},
\qquad
L:= K_t r^{\beta-1}= K_t \gamma_t^{(1-\beta)/2},
\qquad
\varepsilon:= K_t r^\beta= K_t \gamma_t^{-\beta/2}.
\]
Define the inf-convolution approximation
\[
\tilde v(x):=\inf_{y\in\R^d}\Big\{v(y)+L\|x-y\|\Big\}.
\]
By Lemma~\ref{lem:infconv}:
(i) $\tilde v$ is $L$-Lipschitz;
(ii) $\|\tilde v-v\|_\infty\le \varepsilon$;
(iii) $\tilde v\le v$ pointwise. Consequently, for all $x\in \mathbb{R}^d$
\begin{equation}\label{eq:delta_bounds}
0\le \delta(x):=v(x)-\tilde v(x)\le \varepsilon.
\end{equation}

Set
\[
c:=\log\int e^{\tilde v} d\mu
\quad \text{ and } \quad
w:=\tilde v-c.
\]
Then $\int e^{w} d\mu=1$ and $w$ is $L$-Lipschitz. Moreover, since
$\int e^v d\mu=1$, we have 
\begin{equation}\label{eq:c_bounds}
-\varepsilon\le c\le 0.
\end{equation}
Now decompose
\[
e^v = e^{\tilde v+\delta}=e^{w} e^{c+\delta}
\]
 and define $X:=e^{w}$, $Z:=e^{c+\delta}$ so that $e^v=XZ$ and $\mathbb{E}_\mu[X]=1$.
From \eqref{eq:delta_bounds} and \eqref{eq:c_bounds}, we have
\begin{equation}\label{eq:Z_bounds}
-\varepsilon\le c+\delta\le \varepsilon
\qquad\Rightarrow\qquad
e^{-\varepsilon}\le Z\le e^{\varepsilon}.
\end{equation}
\textbf{A product variance bound.}
If $\mathbb{E}[X]=1$ and $|Z|\le b$ a.s., then
\begin{equation}\label{eq:var_product}
\text{Var}(XZ)\le 2b^2 \text{Var}(X)+2 \text{Var}(Z).
\end{equation}
Indeed, write $XZ-\mathbb{E}[XZ]=U+V$ with
$U:=((X-1)Z-\mathbb{E}[(X-1)Z])$ and $V:=Z-\mathbb{E}[Z]$.
Then $\text{Var}(XZ)=\mathbb{E}[(U+V)^2]\le 2\mathbb{E}[U^2]+2\mathbb{E}[V^2]=2\text{Var}(U)+2\text{Var}(Z)$.
Moreover, $\text{Var}(U)\le \mathbb{E}[((X-1)Z)^2]\le b^2\mathbb{E}[(X-1)^2]=b^2\text{Var}(X)$,
which gives \eqref{eq:var_product}.

Apply \eqref{eq:var_product} with $b=e^{\varepsilon}$ (by \eqref{eq:Z_bounds}):
\begin{equation}\label{eq:var_ev_split}
\text{Var}_\mu(e^v)\le 2e^{2\varepsilon}\text{Var}_\mu(e^{w})+2\text{Var}_\mu(Z).
\end{equation}
\textbf{Bound on $\text{Var}_\mu(e^{w})$.}
Since $w$ is $L$-Lipschitz and $\mu$ is $\gamma_t$-strongly log-concave, we get applying Lemma~\ref{lem:herbst} that there exists a universal $c_0>0$ such that for all
$\lambda\in\R$,
\begin{equation}\label{eq:mgf_lip}
\int \exp \big(\lambda(w-\mathbb{E}_\mu w)\big) d\mu
\le \exp \Big(\frac{c_0L^2}{\gamma_t}\frac{\lambda^2}{2}\Big).
\end{equation}
Let $m:=\mathbb{E}_\mu[w]$. Since $\int e^w d\mu=1$, Jensen gives $m\le 0$ and thus $e^{2m}\le 1$.
Taking $\lambda=2$ in \eqref{eq:mgf_lip} yields
\[
\int e^{2(w-m)} d\mu \le \exp \Big(\frac{2c_0L^2}{\gamma_t}\Big),
\]
hence
\[
\int e^{2w} d\mu = e^{2m}\int e^{2(w-m)} d\mu \le \exp \Big(\frac{2c_0L^2}{\gamma_t}\Big).
\]
Therefore,
\[
\text{Var}_\mu(e^w)=\int e^{2w} d\mu -1
\le \exp \Big(\frac{2c_0L^2}{\gamma_t}\Big)-1
\le C \frac{L^2}{\gamma_t}\exp \Big(C\frac{L^2}{\gamma_t}\Big),
\]
using $e^x-1\le xe^x$ for $x\ge 0$. Since $L= K_t\gamma_t^{(1-\beta)/2}$, we have
\[
\frac{L^2}{\gamma_t}= K_t^2\gamma_t^{-\beta},
\]
and thus
\begin{equation}\label{eq:var_ew}
\text{Var}_\mu(e^w)\le C  K_t^2\gamma_t^{-\beta}\exp \big(C K_t^2\gamma_t^{-\beta}\big).
\end{equation}
\textbf{Bound $\text{Var}_\mu(Z)$.}
From \eqref{eq:Z_bounds}, $Z\in[e^{-\varepsilon},e^{\varepsilon}]$. By Popoviciu's inequality,
\[
\text{Var}_\mu(Z)\le \frac{(e^{\varepsilon}-e^{-\varepsilon})^2}{4}
=\sinh(\varepsilon)^2 \le \varepsilon^2 e^{2\varepsilon},
\]
since $\sinh(\varepsilon)\le \varepsilon e^\varepsilon$ for all $\varepsilon\ge 0$.
Thus
\begin{equation}\label{eq:var_Z}
\text{Var}_\mu(Z)\le \varepsilon^2 e^{2\varepsilon}.
\end{equation}

Combining \eqref{eq:var_ev_split}, \eqref{eq:var_ew}, and \eqref{eq:var_Z}, we get
\[
\text{Var}_\mu(e^v)
\le C  K_t^2\gamma_t^{-\beta} e^{2\varepsilon}\exp \big(C K_t^2\gamma_t^{-\beta}\big)
+ C \varepsilon^2 e^{2\varepsilon}.
\]
Since $\varepsilon^2= K_t^2\gamma_t^{-\beta}$ and $e^{2\varepsilon}\le e\exp(\varepsilon^2)$
(using $2\varepsilon\le 1+\varepsilon^2$), we can absorb the $e^{2\varepsilon}$ factors into
$\exp(C K_t^2\gamma_t^{-\beta})$ to obtain
\begin{equation}\label{eq:var_ev_final}
\text{Var}_\mu(e^v)\le C  K_t^2\gamma_t^{-\beta}\exp \big(C K_t^2\gamma_t^{-\beta}\big).
\end{equation}
Taking square roots gives
\begin{equation}\label{eq:sqrt_var_ev}
\sqrt{\text{Var}_\mu(e^v)}\le C  K_t \gamma_t^{-\beta/2}\exp \big(C K_t^2\gamma_t^{-\beta}\big).
\end{equation}

Inserting \eqref{eq:sqrt_var_ev} and \eqref{eq:VarGj} into \eqref{eq:CS-lem10}, we get
\[ 
\Big|\int (h^\top H^{\text{Id}}_\mu)^{j}(e^v-1) d\mu\Big|
\le
\big(C\gamma_t^{-j/2}\big) \big(C K_t\gamma_t^{-\beta/2}e^{C K_t^2\gamma_t^{-\beta}}\big)
=
C  K_t \gamma_t^{-(j+\beta)/2}\exp \big(C K_t^2\gamma_t^{-\beta}\big),
\]
which gives the result for $\xi=1$.
\end{proof}

\subsubsection{Proof of Proposition~\ref{prop:finalestimatecompcov}}\label{sec:prop:finalestimatecompcov}

\begin{proof} 
Suppose first that $\|a_t\|_{\infty}\leq K_1$. Let $$v(y):=a_t(y)+\log\left(\int \exp\left(-u_t(w)\right) \varphi^{x,\sigma_t}(w)   dw\right) -\log\left(\int \exp\left(a_t(w)-u_t(w)\right) \varphi^{x,\sigma_t}(w)   dw\right),$$
so $p^{t,x}=e^v \nu^{t,x}$.
Combining Proposition~\ref{prop:firstuglybound} and Lemma~\ref{lemma:missigestimate} we have
\begin{align}\label{align:finalcal}
    &|h^\top \left(\int H_{p^{t,x}}^{\text{Id}}(y)^{\otimes 2}   dp^{t,x}(y) - \int H_{\nu^{t,x}}^{\text{Id}}(y)^{\otimes 2}   d\nu^{t,x}(y)\right) h|\nonumber\\
    & \leq C\Bigl(|\int  h^\top H_{\nu^{t,x}}^{\text{Id}}(y)\left(e^{v(y)}-1\right)d\nu^{t,x}(y)|^2\\
    &+|\int  (h^\top H_{\nu^{t,x}}^{\text{Id}}(y))^{ 2}\left(e^{v(y)}-1\right)d\nu^{t,x}(y)|\Bigl)\nonumber\\
    & \leq C K_t^\xi(\alpha_t+\frac{1}{\sigma_t^2})^{-\frac{1+\beta\xi}{2}}(\alpha_t+\frac{1}{\sigma_t^2})^{-\frac{1}{2}}+C K_t^\xi(\alpha_t+\frac{1}{\sigma_t^2})^{-\frac{2+\beta\xi}{2}}\nonumber\\
    &\leq C K_t^\xi(\alpha_t+\frac{1}{\sigma_t^2})^{-\frac{2+\beta\xi}{2}}.
\end{align}
The unbounded case follows the same steps by replacing Lemma~\ref{lemma:missigestimate} by Lemma~\ref{lem:lemma10-unbounded}.
\end{proof}

\subsubsection{Proof of Proposition~\ref{prop:boundsecondterm}}\label{sec:prop:boundsecondterm}

\begin{proof}
Suppose first that $\|a_t\|_{\infty}\leq K_1$. We have
\begin{align*}
    | \int h^\top H_{\pi^{t,x}}^{m_t' }(z) h^\top H_{\pi^{t,x}}^{m_t}(z)d\pi^{t,x}(z)| \\
    & \leq \left(\int (h^\top H_{\pi^{t,x}}^{m_t' }(z))^2 d\pi^{t,x}(z)\int ( h^{\top} H_{\pi^{t,x}}^{m_t}(z))^2d\pi^{t,x}(z)\right)^{1/2}.
\end{align*}
Furthermore,
\begin{align*}
    \int ( h^{\top} H_{\pi^{t,x}}^{m_t}(z))^2d\pi^{t,x}(z)=&\int ( h^{\top} H_{p^{t,x}}^{\text{Id}}(z))^2dp^{t,x}(z)\\
    =& \int ( h^{\top} H_{p^{t,x}}^{\text{Id}}(z))^2dp^{t,x}(z)-\int ( h^{\top} H_{\nu^{t,x}}^{\text{Id}}(z))^2d\nu^{t,x}(z)+\int ( h^{\top} H_{\nu^{t,x}}^{\text{Id}}(z))^2d\nu^{t,x}(z)
\end{align*}
and applying Brascamp-Lieb inequality we have
\begin{equation}\label{eq:fghjiuytfghjghjko}
    \int ( h^{\top} H_{\nu^{t,x}}^{\text{Id}}(z))^2d\nu^{t,x}(z)\leq (\alpha_t+\frac{1}{\sigma_t^2})^{-1}.
\end{equation}
Then, from Proposition~\ref{prop:finalestimatecompcov} we obtain\begin{align}
    |\int ( h^{\top} H_{p^{t,x}}^{\text{Id}}(z))^2dp^{t,x}(z)-\int ( h^{\top} H_{\nu^{t,x}}^{\text{Id}}(z))^2d\nu^{t,x}(z)|\leq C \sigma_t^{2}\frac{1}{\alpha_t\sigma_t^2+1}.
\end{align}
Finally, using Assumption~\ref{assum:varmprime} we have
$$\int (h^\top H_{\pi^{t,x}}^{m_t' }(z))^2 d\pi^{t,x}(z)\leq L_t^2 \sigma_t^2\frac{1}{\alpha_t\sigma_t^2+1}$$
so we deduce that
\begin{align*}
    \left(\int (h^\top H_{\pi^{t,x}}^{m_t' }(z))^2 d\pi^{t,x}(z)\int ( h^{\top} H_{\pi^{t,x}}^{m_t}(z))^2d\pi^{t,x}(z)\right)^{1/2}\leq C L_t\sigma_t^2\frac{1}{\alpha_t\sigma_t^2+1}.
\end{align*}
The unbounded case follows the same steps.
\end{proof}

\subsubsection{Proof of Theorem~\ref{theo:onesidedlipinteg}}\label{sec:theo:onesidedlipinteg}

\begin{proof}
Fix $z\in[0,1]$. By Assumption~\ref{assum:sigma} and Theorem~\ref{theo:onesidedlip2}, for a.e. $t\in[\delta,1]$,
\[
\sup_{x\in \mathbb{R}^d} \lambda_{\max}\left(\nabla v_t(x)\right)
\le \frac{1}{\alpha_t\sigma_t^2+1}\Bigg(C(L_t
    -\frac{\sigma_t'}{\sigma_t}(K_t\sigma_t^\beta)^{\xi_t})
    \exp \Big(CK_t^2(\alpha_t+\tfrac{1}{\sigma_t^2})^{-\beta}\Big)
    +\sigma_t\sigma_t'\alpha_t\Bigg).
\]
For $t\geq \delta$, Assumption~\ref{assum:sigma} gives $\sigma_t'\le 0$, hence $\sigma_t\sigma_t'\alpha_t\leq 0$ and we can drop this term.
Set, for $t\in[\delta,1]$,
\[
g_1(t):=\frac{C}{\alpha_t\sigma_t^2+1}\Bigg(L_t
    -\frac{\sigma_t'}{\sigma_t}(K_t\sigma_t^\beta)^{\xi_t}\Bigg)
    \exp \Big(CK_t^2(\alpha_t+\tfrac{1}{\sigma_t^2})^{-\beta}\Big),
\]
so that $\sup_x \lambda_{\max}(\nabla v_t(x))\le g_1(t)$ for a.e. $t\in[\delta,1]$.
We take $\xi_t=\mathds{1}_{t\geq s}$ and split the integral accordingly:
\begin{align*}
&\int_\delta^1 g_1(t) dt\\
&\qquad\leq \int_\delta^1 \frac{L_t}{\alpha_t\sigma_t^2+1}\exp \Big(CK_t^2(\alpha_t+\tfrac{1}{\sigma_t^2})^{-\beta}\Big)dt
-\int_{\delta\wedge s}^{s}\frac{\sigma_t'}{\sigma_t}\frac{1}{\alpha_t\sigma_t^2+1}
\exp \Big(CK_t^2(\alpha_t+\tfrac{1}{\sigma_t^2})^{-\beta}\Big)dt\\
&\qquad\quad -\int_{s\vee\delta}^1\frac{\sigma_t'}{\sigma_t}\frac{1}{\alpha_t\sigma_t^2+1}
(K_t\sigma_t^\beta)\exp \Big(CK_t^2(\alpha_t+\tfrac{1}{\sigma_t^2})^{-\beta}\Big)dt.
\end{align*}
Since $(\alpha_t+\tfrac{1}{\sigma_t^2})^{-\beta}\le \alpha_t^{-\beta}$, the assumption
\[
\forall t\in[0,\delta\vee s],\qquad K_t^2\alpha_t^{-\beta}\le A
\]
yields, for all $t\in[0,\delta\vee s]$,
\begin{equation}\label{eq:expbd}
\exp \Big(CK_t^2(\alpha_t+\tfrac{1}{\sigma_t^2})^{-\beta}\Big)\le e^{CA}.
\end{equation}
Furthermore, on $[s\vee\delta,1]$ we use $K_t\le A\sigma_t^{-\chi}$
and $(\alpha_t+\tfrac{1}{\sigma_t^2})^{-\beta}\le \sigma_t^{2\beta}$ to get
\[
\exp \Big(CK_t^2(\alpha_t+\tfrac{1}{\sigma_t^2})^{-\beta}\Big)
\le \exp \big(CA^2\sigma_t^{2(\beta-\chi)}\big)\le e^{CA^2},
\]
where we used $\beta>\chi$ and $\sigma_t\le 1$.

On $[\delta\wedge s,s]\subset[0,\delta\vee s]$,
using \eqref{eq:expbd}, $\frac{1}{\alpha_t\sigma_t^2+1}\le 1$, and $\sigma_t'\le 0$ on $[\delta,1]$, we obtain
\begin{align*}
-\int_{\delta\wedge s}^{s}\frac{\sigma_t'}{\sigma_t}\frac{1}{\alpha_t\sigma_t^2+1}
\exp \Big(CK_t^2(\alpha_t+\tfrac{1}{\sigma_t^2})^{-\beta}\Big)dt
&\le -e^{CA}\int_{\delta\wedge s}^{s}\frac{\sigma_t'}{\sigma_t}dt\\
&= e^{CA}\Big(\log(\sigma_{\delta\wedge s})-\log(\sigma_s)\Big).
\end{align*}
Hence, using again $\frac{1}{\alpha_t\sigma_t^2+1}\le 1$ and $\sigma_t'\le 0$,
\begin{align*}
-\int_{s\vee\delta}^1\frac{\sigma_t'}{\sigma_t}\frac{1}{\alpha_t\sigma_t^2+1}
(K_t\sigma_t^\beta)\exp \Big(CK_t^2(\alpha_t+\tfrac{1}{\sigma_t^2})^{-\beta}\Big)dt
&\le -e^{CA^2}\int_{s\vee\delta}^1\frac{\sigma_t'}{\sigma_t} A\sigma_t^{\beta-\chi}dt\\
&= Ce^{CA^2}\Big(\sigma_{s\vee\delta}^{\beta-\chi}-\sigma_1^{\beta-\chi}\Big).
\end{align*}
Combining these bounds with the assumption $\int_\delta^1 \frac{L_t}{\alpha_t\sigma_t^2+1}dt\le C$,
we conclude
\[
\int_\delta^1 g_1(t) dt \le C.
\]

\medskip
On the other hand, by Proposition~\ref{propnablav} we have for a.e. $t\in[0,\delta]$,
\begin{align*}
  \sup_{x\in \mathbb{R}^d} \lambda_{\max} \left(\nabla v_t(x)\right)
  &\leq \sup_{x\in \mathbb{R}^d,h\in \mathbb{S}^{d-1}} \frac{\sigma_t'}{\sigma_t}
  +h^\top \frac{1}{\sigma_t^2}\int H_{\pi^{t,x}}^{m_t'}(z)\otimes H_{\pi^{t,x}}^{m_t}(z)d\pi^{t,x}(z)h\\
  &\hspace{3.5cm}-\frac{\sigma_t'}{\sigma_t^3}h^\top\int H_{\pi^{t,x}}^{m_t}(z)^{\otimes 2}d\pi^{t,x}(z)h\mathds{1}_{\sigma_t'\leq 0}.
\end{align*}
By Proposition~\ref{prop:boundsecondterm},
\[
h^\top\frac{1}{\sigma_t^2}\int H_{\pi^{t,x}}^{m_t'}(z)\otimes H_{\pi^{t,x}}^{m_t}(z)d\pi^{t,x}(z)h
\leq C \frac{L_t}{\alpha_t\sigma_t^2+1}\exp \Big(CK_t^2(\alpha_t+\tfrac{1}{\sigma_t^2})^{-\beta}\Big).
\]
Moreover, combining Proposition~\ref{prop:finalestimatecompcov} (with $\xi=0$, i.e. with the exponential factor)
and Lemma~\ref{lemma:boundeasy}, we obtain for $t\in[0,\delta]$,
\begin{align*}
\frac{\sigma_t'}{\sigma_t}-\frac{\sigma_t'}{\sigma_t^3}h^\top\int H_{\pi^{t,x}}^{m_t}(z)^{\otimes 2}d\pi^{t,x}(z)h\mathds{1}_{\sigma_t'\leq 0}
&\leq -C\frac{\sigma_t'}{\sigma_t} \frac{1}{\alpha_t\sigma_t^2+1}
\exp \Big(CK_t^2(\alpha_t+\tfrac{1}{\sigma_t^2})^{-\beta}\Big)\mathds{1}_{\sigma_t'\leq 0}
+\frac{\sigma_t'}{\sigma_t}\mathds{1}_{\sigma_t'> 0}.
\end{align*}
Since $\delta\le \delta\vee s$, the bound \eqref{eq:expbd} applies on $[0,\delta]$ and yields
\[
\exp \Big(CK_t^2(\alpha_t+\tfrac{1}{\sigma_t^2})^{-\beta}\Big)\le e^{CA},\qquad t\in[0,\delta].
\]
Using again Assumption~\ref{assum:sigma} on $[0,\delta]$ (in particular $|\sigma_t'|\vee \sigma_t^{-1}\le A$),
we conclude that the whole bracket is bounded by a constant. In particular, for a.e. $t\in[0,\delta]$,
\[
\sup_{x\in \mathbb{R}^d} \lambda_{\max}\left(\nabla v_t(x)\right)
\le g_0(t):= C\left(\frac{L_t}{\alpha_t\sigma_t^2+1}+1\right).
\]
Therefore,
\[
\int_0^\delta g_0(t) dt \le C.
\]
\end{proof}

\subsubsection{Proof of Theorem~\ref{theo:onesidedlipinteg0}}\label{sec:theo:onesidedlipinteg0}

\begin{proof}
    Fix $z\in[0,1]$. Let $\delta \in (0,1)$ given by Assumption 
 \ref{assum:sigma2} such that for all $t\in [0,\delta)$, $\sigma_t'> 0$ and for all $t\in (\delta,1)$ $\sigma_t'< 0$.
 Let
\[
E_t:=\exp \left(CK_t^2\Big(\alpha_t+\frac{1}{\sigma_t^2}\Big)^{-\beta}\right).
\]
Since $\big(\alpha_t+\frac{1}{\sigma_t^2}\big)^{-\beta}\le \sigma_t^{2\beta}$ and $K_t\le A\sigma_t^{-\chi}$, we have
\[
E_t \le \exp \left(CA^2\sigma_t^{2(\beta-\chi)}\right)\le \exp \left(CA^2\sigma_\delta^{2(\beta-\chi)}\right)=:M,
\]
for all $t\in[0,1]$ (using that $\sigma_t\le\sigma_\delta$ on $[0,1]$).

By Theorem~\ref{theo:onesidedlip2} we have 
\begin{align*}
    \int_z^1 \sup_{x\in \mathbb{R}^d} \lambda_{\max}\left(\nabla v_t(x)\right) dt
    &\leq \int_z^{z\vee\delta} \sup_{x\in \mathbb{R}^d} \lambda_{\max}\left(\nabla v_t(x)\right) dt\\
    &\quad +\int_{z\vee\delta}^1 \frac{C}{\alpha_t\sigma_t^2+1}\left(\left(L_t-\frac{\sigma_t'}{\sigma_t}(K_t\sigma_t^\beta)^{\xi_t}\right)E_t+\sigma_t\sigma_t'\alpha_t\right) dt.       
\end{align*}

Now, taking $\xi_t=1$,
\begin{align*}
\int_{z\vee\delta}^1 \frac{1}{\alpha_t\sigma_t^2+1}\left(\left(L_t-\frac{\sigma_t'}{\sigma_t}(K_t\sigma_t^\beta)^{\xi_t}\right)E_t+\sigma_t\sigma_t'\alpha_t\right) dt
&\leq C\int_{z\vee\delta}^1 \frac{L_t}{\alpha_t\sigma_t^2+1} dt
    -C \int_{z\vee\delta}^1\frac{\sigma_t'}{\sigma_t}A\sigma_t^{\beta-\chi} dt\\
&\leq C\big(A+\sigma_{z\vee\delta}^{\beta-\chi}-\sigma_1^{\beta-\chi}\big)\\
&\leq C\big(A+\sigma_\delta^{\beta-\chi}-\sigma_1^{\beta-\chi}\big)\\
& \leq C.
\end{align*}

On the other hand, by Theorem~\ref{theo:onesidedlip2}, we have
\begin{align*}
    \int_z^{z\vee\delta} \sup_{x\in\mathbb{R}^d}\lambda_{\max}\left(\nabla v_t(x)\right) dt
    &\leq C \int_z^{z\vee\delta} \frac{1}{\alpha_t\sigma_t^2+1}\left(\left(L_t+\frac{\sigma_t'}{\sigma_t}(K_t\sigma_t^\beta)^{\xi_t}\right)E_t+\sigma_t\sigma_t'A\right) dt\\
    &\leq C+C \int_z^{z\vee\delta} \frac{1}{\alpha_t\sigma_t^2+1}\frac{\sigma_t'}{\sigma_t}(K_t\sigma_t^\beta)^{\xi_t} dt\\
    &\leq C+C \int_0^\delta \frac{1}{\alpha_t\sigma_t^2+1}\frac{\sigma_t'}{\sigma_t}(K_t\sigma_t^\beta)^{\xi_t} dt.
\end{align*}
Now, taking $\xi_t=1$ we have
$$
K_t^\xi\frac{\sigma_t'}{\sigma_t^{1-\beta\xi}}\frac{1}{\alpha_t\sigma_t^2+1}
\leq C\frac{\sigma_t'}{\sigma_t^{1-\beta+\chi}}\frac{1}{\alpha_t\sigma_t^2+1}
\leq C\frac{\sigma_t'}{\sigma_t^{1-\beta+\chi}},
$$
which gives
$$
\int_z^{z\vee\delta} \sup_{x\in\mathbb{R}^d}\lambda_{\max}\left(\nabla v_t(x)\right) dt \leq C.
$$

Combining the two estimates (and enlarging $C$ if needed) yields
$$
\int_z^1 \sup_{x\in \mathbb{R}^d} \lambda_{\max}\left(\nabla v_t(x)\right) dt\leq C,
$$
for all $z\in[0,1]$.
\end{proof}

\subsubsection{Proof of Corollary~\ref{coro:lipmafm}}\label{sec:coro:lipmafm}

\begin{proof}
    Let us show that this setting satisfies the assumptions of Theorem~\ref{theo:onesidedlipinteg}. We have that $$(m_t)_{\#} \pi =\exp(-u_t+a_t),$$
with $u_t(x)=u(x/f_t)+d\log(f_t)$ and $a_t(x)=a(x/f_t)$ so $\alpha_t=\alpha f_t^{-2}$ and $K_t=Kf_t^{-\beta}$. Furthermore,
\begin{align*}
    \big|h^\top \big(m_t'(z)-m_t'(z')\big)\big|=f_t'\big|h^\top \big(x_1-x_1'\big)\big|
 = 
\frac{f_t'}{f_t} \big|h^\top \big(m_t(z)-m_t(z')\big)\big|
\end{align*}
so taking $L_t=\frac{f_t'}{f_t}$, we get
\begin{align*}
    \operatorname{Var}_{\pi^{t,x}} \big(h^\top m_t'(Z)\big)&\leq L_t^2 \operatorname{Var}_{\pi^{t,x}} \big(h^\top m_t(Z)\big)\\
    &\leq L_t^2 \int ( h^{\top} H_{\pi^{t,x}}^{m_t}(z))^2d\pi^{t,x}(z).
\end{align*}
Now,
\begin{align*}
    \int ( h^{\top} H_{\pi^{t,x}}^{m_t}(z))^2d\pi^{t,x}(z)=&\int ( h^{\top} H_{p^{t,x}}^{\text{Id}}(z))^2dp^{t,x}(z)\\
    =& \int ( h^{\top} H_{p^{t,x}}^{\text{Id}}(z))^2dp^{t,x}(z)-\int ( h^{\top} H_{\nu^{t,x}}^{\text{Id}}(z))^2d\nu^{t,x}(z)+\int ( h^{\top} H_{\nu^{t,x}}^{\text{Id}}(z))^2d\nu^{t,x}(z)
\end{align*}
and applying Brascamp-Lieb inequality we have
\begin{equation}
    \int ( h^{\top} H_{\nu^{t,x}}^{\text{Id}}(z))^2d\nu^{t,x}(z)\leq (\alpha_t+\frac{1}{\sigma_t^2})^{-1}.
\end{equation}
Furthermore, from Proposition~\ref{prop:finalestimatecompcov} we obtain\begin{align*}
    |\int ( h^{\top} H_{p^{t,x}}^{\text{Id}}(z))^2dp^{t,x}(z)-\int ( h^{\top} H_{\nu^{t,x}}^{\text{Id}}(z))^2d\nu^{t,x}(z)|\leq C \sigma_t^{2}\frac{1}{\alpha_t\sigma_t^2+1}\exp \big(C K_t^2 (\alpha_t+\frac{1}{\sigma_t^2})^{-\beta}\big),
\end{align*}
which gives
$$\operatorname{Var}_{\pi^{t,x}} \big(h^\top m_t'(Z)\big)\leq C \frac{L_t^2\sigma_t^{2}}{\alpha_t\sigma_t^2+1}\exp \big(C K_t^2 (\alpha_t+\frac{1}{\sigma_t^2})^{-\beta}\big).$$
As
\begin{align*}
    (\alpha_t+\frac{1}{\sigma_t^2})^{-\beta}\leq \alpha_t^{-\beta}=\alpha^{-\beta}f_t^{2\beta}= \alpha^{-\beta} K^{2}K_t^{-2}, 
\end{align*}
we deduce that
$$K_t^2\alpha_t^{-\beta}\leq A$$
which in particular gives
$$\operatorname{Var}_{\pi^{t,x}} \big(h^\top m_t'(Z)\big)\leq C \frac{L_t^2\sigma_t^{2}}{\alpha_t\sigma_t^2+1},$$
so Assumption~\ref{assum:varmprime} is verified. On the other hand, as $\gamma  \leq \sigma_{\frac{1}{2}}\wedge f_{\frac{1}{2}}$ and $f$ is increasing, $\sigma$ is decreasing, we have
$$\alpha\sigma_t^2+f_t^2 \geq (\alpha\gamma^2)\wedge \gamma^2.$$
Then,
\begin{align*}
   \int_0^1\frac{L_t}{\alpha_t\sigma_t^2+1}=\int_0^1\frac{f_t'f_t}{\alpha\sigma_t^2+f_t^2} \leq \frac{1}{(\alpha\gamma^2)\wedge \gamma^2}\int_0^1 f_tf_t'dt\leq \frac{f_1^2-f_0^2}{2((\alpha\gamma^2)\wedge \gamma^2)}\leq C.
\end{align*}
Now, for $s=1-\gamma$ we have for all $t\geq s$, $$K_t=Kf_t^{-\beta}\leq Kf_{1-\gamma}^{-\beta}\leq K\sigma_{1-\gamma}^{-\beta/2}\leq K\sigma_t^{-\beta/2}.$$ 
Finally, as $\sigma$ satisfies Assumption~\ref{assum:sigma} (with $\delta=0$), we can apply Theorem~\ref{theo:onesidedlipinteg} which gives the result.
\end{proof}

\subsubsection{Proof of Corollary~\ref{coro:notlipmafm}}\label{sec:coro:notlipmafm}
\begin{proof}
Let us show that the setting of Corollary~\ref{coro:notlipmafm} satisfies the assumptions of Theorem~\ref{theo:onesidedlipinteg0}.
We write $(m_t)_\#\pi=e^{-u_t+a_t}$ with
\[
e^{-u_t(x)}:=\int \varphi^{x,g_t}(f_t y)  e^{-u(y)} dy,
\qquad
a_t(x):=\log\frac{\int \varphi^{x,g_t}(f_t y)  e^{-u(y)+a(y)} dy}
{\int \varphi^{x,g_t}(f_t y)  e^{-u(y)} dy},
\]
where $\varphi^{x,g_t}$ is the $\mathcal{N}(x,g_t^2 \mathrm{Id})$ density.

By Proposition~\ref{prop:bondhessut} we have for all $x\in\mathbb{R}^d$,
\begin{equation}\label{eq:ut-hessian-two-sided2-new}
\frac{\alpha}{\alpha g_t^2 + f_t^2} \mathrm{Id}  \preceq  \nabla^2 u_t(x)  \preceq  \frac{1}{g_t^2} \mathrm{Id}.
\end{equation}
In particular we may take
\[
\alpha_t:=\frac{\alpha}{\alpha g_t^2+f_t^2}.
\]

We now bound the $\beta$-Hölder constant of $a_t$.
Fix $t\in(0,1)$ and define, for $x\in\R^d$,
\[
\nu^{t,x}(dy)\propto e^{-u(y)} \varphi^{x,g_t}(f_t y) dy,
\]
so that $a_t(x)=\log\big(\mathbb{E}_{\nu^{t,x}} e^{a(Y)}\big)$.
By Lemma~\ref{lem:mgf_holder}, taking
\[
M_q:=\exp \Big(2Kq+\frac{C K^2}{2\alpha} q^2\Big),
\]
condition \eqref{eq:exp-mom-assump} in Lemma~\ref{lemma:holderexp} holds, so applying Lemma~\ref{lemma:holderexp} we get
\[
|a_t(x)-a_t(x')|
\le
\begin{cases}
K M_{2/(1-\beta)}^{ 1-\beta} \kappa_t^\beta \|x-x'\|^\beta, & \beta\in(0,1),\\[4pt]
K M_4^{1/2} \kappa_t \|x-x'\|, & \beta=1,
\end{cases}
\qquad
\kappa_t:=\dfrac{f_t}{\alpha g_t^2+f_t^2}.
\]
Using also the structural assumption $f_t+g_t\ge \gamma$ for all $t$, we have
\[
\alpha g_t^2+f_t^2  \ge (\alpha\wedge 1)(g_t^2+f_t^2)
 \ge (\alpha\wedge 1)\frac{(f_t+g_t)^2}{2}
 \ge (\alpha\wedge 1)\frac{\gamma^2}{2},
\]
so $\kappa_t\le C$ uniformly in $t\in[0,1]$.
Therefore the Hölder constants
\[
K_t:=\sup_{x\neq x'}\frac{|a_t(x)-a_t(x')|}{\|x-x'\|^\beta}
\]
satisfy $K_t\le C$ uniformly in $t$.
Thus $(m_t)_\#\pi$ is $(\alpha_t,\beta, C)$-weakly log-concave, with $C$ depending only on $\alpha,\beta,K,\gamma$.

By Proposition~\ref{prop:boundvarmprime}, for all $x\in\R^d$ and all unit $h\in\mathbb S^{d-1}$,
\[
\operatorname{Var}_{\pi^{t,x}} \big(h^\top m_t'(Z)\big)
 \le 
C \bar M_2^{1/2} 
\sigma_t^2 
\frac{ f_t'^2 + \alpha g_t'^2 + \dfrac{(g_t' f_t - f_t' g_t)^2}{\sigma_t^2} }
{ \alpha\sigma_t^2 + f_t^2 + \alpha g_t^2 }.
\]
Using again $f_t^2+\alpha g_t^2\ge (\alpha\wedge 1)\gamma^2/2$, we get
\[
\frac{1}{\alpha\sigma_t^2 + f_t^2 + \alpha g_t^2}
\le
\frac{2}{(\alpha\wedge 1)\gamma^2} \frac{1}{\alpha\sigma_t^2 + f_t^2 + \alpha g_t^2}.
\]
On the other hand,
\[
\frac{\sigma_t^2}{\alpha_t\sigma_t^2+1}
=
\sigma_t^2 \frac{\alpha g_t^2+f_t^2}{\alpha\sigma_t^2+\alpha g_t^2+f_t^2}
\ge
\frac{(\alpha\wedge 1)\gamma^2}{2} \frac{\sigma_t^2}{\alpha\sigma_t^2+f_t^2+\alpha g_t^2}.
\]
Combining the last two displays yields
\[
\operatorname{Var}_{\pi^{t,x}} \big(h^\top m_t'(Z)\big)
 \le 
L_t^2 \frac{\sigma_t^2}{\alpha_t\sigma_t^2+1},
\]
with
\[
L_t^2
:=
C 
\Big(f_t'^2+\alpha g_t'^2+\dfrac{(g_t'f_t-f_t'g_t)^2}{\sigma_t^2}\Big),
\]
where $C$ depends only on $\alpha,\beta,K,\gamma$ (through $\bar M_2$).
Thus Assumption~\ref{assum:varmprime} holds.

Finally, we have
\begin{align*}
\int_0^1 \frac{L_t}{1+\alpha_t\sigma_t^2} dt
 \leq&
C\left(
\int_0^1 \sqrt{f_t'^2+\alpha g_t'^2} dt
 + \int_0^1 \frac{|g_t'f_t-f_t'g_t|}{\sigma_t} dt
\right)\\
& \leq C \left(\int_0^1 (f'_t-\sqrt{\alpha}  g'_t) dt+\int_0^1 \sigma_t^{-1+q} dt\right)\\
&\leq C,
\end{align*}
using that $|g_t'f_t-f_t'g_t|\le K\sigma_t^q$ and $\int_0^1\sigma_t^{-1+q} dt\le K$.

Since $K_t\le C$ uniformly, we may take $\chi=0$ in Theorem~\ref{theo:onesidedlipinteg0}.
Moreover, for $t\in[0,\delta)$ we have $\sigma_t'>0$ and by Assumption~\ref{assum:sto-int} we have $g_t\ge \gamma$.
Hence by \eqref{eq:ut-hessian-two-sided2-new},
\[
\nabla^2 u_t(x) \preceq \frac{1}{g_t^2} \mathrm{Id} \preceq \frac{1}{\gamma^2} \mathrm{Id},
\]
so the third bullet of Theorem~\ref{theo:onesidedlipinteg0} holds.

All assumptions of Theorem~\ref{theo:onesidedlipinteg0} are satisfied with a constant $A$
depending only on $\alpha,\beta,K,\gamma$ (and on $\sigma_\delta$ through Assumption~\ref{assum:sigma2}).
Therefore Theorem~\ref{theo:onesidedlipinteg0} yields for all $z\in [0,1]$
\[
\int_z^1 \sup_{x\in\mathbb{R}^d}\lambda_{\max}\big(\nabla v_t(x)\big) dt \le C.
\]
\end{proof}

\subsubsection{Proof of Corollary~\ref{coro:score_osl_selfcontained}}\label{sec:coro:score_osl_selfcontained}
\begin{proof}
We have that the drift of the probability flow-ODE is
\[
v_t(x)=\frac1t\bigl(x+s(t,x)\bigr),\qquad s(t,x)=\nabla\log p_t(x).
\]
Then, we need to bound the quantity
\[
\int_z^1 \frac{1}{t} \sup_{x\in\R^d}\lambda_{\max} \bigl(\text{Id}+\nabla s(t,x)\bigr)  dt.
\]
We use Corollary~\ref{coro:lipmafm}. Observe that the smoothing path
\[
X_t := tY+\sqrt{1-t^2}  \xi,\qquad Y\sim p^\star,\ \xi\sim\mathcal N(0,\text{Id}),\ Y\perp\xi,
\]
is a special case of the Lipman flow-matching setting (Assumption~\ref{assum:lipman}) with
\[
\pi=\gamma_d\otimes p^\star,\qquad m_t(x_0,x_1)=f_t x_1 \text{ with } f_t=t,\qquad \sigma_t=\sqrt{1-t^2}.
\]
Indeed, $f$ is increasing and $C^1$ with $f_0=0$ and $f_1=1$, while $\sigma$ is decreasing and $C^1$ on $(0,1)$
with $\sigma_0=1$ and $\sigma_1=0$. Moreover, choosing for instance $\gamma:=\tfrac15$, one checks the last bullet
of Assumption~\ref{assum:lipman}: since $\gamma\le f_{1/2}=1/2$ and $\gamma\le \sigma_{1/2}=\sqrt{3}/2$, the required
non-degeneracy condition holds and
\[
f_{1-\gamma}^2=(1-\gamma)^2=\Bigl(\frac45\Bigr)^2=\frac{16}{25} \ge \sqrt{1-\Bigl(\frac45\Bigr)^2}
=\frac35=\sigma_{1-\gamma}.
\]
Therefore $(v,\pi,m,\sigma)$ satisfies Assumption~\ref{assum:lipman} and Corollary~\ref{coro:lipmafm} yields
\[
\int_z^1 \sup_{x\in\R^d}\lambda_{\max} \bigl(\nabla v_t(x)\bigr)  dt \le C.
\]
\end{proof}

\subsection{Details for Lipschitz estimates}

\subsubsection{Proof of Corollary~\ref{coro:lipregugeneral}}\label{sec:coro:lipregugeneral}
\begin{proof}
Fix $t$ such that $\sigma_t>0$ and let $\pi^{t,x}$ denote the posterior law of $Z$ given $X_t=x$.
Set
\[
\Sigma_t(x)  :=  \text{Cov}_{\pi^{t,x}} \big(m_t(Z)\big)\in\R^{d\times d},
\qquad 
\Gamma_t(x)  :=  \text{Cov}_{\pi^{t,x}} \big(m'_t(Z),  m_t(Z)\big)\in\R^{d\times d}.
\]
Then the identity used in the proof of Proposition~\ref{propnablav} rewrites as
\begin{equation}\label{eq:gradvt-decomp}
\nabla v_t(x)
= \frac{\sigma'_t}{\sigma_t}  I
-\frac{\sigma'_t}{\sigma_t^3}  \Sigma_t(x)
+\frac{1}{\sigma_t^2}  \Gamma_t(x)
\qquad\text{for a.e. }t\text{ and all }x\in\R^d.
\end{equation}

Taking operator norms in \eqref{eq:gradvt-decomp} and using the triangle inequality yields
\begin{equation}\label{eq:lip-basic}
\|\nabla v_t(x)\|_{\text{op}}
\le \frac{|\sigma'_t|}{\sigma_t}
+\frac{|\sigma'_t|}{\sigma_t^3}  \|\Sigma_t(x)\|_{\text{op}}
+\frac{1}{\sigma_t^2}  \|\Gamma_t(x)\|_{\text{op}}.
\end{equation}

For any unit vector $h\in\mathbb S^{d-1}$,
\[
h^\top \Sigma_t(x) h
=\text{Var}_{\pi^{t,x}} \big(\langle h,m_t(Z)\rangle\big)
=\int \big(\langle h,H^{p^{t,x}}_{\text{Id}}(y)\rangle\big)^2  dp^{t,x}(y),
\]
where $p^{t,x}$ is the law of $m_t(Z)$ under $\pi^{t,x}$.
Let $\nu^{t,x}$ be the log-concave surrogate posterior for which Brascamp-Lieb gives
\begin{equation}\label{eq:hfjidhjdkspzspmpm}
\int \big(\langle h,H^{\nu^{t,x}}_{\text{Id}}(y)\rangle\big)^2  d\nu^{t,x}(y)
  \le  (\alpha_t+\sigma_t^{-2})^{-1}. 
\end{equation}
Moreover Proposition~\ref{prop:finalestimatecompcov} controls the discrepancy between $p^{t,x}$ and $\nu^{t,x}$ at the level of second moments:
for $\xi=0$ it yields, uniformly in $x$ and $h$, 
\[
\Big|  h^\top \Big(\int H_{\text{Id}}^{p^{t,x}} \otimes H_{\text{Id}}^{p^{t,x}}  dp^{t,x}
-\int H_{\text{Id}}^{\nu^{t,x}} \otimes H_{\text{Id}}^{\nu^{t,x}}  d\nu^{t,x}\Big)h  \Big|
  \le  
C(\alpha_t+\sigma_t^{-2})^{-1}
\exp \Big(CK_t^2(\alpha_t+\sigma_t^{-2})^{-\beta}\Big).
\]
Combining with \eqref{eq:hfjidhjdkspzspmpm} and taking the supremum over $h$ gives the operator bound
\begin{equation}\label{eq:Sigma-bound}
\|\Sigma_t(x)\|_{\text{op}}
\le
C(\alpha_t+\sigma_t^{-2})^{-1}
\exp \Big(CK_t^2(\alpha_t+\sigma_t^{-2})^{-\beta}\Big)
=
C  \frac{\sigma_t^2}{1+\alpha_t\sigma_t^2}  
\exp \Big(CK_t^2(\alpha_t+\sigma_t^{-2})^{-\beta}\Big).
\end{equation}
For unit vectors $u,v\in\mathbb S^{d-1}$, Cauchy-Schwarz gives
\[
|u^\top \Gamma_t(x) v|
=\big|\text{Cov}_{\pi^{t,x}}(\langle u,m'_t(Z)\rangle,\langle v,m_t(Z)\rangle)\big|
\le
\sqrt{\text{Var}_{\pi^{t,x}}(\langle u,m'_t(Z)\rangle)}  
\sqrt{\text{Var}_{\pi^{t,x}}(\langle v,m_t(Z)\rangle)}.
\]
Assumption~\ref{assum:varmprime} exactly provides a uniform bound on the first variance term:
\[
\text{Var}_{\pi^{t,x}}(\langle u,m'_t(Z)\rangle)
\le
L_t^2  \frac{\sigma_t^2}{1+\alpha_t\sigma_t^2},
\qquad \forall x, \forall u\in\mathbb S^{d-1}. 
\]
The second variance term is controlled by \eqref{eq:Sigma-bound}. Taking suprema in $u,v$ yields
\begin{equation}\label{eq:Gamma-bound}
\|\Gamma_t(x)\|_{\text{op}}
\le
C  L_t  \frac{\sigma_t^2}{1+\alpha_t\sigma_t^2}  
\exp \Big(CK_t^2(\alpha_t+\sigma_t^{-2})^{-\beta}\Big).
\end{equation}
 
Plugging \eqref{eq:Sigma-bound} and \eqref{eq:Gamma-bound} into \eqref{eq:lip-basic} gives: for a.e. $t$ with $\sigma_t>0$,
\begin{equation}\label{eq:lip-final}
\sup_{x\in\R^d}\|\nabla v_t(x)\|_{\text{op}}
\le
\frac{|\sigma'_t|}{\sigma_t}
+
C  \frac{1}{1+\alpha_t\sigma_t^2}  
\exp \Big(CK_t^2(\alpha_t+\sigma_t^{-2})^{-\beta}\Big)\left(  \frac{|\sigma'_t|}{\sigma_t}
+L_t\right).
\end{equation}
\end{proof}

\subsubsection{A refined variant of Corollary~\ref{coro:lipregugeneral}}\label{sec:cor:refined_one_sided_lip}
\begin{corollary}
\label{cor:refined_one_sided_lip}
Let $v:[0,1]\times \mathbb{R}^d\rightarrow \mathbb{R}^d$ of the form \eqref{eq:vf} with $\sigma\in C^1_{\text{a.e.}}([0,1],[0,1])$ and $(m_t)_t$ satisfying Assumptions~\ref{assum:1} and \ref{assum:varmprime}.
Assume moreover that, in the weak log-concavity decomposition
\[
(m_t)_\#\pi(x)=\exp\bigl(-u_t(x)+a_t(x)\bigr),
\]
the convex part $u_t$ satisfies for some $A_t> 0$
\[
\nabla^2 u_t(x)\preceq A_t  \text{Id} \qquad\text{for all }x\in\R^d.
\]
Then there exists a constant $C>0$ independent of the dimension such that for every
$t\in[0,1]$ with $\sigma_t>0$ and every $\xi\in\{0,1\}$,
\[
\sup_{x\in\R^d}\|\nabla v_t(x)\|_{\text{op}}\le  
\frac{1}{1+\alpha_t\sigma_t^2}\Biggl(
C\exp \Bigl(CK_t^2(\alpha_t+\sigma_t^{-2})^{-\beta}\Bigr)
\Bigl(L_t+\frac{|\sigma_t'|}{\sigma_t}(K_t\sigma_t^\beta)^\xi\Bigr)
  +  \sigma_t|\sigma_t'| A_t
\Biggr).
\]
\end{corollary}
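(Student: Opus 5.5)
We start from the Jacobian decomposition of Proposition~\ref{propnablav},
\[
\nabla v_t(x)=\Big(\frac{\sigma_t'}{\sigma_t}\mathrm{Id}-\frac{\sigma_t'}{\sigma_t^3}\Sigma_t(x)\Big)+\frac{1}{\sigma_t^2}\Gamma_t(x),
\qquad \Sigma_t=\mathrm{Cov}_{p^{t,x}}(\mathrm{Id}),\ \Gamma_t=\mathrm{Cov}_{\pi^{t,x}}(m_t',m_t).
\]
We bound the operator norm by the triangle inequality, but we keep the first two terms together rather than estimating them separately as in Corollary~\ref{coro:lipregugeneral}. This grouping is where the cancellation comes from. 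The mixed term $\sigma_t^{-2}\|\Gamma_t(x)\|_{\mathrm{op}}$ is handled exactly as in the proof of Corollary~\ref{coro:lipregugeneral}. We use Cauchy--Schwarz with bilinear test vectors $u,v$, Assumption~\ref{assum:varmprime} for the $m_t'$ variance, and the bound \eqref{eq:Sigma-bound} for the $m_t$ variance. This gives $C L_t(1+\alpha_t\sigma_t^2)^{-1}\exp(CK_t^2(\alpha_t+\sigma_t^{-2})^{-\beta})$, which is the $L_t$ part of the claim.

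For the grouped symmetric term we write $\Sigma_t=\mathrm{Cov}_{\nu^{t,x}}+(\Sigma_t-\mathrm{Cov}_{\nu^{t,x}})$. The perturbative piece $\frac{|\sigma_t'|}{\sigma_t^3}\|\Sigma_t-\mathrm{Cov}_{\nu^{t,x}}\|_{\mathrm{op}}$ is a symmetric matrix, so its operator norm is the supremum over $h\in\mathbb S^{d-1}$ of the quadratic form. Proposition~\ref{prop:finalestimatecompcov} with $\xi\in\{0,1\}$ then gives, as in \eqref{align:finalcal}, the contribution $C\frac{|\sigma_t'|}{\sigma_t}(K_t\sigma_t^\beta)^\xi(1+\alpha_t\sigma_t^2)^{-1}\exp(\cdots)$.

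The core piece is $\frac{\sigma_t'}{\sigma_t}\bigl(\mathrm{Id}-\sigma_t^{-2}\mathrm{Cov}_{\nu^{t,x}}\bigr)$. Its operator norm equals $\frac{|\sigma_t'|}{\sigma_t}\max\bigl(|1-\sigma_t^{-2}\lambda_{\min}|,|1-\sigma_t^{-2}\lambda_{\max}|\bigr)$, where $\lambda_{\min},\lambda_{\max}$ are the extreme eigenvalues of $\mathrm{Cov}_{\nu^{t,x}}$. Brascamp--Lieb (Lemma~\ref{lemma:boundeasy}) and Cramér--Rao (Lemma~\ref{lem:lemma3_cramer_rao}) sandwich this covariance:
\[
(A_t+\sigma_t^{-2})^{-1}\mathrm{Id}\preceq \mathrm{Cov}_{\nu^{t,x}}\preceq(\alpha_t+\sigma_t^{-2})^{-1}\mathrm{Id}.
\]
Hence every eigenvalue of $\mathrm{Id}-\sigma_t^{-2}\mathrm{Cov}_{\nu^{t,x}}$ lies in $\bigl[\frac{\alpha_t\sigma_t^2}{1+\alpha_t\sigma_t^2},\frac{A_t\sigma_t^2}{1+A_t\sigma_t^2}\bigr]\subset[0,1)$. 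The operator norm is therefore at most $\frac{|\sigma_t'|}{\sigma_t}\cdot\frac{A_t\sigma_t^2}{1+A_t\sigma_t^2}\le \frac{\sigma_t|\sigma_t'|A_t}{1+\alpha_t\sigma_t^2}$, using $\alpha_t\le A_t$. This is exactly the last term of the claim, and it holds regardless of the sign of $\sigma_t'$. Summing the three contributions and absorbing constants finishes the proof.

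The main obstacle is the core term. One has to check that the two-sided sandwich is legitimate for $\nu^{t,x}$ when $u_t$ is only $C^2$ with $\nabla^2 u_t\preceq A_t\mathrm{Id}$, so that Cramér--Rao applies with the Fisher matrix $\int\nabla^2V\,d\nu^{t,x}$. One also has to make sure that no stray $|\sigma_t'|/\sigma_t$ survives outside the factor $(1+\alpha_t\sigma_t^2)^{-1}$. The rest is bookkeeping that repeats the earlier propositions.
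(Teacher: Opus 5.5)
Your proposal is correct and follows essentially the same argument as the paper. It uses the same ingredients: the decomposition of Proposition~\ref{propnablav}, the covariance comparison of Proposition~\ref{prop:finalestimatecompcov} for the perturbative piece, Cauchy--Schwarz with Assumption~\ref{assum:varmprime} for the mixed term, and the Brascamp--Lieb/Cram\'er--Rao sandwich for the log-concave core. The only difference is cosmetic. The paper writes $\|\nabla v_t(x)\|_{\mathrm{op}}\le\max\{A_1,-A_2\}$ and bounds $\lambda_{\max}$ and $\lambda_{\min}$ of the symmetric part separately, with a case split on the sign of $\sigma_t'$. You instead bound the operator norm of $\frac{\sigma_t'}{\sigma_t}(\mathrm{Id}-\sigma_t^{-2}\mathrm{Cov}_{\nu^{t,x}})$ in one step from the two-sided eigenvalue localization, which is slightly cleaner.
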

Compared to Corollary~\ref{coro:lipregugeneral}, this estimate improves the dependence on the explicit Gaussian term:
 the contribution $\frac{|\sigma_t'|}{\sigma_t}$ disappears and it is replaced by the finer term
    $
    \frac{|\sigma_t'|}{\sigma_t}(K_t\sigma_t^\beta)^\xi$
    which can be much smaller when the perturbation is small at the noise scale.
The additional term $\sigma_t|\sigma_t'|A_t$ reflects the upper curvature bound on the convex part $u_t$, and it comes from a direct covariance estimate on the log-concave comparison posterior.
\begin{proof}We have from Proposition~\ref{propnablav}
\begin{align*}
\|\nabla v_t(x)\|_{\text{op}}&=\sup_{h,h'\in \mathbb{S}^{d-1}}h^\top \nabla v_t(x)h'\\
&=\sup_{h,h'\in \mathbb{S}^{d-1}}h^\top \left(\frac{\sigma_t'}{\sigma_t} \text{Id}-\frac{\sigma_t'}{\sigma_t^3}\int H_{\pi^{t,x}}^{m_t}(z)^{\otimes 2}d\pi^{t,x}(z)+\frac{1}{\sigma_t^2}\int H_{\pi^{t,x}}^{m_t' }(z)\otimes H_{\pi^{t,x}}^{m_t}(z)d\pi^{t,x}(z)\right)h'\\
&\leq \max\{A_1,-A_2\}
\end{align*}
for 
$$A_1:=\lambda_{\max} \left(
\frac{\sigma_t'}{\sigma_t}\text{Id}
-\frac{\sigma_t'}{\sigma_t^3}\int (H_{\text{Id}}^{p^{t,x}})^{\otimes 2}  dp^{t,x}
\right)
+\frac{1}{\sigma_t^2}\Bigl\|
\int H^{m_t'}_{\pi^{t,x}}\otimes H^{m_t}_{\pi^{t,x}}  d\pi^{t,x}
\Bigr\|_{\text{op}}$$
and
$$A_2:=\lambda_{\min} \left(
\frac{\sigma_t'}{\sigma_t}\text{Id}
-\frac{\sigma_t'}{\sigma_t^3}\int (H_{\text{Id}}^{p^{t,x}})^{\otimes 2}  dp^{t,x}
\right)
-\frac{1}{\sigma_t^2}\Bigl\|
\int H^{m_t'}_{\pi^{t,x}}\otimes H^{m_t}_{\pi^{t,x}}  d\pi^{t,x}
\Bigr\|_{\text{op}}.$$

First, by the proof of Theorem~\ref{theo:onesidedlip2} we have that
$$A_1\leq \frac{1}{1+\alpha_t\sigma_t^2}\Biggl(
C\exp \Bigl(CK_t^2(\alpha_t+\sigma_t^{-2})^{-\beta}\Bigr)
\Bigl(L_t+\frac{|\sigma_t'|}{\sigma_t}(K_t\sigma_t^\beta)^\xi\Bigr)
  +  \sigma_t|\sigma_t'| A_t
\Biggr),$$
so we only need to show the bound on the $A_2$.
Now, fix $t$ such that $\sigma_t>0$ and let $\pi^{t,x}$ be the posterior law of $Z$ given $X_t=x$.
We have
\begin{align*}
&\lambda_{\min} \left(
\frac{\sigma_t'}{\sigma_t}\text{Id}
-\frac{\sigma_t'}{\sigma_t^3}\int (H_{\text{Id}}^{p^{t,x}})^{\otimes 2}  dp^{t,x}
\right)
\\
&\ge
-\frac{|\sigma_t'|}{\sigma_t^3}  
\|
\int (H_{\text{Id}}^{p^{t,x}})^{\otimes 2}  dp^{t,x}
-\int (H_{\text{Id}}^{\nu^{t,x}})^{\otimes 2}  d\nu^{t,x}
\|_{\text{op}}
+
\lambda_{\min} \left(
-\frac{\sigma_t'}{\sigma_t^3}\int (H_{\text{Id}}^{\nu^{t,x}})^{\otimes 2}  d\nu^{t,x}
\right) 
+\frac{\sigma_t'}{\sigma_t}.
\end{align*}
By Proposition~\ref{prop:finalestimatecompcov} 
\[
\|
\int (H_{\text{Id}}^{p^{t,x}})^{\otimes 2}  dp^{t,x}
-\int (H_{\text{Id}}^{\nu^{t,x}})^{\otimes 2}  d\nu^{t,x}
\|_{\text{op}}
\le
C K_t^\xi(\alpha_t+\sigma_t^{-2})^{-(2+\beta\xi)/2}
\exp \Bigl(CK_t^2(\alpha_t+\sigma_t^{-2})^{-\beta}\Bigr),
\]
so
\[
\frac{|\sigma_t'|}{\sigma_t^3}  
\|
\int (H_{\text{Id}}^{p^{t,x}})^{\otimes 2}  dp^{t,x}
-\int (H_{\text{Id}}^{\nu^{t,x}})^{\otimes 2}  d\nu^{t,x}
\|_{\text{op}}
\le
\frac{C}{1+\alpha_t\sigma_t^2}  
\frac{|\sigma_t'|}{\sigma_t}(K_t\sigma_t^\beta)^\xi  
\exp \Bigl(CK_t^2(\alpha_t+\sigma_t^{-2})^{-\beta}\Bigr).
\]
Moreover, since $\alpha_t\text{Id}\preceq\nabla^2u_t\preceq A_t\text{Id}$, we have
\begin{itemize}
    \item if $\sigma_t'\geq 0$,  using Brascamp-Lieb inequality
\begin{align*}
\lambda_{\min} \left(
-\frac{\sigma_t'}{\sigma_t^3}\int (H_{\text{Id}}^{\nu^{t,x}})^{\otimes 2}  d\nu^{t,x}\right)
+\frac{\sigma_t'}{\sigma_t} = &-\frac{\sigma_t'}{\sigma_t^3}\lambda_{\max} \left(\int (H_{\text{Id}}^{\nu^{t,x}})^{\otimes 2}  d\nu^{t,x}
\right)
+\frac{\sigma_t'}{\sigma_t}\\
&\ge
\frac{\sigma_t\sigma_t' \alpha_t}{1+\alpha_t\sigma_t^2},
\end{align*}    \item if $\sigma_t'< 0$, using Cramer-Rao inequality 
\begin{align*}
\lambda_{\min} \left(-
\frac{\sigma_t'}{\sigma_t^3}\int (H_{\text{Id}}^{\nu^{t,x}})^{\otimes 2}  d\nu^{t,x}\right)
+\frac{\sigma_t'}{\sigma_t} = &-\frac{\sigma_t'}{\sigma_t^3}\lambda_{\min} \left(\int (H_{\text{Id}}^{\nu^{t,x}})^{\otimes 2}  d\nu^{t,x}
\right)
+\frac{\sigma_t'}{\sigma_t}\\
&\ge
\frac{\sigma_t\sigma_t' A_t}{1+A_t\sigma_t^2}.
\end{align*}
\end{itemize}
Combining the last three displays yields
\begin{equation}
\label{eq:cov_part_final}
\lambda_{\min} \left(
\frac{\sigma_t'}{\sigma_t}\text{Id}
-\frac{\sigma_t'}{\sigma_t^3}\int (H_{\text{Id}}^{p^{t,x}})^{\otimes 2}  dp^{t,x}
\right)
\ge 
\frac{-1}{1+\alpha_t\sigma_t^2}\Biggl(
C\exp \Bigl(CK_t^2(\alpha_t+\sigma_t^{-2})^{-\beta}\Bigr)  
\frac{|\sigma_t'|}{\sigma_t}(K_t\sigma_t^\beta)^\xi
+\sigma_t|\sigma_t'| A_t
\Biggr).
\end{equation}

Let us now control the mixed covariance term.
Set
\[
\Gamma_t(x)
:=
\int H^{m_t'}_{\pi^{t,x}}(z)\otimes H^{m_t}_{\pi^{t,x}}(z)  d\pi^{t,x}(z).
\]
For unit vectors $h,h'\in\mathbb S^{d-1}$, Cauchy-Schwarz gives
\[
|h^\top\Gamma_t(x)h'|
\le
\sqrt{\text{Var}_{\pi^{t,x}} \bigl(\langle h,m_t'(Z)\rangle\bigr)}  
\sqrt{\text{Var}_{\pi^{t,x}} \bigl(\langle h',m_t(Z)\rangle\bigr)}.
\]
By Assumption~\ref{assum:varmprime},
$\text{Var}_{\pi^{t,x}}(\langle h,m_t'(Z)\rangle)\le L_t^2  \sigma_t^2/(1+\alpha_t\sigma_t^2)$.
On the other hand, the same argument as in the proof of Corollary~\ref{coro:lipregugeneral} (Brascamp--Lieb on $\nu^{t,x}$
plus Proposition~\ref{prop:finalestimatecompcov} with $\xi=0$) yields
\[
\sup_{x\in\R^d}\sup_{h'\in\mathbb S^{d-1}}
\text{Var}_{\pi^{t,x}} \bigl(\langle h',m_t(Z)\rangle\bigr)
\le
C  \frac{\sigma_t^2}{1+\alpha_t\sigma_t^2}  
\exp \Bigl(CK_t^2(\alpha_t+\sigma_t^{-2})^{-\beta}\Bigr).
\]
Taking suprema over $h,h'$ gives
\[
\|\Gamma_t(x)\|_{\text{op}}
\le
C  L_t  \frac{\sigma_t^2}{1+\alpha_t\sigma_t^2}  
\exp \Bigl(CK_t^2(\alpha_t+\sigma_t^{-2})^{-\beta}\Bigr),
\]
hence
\begin{equation}
\label{eq:gamma_part_final}
\frac{1}{\sigma_t^2}\|\Gamma_t(x)\|_{\text{op}}
\le
\frac{C}{1+\alpha_t\sigma_t^2}  
L_t  
\exp \Bigl(CK_t^2(\alpha_t+\sigma_t^{-2})^{-\beta}\Bigr).
\end{equation}

\medskip
Finally, combining \eqref{eq:cov_part_final} and \eqref{eq:gamma_part_final} gives the lower bound on $A_2$.
\end{proof}

\subsubsection{Proof of Corollary~\ref{cor:lipman-global}}\label{sec:cor:lipman-global}
\begin{proof}
By Corollary~\ref{coro:lipregugeneral}, for every $t$ such that $\sigma_t>0$,
\begin{equation}\label{eq:lip-bound-cor4}
\sup_{x\in\R^d}\|\nabla v_t(x)\|_{\text{op}}
\le
\frac{|\sigma_t'|}{\sigma_t}
+\frac{C_0}{1+\alpha_t\sigma_t^2}
\exp \Big(CK_t^2(\alpha_t+\sigma_t^{-2})^{-\beta}\Big)
\Big(\frac{|\sigma_t'|}{\sigma_t}+L_t\Big).
\end{equation}
Under the Lipman reduced model, $(m_t)_\#\pi$ is $(\alpha_t,\beta,K_t)$-weakly log-concave with
\[
\alpha_t=\frac{\alpha}{f_t^2},\qquad K_t=\frac{K}{f_t^\beta},\qquad L_t=\frac{|f_t'|}{f_t}.
\]
Moreover, since $\alpha_t+\sigma_t^{-2}\ge \alpha_t$, we have for a.e. $t\in(0,1)$,
\[
K_t^2(\alpha_t+\sigma_t^{-2})^{-\beta}\le K_t^2\alpha_t^{-\beta}
=\frac{K^2}{f_t^{2\beta}}\Big(\frac{f_t^2}{\alpha}\Big)^\beta
=K^2\alpha^{-\beta},
\]
hence the exponential factor in \eqref{eq:lip-bound-cor4} is uniformly bounded on $(0,1)$. Furthermore, by Assumption~\ref{assum:lipman} we have that there exists $\gamma  \in (0,1)$ such that $\gamma  \leq \sigma_{\frac{1}{2}}\wedge f_{\frac{1}{2}}$ and $f$ is increasing and $\sigma$ is decreasing so
\begin{align*}
    \frac{1}{1+\alpha_t\sigma_t^2}L_t
=\frac{|f_t'|  f_t}{f_t^2+\alpha\sigma_t^2}\leq \frac{|f_t'|  f_t}{\gamma^2\wedge(\alpha\gamma^2)}\leq C f_t'f_t\leq \frac{CA}{1-t}.
\end{align*}
In the end, we have
\begin{equation}\label{eq:lip-bound-cor42}
\sup_{x\in\mathbb R^d}\|\nabla v_t(x)\|_{\mathrm{op}}
\le
\frac{|\sigma_t'|}{\sigma_t}
+\frac{C}{1+\alpha_t\sigma_t^2}\frac{|\sigma_t'|}{\sigma_t}+\frac{C}{1-t}.
\end{equation}

\noindent\textbf{Control on $t\in[t_0,1)$.}We have
\[
\frac{|\sigma_t'|}{\sigma_t}=\big|\partial_t\log\sigma_t\big|
=\Big|-\frac{p}{1-t}+\frac{\ell'(t)}{\ell(t)}\Big|
\le \frac{p}{1-t}+A^2 \leq \frac{C}{1-t},
\qquad t\in[t_0,1).
\]
Plugging this  into \eqref{eq:lip-bound-cor42}
yields
\[
\sup_{x\in\R^d}\|\nabla v_t(x)\|_{\text{op}} \le \frac{C}{1-t},
\qquad \text{for a.e. }t\in[t_0,1).
\]
\noindent\textbf{Step 2: $t\in(0,t_0]$.}
By assumption, $\sigma_t\ge A^{-1}$ and $|\sigma_t'|\le A$ on $(0,t_0]$, hence
\[
\frac{|\sigma_t'|}{\sigma_t}\le C,\qquad t\in(0,t_0], 
\]
which combined with \eqref{eq:lip-bound-cor42} gives the result.
\end{proof}

\subsubsection{Proof of Corollary~\ref{coro:global-Lipschitz-v}}\label{sec:coro:global-Lipschitz-v}
\begin{proof}
We apply Corollary~\ref{coro:lipregugeneral} to the reduced model $X_t=f_tY+\bar g_t\xi$. In this setting one may take
\[
\alpha_t=\frac{\alpha}{f_t^2},\qquad K_t=\frac{K}{f_t^\beta},\qquad L_t=\frac{|f_t'|}{f_t},
\]
so that Corollary~\ref{coro:lipregugeneral} yields, for a.e. $t\in(0,1)$ with $\bar g_t>0$,
\begin{equation}\label{eq:cor4-bound}
\sup_{x\in\mathbb R^d}\|\nabla v_t(x)\|_{\mathrm{op}}
\le
\frac{|\bar g_t'|}{\bar g_t}
+\frac{C}{1+\alpha_t\bar g_t^2}\exp \Big(C_0K_t^2(\alpha_t+\bar g_t^{-2})^{-\beta}\Big)
\Big(\frac{|\bar g_t'|}{\bar g_t}+L_t\Big).
\end{equation}
First, by Assumption~\ref{assum:sto-int} we have $g_t+f_t\geq \gamma$ so
$$\frac{1}{1+\alpha_t\bar g_t^2}  L_t
=\frac{|f_t'|  f_t}{f_t^2+\alpha \bar g_t^2}\leq C\frac{A}{1-t}.$$
Next, the exponential factor in \eqref{eq:cor4-bound} is uniformly bounded on $(0,1)$:
since $\alpha_t+\bar g_t^{-2}\ge \alpha_t$, we have
\[
K_t^2(\alpha_t+\bar g_t^{-2})^{-\beta}\le K_t^2\alpha_t^{-\beta}
=\frac{K^2}{f_t^{2\beta}}\Big(\frac{f_t^2}{\alpha}\Big)^\beta=\frac{K^2}{\alpha^\beta},
\]
hence
\[
\exp \Big(C_0K_t^2(\alpha_t+\bar g_t^{-2})^{-\beta}\Big)\le
C.
\]
In the end, we have
\begin{equation}\label{eq:cor4-bound2}
\sup_{x\in\mathbb R^d}\|\nabla v_t(x)\|_{\mathrm{op}}
\le
\frac{|\bar g_t'|}{\bar g_t}
+\frac{C}{1+\alpha_t\bar g_t^2}\frac{|\bar g_t'|}{\bar g_t}+\frac{C}{1-t}.
\end{equation}
\medskip
\noindent\textbf{Control on $[t_0,1)$.}
By assumption, there exist $t_0\in(0,1)$, $p>0$ and a positive function $\ell$ on $[t_0,1)$ such that
\[
\bar g_t=(1-t)^p  \ell(t),\qquad A^{-1}\le \ell(t)\le A,\qquad \|\ell'\|_{L^\infty([t_0,1))}\le A.
\]
Hence
\[
\frac{|\bar g_t'|}{\bar g_t}
=\big|\partial_t\log \bar g_t\big|
=\left|-\frac{p}{1-t}+\frac{\ell'(t)}{\ell(t)}\right|
\le \frac{p}{1-t}+A^2,
\qquad t\in[t_0,1).
\]
Moreover, since $f$ is increasing with $f_1=1$, we have $f_t\ge f_{t_0}>0$ on
$[t_0,1)$. Then $K_t$ and the exponential factor in
\eqref{eq:cor4-bound2} are bounded so we deduce that 
\begin{equation}\label{eq:near-1}
\sup_{x}\|\nabla v_t(x)\|_{\mathrm{op}}\le \frac{C}{1-t},
\qquad \text{for a.e. }t\in[t_0,1).
\end{equation}

\medskip
\noindent\textbf{Control on $(0,t_0]$.}
Next, by Assumption~\ref{assum:sto-int}, there is $\delta>0$ and $\gamma>0$ such that
$g_t\ge \gamma$ for $t\in(0,\delta)$, and $\sigma$ is increasing on $[0,\delta)$ then decreasing on $(\delta,1)$ with
$\sigma_0=\sigma_1=0$. In particular, for $t\in[\delta,t_0]$ one has $\sigma_t\ge \sigma_{t_0}>0$, and therefore
\[
\bar g_t=\sqrt{g_t^2+\sigma_t^2}\ge \min\{\gamma,\sigma_{t_0}\}\geq c^{-1},
\qquad \forall t\in(0,t_0].
\]
 Plugging this into \eqref{eq:cor4-bound2} yields for $t\in(0,t_0]$ that
\[
\sup_x\|\nabla v_t(x)\|_{\mathrm{op}}
\le C|\bar g_t'|+\frac{C}{1-t}.
\]
Finally, since for all $t\in (0,t_0)$, $|\bar g_t'|\leq A $ we get the result.
\end{proof}

\subsubsection{Proof of Corollary~\ref{coro:difflipbound}}\label{sec:coro:difflipbound}
\begin{proof}
By the closed-form identity for the probability-flow drift in the diffusion setting,
\[
v_t(x)=\frac1t\bigl(x+s(t,x)\bigr).
\]
Moreover, the reduced model $X_t=f_tY+\sigma_t\xi$ with $f_t=t$ and $\sigma_t=\sqrt{1-t^2}$ fits the
Lipman flow-matching framework, and the schedule assumptions of Corollary~\ref{cor:lipman-global} are satisfied.
Therefore Corollary~\ref{cor:lipman-global} yields
\[
\sup_{x\in\R^d}\|\nabla v_t(x)\|_{ \text{op}} \le \frac{C}{1-t}.
\]
\end{proof}

\subsection{Details for time Lipschitz estimates}

\subsubsection{Proof of Theorem~\ref{theo:partialtvt}}\label{sec:theo:partialtvt}
Recall the reduced model of Theorem~\ref{theo:partialtvt}:
\[
X_t = f_t Y + \bar g_t \xi,
\]
where \(Y\) has density \(p(y)=e^{-u(y)+a(y)}\), \(\xi\sim\mathcal N(0,I_d)\), and \(\xi\) is independent of \(Y\). For \(x\in\R^d\), we denote the posterior mean and covariance
\[
\mu_t(x):=\mathbb{E}[Y\mid X_t=x],\qquad \Sigma_t(x):=\text{Cov}(Y\mid X_t=x),
\]
and we write \(\rho^{t,x}\) for the posterior law of \(Y\) given \(X_t=x\), i.e.
\begin{equation}\label{eq:posterior_rho}
\rho^{t,x}(d y)=\frac{p(y)  \varphi^{x,\bar g_t}(f_t y)}{p_t(x)}  d y,
\qquad
\varphi^{x,\sigma}(z):=(2\pi\sigma^2)^{-d/2}\exp \Big(-\frac{\|x-z\|^2}{2\sigma^2}\Big).
\end{equation}
A first key point is that, in the reduced model, $\dot X_t$ is affine in $(Y,\xi)$, and therefore $v_t(x)=\E[\dot X_t\mid X_t=x]$ is an affine combination of $x$ and the posterior mean $\mu_t(x)$. Differentiating this representation in time yields the following decomposition.

\begin{proposition}\label{prop:time_derivative_general}
Assume \(f,\bar g\in W^{2,1}_{\text{loc}}(0,1)\) and \(\bar g_t>0\).
Then for a.e. \(t\in(0,1)\) at which \(x\mapsto \mu_t(x)\) is well-defined and \(t\mapsto\mu_t(x)\) is differentiable,
\begin{equation}\label{eq:dt_vt_decomposition}
\partial_t v_t(x)
=
\partial_t \Big(\frac{\bar g_t'}{\bar g_t}\Big)x
+
\partial_t \Big(f_t'-\frac{\bar g_t'}{\bar g_t}f_t\Big)\mu_t(x)
+
\Big(f_t'-\frac{\bar g_t'}{\bar g_t}f_t\Big)\partial_t\mu_t(x).
\end{equation}
\end{proposition}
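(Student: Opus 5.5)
The plan is to first establish the affine representation $v_t(x)=a_t x+c_t\mu_t(x)$ with $a_t=\bar g_t'/\bar g_t$ and $c_t=f_t'-a_tf_t$, and then differentiate it in $t$ by the product rule. For the representation I will follow the computation already carried out in the proof of Lemma~\ref{lemma:boundnormvt}. In the reduced model $\dot X_t=f_t'Y+\bar g_t'\xi$, so by linearity of conditional expectation
\[
v_t(x)=\E[\dot X_t\mid X_t=x]=f_t'\mu_t(x)+\bar g_t'\,\E[\xi\mid X_t=x].
\]
Since $\xi=(X_t-f_tY)/\bar g_t$ almost surely and $\bar g_t>0$, we have $\E[\xi\mid X_t=x]=(x-f_t\mu_t(x))/\bar g_t$. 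Substituting gives $v_t(x)=a_tx+c_t\mu_t(x)$ for every $x$ and every $t$ at which $f,\bar g$ are differentiable, i.e.\ for a.e.\ $t$. When the path comes from a stochastic interpolant, I will first reduce to this model using the Gaussian rotation of Lemma~\ref{lemma:boundnormvt}: the component $\eta_t$ is independent of $X_t$, so it drops out of the conditional expectation.

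Next I differentiate in $t$ at a fixed $x$. Because $f,\bar g\in W^{2,1}_{\rm loc}$ and $\bar g$ is continuous and positive, $f'$ and $\bar g'$ are absolutely continuous. Hence $a_t=\bar g_t'/\bar g_t$ and $c_t=f_t'-a_tf_t$ are locally absolutely continuous, being quotients and products of absolutely continuous functions with a denominator bounded away from $0$ on compacts. They are therefore differentiable for a.e.\ $t$. At any $t$ where $a,c$ are differentiable and $t\mapsto\mu_t(x)$ is differentiable (the hypothesis of the statement), the product rule gives
\[
\partial_tv_t(x)=a_t'\,x+c_t'\,\mu_t(x)+c_t\,\partial_t\mu_t(x),
\]
which is exactly \eqref{eq:dt_vt_decomposition}.

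The only delicate point is the bookkeeping of the a.e.\ sets. The representation holds pointwise in $x$ outside a null set of times determined by $f$ and $\bar g$ alone, so that set is independent of $x$. I will intersect this null set with the exceptional set for differentiability of $\mu_t(x)$ assumed in the statement. I do not expect any analytic obstacle here. The real difficulty, bounding $\partial_t\mu_t(x)$ through posterior covariances, is deferred to the subsequent estimates.
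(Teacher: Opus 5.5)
Your proposal is correct and follows the paper's proof: you derive $v_t(x)=a_tx+c_t\mu_t(x)$ from $\dot X_t=f_t'Y+\bar g_t'\xi$ and $\E[\xi\mid X_t=x]=(x-f_t\mu_t(x))/\bar g_t$, then differentiate in $t$ by the product rule. Your extra bookkeeping (local absolute continuity of $a_t$ and $c_t$, plus the optional Gaussian-rotation reduction) is consistent with the paper. Note also that $W^{2,1}_{\mathrm{loc}}$ already makes $f$ and $\bar g$ of class $C^1$, so the representation holds for every $t$ rather than only a.e., and this is what makes differentiating it in $t$ legitimate.
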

The proof of Proposition~\ref{prop:time_derivative_general} can be found in Section~\ref{sec:prop:time_derivative_general}. We next control $\partial_t\mu_t(x)$.
The identity \eqref{eq:dtmu-identity} is a conditional-differentiation (Fisher-type) formula: time variations of the posterior mean come from the time variations of the likelihood parameters $(f_t,\bar g_t)$.
The first term is the ``Gaussian-linear'' contribution: a covariance (hence of size $\gamma_t^{-1}$) multiplied by $x$.
The second term is a nonlinear correction involving the sensitivity of the posterior second moment $r_t(x)$.

\begin{proposition}\label{prop:estimatepartialtmut}
Let $p:\mathbb{R}^d\rightarrow \mathbb{R}$ be an $(\alpha,\beta,K)$ weakly log-concave probability density. Define
\[
\mu_t(x):=\mathbb E[Y\mid X_t=x],\qquad \Sigma_t(x):=\mathrm{Cov}(Y\mid X_t=x),
\qquad r_t(x):=\mathbb E[\|Y\|^2\mid X_t=x].
\]
Set
\[
a_t:=\frac{\bar g_t'}{\bar g_t},\qquad c_t:= f_t' - a_t f_t,
\qquad \gamma_t := \alpha + \frac{f_t^2}{\bar g_t^2}.
\]
Then for a.e. $t\in I$ and all $x\in\mathbb{R}^d$,
\begin{equation}\label{eq:dtmu-identity}
\partial_t \mu_t(x)
=
\frac{f_t' - 2a_t f_t}{\bar g_t^2}  \Sigma_t(x)  x
 - 
c_t  \nabla r_t(x).
\end{equation}
Moreover, there exists a constant $C>0$ independent of the dimension such that for all $t\in I$ and all $x\in\mathbb{R}^d$,
\begin{equation}\label{eq:Sigma-bound2}
\|\Sigma_t(x)\|_{\mathrm{op}}
 \le 
\frac{C}{\gamma_t}  
\exp \Big(2K + \frac{C K^2}{\gamma_t}\Big)\leq C_2.
\end{equation}
Consequently, for a.e. $t\in I$ and all $x\in\mathbb{R}^d$,
\begin{equation}\label{eq:dtmu-bound}
\|\partial_t \mu_t(x)\|
 \le 
C  
\frac{|f_t' - 2a_t f_t|}{\alpha \bar g_t^2 + f_t^2}  
 \|x\|
 + 
|c_t|  \|\nabla r_t(x)\|.
\end{equation}
\end{proposition}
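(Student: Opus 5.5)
The argument has three parts: derive the identity \eqref{eq:dtmu-identity} by differentiating the posterior $\rho^{t,x}$ in $t$, bound the posterior covariance by comparison with a strongly log-concave surrogate, and combine the two for \eqref{eq:dtmu-bound}.

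\textbf{Step 1: the identity \eqref{eq:dtmu-identity}.} By \eqref{eq:posterior_rho}, up to $y$-independent factors,
\[
\rho^{t,x}(dy)\propto p(y)\exp\Big(-\tfrac{f_t^2\|y\|^2}{2\bar g_t^2}+\tfrac{f_t}{\bar g_t^2}\langle x,y\rangle\Big)dy .
\]
This is an exponential family in the pair $(\|y\|^2, y)$. The natural parameters are $\theta_t=-f_t^2/(2\bar g_t^2)$ and $\eta_t=(f_t/\bar g_t^2)\,x$. Differentiating in $t$ under the integral (dominated convergence, justified by the Gaussian factor) gives
\[
\partial_t\mu_t=\dot\theta_t\,\mathrm{Cov}_{\rho^{t,x}}(Y,\|Y\|^2)+\mathrm{Cov}_{\rho^{t,x}}(Y,Y)\,\dot\eta_t .
\]
Differentiating the same family in $x$ gives two further formulas:
\[
\nabla_x r_t=\tfrac{f_t}{\bar g_t^2}\mathrm{Cov}(Y,\|Y\|^2), \qquad \nabla_x\mu_t=\tfrac{f_t}{\bar g_t^2}\Sigma_t .
\]
I would use these to rewrite the first term as a multiple of $\nabla r_t$. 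I then compute $\dot\theta_t$ and $\dot\eta_t$ in terms of $a_t=\bar g_t'/\bar g_t$:
\[
\dot\theta_t=-\frac{f_t}{\bar g_t^2}\,(f_t'-a_tf_t)=-\frac{f_t}{\bar g_t^2}\,c_t,
\qquad
\dot\eta_t=\frac{f_t'-2a_tf_t}{\bar g_t^2}\,x .
\]
Substituting both yields exactly $\frac{f_t'-2a_tf_t}{\bar g_t^2}\Sigma_t x-c_t\nabla r_t$. The main risk here is bookkeeping of constant factors; the structure is forced by the exponential family.

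\textbf{Step 2: the covariance bound \eqref{eq:Sigma-bound2}.} I compare $\rho^{t,x}$ with the surrogate
\[
\nu^{t,x}\propto e^{-u(y)}\varphi^{x,\bar g_t}(f_ty),
\]
which is $\gamma_t$-strongly log-concave. Brascamp--Lieb gives $\mathrm{Cov}_{\nu^{t,x}}\preceq \gamma_t^{-1}\mathrm{Id}$, and $\rho^{t,x}=e^{v}\nu^{t,x}$ with $v=a-\log\E_{\nu^{t,x}}e^{a}$. For a unit vector $h$ I would use
\[
h^\top\Sigma_t h\le \E_{\rho}\big[(h^\top(Y-\E_\nu Y))^2\big],
\]
then Cauchy--Schwarz together with the fact that the one-dimensional log-concave fourth moment is bounded by a constant times the squared variance (as in the proof of Proposition~\ref{prop:boundvarmprime}). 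This gives
\[
h^\top\Sigma_t h\le C\gamma_t^{-1}\big(\E_\nu e^{2(a-\E_\nu a)}\big)^{1/2}.
\]
Lemma~\ref{lem:mgf_holder} with $r=2$ bounds the exponential moment by $\exp(4K+CK^2/\gamma_t)$. Taking the square root gives $(C/\gamma_t)\exp(2K+CK^2/\gamma_t)$. Since $\gamma_t\ge\alpha$, the whole expression is at most $C_2$, which depends only on $(\alpha,K)$ and not on $d$. Proposition~\ref{prop:finalestimatecompcov} with $\xi=0$ could be used instead, giving the same form.

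\textbf{Step 3: the bound \eqref{eq:dtmu-bound}.} I take norms in \eqref{eq:dtmu-identity} and insert Step 2:
\[
\frac{|f_t'-2a_tf_t|}{\bar g_t^2}\,\|\Sigma_t\|_{\mathrm{op}}\le C\,\frac{|f_t'-2a_tf_t|}{\bar g_t^2\gamma_t}=C\,\frac{|f_t'-2a_tf_t|}{\alpha\bar g_t^2+f_t^2}.
\]
The $\nabla r_t$ term is simply carried along unchanged. I expect the only delicate point in the whole proof to be the rigorous justification of differentiating under the integral in Step 1 when $f,\bar g$ are merely $W^{2,1}_{\rm loc}$. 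This is handled a.e. in $t$ using the Gaussian tails of $\rho^{t,x}$ and the sub-Gaussian tails of $p$.
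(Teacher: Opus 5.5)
Your proposal is correct and follows the paper's proof essentially step for step. The paper obtains \eqref{eq:dtmu-identity} from $\partial_t\mu_t(x)=\mathrm{Cov}_{\rho^{t,x}}\big(Y,\partial_t\log\varphi^{x,\bar g_t}(f_tY)\big)$ together with $\nabla r_t(x)=\frac{f_t}{\bar g_t^2}\mathrm{Cov}_{\rho^{t,x}}(Y,\|Y\|^2)$, which is your exponential-family computation without naming the natural parameters $(\theta_t,\eta_t)$. The covariance bound \eqref{eq:Sigma-bound2} is proved exactly as you describe: tilt from the surrogate $\nu^{t,x}$, Cauchy--Schwarz, the one-dimensional log-concave fourth-moment bound, Brascamp--Lieb, and Lemma~\ref{lem:mgf_holder} with $r=2$.
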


The proof of Proposition~\ref{prop:estimatepartialtmut} can be found in Section~\ref{sec:prop:estimatepartialtmut}. The remaining task is therefore to control $\nabla r_t(x)$.
Since $r_t(x)=\E[\|Y\|^2\mid X_t=x]$ is a posterior expectation, differentiating in $x$ naturally produces a covariance with the score of the Gaussian likelihood.
This turns $\nabla r_t(x)$ into a third-order posterior quantity, expressed by the following identity.

\begin{proposition}\label{prop:grad-rt}
Fix $t$ such that $f_t>0$ and $\bar g_t>0$ and assume $r_t(x)<\infty$.
Then, at every $x$ where $r_t$ is differentiable,
\begin{equation*}
\nabla r_t(x)=\frac{f_t}{\bar g_t^{  2}}  \text{Cov}_{\rho^{t,x}} \bigl(Y,\|Y\|^2\bigr).
\end{equation*}
\end{proposition}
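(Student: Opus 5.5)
The identity follows by differentiating under the integral sign in the posterior representation \eqref{eq:posterior_rho}. First I will write
\[
r_t(x)=\frac{N(x)}{D(x)},\qquad N(x):=\int \|y\|^2 p(y)\,\varphi^{x,\bar g_t}(f_ty)\,dy,\qquad D(x):=\int p(y)\,\varphi^{x,\bar g_t}(f_ty)\,dy=p_t(x).
\]
The only $x$-dependence sits in the Gaussian kernel, and
\[
\nabla_x \log\varphi^{x,\bar g_t}(f_ty)=-\frac{x-f_ty}{\bar g_t^2}=\frac{f_ty-x}{\bar g_t^2}.
\]

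Next, by the quotient rule,
\[
\nabla r_t(x)=\frac{\nabla N}{D}-\frac{N}{D}\frac{\nabla D}{D}.
\]
Here
\[
\frac{\nabla N}{D}=\E_{\rho^{t,x}}\Big[\|Y\|^2\frac{f_tY-x}{\bar g_t^2}\Big],\qquad \frac{\nabla D}{D}=\E_{\rho^{t,x}}\Big[\frac{f_tY-x}{\bar g_t^2}\Big].
\]
Subtracting, the contributions of $-x/\bar g_t^2$ cancel because they are deterministic, and what remains is
\[
\frac{f_t}{\bar g_t^2}\Big(\E[\|Y\|^2Y]-\E[\|Y\|^2]\,\E[Y]\Big)=\frac{f_t}{\bar g_t^2}\text{Cov}_{\rho^{t,x}}\bigl(Y,\|Y\|^2\bigr).
\]
This is the claimed formula. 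It is the same computation as in the proof of Proposition~\ref{propnablav}, and as in Proposition~\ref{prop:bondhessut} for $\nabla u_t$.

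The main obstacle is justifying differentiation under the integral sign, in particular showing that the third moments appearing in the covariance are finite. Since $p=e^{-u+a}$ with $u$ being $\alpha$-strongly convex and $a$ $\beta$-Hölder with $\beta\le 1$, $p$ has sub-Gaussian tails: $a$ grows at most linearly, so $p(y)\le Ce^{-\alpha\|y\|^2/4}$. I would therefore locally dominate the integrand, uniformly for $x$ in a ball $B(x_0,1)$, by
\[
(1+\|y\|^3)\,p(y)\,e^{C(1+\|y\|)},
\]
using the bound
\[
\|\nabla_x\varphi^{x,\bar g_t}(f_ty)\|\le C(1+\|y\|)\varphi^{x,\bar g_t}(f_ty)
\]
for $x$ in that ball. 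This dominating function is integrable. Dominated convergence then gives differentiability of $N$ and $D$, with $D>0$, so the hypothesis that $r_t$ is differentiable at $x$ is in fact automatic. If $u=+\infty$ outside a convex support, the same bound holds on the support and nothing changes.
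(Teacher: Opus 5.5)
Your proposal is correct and is essentially the paper's proof: write $r_t=N/Z$, differentiate the Gaussian kernel under the integral, apply the quotient rule, and note that the deterministic $-x/\bar g_t^2$ contributions cancel, leaving $\frac{f_t}{\bar g_t^2}\,\text{Cov}_{\rho^{t,x}}(Y,\|Y\|^2)$. Your domination argument, which the paper leaves implicit, is also valid, though you do not need the sub-Gaussian tails of $p$: since $f_t>0$, the factor $(1+\|y\|^3)\exp\bigl(-\|x-f_ty\|^2/(2\bar g_t^2)\bigr)$ is bounded uniformly in $y$ and $x\in B(x_0,1)$, so a constant multiple of $p(y)$ already dominates the integrand and the interchange works for any probability density $p$.
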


The proof of Proposition~\ref{prop:grad-rt} can be found in Section~\ref{sec:prop:grad-rt}. To bound this covariance, it is useful to compare the true posterior $\rho^{t,x}$ (which is only weakly log-concave) to the log-concave surrogate posterior $\nu^{t,x}$ obtained by removing the perturbation from the prior.
The surrogate $\nu^{t,x}$ is genuinely strongly log-concave, with curvature $\gamma_t$, so it enjoys dimension-free concentration and Lipschitz dependence on $x$.
The next proposition shows that these properties are stable under the weak perturbation, and yields the desired affine bound on $\nabla r_t(x)$.

\begin{proposition}\label{prop:rt-gradient-holder}
Assume that $p^\star$ is $(\alpha,\beta,K)$-weakly log-concave, let $
\nu^{t,x}(dy) \propto \exp \left(-u(y)-\frac{\|x-f_t y\|^2}{2\bar g_t^2}\right)dy$ 
and set the curvature parameter $
\gamma_t:=\alpha+\frac{f_t^2}{\bar g_t^2}.$
Then there $C>0$ independent of the dimension such that, at every $x$ where $r_t$ is differentiable,
\[
\|\nabla r_t(x)\| \le A_t + B_t\|x\|,
\]
with
\[
B_t:= C  \frac{f_t^2}{\bar g_t^4  \gamma_t^2},
\qquad
A_t:= C  \frac{f_t}{\bar g_t^2  \gamma_t}\left(\|\E_{\nu_{t,0}}[W]\|+\sqrt{\frac{d}{\gamma_t}}\right).
\]
\end{proposition}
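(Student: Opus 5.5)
By Proposition~\ref{prop:grad-rt}, it suffices to bound $\|\mathrm{Cov}_{\rho^{t,x}}(Y,\|Y\|^2)\|$ by $C\gamma_t^{-1}\bigl(\|\mu^\nu_t(x)\|+\sqrt{d/\gamma_t}\bigr)$ up to dimension-free constants. Here $\mu^\nu_t(x):=\E_{\nu^{t,x}}[W]$. The conclusion then follows from a Lipschitz bound on $x\mapsto\mu^\nu_t(x)$.

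Fix $h\in\mathbb S^{d-1}$. Write $m:=\E_{\rho^{t,x}}Y$ and $\|Y\|^2=\|Y-m\|^2+2\langle m,Y-m\rangle+\|m\|^2$. This gives
\[
h^\top\mathrm{Cov}_{\rho^{t,x}}(Y,\|Y\|^2)=\E_{\rho}\bigl[\langle h,Y-m\rangle\|Y-m\|^2\bigr]+2\,h^\top\Sigma_t(x)m .
\]
The second term is at most $C\gamma_t^{-1}\|m\|$ by \eqref{eq:Sigma-bound2}.

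For the first term, use Cauchy--Schwarz: it is at most $\sqrt{\mathrm{Var}_\rho(\langle h,Y\rangle)}\,\bigl(\E_\rho\|Y-m\|^4\bigr)^{1/2}$. To handle the weak perturbation, pass to $\nu^{t,x}$. We have $\rho^{t,x}=e^{\bar a}\nu^{t,x}$ with $\E_\nu e^{\bar a}=1$, and $\E_\nu e^{2\bar a}$ is bounded by Lemma~\ref{lem:var_exp_concentration} (or Lemma~\ref{lem:mgf_holder}) uniformly in $x$, since $\nu^{t,x}$ is $\gamma_t$-strongly log-concave. Then, for any polynomial functional $F\ge0$, Cauchy--Schwarz gives $\E_\rho F\le (\E_\nu e^{2\bar a})^{1/2}(\E_\nu F^2)^{1/2}$.

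Under $\nu$, $\|W-\E_\nu W\|$ is $1$-Lipschitz and has mean at most $\sqrt{d/\gamma_t}$, by the integration-by-parts argument of Lemma~\ref{lem:mean_bound_nu_t0}. Lemma~\ref{lem:lip-moments} then gives $\E_\nu\|W-\E_\nu W\|^8\lesssim (d/\gamma_t)^4$. Also $\|m-\E_\nu W\|\lesssim \gamma_t^{-1/2}$ by the same transfer applied to the linear functional $\langle h,\cdot\rangle$. Hence $\E_\rho\|Y-m\|^4\lesssim (d/\gamma_t)^2$, and the first term is $\lesssim \gamma_t^{-1/2}\cdot d/\gamma_t$.

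This is too large by a factor $\sqrt d$, and it is the main obstacle. The fix is to avoid Cauchy--Schwarz on the full norm. Instead write $\|Y-m\|^2-\E\|Y-m\|^2$, so the first term becomes $\mathrm{Cov}_\rho\bigl(\langle h,Y\rangle,\|Y-m\|^2\bigr)$.

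Under the strongly log-concave $\nu$, Brascamp--Lieb controls the variance of $\|W-c\|^2$. Its gradient is $2(W-c)$, so $\mathrm{Var}_\nu(\|W-c\|^2)\le 4\gamma_t^{-1}\E_\nu\|W-c\|^2\lesssim d/\gamma_t^2$, up to the centering error. Combined with $\mathrm{Var}(\langle h,\cdot\rangle)\le\gamma_t^{-1}$, the covariance is of size $\gamma_t^{-3/2}\sqrt d=\gamma_t^{-1}\sqrt{d/\gamma_t}$, which is what we need.

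Transferring this covariance bound from $\nu$ to $\rho$ is done through a weighted Cauchy--Schwarz: bound $\E_\rho[(A-\E_\nu A)^2]\le(\E_\nu e^{2\bar a})^{1/2}\bigl(\E_\nu(A-\E_\nu A)^4\bigr)^{1/2}$. The needed fourth-moment bounds for $A=\|W-c\|^2$ come from Lemma~\ref{lem:lip-moments} applied to the Lipschitz function $\|W-c\|$: its fluctuations are $O(\gamma_t^{-1/2})$ around a mean $O(\sqrt{d/\gamma_t})$, so $A$ fluctuates at scale $\sqrt{d}/\gamma_t$. The exponential factors are harmless since they are bounded.

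This yields $\|\nabla r_t(x)\|\le C\frac{f_t}{\bar g_t^2\gamma_t}\bigl(\|\E_\rho Y\|+\sqrt{d/\gamma_t}\bigr)$. Finally, $\|\E_\rho Y\|\le\|\mu^\nu_t(x)\|+C\gamma_t^{-1/2}$. By the $W_1$ bound in the proof of Lemma~\ref{lemma:holderexp}, applied with $f=f_t$ and $g=\bar g_t$,
\[
\|\mu^\nu_t(x)\|\le\|\E_{\nu_{t,0}}W\|+\frac{f_t}{\alpha\bar g_t^2+f_t^2}\|x\|=\|\E_{\nu_{t,0}}W\|+\frac{f_t}{\bar g_t^2\gamma_t}\|x\|.
\]
Multiplying by $\frac{f_t}{\bar g_t^2\gamma_t}$ gives exactly the constants $A_t$ and $B_t$ stated in the proposition.
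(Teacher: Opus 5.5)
Your proof is correct and follows essentially the paper's route: Cauchy--Schwarz on $\mathrm{Cov}_{\rho^{t,x}}(h^\top Y,\|Y\|^2)$, Brascamp--Lieb for the linear factor, and a fourth-moment bound for the squared norm under the strongly log-concave surrogate $\nu^{t,x}$ via Lemma~\ref{lem:lip-moments}, transferred to $\rho^{t,x}$ by weighted Cauchy--Schwarz with $\E_{\nu}e^{2\bar a}$ bounded through Lemma~\ref{lem:mgf_holder}, followed by the Lipschitz bound $\|\E_{\nu^{t,x}}W\|\le\|\E_{\nu_{t,0}}W\|+\frac{f_t}{\bar g_t^2\gamma_t}\|x\|$. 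The only difference is cosmetic: you center at $m=\E_\rho Y$ and split off $2h^\top\Sigma_t(x)m$, whereas the paper applies Cauchy--Schwarz directly to the uncentered $\|Y\|^2$, whose variance already scales like $\E_{\nu}\|W\|^2/\gamma_t+\gamma_t^{-2}$ and so absorbs both of your terms at once (your initial $\sqrt d$-lossy attempt is never needed).
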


The proof of Proposition~\ref{prop:rt-gradient-holder} can be found in Section~\ref{sec:prop:rt-gradient-holder}. Finally, we also need a direct affine bound on the posterior mean $\mu_t(x)$ itself.
The slope in $\|x\|$ again scales like $f_t/(\bar g_t^2\gamma_t)$, which corresponds to the sensitivity of a $\gamma_t$-strongly log-concave posterior mean to changes in the observation.

\begin{lemma}\label{lemma:bounfmut}
Let $Y$ be a random variable on $\mathbb R^d$ with density
$
p(y)= e^{-u(y)+a(y)},$ that is  $(\alpha,\beta,K)$-weakly log concave
and let $\xi\sim\mathcal N(0,I_d)$ be independent of $Y$. Set
\[
\mu_t(x):=\mathbb E[Y\mid X_t=x]=\mathbb E_{\rho^{t,x}}[Y].
\]
and the curvature parameter
\[
\gamma_t := \alpha + \frac{f_t^2}{\bar g_t^2}.
\]
Then, for every $x\in\mathbb R^d$,
\[
 \|\mu_t(x)\| \le C\Big(
\|E_{\nu_{t,0}}[W]\|  +  \sqrt{\tfrac{d}{\gamma_t}}
 +  \frac{f_t}{\bar g_t^2  \gamma_t}  \|x\|
\Big).
\]
\end{lemma}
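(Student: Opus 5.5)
The plan is to compare the true posterior $\rho^{t,x}$ with the strongly log-concave surrogate $\nu^{t,x}(dy)\propto \exp(-u(y)-\|x-f_ty\|^2/(2\bar g_t^2))dy$, which is $\gamma_t$-strongly log-concave. Since $\rho^{t,x}=e^{v}\nu^{t,x}$ with $v=a-\log\E_{\nu^{t,x}}[e^{a}]$ normalized so that $\int e^v d\nu^{t,x}=1$, the triangle inequality gives
\[
\|\mu_t(x)\|\le \|\E_{\nu^{t,x}}[Y]-\E_{\nu^{t,0}}[Y]\| + \|\E_{\nu^{t,0}}[Y]\| + \|\E_{\rho^{t,x}}[Y]-\E_{\nu^{t,x}}[Y]\|.
\]
The middle term is exactly $\|\E_{\nu_{t,0}}[W]\|$. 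The other two terms are the $x$-dependence of the surrogate mean and the perturbation cost, respectively.

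For the first term, I would reuse the computation from the proof of Lemma~\ref{lemma:holderexp}. Along $x_s=sx$, the derivative of $\E_{\nu^{t,x_s}}[\langle h,Y\rangle]$ equals $\mathrm{Cov}_{\nu^{t,x_s}}(\langle h,Y\rangle,\langle \tfrac{f_t}{\bar g_t^2}x,Y\rangle)$. Bounding this with Cauchy--Schwarz and Brascamp--Lieb, and then integrating in $s$ and taking the supremum over unit $h$, gives
\[
\|\E_{\nu^{t,x}}[Y]-\E_{\nu^{t,0}}[Y]\|\le \frac{f_t}{\bar g_t^2\gamma_t}\|x\|.
\]
Equivalently, this is the $W_1$ bound with $\kappa_t=f_t/(\alpha\bar g_t^2+f_t^2)=f_t/(\bar g_t^2\gamma_t)$.

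For the perturbation term, fix a unit $h$ and write
\[
\langle h,\E_{\rho^{t,x}}Y-\E_{\nu^{t,x}}Y\rangle=\int h^\top H^{\mathrm{Id}}_{\nu^{t,x}}(y)\,(e^{v(y)}-1)\,d\nu^{t,x}(y).
\]
This is precisely the $j=1$ quantity in Lemma~\ref{lem:lemma10-unbounded}, with $\mu=\nu^{t,x}$, Hölder constant $K$, and $\gamma_t$. Taking $\xi=0$ there yields the bound $C\gamma_t^{-1/2}\exp(CK^2\gamma_t^{-\beta})$. Because $\gamma_t\ge\alpha$, the exponential is at most $\exp(CK^2\alpha^{-\beta})$, which is dimension-free. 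Since $d\ge1$, we also have $\gamma_t^{-1/2}\le\sqrt{d/\gamma_t}$, so after taking the supremum over $h$ this term is bounded by $C\sqrt{d/\gamma_t}$. This is where the $\sqrt{d/\gamma_t}$ term in the statement comes from; the bound is actually dimension-free, so it is slightly generous.

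Adding the three bounds gives the claim, with $C$ depending only on $\alpha,\beta,K$. The main obstacle is making sure Lemma~\ref{lem:lemma10-unbounded} applies when $u$ may equal $+\infty$ outside a convex support, i.e.\ that the Brascamp--Lieb and Herbst inputs hold there. I would handle this as the paper does elsewhere: approximate $u$ by smooth $\alpha$-strongly convex potentials and pass to the limit in the means, using the uniform second-moment control from strong log-concavity to justify the limit.
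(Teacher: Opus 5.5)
Your proposal is correct, but it treats the perturbation differently from the paper.

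\textbf{The paper's route.} It never isolates $\E_{\rho^{t,x}}Y-\E_{\nu^{t,x}}Y$. It writes $\mu_t(x)=\E_{\nu^{t,x}}[L_{t,x}Y]$ with $L_{t,x}=e^{a}/\E_{\nu^{t,x}}e^{a}$ and applies Cauchy--Schwarz once:
\[
\|\mu_t(x)\|\le \big(\E_{\nu^{t,x}}L_{t,x}^2\big)^{1/2}\big(\E_{\nu^{t,x}}\|Y\|^2\big)^{1/2}.
\]
The first factor is bounded by the exponential-moment estimate of Lemma~\ref{lem:mgf_holder}. The second is bounded by $\|\tilde\mu_t(x)\|^2+d/\gamma_t$, using Brascamp--Lieb on the trace of the covariance. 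The same Lipschitz bound on $\tilde\mu_t$ that you derive then finishes the proof; the paper obtains it from the Jacobian identity $\nabla_x\tilde\mu_t=\tfrac{f_t}{\bar g_t^2}\mathrm{Cov}_{\nu^{t,x}}(Y)$ rather than along a path. That argument is very short and needs only a crude second moment. The price is that the perturbation constant multiplies everything, including the full fluctuation term $\sqrt{d/\gamma_t}$.

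\textbf{Your route.} Your additive comparison uses the $j=1$, $\xi=0$ case of Lemma~\ref{lem:lemma10-unbounded} together with $\gamma_t\ge\alpha$. It only tests one-dimensional projections $h^\top Y$, so it actually proves the sharper bound
\[
\|\mu_t(x)\|\le\|\E_{\nu_{t,0}}[W]\|+\tfrac{f_t}{\bar g_t^2\gamma_t}\|x\|+C\gamma_t^{-1/2}.
\]
Here the surrogate terms carry coefficient $1$ and the cost of the tilt is dimension-free. Since $d\ge1$, this implies the lemma.

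\textbf{Comparison.} Downstream the $\sqrt d$ returns anyway through Lemma~\ref{lem:mean_bound_nu_t0}, so for the paper's applications the two bounds are interchangeable. Yours has the advantage of isolating the true scale, $\gamma_t^{-1/2}$, of the H\"older tilt's effect on the posterior mean.

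The support issue you flag is not a real obstacle. Brascamp--Lieb and the Herbst input are available for strongly log-concave measures with convex support, by the standard approximation you describe. The paper's proof does not dwell on it either.
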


The proof of Lemma~\ref{lemma:bounfmut} can be found in Section~\ref{sec:lemma:bounfmut}.

\paragraph{Concluding the Proof of Theorem~\ref{theo:partialtvt}}
\begin{proof}
By Proposition~\ref{prop:time_derivative_general}, for a.e. $t$ we have
\begin{equation}\label{eq:dt-v-triangle}
\|\partial_t v_t(x)\|
\le |a_t'|  \|x\| + |c_t'|  \|\mu_t(x)\| + |c_t|  \|\partial_t\mu_t(x)\|.
\end{equation}
Now, by Lemma~\ref{lemma:bounfmut}
\begin{equation}\label{eq:mu-bound}
\|\mu_t(x)\|
\le C\Big(\|\mathbb E_{\nu_{t,0}}[W]\|+\sqrt{\tfrac{d}{\gamma_t}}+\frac{f_t}{\bar g_t^2  \gamma_t}\|x\|\Big),
\end{equation}
by Proposition~\ref{prop:estimatepartialtmut} 
\begin{equation}\label{eq:dt-mu-bound}
\|\partial_t\mu_t(x)\|
\le C  \frac{|f_t'-2a_t f_t|}{\alpha \bar g_t^2+f_t^2}  \|x\|
 +  |c_t|  \|\nabla r_t(x)\|.
\end{equation}
and 
By Proposition~\ref{prop:rt-gradient-holder}
\begin{equation}\label{eq:grad-r-affine}
\|\nabla r_t(x)\|
\le C \frac{f_t}{\bar g_t^2  \gamma_t}
\Big(\|E_{\nu_{t,0}}[W]\|+\sqrt{\tfrac{d}{\gamma_t}}\Big)
 +  C  \frac{f_t^2}{\bar g_t^4  \gamma_t^2}  \|x\|.
\end{equation}

Now, inserting \eqref{eq:mu-bound} into \eqref{eq:dt-v-triangle}, and using \eqref{eq:dt-mu-bound} we get
\[
\|\partial_t v_t(x)\|
\le |a_t'|  \|x\| + |c_t'|  \|\mu_t(x)\|
+ |c_t|  C  \frac{|f_t'-2a_t f_t|}{\alpha \bar g_t^2+f_t^2}  \|x\|
+ |c_t|^2  \|\nabla r_t(x)\|.
\]
Now apply \eqref{eq:mu-bound} and \eqref{eq:grad-r-affine} to the last two terms involving
$\mu_t(x)$ and $\nabla r_t(x)$, and regroup the coefficients of $\|x\|$ and the $\|x\|$-independent
terms. This yields precisely
\[
\|\partial_t v_t(x)\|\le \mathcal A_t + \mathcal B_t  \|x\|,
\]
with $\mathcal A_t$ and $\mathcal B_t$ as stated.
\end{proof}

\subsubsection{Proof of Proposition~\ref{prop:time_derivative_general}}\label{sec:prop:time_derivative_general}
\begin{proof}
Fix $t\in(0,1)$ such that $f'_t,\bar g'_t$ exist, $\bar g_t>0$, and such that
$t\mapsto \mu_t(x):=\E[Y\mid X_t=x]$ is differentiable at $t$ for the given $x\in\R^d$. Since $X_t=f_tY+\bar g_t\xi$ and $f,\bar g$ depend only on $t$, we have
\[
\dot X_t = f'_t  Y + \bar g'_t  \xi,
\]
therefore by linearity of conditional expectation,
\[
v_t(x):=\E[\dot X_t\mid X_t=x]
= f'_t  \E[Y\mid X_t=x] + \bar g'_t  \E[\xi\mid X_t=x]
= f'_t  \mu_t(x) + \bar g'_t  \E[\xi\mid X_t=x].
\]
Using the model identity $X_t=f_tY+\bar g_t\xi$, we can rewrite
\[
\xi=\frac{X_t-f_tY}{\bar g_t}.
\]
Conditioning on $X_t=x$ gives
\[
\E[\xi\mid X_t=x]
=\E\Big[\frac{x-f_tY}{\bar g_t}  \Big|  X_t=x\Big]
=\frac{1}{\bar g_t}\Big(x-f_t  \E[Y\mid X_t=x]\Big)
=\frac{x-f_t\mu_t(x)}{\bar g_t}.
\]
Plugging this into the expression for $v_t$ gives
\[
v_t(x)
= f'_t  \mu_t(x)+\bar g'_t  \frac{x-f_t\mu_t(x)}{\bar g_t}
= \frac{\bar g'_t}{\bar g_t}  x
+\Big(f'_t-\frac{\bar g'_t}{\bar g_t}f_t\Big)\mu_t(x).
\]
Define
\[
a_t:=\frac{\bar g'_t}{\bar g_t},
\qquad
c_t:=f'_t-a_tf_t
= f'_t-\frac{\bar g'_t}{\bar g_t}f_t,
\]
so that $v_t(x)=a_t x + c_t\mu_t(x)$. Differentiating in $t$ yields
\[
\partial_t v_t(x)
= a'_t  x + c'_t  \mu_t(x) + c_t  \partial_t\mu_t(x).
\]
Finally, substituting back $a_t=\bar g'_t/\bar g_t$ and
$c_t=f'_t-(\bar g'_t/\bar g_t)f_t$ gives
\[
\partial_t v_t(x)
= \partial_t \Big(\frac{\bar g'_t}{\bar g_t}\Big)  x
+ \partial_t \Big(f'_t-\frac{\bar g'_t}{\bar g_t}f_t\Big)  \mu_t(x)
+ \Big(f'_t-\frac{\bar g'_t}{\bar g_t}f_t\Big)  \partial_t\mu_t(x),
\]
which is exactly the claimed identity.
\end{proof}

\subsubsection{Proof of Proposition~\ref{prop:estimatepartialtmut}}\label{sec:prop:estimatepartialtmut}

\begin{proof}
For fixed $t$ and $x$, the posterior $\rho^{t,x}$ satisfies the covariance identity
\begin{equation}\label{eq:cov-formula}
\partial_t\mu_t(x)=\mathrm{Cov}_{\rho^{t,x}} \Big(Y, \partial_t\log\varphi^{x,\bar g_t}(f_tY)\Big).
\end{equation}
A direct differentiation gives
\begin{equation}\label{eq:dtlogphi}
\partial_t\log\varphi^{x,\bar g_t}(f_tY)
=
\frac{f_t'}{\bar g_t^2}  \langle x-f_tY, Y\rangle
+
\frac{\bar g_t'}{\bar g_t^3}  \Big(\|x-f_tY\|^2-d\bar g_t^2\Big).
\end{equation}
Since $\mathrm{Cov}_{\rho^{t,x}}(Y,\text{constant})=0$, the term $-d\bar g_t^2$ in \eqref{eq:dtlogphi}
does not contribute. Expanding the remaining covariances yields
\[
\mathrm{Cov}_{\rho^{t,x}} \big(Y,\langle x-f_tY,Y\rangle\big)
=
\mathrm{Cov}_{\rho^{t,x}}(Y,\langle x,Y\rangle)
-
f_t  \mathrm{Cov}_{\rho^{t,x}}(Y,\|Y\|^2)
=
\Sigma_t(x)  x - f_t  \mathrm{Cov}_{\rho^{t,x}}(Y,\|Y\|^2),
\]
and similarly
\[
\mathrm{Cov}_{\rho^{t,x}} \big(Y,\|x-f_tY\|^2\big)
=
-2f_t  \Sigma_t(x)  x + f_t^2  \mathrm{Cov}_{\rho^{t,x}}(Y,\|Y\|^2),
\]
because the $\|x\|^2$ part is constant and vanishes under covariance.
Plugging these into \eqref{eq:cov-formula}--\eqref{eq:dtlogphi} gives
\[
\partial_t\mu_t(x)
=
\Big(\frac{f_t'}{\bar g_t^2}-\frac{2f_t\bar g_t'}{\bar g_t^3}\Big)\Sigma_t(x)x
+
\Big(-\frac{f_tf_t'}{\bar g_t^2}+\frac{f_t^2\bar g_t'}{\bar g_t^3}\Big)\mathrm{Cov}_{\rho^{t,x}}(Y,\|Y\|^2).
\]
It remains to identify $\mathrm{Cov}_{\rho^{t,x}}(Y,\|Y\|^2)$ with $\nabla r_t(x)$.
Write $r_t(x)=\int \|y\|^2 \rho^{t,x}(dy)$. Differentiating under the integral sign and using the quotient rule,
one obtains the identity:
\[
\nabla r_t(x)
=
\mathrm{Cov}_{\rho^{t,x}} \Big(\|Y\|^2, \nabla_x\log\varphi^{x,\bar g_t}(f_tY)\Big).
\]
But $\nabla_x\log\varphi^{x,\bar g_t}(f_tY)=\frac{f_tY-x}{\bar g_t^2}$, hence the $-x$ term drops in covariance and
\begin{equation}\label{eq:grad-r}
\nabla r_t(x)
=
\frac{f_t}{\bar g_t^2}  \mathrm{Cov}_{\rho^{t,x}}(Y,\|Y\|^2).
\end{equation}
Therefore $\mathrm{Cov}_{\rho^{t,x}}(Y,\|Y\|^2)=\frac{\bar g_t^2}{f_t}  \nabla r_t(x)$, and substituting this above yields
\[
\partial_t\mu_t(x)
=
\frac{f_t'-2(\bar g_t'/\bar g_t)f_t}{\bar g_t^2}  \Sigma_t(x)x
-
\Big(f_t'-(\bar g_t'/\bar g_t)f_t\Big)\nabla r_t(x),
\]
which is exactly \eqref{eq:dtmu-identity}. Now, fix $t\in I$, $x\in\mathbb{R}^d$, and a unit vector $h\in\mathbb{S}^{d-1}$.
Let $\nu^{t,x}$ be the log-concave surrogate posterior
\[
\nu^{t,x}(dy)\propto \exp \Big(-u(y)-\frac{\|x-f_ty\|^2}{2\bar g_t^2}\Big)  dy,
\]
so that $\rho^{t,x}(dy)\propto e^{a(y)}\nu^{t,x}(dy)$.
Define
\[
L(y):=\frac{e^{a(y)}}{\int e^{a(z)}  \nu^{t,x}(dz)}
\quad\text{so that}\quad
\mathbb E_{\nu^{t,x}}[L]=1
 \text{ and } 
\mathbb E_{\rho^{t,x}}[\cdot]=\mathbb E_{\nu^{t,x}}[L  \cdot].
\]
Let $F(y):=h^\top y$. Then, using $\mathrm{Var}_{\rho}(F)=\inf_{c\in\mathbb R}\mathbb E_\rho[(F-c)^2]$,
\[
\mathrm{Var}_{\rho^{t,x}}(F)\le \mathbb E_{\nu^{t,x}} \Big[L  (F-\mathbb E_{\nu^{t,x}}F)^2\Big]
\le
\big(\mathbb E_{\nu^{t,x}}[L^2]\big)^{1/2}  
\big(\mathbb E_{\nu^{t,x}}[(F-\mathbb E_{\nu^{t,x}}F)^4]\big)^{1/2}.
\]
Since $\nu^{t,x}$ is log-concave on $\mathbb R^d$, the one-dimensional law of
$F-\mathbb E_{\nu^{t,x}}F$ is log-concave on $\mathbb R$, and there is a universal constant $C>0$ such that
\begin{equation}\label{eq:4th-moment}
\big(\mathbb E_{\nu^{t,x}}[(F-\mathbb E_{\nu^{t,x}}F)^4]\big)^{1/2}
\le C  \mathrm{Var}_{\nu^{t,x}}(F).
\end{equation}
Moreover,
\[
\mathbb E_{\nu^{t,x}}[L^2]
=
\frac{\mathbb E_{\nu^{t,x}}[e^{2a(Y)}]}{\mathbb E_{\nu^{t,x}}[e^{a(Y)}]^2}
=
\frac{\mathbb E_{\nu^{t,x}}[e^{2(a(Y)-\mathbb E_{\nu^{t,x}}a(Y))}]}
{\mathbb E_{\nu^{t,x}}[e^{a(Y)-\mathbb E_{\nu^{t,x}}a(Y)}]^2}
\le
\mathbb E_{\nu^{t,x}} \big[e^{2(a(Y)-\mathbb E_{\nu^{t,x}}a(Y))}\big],
\]
using Jensen to get $\mathbb E_{\nu^{t,x}}[e^{a-\mathbb E a}]\ge 1$.
Combining these bounds, we obtain
\begin{equation}\label{eq:var-compare}
\mathrm{Var}_{\rho^{t,x}}(h^\top Y)
\le
C  \Big(\mathbb E_{\nu^{t,x}}e^{2(a(Y)-\mathbb E_{\nu^{t,x}}a(Y))}\Big)^{1/2}  
\mathrm{Var}_{\nu^{t,x}}(h^\top Y).
\end{equation}

The negative log-density of $\nu^{t,x}$ is
\[
V_{t,x}(y)=u(y)+\frac{\|x-f_ty\|^2}{2\bar g_t^2},
\]
so
\[
\nabla^2 V_{t,x}(y)=\nabla^2 u(y)+\frac{f_t^2}{\bar g_t^2}I_d \succeq
\Big(\alpha+\frac{f_t^2}{\bar g_t^2}\Big)I_d=\gamma_t \text{Id}
\quad\text{uniformly in $y$ and $x$.}
\]
The Brascamp--Lieb inequality applied to the linear function $y\mapsto h^\top y$ gives
\begin{equation}\label{eq:BL}
\mathrm{Var}_{\nu^{t,x}}(h^\top Y)\le \frac{1}{\gamma_t}.
\end{equation}

Now, applying Lemma~\ref{lem:mgf_holder} we have
\[
\sup_{x\in\mathbb R^d}\mathbb E_{\nu^{t,x}}
\exp \Big(r(a(Y)-\mathbb E_{\nu^{t,x}}a(Y))\Big)
\le
\exp \Big(2Kr+\frac{C K^2}{2\gamma_t}r^2\Big)
\qquad\forall r\ge 0,
\]
so taking $r=2$ and then square-roots yields
\begin{equation}\label{eq:exp-moment}
\Big(\mathbb E_{\nu^{t,x}}e^{2(a(Y)-\mathbb E_{\nu^{t,x}}a(Y))}\Big)^{1/2}
\le
\exp \Big(2K+\frac{C K^2}{\gamma_t}\Big)\leq C.
\end{equation}
Combining \eqref{eq:var-compare}, \eqref{eq:BL}, and \eqref{eq:exp-moment} gives, for every unit $h$,
\[
h^\top \Sigma_t(x) h
=
\mathrm{Var}_{\rho^{t,x}}(h^\top Y)
\le
\frac{C}{\gamma_t}.
\]
Taking the supremum over $h\in\mathbb S^{d-1}$ yields \eqref{eq:Sigma-bound2}.
Finally, insert \eqref{eq:Sigma-bound2} into \eqref{eq:dtmu-identity}:
\[
\Big\|\frac{f_t'-2a_t f_t}{\bar g_t^2}\Sigma_t(x)x\Big\|
\le
\frac{|f_t'-2a_t f_t|}{\bar g_t^2}  \|\Sigma_t(x)\|_{\mathrm{op}}  \|x\|
\le
C  \frac{|f_t'-2a_t f_t|}{\bar g_t^2  \gamma_t}\|x\|.
\]
\end{proof}

\subsubsection{Proof of Proposition~\ref{prop:grad-rt}}\label{sec:prop:grad-rt}
\begin{proof}
Write the posterior density up to normalization as
\[
\rho^{t,x}(dy) \propto p(y)\exp \Bigl(-\frac{\|x-f_t y\|^2}{2\bar g_t^{  2}}\Bigr)  dy.
\]
Let
\[
Z(x):=\int_{\R^d} p(y)\exp \Bigl(-\frac{\|x-f_t y\|^2}{2\bar g_t^{  2}}\Bigr)  dy,
\qquad
N(x):=\int_{\R^d} \|y\|^2  p(y)\exp \Bigl(-\frac{\|x-f_t y\|^2}{2\bar g_t^{  2}}\Bigr)  dy,
\]
so that $r_t(x)=N(x)/Z(x)$. Then,
\[
\nabla_x N(x)=\int \|y\|^2  p(y)  \nabla_x\exp \Bigl(-\frac{\|x-f_t y\|^2}{2\bar g_t^{  2}}\Bigr)  dy
= -\frac1{\bar g_t^{  2}}\int \|y\|^2 (x-f_t y)  p(y)e^{-\frac{\|x-f_t y\|^2}{2\bar g_t^{  2}}}  dy.
\]
Hence
\[
\frac{\nabla_x N(x)}{Z(x)}
= -\frac1{\bar g_t^{  2}}\mathbb{E}_{\rho^{t,x}}\bigl[(x-f_tY)\|Y\|^2\bigr],\qquad
\nabla_x\log Z(x)=\frac{\nabla_x Z(x)}{Z(x)}=-\frac1{\bar g_t^{  2}}\mathbb{E}_{\rho^{t,x}}[x-f_tY].
\]
Therefore, 
\begin{align*}
\nabla r_t(x)
&= \frac{\nabla N(x)}{Z(x)} - r_t(x)\nabla\log Z(x) \\
&= -\frac1{\bar g_t^{  2}}\Bigl(\mathbb{E}_{\rho^{t,x}}[(x-f_tY)\|Y\|^2]-\mathbb{E}_{\rho^{t,x}}[x-f_tY] \mathbb{E}_{\rho^{t,x}}\|Y\|^2\Bigr)\\
&= -\frac1{\bar g_t^{  2}}\Bigl(x r_t(x)-f_t\mathbb{E}_{\rho^{t,x}}[Y\|Y\|^2]-(x-f_t\mu_t(x))r_t(x)\Bigr)\\
&=\frac{f_t}{\bar g_t^{  2}}\Bigl(\mathbb{E}_{\rho^{t,x}}[Y\|Y\|^2]-\mu_t(x)  r_t(x)\Bigr)
=\frac{f_t}{\bar g_t^{  2}}\text{Cov}_{\rho^{t,x}}(Y,\|Y\|^2),
\end{align*}
which gives the result.
\end{proof}

\subsubsection{Proof of Proposition~\ref{prop:rt-gradient-holder}}\label{sec:prop:rt-gradient-holder}
\begin{proof}
Fix $x$ where $r_t$ is differentiable and let $Y\sim \rho^{t,x}$.
By Proposition~\ref{prop:grad-rt}
\[
\nabla r_t(x)=\frac{f_t}{\bar g_t^2}  \text{Cov}_{\rho^{t,x}}\big(Y,\|Y\|^2\big),
\]
Hence for $h\in\mathbb S^{d-1}$, we have
\[
h^\top \nabla r_t(x)=\frac{f_t}{\bar g_t^2}  \text{Cov}_{\rho^{t,x}}\big(h^\top Y,\|Y\|^2\big).
\]
Applying Cauchy-Schwarz we deduce that
\begin{equation}\label{eq:grad-rt-basic}
\|\nabla r_t(x)\|
\le
\frac{f_t}{\bar g_t^2}  
\sqrt{\|\Sigma_t(x)\|_{\text{op}}}  
\sqrt{\text{Var}_{\rho^{t,x}}(\|Y\|^2)}.
\end{equation}

Write $\rho^{t,x}(dy)\propto e^{a(y)}\nu^{t,x}(dy)$ and set
\[
L(y):=\frac{e^{a(y)}}{\int e^{a}  d\nu^{t,x}},
\qquad\text{so that}\qquad \E_{\nu^{t,x}}[L]=1, \E_{\rho^{t,x}}[\cdot]=\E_{\nu^{t,x}}[L  \cdot].
\]
Let $F(y):=h^\top y$. Using $\text{Var}_{\rho}(F)=\inf_{c\in\mathbb R}\E_{\rho}[(F-c)^2]$ and choosing
$c=\E_{\nu^{t,x}}F$, one gets
\[
\text{Var}_{\rho^{t,x}}(F)\le \E_{\nu^{t,x}}\big[L(F-\E_{\nu^{t,x}}F)^2\big]
\le
\big(\E_{\nu^{t,x}}[L^2]\big)^{1/2}
\big(\E_{\nu^{t,x}}[(F-\E_{\nu^{t,x}}F)^4]\big)^{1/2}.
\]
Since $\nu^{t,x}$ is log-concave, the one-dimensional law of
$F-\E_{\nu^{t,x}}F$ is log-concave on $\mathbb R$, hence
\begin{equation}
\big(\E_{\nu^{t,x}}[(F-\E_{\nu^{t,x}}F)^4]\big)^{1/2}\le C  \text{Var}_{\nu^{t,x}}(F)
\end{equation}
for a universal constant $C>0$. Moreover
\[
\E_{\nu^{t,x}}[L^2]
=
\frac{\E_{\nu^{t,x}}[e^{2a(Y)}]}{\E_{\nu^{t,x}}[e^{a(Y)}]^2}
=
\frac{\E_{\nu^{t,x}}[e^{2(a(Y)-\E_{\nu^{t,x}}a(Y))}]}{\E_{\nu^{t,x}}[e^{a(Y)-\E_{\nu^{t,x}}a(Y)}]^2}
\le
\E_{\nu^{t,x}}e^{2(a(Y)-\E_{\nu^{t,x}}a(Y))},
\]
using Jensen in the denominator. Therefore,
\[
\text{Var}_{\rho^{t,x}}(h^\top Y)\le
C\Big(\E_{\nu^{t,x}}e^{2(a(Y)-\E_{\nu^{t,x}}a(Y))}\Big)^{1/2}\text{Var}_{\nu^{t,x}}(h^\top Y).
\]
Now $\nu^{t,x}$ has negative log-density
$V_{t,x}(y)=u(y)+\|x-f_ty\|^2/(2\bar g_t^2)$, hence
$\nabla^2 V_{t,x}\succeq \gamma_t \text{Id}$ uniformly, so Brascamp--Lieb gives
$\text{Var}_{\nu^{t,x}}(h^\top Y)\le 1/\gamma_t$ for $\|h\|=1$.
Combining and taking the supremum over $h$ yields
\begin{equation}\label{eq:Sigma-bound-Mt2}
\|\Sigma_t(x)\|_{\text{op}}\le C  \frac{1}{\gamma_t}\Big(\E_{\nu^{t,x}}e^{2(a-\E_{\nu^{t,x}}a)}\Big)^{1/2}
\le C  \frac{M_{t,2}^{1/2}}{\gamma_t}.
\end{equation}
With the same $L$ as above and $G(y):=\|y\|^2$, we again use
$\text{Var}_{\rho}(G)=\inf_c \E_{\rho}[(G-c)^2]$ and choose $c=\E_{\nu^{t,x}}G$, giving
\[
\text{Var}_{\rho^{t,x}}(G)\le \E_{\nu^{t,x}}\big[L(G-\E_{\nu^{t,x}}G)^2\big]
\le
\big(\E_{\nu^{t,x}}[L^2]\big)^{1/2}
\big(\E_{\nu^{t,x}}[(G-\E_{\nu^{t,x}}G)^4]\big)^{1/2}.
\]
As above, $(\E_{\nu^{t,x}}[L^2])^{1/2}\le (\E_{\nu^{t,x}}e^{2(a-\E a)})^{1/2}\le M_{t,2}^{1/2}$,
so it remains to bound the fourth centered moment of $G$ under $\nu^{t,x}$.

Let $U:=\|W\|$ with $W\sim \nu^{t,x}$, set $m:=\E_{\nu^{t,x}}U$ and $R:=U-m$.
Then $G-\E_{\nu^{t,x}}G = U^2-\E_{\nu^{t,x}}U^2 = 2mR + (R^2-\E R^2)$.
Using $(a+b)^4\le 8(a^4+b^4)$, we get
\[
\E_{\nu^{t,x}}[(G-\E G)^4]
\le
8\cdot (2m)^4  \mathbb{E}[R^4] + 8  \mathbb{E}[(R^2-\E R^2)^4].
\]
Since $\nu^{t,x}$ is $\gamma_t$-strongly log-concave and $y\mapsto \|y\|$ is $1$-Lipschitz,
Lemma~\ref{lem:lip-moments} gives
\[
\mathbb{E}[R^4]\le C  \gamma_t^{-2},
\qquad
\mathbb{E}[R^8]\le C  \gamma_t^{-4},
\qquad
\mathbb{E}[R^2]\le C  \gamma_t^{-1}.
\]
Moreover,
\[
\mathbb{E}[(R^2-\E R^2)^4]\le 8  \mathbb{E}[R^8] + 8  (\mathbb{E}[R^2])^4 \le C  \gamma_t^{-4}.
\]
Therefore,
\[
\E_{\nu^{t,x}}[(G-\E G)^4]\le C\left(\frac{m^4}{\gamma_t^2}+\frac{1}{\gamma_t^4}\right),
\qquad\text{hence}\qquad
\big(\E_{\nu^{t,x}}[(G-\E G)^4]\big)^{1/2}\le
C\left(\frac{m^2}{\gamma_t}+\frac{1}{\gamma_t^2}\right).
\]
Finally, $m^2=(\E U)^2\le \E U^2 = \E_{\nu^{t,x}}\|W\|^2$ by Jensen, so
\[
\big(\E_{\nu^{t,x}}[(G-\E G)^4]\big)^{1/2}\le
C\left(\frac{\E_{\nu^{t,x}}\|W\|^2}{\gamma_t}+\frac{1}{\gamma_t^2}\right).
\]
Plugging back yields
\begin{equation}\label{eq:VarY2-bound}
\text{Var}_{\rho^{t,x}}(\|W\|^2)
\le
C  M_{t,2}^{1/2}\left(\frac{\E_{\nu^{t,x}}\|W\|^2}{\gamma_t}+\frac{1}{\gamma_t^2}\right).
\end{equation}

Combining \eqref{eq:grad-rt-basic}, \eqref{eq:Sigma-bound-Mt2}, and \eqref{eq:VarY2-bound} gives
\[
\|\nabla r_t(x)\|
\le
\frac{f_t}{\bar g_t^2}  
\sqrt{C\frac{M_{t,2}^{1/2}}{\gamma_t}}  
\sqrt{C M_{t,2}^{1/2}\left(\frac{\E_{\nu^{t,x}}\|W\|^2}{\gamma_t}+\frac{1}{\gamma_t^2}\right)}
\le
C  M_{t,2}^{1/2}  \frac{f_t}{\bar g_t^2}\left(\frac{\sqrt{\E_{\nu^{t,x}}\|W\|^2}}{\gamma_t}
+\frac{1}{\gamma_t^{3/2}}\right).
\]
It remains to bound $\sqrt{\E_{\nu^{t,x}}\|W\|^2}$ by an affine function of $\|x\|$.
Let $\tilde\mu_t(x):=\E_{\nu^{t,x}}[W]$ and $\tilde\Sigma_t(x):=\text{Cov}_{\nu^{t,x}}(W)$.
By Brascamp--Lieb, $\tilde\Sigma_t(x)\preceq \gamma_t^{-1}I_d$, hence
\[
\E_{\nu^{t,x}}\|W\|^2 = \|\tilde\mu_t(x)\|^2 + \text{tr}(\tilde\Sigma_t(x))
\le \|\tilde\mu_t(x)\|^2 + \frac{d}{\gamma_t}.
\]
Moreover, differentiating $\tilde\mu_t$ with respect to $x$ yields the Jacobian identity
$\nabla_x \tilde\mu_t(x)=\frac{f_t}{\bar g_t^2}\tilde\Sigma_t(x)$, hence
$\|\nabla_x\tilde\mu_t(x)\|_{\text{op}}\le \frac{f_t}{\bar g_t^2\gamma_t}$ and therefore
\[
\|\tilde\mu_t(x)\|
\le
\|\tilde\mu_t(0)\| + \frac{f_t}{\bar g_t^2\gamma_t}\|x\|.
\]
Combining the last two displays and taking square-roots gives
\[
\sqrt{\E_{\nu^{t,x}}\|W\|^2}
\le
\|\tilde\mu_t(0)\| + \frac{f_t}{\bar g_t^2\gamma_t}\|x\| + \sqrt{\frac{d}{\gamma_t}}.
\]
Plugging this into the previous bound yields
\[
\|\nabla r_t(x)\|
\le
C  M_{t,2}^{1/2}  \frac{f_t}{\bar g_t^2\gamma_t}
\left(\|\tilde\mu_t(0)\|+\sqrt{\frac{d}{\gamma_t}}\right)
 + 
C  M_{t,2}^{1/2}  \frac{f_t^2}{\bar g_t^4\gamma_t^2}\|x\|,
\]
which is the claimed affine bound. Finally, applying Lemma~\ref{lem:mgf_holder} we have
\[
\sup_{x\in\mathbb R^d}\E_{\nu^{t,x}}\exp \Big(2\big(a(W)-\E_{\nu^{t,x}}a(W)\big)\Big)
\le
\exp \left(4K+\frac{C K^2}{\gamma_t}\right).
\]
\end{proof}

\subsubsection{Proof of Lemma~\ref{lemma:bounfmut}}\label{sec:lemma:bounfmut}

\begin{proof}
By Bayes' formula, the posterior density of $Y$ given $X_t=x$ is
\[
\rho^{t,x}(dy) \propto e^{-u(y)+a(y)}\exp \Big(-\tfrac{1}{2\bar g_t^2}\|x-f_t y\|^2\Big)  dy.
\]
Since $\nu^{t,x}(dy)\propto e^{-u(y)}\exp(-\|x-f_t y\|^2/(2\bar g_t^2))  dy$, we can write
\[
\rho^{t,x}(dy)  =  L_{t,x}(y)  \nu^{t,x}(dy),
\qquad
L_{t,x}(y):=\frac{e^{a(y)}}{\mathbb E_{\nu^{t,x}}[e^{a(Y)}]}.
\]
Therefore,
\[
\mu_t(x)=\mathbb E_{\rho^{t,x}}[Y] = \mathbb E_{\nu^{t,x}}[L_{t,x}(Y)  Y].
\]
Applying Cauchy-Schwarz gives
\begin{equation}\label{eq:CS-mu}
\|\mu_t(x)\|
\le \Big(\mathbb E_{\nu^{t,x}}[L_{t,x}(Y)^2]\Big)^{1/2}  
\Big(\mathbb E_{\nu^{t,x}}\|Y\|^2\Big)^{1/2}.
\end{equation}

Let $\bar a_x:=\mathbb E_{\nu^{t,x}}[a(Y)]$. Then
\[
\mathbb E_{\nu^{t,x}}[L_{t,x}(Y)^2]
= \frac{\mathbb E_{\nu^{t,x}}[e^{2a(Y)}]}{\mathbb E_{\nu^{t,x}}[e^{a(Y)}]^2}
= \frac{\mathbb E_{\nu^{t,x}}[e^{2(a(Y)-\bar a_x)}]}{\mathbb E_{\nu^{t,x}}[e^{(a(Y)-\bar a_x)}]^2}.
\]
By Jensen, $\mathbb E_{\nu^{t,x}}[e^{(a(Y)-\bar a_x)}]\ge e^{\mathbb E_{\nu^{t,x}}[a(Y)-\bar a_x]}=1$, hence applying Lemma~\ref{lem:mgf_holder}
\[
\mathbb E_{\nu^{t,x}}[L_{t,x}(Y)^2]
\le \mathbb E_{\nu^{t,x}}[e^{2(a(Y)-\bar a_x)}]
\le \exp \left(4K+\frac{C K^2}{\gamma_t}\right).
\]
Thus,
\begin{equation}\label{eq:L2-bound}
\Big(\mathbb E_{\nu^{t,x}}[L_{t,x}(Y)^2]\Big)^{1/2}\le \exp \left(2K+\frac{C K^2}{2\gamma_t}\right)\leq C.
\end{equation}

The density of $\nu^{t,x}$ is proportional to $\exp(-V_{t,x}(y))$ with
\[
V_{t,x}(y)=u(y)+\frac{1}{2\bar g_t^2}\|x-f_t y\|^2.
\]
Since $\nabla^2 u\succeq \alpha \text{Id}$ and $\nabla^2_y\frac{1}{2\bar g_t^2}\|x-f_t y\|^2
= \frac{f_t^2}{\bar g_t^2}I_d$, we have
\[
\nabla^2 V_{t,x}(y)\succeq \Big(\alpha+\frac{f_t^2}{\bar g_t^2}\Big)I_d=\gamma_t \text{Id},
\]
so $\nu^{t,x}$ is $\gamma_t$-strongly log-concave. By the Brascamp--Lieb inequality,
\[
\mathrm{Cov}_{\nu^{t,x}}(Y) \preceq \gamma_t^{-1}I_d
\quad\Longrightarrow\quad
\mathrm{tr}  \mathrm{Cov}_{\nu^{t,x}}(Y) \le \frac{d}{\gamma_t}.
\]
Hence, writing $\tilde\mu_t(x):=\mathbb E_{\nu^{t,x}}[Y]$
\begin{equation}\label{eq:second-moment}
\mathbb E_{\nu^{t,x}}\|Y\|^2
= \|\tilde\mu_t(x)\|^2 + \mathrm{tr}  \mathrm{Cov}_{\nu^{t,x}}(Y)
\le \|\tilde\mu_t(x)\|^2 + \frac{d}{\gamma_t}.
\end{equation}

Rewrite $\nu^{t,x}$ as an exponential family in $x$:
\[
\nu^{t,x}(dy) \propto \exp \Big(-u(y)-\frac{f_t^2}{2\bar g_t^2}\|y\|^2 + \frac{f_t}{\bar g_t^2}  x\cdot y\Big)  dy.
\]
Differentiating under the integral sign (justified since $\nu^{t,x}$ is strongly log-concave) yields the standard identity
\[
\nabla_x \tilde\mu_t(x)  =  \frac{f_t}{\bar g_t^2}  \mathrm{Cov}_{\nu^{t,x}}(Y).
\]
Therefore, using $\|\mathrm{Cov}_{\nu^{t,x}}(Y)\|_{\mathrm{op}}\le 1/\gamma_t$,
\[
\|\nabla_x \tilde\mu_t(x)\|_{\mathrm{op}}
\le \frac{f_t}{\bar g_t^2  \gamma_t},
\]
so $\tilde\mu_t$ is Lipschitz and
\begin{equation}\label{eq:Lip-tilde-mu}
\|\tilde\mu_t(x)\|\le \|\tilde\mu_t(0)\| + \frac{f_t}{\bar g_t^2  \gamma_t}  \|x\|.
\end{equation}

Combining \eqref{eq:second-moment}--\eqref{eq:Lip-tilde-mu} and using $\sqrt{A^2+B}\le |A|+\sqrt{B}$ gives
\[
\Big(\mathbb E_{\nu^{t,x}}\|Y\|^2\Big)^{1/2}
\le \|\tilde\mu_t(x)\| + \sqrt{\frac{d}{\gamma_t}}
\le \|\tilde\mu_t(0)\| + \sqrt{\frac{d}{\gamma_t}} + \frac{f_t}{\bar g_t^2  \gamma_t}  \|x\|.
\]
Together with \eqref{eq:CS-mu} and \eqref{eq:L2-bound}, this yields the claimed bound on $\|\mu_t(x)\|$.
\end{proof}

\subsubsection{Proof of Corollary~\ref{coro:dtv_rate}}\label{sec:coro:dtv_rate}

\begin{proof}
We apply Theorem~\ref{theo:partialtvt} in the reduced model $X_t=f_tY+\sigma_t\xi$ (so that $\bar g_t=\sigma_t$). Define
\[
a_t:=\frac{\sigma_t'}{\sigma_t},\qquad c_t:=f_t'-a_tf_t,\qquad
\gamma_t:=\alpha+\frac{f_t^2}{\sigma_t^2},\qquad
D_t:=\alpha\sigma_t^2+f_t^2=\sigma_t^2\gamma_t.
\]
Then Theorem~\ref{theo:partialtvt} yields, for all $x\in\mathbb R^d$ and for a.e. $t\in(0,1)$,
\[
\|\partial_t v_t(x)\|\le A_t+B_t\|x\|.
\]

\medskip\noindent
\textbf{Control on $[t_0,1)$.}
Since $f$ is increasing we have $f_t\ge f_{t_0}$ for all $t\in[t_0,1)$. Then on $[t_0,1)$,
\[
D_t\ge f_t^2\ge C^{-1},\qquad \frac1{D_t}\le C,\qquad \frac{f_t}{D_t}\le C,\qquad \frac{f_t^2}{D_t^2}\le C,
\]
so every ratio involving $D_t$ which appears in $A_t,B_t$ is uniformly bounded on $[t_0,1)$. 
Using $A^{-1}\le \ell\le A$ and $\|\ell'\|_\infty\le A$, we get $|a_t|\le C(1-t)^{-1}$ on $[t_0,1)$.
Differentiating again,
\[
a_t'=-\frac{p}{(1-t)^2}+\partial_t\Bigl(\frac{\ell'}{\ell}\Bigr),
\]
and $\|\ell'\|_\infty+\|\ell''\|_\infty\le A$, $A^{-1}\le \ell\le A$ give $|a_t'|\lesssim (1-t)^{-2}$ on $[t_0,1)$.
Using $0\le f_t\le 1$ and $\|f'\|_\infty\le A$ on $[t_0,1)$,
\[
|c_t|=|f_t'-a_tf_t|\le |f_t'|+|a_t||f_t|\lesssim 1+\frac1{1-t}\lesssim \frac1{1-t}.
\]
Moreover,
\[
c_t'=f_t''-a_t'f_t-a_tf_t',
\]
so $\|f''\|_\infty\le A$ and the bounds on $a_t,a_t'$ yield $|c_t'|\lesssim (1-t)^{-2}$ on $[t_0,1)$.
Finally,
\[
|f_t'-2a_tf_t|\le |f_t'|+2|a_t||f_t|\lesssim 1+\frac1{1-t}\lesssim \frac1{1-t}.
\]
Inspecting the expressions of Theorem~\ref{theo:partialtvt}, $A_t$ is a sum of terms of the form
\[
|c_t'|\times (\text{bounded ratios in }D_t)\cdot
\Bigl(\|\mathbb E_{\nu_{t,0}}[W]\|+\sqrt{d/\gamma_t}\Bigr)
\]
and
\[
|c_t|^2\times (\text{bounded ratios in }D_t)\cdot
\Bigl(\|\mathbb E_{\nu_{t,0}}[W]\|+\sqrt{d/\gamma_t}\Bigr),
\]
while $B_t$ is a sum of terms of the form
\[
|a_t'|+|c_t'| \times (\text{bounded ratios in }D_t)
+|c_t|\cdot |f_t'-2a_tf_t| \times (\text{bounded ratios in }D_t)
+|c_t|^2\times (\text{bounded ratios in }D_t).
\]
All ratios in $D_t$ are uniformly bounded on $[t_0,1)$ whereas
$|a_t'|,|c_t'|,|c_t|^2$, and $|c_t||f_t'-2a_tf_t|$ are all $O((1-t)^{-2})$. Therefore for a.e. $t\in[t_0,1)$,
\[
A_t\le \frac{C}{(1-t)^2}\Bigl(\|\mathbb E_{\nu_{t,0}}[W]\|+\sqrt{d/\gamma_t}\Bigr),
\qquad
B_t\le \frac{C}{(1-t)^2}.
\]
Using $\gamma_t\ge \alpha$ and Lemma~\ref{lem:mean_bound_nu_t0}, we have
\[
\|\mathbb E_{\nu_{t,0}}[W]\|\le \|\argmin u\|+\sqrt{\frac{d}{\alpha+\lambda_t}}
\le A\sqrt d+\frac{1}{\sqrt{\alpha}}\sqrt d,
\]
so
\[
A_t\le \frac{C}{(1-t)^2}\Bigl(A\sqrt d+\frac{1}{\sqrt{\alpha}}\sqrt d+\sqrt{\frac{d}{\alpha}}\Bigr)
\le \frac{C'}{(1-t)^2}\sqrt d.
\]

\medskip\noindent
\textbf{Control on $(0,t_0]$.}
By assumption, for all $t\in(0,t_0]$ we have $\sigma_t^{-1}\le A$ and $|\sigma_t'|+|\sigma_t''|\le A$, so
\[
|a_t|=\Bigl|\frac{\sigma_t'}{\sigma_t}\Bigr|\le |\sigma_t'|\sigma_t^{-1}\le A^2.
\]
Moreover, since $\sigma\in W^{2,1}_{\mathrm{loc}}(0,1)$ and $\sigma_t>0$, we have for a.e. $t\in(0,t_0]$,
\[
a_t'=\partial_t\Bigl(\frac{\sigma_t'}{\sigma_t}\Bigr)
=\frac{\sigma_t''}{\sigma_t}-\Bigl(\frac{\sigma_t'}{\sigma_t}\Bigr)^2,
\]
hence $|a_t'|\le |\sigma_t''|\sigma_t^{-1}+|\sigma_t'|^2\sigma_t^{-2}\le A^2+A^4$.
Similarly,
\[
|c_t|=|f_t'-a_tf_t|\le |f_t'|+|a_t|\le A+A^2,
\qquad
|f_t'-2a_tf_t|\le |f_t'|+2|a_t|\le A+2A^2,
\]
and using $c_t'=f_t''-a_t'f_t-a_tf_t'$ with $|f_t|\le 1$,
\[
|c_t'|\le |f_t''|+|a_t'|+|a_t||f_t'|\le A+(A^2+A^4)+A^3.
\]
Therefore, for a.e. $t\in(0,t_0]$, $B_t$ is bounded by a constant depending only on $\alpha,\beta,K$ and $A$,
whereas $A_t$ satisfies
\[
A_t\le C_0\Bigl(\|\mathbb E_{\nu_{t,0}}[W]\|+\sqrt{d/\gamma_t}\Bigr)
\]
for some $C_0>0$ with the same dependencies. Using $\gamma_t\ge \alpha$ and Lemma~\ref{lem:mean_bound_nu_t0},
\[
\|\mathbb E_{\nu_{t,0}}[W]\|\le \|\argmin u\|+\sqrt{\frac{d}{\alpha+\lambda_t}}
\le A\sqrt d+\frac{1}{\sqrt{\alpha}}\sqrt d,
\]
we obtain $A_t\le C_0'\sqrt d$ for some $C_0'>0$.
Hence, for a.e. $t\in(0,t_0]$ and all $x$,
\[
\|\partial_t v_t(x)\|\le A_t+B_t\|x\|\le C_0'\bigl(\sqrt d+\|x\|\bigr).
\]
Combining with the estimate on $[t_0,1)$ gives the claim.
\end{proof}

\subsubsection{Proof of Corollary~\ref{coro:timeestimatesi}}\label{sec:coro:timeestimatesi}
\begin{proof}
We apply Theorem~\ref{theo:partialtvt} to the reduced model \(X_t=f_tY+\bar g_t\xi\). Define, for a.e. \(t\in(0,1)\),
\[
a_t:=\frac{\bar g_t'}{\bar g_t},
\qquad
c_t:=f_t'-a_tf_t,
\qquad
\gamma_t:=\alpha+\frac{f_t^2}{\bar g_t^2},
\qquad
D_t:=\alpha\bar g_t^2+f_t^2=\bar g_t^2\gamma_t.
\]
Then Theorem~\ref{theo:partialtvt} yields: for all \(x\in\R^d\) and for a.e. \(t\in(0,1)\),
\[
\|\partial_t v_t(x)\|\le A_t+B_t\|x\|,
\]
where \(A_t,B_t\) are the explicit quantities in Theorem~\ref{theo:partialtvt}.

\medskip
\noindent\textbf{Control on \([t_0,1)\).}
Since \(f\) is increasing we have \(f_t\ge f_{t_0}\)
for all \(t\in[t_0,1)\). Then on \([t_0,1)\),
\[
D_t\ge f_t^2\ge C^{-1},
\qquad
\frac1{D_t}\le C,
\qquad
\frac{f_t}{D_t}\le C,
\qquad
\frac{f_t^2}{D_t^2}\le C,
\]
so every ratio involving \(D_t\) which appears in \(A_t,B_t\) is uniformly bounded on \([t_0,1)\). Next, from \(\bar g_t=(1-t)^p\ell(t)\) we have
\[
a_t=\partial_t(\log\bar g_t)=-\frac{p}{1-t}+\frac{\ell'(t)}{\ell(t)}.
\]
Using \(A^{-1}\le \ell\le A\) and \(\|\ell'\|_\infty\le A\), we get \(|a_t|\leq C (1-t)^{-1}\) on \([t_0,1)\).
Differentiating again,
\[
a_t'=\frac{p}{(1-t)^2}+\partial_t \Bigl(\frac{\ell'}{\ell}\Bigr),
\]
and \(\|\ell'\|_\infty+\|\ell''\|_\infty\le A\), \(A^{-1}\le\ell\le A\) give \(|a_t'|\lesssim (1-t)^{-2}\) on \([t_0,1)\).
Using \(0\le f_t\le 1\) and \(\|f'\|_\infty\le A\) on \([t_0,1)\),
\[
|c_t|=|f_t'-a_tf_t|\le |f_t'|+|a_t||f_t|\lesssim 1+\frac1{1-t}\lesssim \frac1{1-t}.
\]
Moreover,
\[
c_t'=f_t''-a_t'f_t-a_tf_t',
\]
so \(\|f''\|_\infty\le A\) and the bounds on \(a_t,a_t'\) yield \(|c_t'|\lesssim (1-t)^{-2}\) on \([t_0,1)\).
Finally,
\[
|f_t'-2a_tf_t|\le |f_t'|+2|a_t||f_t|\lesssim \frac1{1-t}.
\]
Inspecting the expressions of \(A_t\) and \(B_t\) in Theorem~\ref{theo:partialtvt}, and using that on \([t_0,1)\)
all ratios in \(D_t\), we conclude that \(A_t\) is a sum of terms of size
\[
\lesssim \Bigl(|c_t'|+|c_t|^2\Bigr)\Bigl(\|\E_{\nu_{t,0}}[W]\|+\sqrt{d/\gamma_t}\Bigr)
+ |c_t|^2\Bigl(\sqrt{d/\gamma_t}-\sqrt{d/\gamma_t}\Bigr),
\]
and in particular
\[
A_t \lesssim \Bigl(|c_t'|+|c_t|^2\Bigr)\Bigl(\|\E_{\nu_{t,0}}[W]\|+\sqrt{d/\gamma_t}\Bigr).
\]
Moreover \(B_t\) is a sum of terms of size
\(\lesssim |a_t'|+|c_t'|+|c_t||f_t'-2a_tf_t|+|c_t|^2\).
Hence, on \([t_0,1)\),
\[
A_t\le \frac{C}{(1-t)^2}\Bigl(\|\E_{\nu_{t,0}}[W]\|+\sqrt{d/\gamma_t}\Bigr),
\qquad
B_t\le \frac{C}{(1-t)^2}.
\]
Using $\gamma_t\ge\alpha$ and Lemma~\ref{lem:mean_bound_nu_t0} with $y_0:=\arg\min u$, we have
\[
\|\E_{\nu_{t,0}}[W]\|\le \|y_0\|+\sqrt{\frac{d}{\alpha+\lambda_t}}
\le \|y_0\|+\sqrt{\frac{d}{\alpha}}
\le (A+\alpha^{-1/2})\sqrt d,
\]
where we used the hypothesis $\|y_0\|\le A\sqrt d$.
Hence $A_t\le \dfrac{C'}{(1-t)^2}\sqrt d$ for some $C'>0$.

\medskip
\noindent\textbf{Control on \((0,t_0]\).}
By Assumption~\ref{assum:sto-int}, \(f_t,g_t,\sigma_t\in[0,1]\), hence \(\bar g_t\le \sqrt{2}\) for all \(t\in(0,1)\).
On \((0,t_0]\), the additional assumption (4) gives \(\bar g_t^{-1}\le A\), so \(\bar g_t\) is bounded above and below
by constants depending only on \(A\). In particular,
\[
\gamma_t=\alpha+\frac{f_t^2}{\bar g_t^2}\ge \alpha,
\]
and all rational expressions in \(\bar g_t,f_t,\gamma_t,D_t\) appearing in \(A_t,B_t\) are bounded
by constants depending only on \(\alpha\) and \(A\).

Next, for a.e. \(t\in(0,t_0]\), by (4),
\[
|a_t|=\Bigl|\frac{\bar g_t'}{\bar g_t}\Bigr|
\le |\bar g_t'|  \bar g_t^{-1}\le A^2,
\qquad
|a_t'|
=\Bigl|\frac{\bar g_t''}{\bar g_t}-\Bigl(\frac{\bar g_t'}{\bar g_t}\Bigr)^2\Bigr|
\le |\bar g_t''|  \bar g_t^{-1}+|a_t|^2
\le A^2+A^4.
\]
Similarly,
\[
|c_t|=|f_t'-a_tf_t|\le |f_t'|+|a_t|\le A+A^2,
\]
\[
|f_t'-2a_tf_t|\le |f_t'|+2|a_t|\le A+2A^2,
\] 
and using \(c_t'=f_t''-a_t'f_t-a_tf_t'\) with \(|f_t|\le 1\),
\[
|c_t'|\le |f_t''|+|a_t'|+|a_t|  |f_t'|
\le A+(A^2+A^4)+A^3.
\]
Therefore, for a.e. \(t\in(0,t_0]\), \(B_t\) is bounded by a constant depending only on
\(\alpha,\beta,K\) and \(A\), whereas \(A_t\) satisfies
\[
A_t\le C_0\Bigl(\|\E_{\nu_{t,0}}[W]\|+\sqrt{d/\gamma_t}\Bigr)
\]
for some \(C_0>0\) with the same dependencies.
Using $\gamma_t\ge\alpha$ and Lemma~\ref{lem:mean_bound_nu_t0} with $y_0:=\arg\min u$, we have
\[
\|\E_{\nu_{t,0}}[W]\|\le \|y_0\|+\sqrt{\frac{d}{\alpha+\lambda_t}}
\le \|y_0\|+\sqrt{\frac{d}{\alpha}}
\le (A+\alpha^{-1/2})\sqrt d,
\]
where we used the hypothesis $\|y_0\|\le A\sqrt d$.
Therefore $A_t\le C_0'\sqrt d$ for some $C_0'>0$.
Hence, for a.e. \(t\in(0,t_0]\) and all \(x\),
\[
\|\partial_t v_t(x)\|\le A_t+B_t\|x\|\le C_0'(\sqrt d+\|x\|).
\]

\end{proof}

\subsubsection{Proof of Corollary~\ref{coro:timeregudiffu}}\label{sec:coro:timeregudiffu}
\begin{proof}
Let $v_t$ denote the (rescaled) probability flow drift in the diffusion setting. By the closed-form identity
\[
v_t(x)=\frac1t\big(x+s(t,x)\big),
\]
we have
\[
\partial_t s(t,x)=v_t(x)+t  \partial_t v_t(x).
\]
The reduced model $X_t=tY+\sigma_t\xi$ fits the Lipman flow matching setting  of Assumption~\ref{assum:lipman}.
Moreover, the schedule assumptions of Corollary~\ref{coro:dtv_rate} are satisfied: take $t_0=1/2$ and write
$\sigma_t=(1-t)^{1/2}\ell(t)$ with $\ell(t)=\sqrt{1+t}$, so that $\ell,\ell',\ell''$ are bounded on $[t_0,1)$,
and note that $f_t=t$ satisfies $f',f''\in L^\infty$, while $\sigma^{-1},|\sigma'|,|\sigma''|$ are bounded on $(0,t_0]$.
Therefore Corollary~\ref{coro:dtv_rate} applies and yields that for a.e. $t\in(0,1)$ and all $x\in\mathbb{R}^d$,
\[
\|\partial_t v_t(x)\| \le \frac{C}{(1-t)^2}\big(\sqrt d+\|x\|\big).
\]

Let $\mu_t(x):=\mathbb{E}[Y\mid X_t=x]$. Using the identity
\[
\mu_t(x)=\frac1t\Big(x+(1-t^2)s(t,x)\Big),
\]
we obtain
\[
v_t(x)=\frac1t(x+s(t,x))=\frac{\mu_t(x)-t  x}{1-t^2}.
\]
We now bound $\mu_t(x)$ using Lemma~\ref{lemma:bounfmut} (applied with $f_t=t$ and $\bar g_t=\sigma_t$). Writing
\[
\gamma_t:=\alpha+\frac{t^2}{\sigma_t^2}=\alpha+\frac{t^2}{1-t^2},
\]
Lemma~\ref{lemma:bounfmut} gives, for every $x\in\mathbb{R}^d$,
\[
\|\mu_t(x)\|
\le C\Big(\|\mathbb{E}_{\nu_{t,0}}[Y]\|+\sqrt{\frac d{\gamma_t}}
+\frac{t}{\sigma_t^2\gamma_t}\|x\|\Big).
\]
Since $\gamma_t\ge \alpha$ and $\sigma_t^2\gamma_t=\alpha(1-t^2)+t^2=\alpha+(1-\alpha)t^2\ge \alpha\wedge 1$,
the exponential factor is uniformly bounded and $\frac{t}{\sigma_t^2\gamma_t}\le(\alpha\wedge 1)^{-1}$.
Using the assumption $\|\argmin u\|d^{-1/2}\le A$ we obtain by lemma~\ref{lem:mean_bound_nu_t0}
\[
\|\mu_t(x)\| \le C\big(\sqrt d+\|x\|\big).
\]
Consequently,
\[
\|v_t(x)\|
\le \frac{\|\mu_t(x)\|+t\|x\|}{1-t^2}
\le \frac{C}{1-t^2}\big(\sqrt d+\|x\|\big)
\le \frac{C}{1-t}\big(\sqrt d+\|x\|\big).
\]
Combining $\partial_t s(t,x)=v_t(x)+t  \partial_t v_t(x)$ with the two bounds above yields, for a.e. $t\in(0,1)$,
\[
\|\partial_t s(t,x)\|
\le \frac{C}{1-t}\big(\sqrt d+\|x\|\big)
+ \frac{C}{(1-t)^2}\big(\sqrt d+\|x\|\big)
\le \frac{C}{(1-t)^2}\big(\sqrt d+\|x\|\big),
\]
which proves the claim.
\end{proof}

\subsection{Technical novelties for the global and time Lipschitz estimates}\label{sec:technicalglobal}
As the literature on regularity of exact drifts is already large, we only discuss the closest works, namely  \cite{benton_debortoli_doucet_2024_flowmatching},  \cite{gao2024convergence}, \cite{gentiloni_silveri_ocello_2025}, and \cite{kunkel2025distribution}. These papers all obtain regularity information on flow-matching vector fields or diffusion-model scores, but they do so through rather different mechanisms.

\cite{benton_debortoli_doucet_2024_flowmatching} start from an explicit Jacobian formula for the flow-matching vector field and estimate it through assumptions formulated directly on posterior covariances. \cite{gao2024convergence} also use an explicit representation of the vector field, but their argument controls the derivatives through a hierarchy of conditional moments together with model-specific regularity assumptions. In the diffusion setting, \cite{gentiloni_silveri_ocello_2025} exploit Ornstein-Uhlenbeck regularization and Hessian identities for the smoothed density. \cite{kunkel2025distribution}, on the other hand, studies a covariance representation of the vector field and derives from it an integrated control of the Lipschitz constant under explicit covariance asymptotics.

Although several recent works obtain regularity estimates for flow-matching vector fields or diffusion-model scores, the available proof strategies are quite heterogeneous and remain closely tied to the regimes for which they were designed. For this reason, they do not provide a robust route to the present general setting, where the target is only weakly log-concave and the perturbation is merely Hölder. In that regime, one cannot expect a direct differentiation-based argument, nor a closure through model-specific moment expansions, to yield sharp pointwise-in-time bounds with the optimal dimension dependence. In contrast, our approach recasts both the global space-Lipschitz and time-Lipschitz estimates as stability problems for tilted strongly log-concave posterior laws. This avoids both model-specific moment closures and direct PDE/Hessian analyses of the smoothed density, and is precisely what allows us to work under weak log-concavity with merely Hölder perturbations.

The first key ingredient is Lemma~\ref{lemma:holderexp}. For each time $t\in(0,1)$ and position $x\in\mathbb{R}^d$, we introduce the strongly log-concave surrogate posterior law
\[
\nu_{t,x}(dy)\propto e^{-a(y)}p^{t,x}(y) dy,
\] with $
p^{t,x} = \mathrm{Law}(m_t(Z)\mid X_t = x)$.
 Lemma~\ref{lemma:holderexp} studies the log-partition function
\[
a_t(x):=\log \mathbb{E}_{\nu_{t,x}}\big[e^{a(Y)}\big],
\]
where $a$ is the Hölder perturbation in the decomposition $p^\star=e^{-u+a}$. Its key new point is an $L^p$ log-Laplace stability estimate, which shows that the map $x\mapsto a_t(x)$ remains Hölder without ever differentiating the rough term $a$. The second key ingredient is Proposition~\ref{prop:rt-gradient-holder}, which gives an affine bound on $\nabla r_t(x)$, where
\[
r_t(x):=\mathbb{E}[\|Y\|^2\mid X_t=x]
\]
is the posterior second moment of the endpoint variable $Y$ given the intermediate state $X_t=x$. In particular, the time-Lipschitz bound is not obtained by estimating separately a large hierarchy of conditional moments. 

Starting from the posterior representations, the whole argument is reduced to a small family of stable posterior quantities.  Proposition~\ref{propnablav} isolates the explicit Gaussian singularity and leaves only covariance terms, which are compared to their strongly log-concave surrogates through Lemma~\ref{lemma:holderexp} and Lemma \ref{lem:mgf_holder}. Then, Proposition~\ref{prop:time_derivative_general} rewrites $\partial_t v_t(x)$ in terms of the posterior mean, Proposition~\ref{prop:estimatepartialtmut} reduces the problem to $\nabla r_t(x)$, and Propositions~\ref{prop:grad-rt}-\ref{prop:rt-gradient-holder} together with Lemma~\ref{lemma:bounfmut} close the estimate by the same surrogate-posterior comparison mechanism.

Overall, the novelty is twofold. First, the paper introduces a reusable proof architecture: posterior representation, removal of the rough tilt, analysis under a strongly log-concave surrogate, and transfer back to the true posterior through quantitative stability estimates. Second, this architecture is made effective by two genuinely new estimates: the covariance comparison built from Lemma~\ref{lemma:holderexp} and Lemma \ref{lem:mgf_holder}, and the affine control of $\nabla r_t$ in Proposition~\ref{prop:rt-gradient-holder}. Together, these ingredients are what allow the paper to obtain sharp pointwise-in-time bounds, with dimension-free leading constants, in a regime of weak log-concavity and H\"older perturbations that lies beyond the reach of the proof strategies discussed above.

\subsection{Proofs of the functional inequalities}
\subsubsection{Proof of Lemma~\ref{lem:flow-lip}}\label{sec:lem:flow-lip}

\begin{proof}
Differentiating the flow equation with respect to \(x\) gives
\[
\partial_t \nabla X_t(x)
=
\nabla v_t(X_t(x)) \nabla X_t(x),
\qquad
\nabla X_0(x)=\text{Id}.
\]
Fix \(x\in\R^d\) and a unit vector \(h\in\R^d\), and set
\[
w_t := \nabla X_t(x)h,\qquad a_t := |w_t|^2.
\]
Then
\[
\partial_t a_t
= 2\langle w_t,\partial_t w_t\rangle
= 2\langle w_t,\nabla v_t(X_t(x))w_t\rangle.
\]
By the definition of \(\lambda_{\max}\),
\[
\langle w_t,\nabla v_t(X_t(x))w_t\rangle
\le
\lambda_{\max}\big(\nabla v_t(X_t(x))\big) |w_t|^2
\le
\theta_t a_t.
\]
Hence
\[
\partial_t a_t  \le  2\theta_t a_t.
\]
By Grönwall's inequality,
\[
a_t  \le  a_0
\exp  \Big(2\int_0^t\theta_s ds\Big)
=
\exp  \Big(2\int_0^t\theta_s ds\Big),
\]
since \(a_0 = |\nabla X_0(x)h|^2 = |h|^2 = 1\).
This yields uniformly in $x$,
\[
\|\nabla X_t(x)\|_{\mathrm{op}}
=
\sup_{|h|=1}|\nabla X_t(x)h|
\le
\exp  \Big(\int_0^t\theta_s ds\Big).
\]
\end{proof}

\subsubsection{Proof of Proposition~\ref{prop:Psi-Sobolev-p}}\label{sec:prop:Psi-Sobolev-p}

\begin{proof}
Define \(F:=\zeta\circ T\). Since \(T_{\#}\gamma_d = p\), we have
\[
\text{Ent}_{\Psi,p}(\zeta)
=
\text{Ent}_{\Psi,\gamma_d}(F).
\]
Applying \eqref{eq:gaussian-Psi-Sobolev} to \(F\) gives
\[
\text{Ent}_{\Psi,p}(\zeta)
=
\text{Ent}_{\Psi,\gamma_d}(F)
 \le 
\frac12
\int_{\R^d}\Psi''(F(y)) |\nabla F(y)|^2 d\gamma_d(y).
\]
As \(|\nabla F(y)|\le L |\nabla\zeta(T(y))|\), hence
\[
\text{Ent}_{\Psi,p}(\zeta)
 \le 
\frac{L^2}{2}
\int_{\R^d}\Psi''(\zeta(T(y))) |\nabla\zeta(T(y))|^2 d\gamma_d(y).
\]
Using again \(T_{\#}\gamma_d = p\) and the change of variables \(x=T(y)\) yields
\[
\text{Ent}_{\Psi,p}(\zeta)
 \le 
\frac{L^2}{2}
\int_{\R^d}\Psi''(\zeta(x)) |\nabla\zeta(x)|^2 p(x) dx,
\]
as claimed.
\end{proof}

\subsubsection{Proof of Theorem~\ref{theo:functional-ineq-flowmatching}}\label{sec:theo:functional-ineq-flowmatching}

\begin{proof}
Let
$(X_t)_{t\in[0,1]}$ be the flow associated with the flow-matching vector field $(v_t)_{t\in[0,1]}$ from the probability flow ODE, namely
\[
\partial_t X_t(x)=v_t\bigl(X_t(x)\bigr),\qquad X_0(x)=x,
\]
and define the terminal map $T:=X_1$. The probability flow ODE considers the model
$X_t = f_t Y + \sigma_t \xi$ with $(Y,\xi)\sim p^\star \otimes\gamma_d$, $f_t=t$ and $\sigma_t=\sqrt{1-t^2}$.
As shown in the proof of Corollary~\ref{coro:score_osl_selfcontained},
this choice satisfies the hypotheses required to apply Corollary~\ref{cor:Lip-transport} to the associated
flow-matching vector field $v_t$.

By Corollary~\ref{cor:Lip-transport}, the map $T$ pushes forward the standard Gaussian measure $\gamma_d$ onto $p^\star$
and  is globally Lipschitz with a dimension-free bound
\[
\text{Lip}(T)\le e^{C_{\mathrm{flow}}}=:L.
\]
Applying Proposition~\ref{prop:Psi-Sobolev-p} to the pushforward relation $T_{\#}\gamma_d=p^\star$ and the Lipschitz bound $\text{Lip}(T)\le L$,
we obtain
\[
\text{Ent}_{\Psi,p^\star}(f)
 \le 
\frac{L^2}{2}\int_{\R^d}\Psi''(f(x)) \|\nabla f(x)\|^2 p^\star(x) dx.
\]

\begin{itemize}
\item \emph{Poincar\'e.} Take $\Psi(u)=u^2$ on $\R$. Then $\text{Ent}_{\Psi,p^\star}(f)=\text{Var}_{p^\star}(f)$ and $\Psi''\equiv 2$,
hence for smooth $f$,
\[
\text{Var}_{p^\star}(f)
 \le 
\frac{L^2}{2}\int 2 \|\nabla f\|^2 dp^\star
 = 
L^2\int \|\nabla f\|^2 dp^\star.
\]

\item \emph{Log-Sobolev.} Take $\Psi(u)=u\log u$ on $\R_+$ and set $\zeta=f^2$ with $f$ smooth.
Then $\text{Ent}_{\Psi,p^\star}(f^2)=\text{Ent}_{p^\star}(f^2)$ and $\Psi''(u)=1/u$. Therefore,
\[
\text{Ent}_{p^\star}(f^2)
 \le 
\frac{L^2}{2}\int \frac{1}{f^2} \|\nabla(f^2)\|^2 dp^\star
 = 
\frac{L^2}{2}\int \frac{1}{f^2} 4f^2\|\nabla f\|^2 dp^\star
 = 
2L^2\int \|\nabla f\|^2 dp^\star.
\]
\end{itemize}
\end{proof}

\section{Proofs of the applications to sampling}
\subsection{Proof of the discretization errors}
\subsubsection{Proof of Proposition~\ref{prop:W2_discrete}}\label{sec:prop:W2_discrete}

\begin{proof}
By synchronous coupling (same $X_0$ and same Brownian motion $W$),
\[
W_2(\text{Law}(X_\tau),\text{Law}(\bar X_N))
\le \Big(\E\|X_\tau-\bar X_N\|^2\Big)^{1/2}.
\]
Set $$\Delta_k := X_{t_k}-\bar X_k.$$
\textbf{Step 1: restart the exact flow at each grid point.}
For each $k$, let $Y^{(k)}$ be the solution on $[t_k,t_{k+1}]$ of
\[
dY^{(k)}_t=a_t(Y^{(k)}_t)  dt+b_t  dW_t,\qquad Y^{(k)}_{t_k}=\bar X_k.
\]
Then

\begin{equation}\label{eq:one_step_Minkowski}
\|\Delta_{k+1}\|_{L^2}\le \|X_{t_{k+1}}-Y^{(k)}_{t_{k+1}}\|_{L^2} + \|Y^{(k)}_{t_{k+1}}-\bar X_{k+1}\|_{L^2}.
\end{equation}
\textbf{Step 2: stability with respect to the initial condition.}
Let $Z_t:=X_t-Y^{(k)}_t$ on $[t_k,t_{k+1}]$. Since both SDEs have the same diffusion term,
\[
dZ_t = (a_t(X_t)-a_t(Y^{(k)}_t))  dt,
\]
so
\[
\frac{d}{dt}\|Z_t\|^2
=2\langle Z_t,  a_t(X_t)-a_t(Y^{(k)}_t)\rangle
\le 2L_t  \|Z_t\|^2
\]
by (A1). Applying Grönwall's lemma, we get 
\[
\|Z_{t_{k+1}}\|^2 \le \exp \Big(2\int_{t_k}^{t_{k+1}}L_s  ds\Big)  \|Z_{t_k}\|^2
= \exp \Big(2\int_{t_k}^{t_{k+1}}L_s  ds\Big)  \|\Delta_k\|^2.
\]
Taking expectations and square roots,
\begin{equation}\label{eq:exact_flow_stability}
\|X_{t_{k+1}}-Y^{(k)}_{t_{k+1}}\|_{L^2}
\le \exp \Big(\int_{t_k}^{t_{k+1}} L_s  ds\Big)  \|\Delta_k\|_{L^2}.
\end{equation}
\textbf{Step 3: stability with respect to the drift.}
From the definitions of $Y^{(k)}$ and the Euler update \eqref{eq:EM_main},
\begin{align*}
Y^{(k)}_{t_{k+1}}-\bar X_{k+1}
&=
\int_{t_k}^{t_{k+1}} \big(a_t(Y^{(k)}_t)-a_{t_k}(\bar X_k)\big)  dt
+\int_{t_k}^{t_{k+1}} b_t dW_t - \xi_{k+1}\\
&=\int_{t_k}^{t_{k+1}} \big(a_t(Y^{(k)}_t)-a_{t_k}(\bar X_k)\big)  dt,
\end{align*}
so by Minkowski,
\begin{align}
\|Y^{(k)}_{t_{k+1}}-\bar X_{k+1}\|_{L^2}
&\le
\int_{t_k}^{t_{k+1}}\|a_t(Y^{(k)}_t)-a_{t_k}(\bar X_k)\|_{L^2}  dt.
\label{eq:local_split}
\end{align}
Split
\begin{equation}
a_t(Y^{(k)}_t)-a_{t_k}(\bar X_k)
=
\big(a_t(Y^{(k)}_t)-a_{t_k}(Y^{(k)}_t)\big)
+
\big(a_{t_k}(Y^{(k)}_t)-a_{t_k}(\bar X_k)\big).
\end{equation}

Set $$A_{k}:=\sup_{t\in[t_k,t_{k+1}]}\|Y^{(k)}_t\|_{L^2},$$
from
\[
Y^{(k)}_t=\bar X_k+\int_{t_k}^{t} a_s(Y^{(k)}_s)  ds+\int_{t_k}^{t} b_s  dW_s
\]
and Minkowski plus It\^o isometry,
\[
A_{k}
\le \|\bar X_k\|_{L^2}+\int_{t_k}^{t_{k+1}}\|a_s(Y^{(k)}_s)\|_{L^2}  ds+B\sqrt d  \sqrt{h_k}.
\]
By (A4), $\|a_s(Y^{(k)}_s)\|_{L^2}\le F_s(\sqrt d+\|Y^{(k)}_s\|_{L^2})\le F_s(\sqrt d+A_{k})$, hence
\[
A_{k}\le \|\bar X_k\|_{L^2}+(\sqrt d+A_{k})\int_{t_k}^{t_{k+1}}F_s  ds+B\sqrt d  \sqrt{h_k}.
\]
As by assumption $\int_{t_k}^{t_{k+1}}F_s  ds\leq1/2$ and $\|\bar X_k\|_{L^2}\le A_{\bar X}$, we obtain
\[
A_{k}\le 2(A_{\bar X}+\sqrt d  +B\sqrt{d h_k})=:U_k-2\sqrt{d}.
\]
\emph{Time variation term.}
Fix $t\in[t_k,t_{k+1}]$, we have,
\[
a_t(Y^{(k)}_t)-a_{t_k}(Y^{(k)}_t)
=\int_{t_k}^{t}\partial_s a_s(Y^{(k)}_t)  ds,
\]
hence by (A3)
\[
\|a_t(Y^{(k)}_t)-a_{t_k}(Y^{(k)}_t)\|_{L^2}
\le \Big(\int_{t_k}^{t}M_s  ds\Big)\Big(\sqrt d+\|Y^{(k)}_t\|_{L^2}\Big)
\le \Big(\int_{t_k}^{t}M_s  ds\Big)U_k.
\]
Therefore,
\begin{align*}
\int_{t_k}^{t_{k+1}}\|a_t(Y^{(k)}_t)-a_{t_k}(Y^{(k)}_t)\|_{L^2}  dt
&\le h_k
U_k
\int_{t_k}^{t_{k+1}}M_t  dt.
\end{align*}
\emph{Space variation term.}
By (A2),
\[
\|a_{t_k}(Y^{(k)}_t)-a_{t_k}(\bar X_k)\|_{L^2}
\le C_{t_k}  \|Y^{(k)}_t-\bar X_k\|_{L^2}.
\]
Moreover, for $t\in[t_k,t_{k+1}]$,
\[
Y^{(k)}_t-\bar X_k
=
\int_{t_k}^{t} a_s(Y^{(k)}_s)  ds+\int_{t_k}^{t} b_s  dW_s,
\]
so by Minkowski and It\^o isometry,
\[
\|Y^{(k)}_t-\bar X_k\|_{L^2}
\le
\int_{t_k}^{t}\|a_s(Y^{(k)}_s)\|_{L^2}  ds
+ B\sqrt d  \sqrt{t-t_k}.
\]
Using (A4) and $\|Y^{(k)}_s\|_{L^2}\le U_k-\sqrt{d}$, we have
$\|a_s(Y^{(k)}_s)\|_{L^2}\le F_sU_k$, hence
\[
\|Y^{(k)}_t-\bar X_k\|_{L^2}
\le
U_k\int_{t_k}^{t}F_s  ds + B\sqrt d  \sqrt{t-t_k},
\]
which gives
\[
\int_{t_k}^{t_{k+1}}\|Y^{(k)}_t-\bar X_k\|_{L^2}  dt
\le h_k
U_k\int_{t_k}^{t_{k+1}}F_t  dt
+\frac{2}{3}B\sqrt d  h_k^{3/2}.
\]
Plugging into \eqref{eq:local_split} yields
\begin{equation}\label{eq:rho_k_def}
\|Y^{(k)}_{t_{k+1}}-\bar X_{k+1}\|_{L^2}
\le
\rho_k,
\end{equation}
where
\[
\rho_k:=
h_k U_k
\int_{t_k}^{t_{k+1}}M_t  dt
+ h_kC_{t_k}\Big(U_k \int_{t_k}^{t_{k+1}}F_t  dt+\frac{2}{3}B\sqrt d  h_k^{1/2}\Big).
\]
\textbf{Step 4: discrete Grönwall.}
Combining \eqref{eq:one_step_Minkowski}, \eqref{eq:exact_flow_stability}, and \eqref{eq:rho_k_def},
\[
\|\Delta_{k+1}\|_{L^2}\le \exp \Big(\int_{t_k}^{t_{k+1}} L_s  ds\Big) \|\Delta_k\|_{L^2} + \rho_k.
\]
Iterating,
\[
\|\Delta_{N}\|_{L^2} \le \sum_{k=0}^{N-1}\rho_k\prod_{j=k+1}^{N-1}\exp \Big(\int_{t_j}^{t_{j+1}} L_s  ds\Big)
=\sum_{k=0}^{N-1}\rho_k
\exp \Big(\int_{t_{k+1}}^{\tau}L_s  ds\Big).
\]
Finally,  $W_2(\text{Law}(X_\tau),\text{Law}(\bar X_N))\le \|\Delta_{N}\|_{L^2}$, giving the result.
\end{proof}

\subsubsection{Proof of Proposition~\ref{prop:geom_mesh_general}}\label{sec:prop:geom_mesh_general}

\begin{proof}
Proposition~\ref{prop:W2_discrete} in the ODE case yields
\begin{equation}\label{eq:prop2_ode_recall}
W_2\big(\text{Law}(X_\tau),\text{Law}(\bar X_N)\big)
\le
\sum_{k=0}^{N-1}
h_k \exp \Big(\int_{t_{k+1}}^\tau L_s ds\Big)
\Bigg[
U_k\int_{t_k}^{t_{k+1}}\big(M_t+C_{t_k}F_t\big) dt
\Bigg],
\end{equation}
with $U_k=2A_{\bar X}+4\sqrt d\leq C\sqrt{d}$ (since $B=0$). Then, \eqref{eq:prop2_ode_recall} gives
\begin{equation}\label{eq:reduce_sum}
W_2\big(\text{Law}(X_\tau),\text{Law}(\bar X_N)\big)
\le
C\sqrt d
\sum_{k=0}^{N-1} h_k\int_{t_k}^{t_{k+1}}\big(M_t+C_{t_k}F_t\big) dt.
\end{equation}
Now, as $t_k=1-r^k$,
\begin{equation}\label{eq:jhflzpazmppmmp}
\int_{t_k}^{t_{k+1}}F_t dt
\le
A\int_{t_k}^{t_{k+1}}\frac{dt}{1-t}
=
A\log \Big(\frac{1-t_k}{1-t_{k+1}}\Big)
=
A\log \Big(\frac{r^k}{r^{k+1}}\Big)
=
A\log \Big(\frac{1}{1-h_{\max}}\Big).
\end{equation}
Thus choosing $\bar h\in(0,1)$ such that
$A\log \big(\frac{1}{1-\bar h}\big)\le \frac12$
ensures $\int_{t_k}^{t_{k+1}}F_t dt\le \frac12$ for all $k$ whenever $h_{\max}\le \bar h$.
Using \eqref{eq:lipman_envelopes_general},
\[
\int_{t_k}^{t_{k+1}}M_t dt
\le
A\int_{t_k}^{t_{k+1}}\frac{dt}{(1-t)^2}
=
A\Big(\frac{1}{1-t_{k+1}}-\frac{1}{1-t_k}\Big)
=
A\Big(\frac{1}{r^{k+1}}-\frac{1}{r^k}\Big)
=
A\frac{1-r}{r^{k+1}}.
\]
Multiplying by $h_k=r^k(1-r)=r^k h_{\max}$ yields
\[
\sum_{k=0}^{N-1}h_k\int_{t_k}^{t_{k+1}}M_t dt
\le
N A\frac{h_{\max}^2}{1-h_{\max}}.
\]
Since $r^N=1-\tau$ and $r=1-h_{\max}$, we have
\[
N=\frac{\log(1/(1-\tau))}{\log(1/r)}
=\frac{\log(1/(1-\tau))}{\log(1/(1-h_{\max}))}
\le
\frac{\log(1/(1-\tau))}{h_{\max}},
\]
using $-\log(1-h_{\max})\ge h_{\max}$. Hence for $h_{\max}\le 1/2$,
\begin{equation}\label{eq:M_sum_bound}
\sum_{k=0}^{N-1}h_k\int_{t_k}^{t_{k+1}}M_t dt
\le
C h_{\max}\log \Big(\frac{1}{1-\tau}\Big).
\end{equation}
Using again \eqref{eq:lipman_envelopes_general} and \eqref{eq:jhflzpazmppmmp},
\[
\int_{t_k}^{t_{k+1}}F_t dt\le A\log(1/r).
\]
Also $C_{t_k}\le \frac{A}{1-t_k}$ so
\[
h_k\int_{t_k}^{t_{k+1}}C_{t_k}F_t dt
\le
h_k\frac{A^2}{r^k}\log(1/r),
\]
which gives
\begin{align*}
\sum_{k=0}^{N-1}h_k\int_{t_k}^{t_{k+1}}C_{t_k}F_t dt
&\le
A^2 N h_{\max}\log(1/r)\\
& \leq Ch_{\max}\log(1/(1-\tau)).
\end{align*}
Combining \eqref{eq:reduce_sum} and \eqref{eq:M_sum_bound}, we obtain for
$h_{\max}\le \min(\bar h,1/2)$
\begin{align*}
W_2\big(\text{Law}(X_\tau),\text{Law}(\bar X_N)\big)
&\le
C \sqrt d h_{\max} \log \Big(\frac{1}{1-\tau}\Big).
\end{align*}
\end{proof}

\subsubsection{Proof of Theorem~\ref{theo:geom_mesh_generaldiff}}\label{sec:theo:geom_mesh_generaldiff}
Let us first state an intermediate decomposition. 
Define for $k\in \{0,...,N-1\}$
\begin{equation}
\label{eq:Vk_def}
\Delta_k := X_{t_k}-\bar X_k, \qquad V_k := \int_{t_k}^{t_{k+1}}\Big(a_t(X_t)-a_{t_k}(X_{t_k})\Big) dt .
\end{equation}

\begin{proposition}
\label{prop:biasvar_no_propagation}
Suppose we are under the assumptions of Theorem~\ref{theo:geom_mesh_generaldiff} and
assume that there exists  $L\in L^1([0,\tau],\R)$ such that for all $k$,
\begin{equation}
\label{eq:stability_discrete}
\big\|\Delta_k + h_k\big(a_{t_k}(X_{t_k})-a_{t_k}(\bar X_k)\big)\big\|_{L^2}
 \le  e^{\int_{t_k}^{t_{k+1}} L_s ds}  \|\Delta_k\|_{L^2}.
\end{equation}
Writing $\mathcal F_k:=\sigma(X_{t_k},\bar X_k)$, assume
there exist $\mathrm{Bias}_k,\mathrm{Var}_k\geq 0$ such that
\begin{equation}
\label{eq:biasvar_Vk}
\big\| \E[V_k\mid \mathcal F_k]\big\|_{L^2}\le \mathrm{Bias}_k,
\qquad
\big\|V_k-\E[V_k\mid \mathcal F_k]\big\|_{L^2}\le \mathrm{Var}_k .
\end{equation}

Then, the discretization error satisfies
\begin{equation*}
W_2\big(\text{Law}(X_\tau),\text{Law}(\bar X_N)\big) \le
\sum_{k=0}^{N-1}\exp\Big(\int_{t_{k+1}}^\tau L_u du\Big) \mathrm{Bias}_k
 + 
\Big(\sum_{k=0}^{N-1}\exp\Big(2\int_{t_{k+1}}^\tau L_u du\Big) \mathrm{Var}_k^{ 2}\Big)^{1/2}.
\end{equation*}
\end{proposition}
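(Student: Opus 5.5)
We work under the synchronous coupling (same $X_0$, same Brownian motion) and write the exact one-step recursion for $\Delta_k$. Since $X_{t_{k+1}} = X_{t_k} + h_k a_{t_k}(X_{t_k}) + V_k + \xi_{k+1}$ and $\bar X_{k+1} = \bar X_k + h_k a_{t_k}(\bar X_k) + \xi_{k+1}$, the Gaussian increments cancel and
\[
\Delta_{k+1} = \Phi_k + V_k,\qquad \Phi_k:=\Delta_k + h_k\big(a_{t_k}(X_{t_k})-a_{t_k}(\bar X_k)\big).
\]
Note that $\Phi_k$ is $\mathcal F_k$-measurable. We then split $V_k = B_k + M_k$ with $B_k:=\E[V_k\mid\mathcal F_k]$ and $M_k:=V_k-B_k$.

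\textbf{Step 1: separating the two contributions.} By linearity of the recursion we decompose $\Delta_k = \Delta^{b}_k + \Delta^{m}_k$. Because $\Phi_k$ is nonlinear in $\Delta_k$, this decomposition cannot be taken literally. Instead, we track $e_k:=\|\Delta_k\|_{L^2}$ directly and prove by induction the bound
\[
e_k \le S_k + Q_k,
\]
where
\[
S_k:=\sum_{j<k} e^{\int_{t_{j+1}}^{t_k}L}\,\mathrm{Bias}_j,\qquad Q_k:=\Big(\sum_{j<k} e^{2\int_{t_{j+1}}^{t_k}L}\,\mathrm{Var}_j^2\Big)^{1/2}.
\]
This bound at $k=N$ gives the claim, since $W_2\le e_N$.

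\textbf{Step 2: the one-step inequality.} We expand
\[
\E\|\Phi_k + B_k + M_k\|^2 = \|\Phi_k+B_k\|_{L^2}^2 + \|M_k\|_{L^2}^2,
\]
where the cross term vanishes because $\E[M_k\mid\mathcal F_k]=0$ and $\Phi_k+B_k$ is $\mathcal F_k$-measurable. By Minkowski and \eqref{eq:stability_discrete},
\[
\|\Phi_k+B_k\|_{L^2}\le \rho_k e_k + \mathrm{Bias}_k,\qquad \rho_k:=e^{\int_{t_k}^{t_{k+1}}L}.
\]
Hence
\[
e_{k+1}^2\le (\rho_k e_k+\mathrm{Bias}_k)^2+\mathrm{Var}_k^2 .
\]

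\textbf{Step 3: closing the induction.} This is the main technical point, since the quantity we want to propagate is the sum $S+Q$ of an $\ell^1$ part and an $\ell^2$ part. Assume $e_k\le S_k+Q_k$. Note that $S_{k+1}=\rho_kS_k+\mathrm{Bias}_k$ and $Q_{k+1}^2=\rho_k^2Q_k^2+\mathrm{Var}_k^2$. Then
\[
e_{k+1}^2\le (S_{k+1}+\rho_kQ_k)^2+\mathrm{Var}_k^2,
\]
and it suffices to show
\[
(S+a)^2+b^2\le (S+\sqrt{a^2+b^2})^2\qquad\text{for } S,a,b\ge0 .
\]
Expanding both sides, this reduces to $2Sa\le 2S\sqrt{a^2+b^2}$, which holds. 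Taking $a=\rho_kQ_k$ and $b=\mathrm{Var}_k$ gives $e_{k+1}\le S_{k+1}+Q_{k+1}$, completing the induction.

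Finally, because the weights $e^{\int_{t_{j+1}}^{t_N}L}=e^{\int_{t_{j+1}}^{\tau}L}$ telescope correctly, setting $k=N$ yields exactly the stated bound.
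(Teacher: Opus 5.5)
Your proposal is correct and follows the paper's proof essentially step for step. It uses the same split $\Delta_{k+1}=\Phi_k+\E[V_k\mid\mathcal F_k]+(V_k-\E[V_k\mid\mathcal F_k])$, the same Pythagorean identity, and the same induction $e_k\le S_k+Q_k$, closed by the elementary inequality $(S+a)^2+b^2\le (S+\sqrt{a^2+b^2})^2$ exactly as in the paper. The aside in Step 1 about a linear decomposition is unnecessary and can be dropped, and the base case $e_0=0$ follows from $\bar X_0=X_0$.
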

The proof of Proposition~\ref{prop:biasvar_no_propagation} can be found in Section~\ref{sec:prop:biasvar_no_propagation}. We can now give the proof of Theorem~\ref{theo:geom_mesh_generaldiff}.

\begin{proof}[Proof of Theorem~\ref{theo:geom_mesh_generaldiff}]
We work on the same probability space and use the synchronous coupling driven by the same Brownian motion.
Hence
\begin{equation}\label{eq:w2-strong_T}
W_2 \bigl(\text{Law}(X_\tau),\text{Law}(\bar X_N)\bigr)\le \|X_\tau-\bar X_N\|_{L^2}.
\end{equation}
Let $\Delta_k$ and $V_k$ defined in \eqref{sec:theo:geom_mesh_generaldiff},
 using $\xi_{k+1}=\int_{t_k}^{t_{k+1}} b_t dW_t$ gives
\begin{equation}\label{eq:recursion_T}
\Delta_{k+1}
=\Delta_k+h_k\Bigl(a_{t_k}(X_{t_k})-a_{t_k}(\bar X_k)\Bigr)+V_k.
\end{equation}

\paragraph{Step 1: stability of the one-step map.}
We have $\lambda_{\max}(\nabla a_t(\cdot))\le L_t$ and $\lambda_{\min}(\nabla a_{t_k})\geq -C_{t_k}$.
By \eqref{eq:diff_envelopes_general_T}, $C_{t_k}\le A/\rho(t_k)$, hence
\[
h_k C_{t_k}\le A \frac{h_k}{\rho(t_k)}.
\]
We bound $\frac{h_k}{\rho(t_k)}$ uniformly over $k$.
If $T-t_k\ge 1$, then $\rho(t_k)=1$ and $h_k/\rho(t_k)=h_k\le h_{\max}$.
If $T-t_k<1$, then $\rho(t_k)=T-t_k=r^kT$ and $h_k/\rho(t_k)=\frac{r^k h_{\max}}{r^k T}=\frac{h_{\max}}{T}\le h_{\max}$ since $T>1$.
Thus $h_k C_{t_k}\le A h_{\max}$ for all $k$.
Choosing $h_{\max}\le \bar h$ small enough, we ensure $h_k C_{t_k}\le 1$, hence $C_{t_k}\le 1/h_k$.

Then for all $z,v\in \mathbb{R}^d$,
\[
\bigl\| \bigl(\text{Id}+h_k \nabla a_{t_k}(z)\bigr)v\bigr\|
\le \|\text{Id}+h_k \nabla a_{t_k}(z)\|_{\text{op}} \|v\|
\le (1+h_kL_{t_k}) \|v\|\le e^{h_kL_{t_k}} \|v\|,
\]
as $\lambda_{\min}(\text{Id}+h_k \nabla a_{t_k}(z))\geq 0$. Hence,
\begin{align}
    \label{eq:stability_T}
    \bigl\|\Delta_k+h_k\bigl(a_{t_k}(X_{t_k})-a_{t_k}(\bar X_k)\bigr)\bigr\|
&\le \int_0^1 \bigl\|\bigl(\text{Id}+h_k \nabla a_{t_k}(X_{t_k}+s(\bar X_k-X_{t_k})))\bigr)(X_{t_k}-\bar X_k)\bigr\| ds\nonumber\\
&\le e^{h_k L_{t_k}}\|\Delta_k\|.
\end{align}

\paragraph{Step 2: It\^o formula for $a_t(X_t)$ and decomposition of $V_k$.}
Define
\[
\beta_t(x):=\Bigl(\partial_t a_t+(\nabla a_t)a_t+\tfrac12 b_t^2 \Delta a_t\Bigr)(x),
\qquad
G_t(x):=\nabla a_t(x).
\]
By It\^o's formula applied to $a_t(X_t)$,
\begin{equation}\label{eq:ito-a_T}
d\bigl(a_t(X_t)\bigr)=\beta_t(X_t) dt+b_t G_t(X_t) dW_t.
\end{equation}
Writing $w_k(u):=t_{k+1}-u$, we obtain
\begin{align}\label{eq:Vk-decomp2_T}
V_k&= \int_{t_k}^{t_{k+1}}\Big(a_t(X_t)-a_{t_k}(X_{t_k})\Big) dt \nonumber\\
&=\int_{t_k}^{t_{k+1}} w_k(u) \beta_u(X_u) du
+\int_{t_k}^{t_{k+1}} w_k(u) b_u G_u(X_u) dW_u.
\end{align}
Let $\mathcal F_k:=\sigma(X_{t_k},\bar X_k)\subset\mathcal F_{t_k}$.
Since the stochastic integral in \eqref{eq:Vk-decomp2_T} is a martingale increment,
we have
\[
\E[V_k\mid \mathcal F_k]=
\E \left[\int_{t_k}^{t_{k+1}} w_k(u) \beta_u(X_u) du\;\middle|\;\mathcal F_k\right],
\]
so by Minkowski,
\begin{equation}\label{eq:bias-bound_T}
\|\E[V_k\mid \mathcal F_k]\|_{L^2}\le \int_{t_k}^{t_{k+1}} w_k(u) \|\beta_u(X_u)\|_{L^2} du.
\end{equation}
Moreover, by triangle inequality and It\^o isometry,
\begin{equation}\label{eq:var-bound_T}
\|V_k-\E[V_k\mid \mathcal F_k]\|_{L^2}
\le
\int_{t_k}^{t_{k+1}} w_k(u) \|\beta_u(X_u)\|_{L^2} du
+\Bigl(\int_{t_k}^{t_{k+1}} w_k(u)^2 b_u^2 \E\|G_u(X_u)\|_F^2 du\Bigr)^{1/2}.
\end{equation}

\paragraph{Step 3: apply the bias-variance accumulation lemma.}
By the global Lipschitz bound $\|G_u(x)\|_{\text{op}}\le C_u$ and \eqref{eq:diff_envelopes_general_T},
\begin{equation}\label{eq:G-bound_T}
\E\|G_u(X_u)\|_F^2\le d C_u^2\le d \frac{A^2}{\rho(u)^2}.
\end{equation}
Next, using the bound based on Assumption~\ref{assum:sdebounds}, the moment bound on $(X_t)$, and the Laplacian bound,
we obtain
\begin{equation}\label{eq:beta-bound_T} 
\|\beta_u(X_u)\|_{L^2}\le \frac{C}{\rho(u)^2}\sqrt d.
\end{equation}
Define
\[
\text{Bias}_k:=C\sqrt d\int_{t_k}^{t_{k+1}}\frac{t_{k+1}-u}{\rho(u)^2} du,
\qquad
\text{Var}_k:=\text{Bias}_k
+C\sqrt d\Bigl(\int_{t_k}^{t_{k+1}}\frac{(t_{k+1}-u)^2}{\rho(u)^2} du\Bigr)^{1/2},
\]
Then \eqref{eq:bias-bound_T}, \eqref{eq:var-bound_T}, \eqref{eq:beta-bound_T}, \eqref{eq:G-bound_T} give
$\|\E[V_k\mid \mathcal F_k]\|_{L^2}\le \text{Bias}_k$ and
$\|V_k-\E[V_k\mid \mathcal F_k]\|_{L^2}\le \text{Var}_k$.
Together with the stability \eqref{eq:stability_T}, Proposition~\ref{prop:biasvar_no_propagation} yields
\begin{equation}\label{eq:accumulation_T}
\|X_\tau-\bar X_N\|_{L^2}
\le \sum_{k=0}^{N-1}\Lambda_k \text{Bias}_k
+\Bigl(\sum_{k=0}^{N-1}\Lambda_k^2 \text{Var}_k^2\Bigr)^{1/2},
\qquad
\Lambda_k:=\exp \Bigl(\int_{t_{k+1}}^\tau L_s ds\Bigr).
\end{equation}
Since $\sup_z\int_z^\tau L_s ds\le A$, we have $\Lambda_k\le e^A$ for all $k$.

\paragraph{Step 4: geometric-grid estimates.}
We use the pointwise bound
\[
\frac{1}{\rho(u)^2}\le 1+\frac{1}{(T-u)^2},\qquad u\in[0,\tau],
\]
which follows from $\rho(u)=\min\{1,T-u\}$.
Therefore,
\[
\int_{t_k}^{t_{k+1}}\frac{t_{k+1}-u}{\rho(u)^2} du
\le \int_{t_k}^{t_{k+1}}(t_{k+1}-u) du
+\int_{t_k}^{t_{k+1}}\frac{t_{k+1}-u}{(T-u)^2} du
= \frac{h_k^2}{2}+\int_{t_k}^{t_{k+1}}\frac{t_{k+1}-u}{(T-u)^2} du.
\]
For the geometric grid $(\tau,T,N)$, one has $T-t_k=r^k T$, $T-t_{k+1}=r^{k+1}T$, and $h_k=r^k h_{\max}$, and the same computation as in the original proof gives
\begin{equation}\label{eq:Ik_T}
\int_{t_k}^{t_{k+1}}\frac{t_{k+1}-u}{(T-u)^2} du
=\log \frac1r+(r-1)
= -\log \Bigl(1-\frac{h_{\max}}{T}\Bigr)-\frac{h_{\max}}{T}
\le C\Bigl(\frac{h_{\max}}{T}\Bigr)^2,
\end{equation}
for $h_{\max}\le \bar h$ small enough (in particular ensuring $h_{\max}/T\le 1/2$).
Moreover, since $r^N=(T-\tau)/T$,
\begin{equation}\label{eq:N-bound_T}
N=\frac{\log \frac{T}{T-\tau}}{\log \frac1r}
=\frac{\log \frac{T}{T-\tau}}{-\log \bigl(1-\frac{h_{\max}}{T}\bigr)}
\le \frac{T}{h_{\max}}\log \frac{T}{T-\tau}.
\end{equation}
Using $\sum_{k=0}^{N-1} h_k=\tau$ and $h_k\le h_{\max}$,
\[
\sum_{k=0}^{N-1} h_k^2 \le h_{\max}\sum_{k=0}^{N-1} h_k = h_{\max}\tau \le h_{\max}T.
\]
Therefore,
\begin{equation}\label{eq:bias-sum_T}
\sum_{k=0}^{N-1}\text{Bias}_k
\le C\sqrt d\left(\sum_{k=0}^{N-1}h_k^2 + N\Bigl(\frac{h_{\max}}{T}\Bigr)^2\right)
\le C\sqrt d h_{\max}\left(T+\log \frac{T}{T-\tau}\right),
\end{equation}
where the last step uses \eqref{eq:N-bound_T} and $T>1$.

For the variance term, write $\text{Var}_k\le \text{Bias}_k+S_k$ where
\[
S_k:=C\sqrt d\Bigl(\int_{t_k}^{t_{k+1}}\frac{(t_{k+1}-u)^2}{\rho(u)^2} du\Bigr)^{1/2}.
\]
Then
\[
\sum_{k=0}^{N-1}\text{Var}_k^2\le 2\sum_{k=0}^{N-1}\text{Bias}_k^2+2\sum_{k=0}^{N-1}S_k^2.
\]
We again use $1/\rho(u)^2\le 1+1/(T-u)^2$ to obtain
\[
S_k^2\le C d \int_{t_k}^{t_{k+1}}(t_{k+1}-u)^2 du
+ C d\int_{t_k}^{t_{k+1}}\frac{(t_{k+1}-u)^2}{(T-u)^2} du.
\]
The first integral equals $h_k^3/3$, hence
\[
\sum_{k=0}^{N-1} h_k^3 \le h_{\max}^2\sum_{k=0}^{N-1} h_k = h_{\max}^2\tau \le h_{\max}^2 T.
\]
For the second integral, using $(t_{k+1}-u)^2\le h_k^2$ and $(T-u)\ge (T-t_{k+1})$ for $u\in[t_k,t_{k+1}]$,
\[
\int_{t_k}^{t_{k+1}}\frac{(t_{k+1}-u)^2}{(T-u)^2} du
\le \int_{t_k}^{t_{k+1}}\frac{h_k^2}{(T-t_{k+1})^2} du
= \frac{h_k^3}{(T-t_{k+1})^2}
= \frac{(r^k h_{\max})^3}{(r^{k+1}T)^2}
= r^{k-2} \frac{h_{\max}^3}{T^2}.
\]
If $h_{\max}/T\le 1/2$, then $r=1-h_{\max}/T\ge 1/2$ and thus
\[
\sum_{k=0}^{N-1} r^{k-2}\le C \frac{1}{1-r}=C \frac{T}{h_{\max}}.
\]
Therefore,
\[
\sum_{k=0}^{N-1}\int_{t_k}^{t_{k+1}}\frac{(t_{k+1}-u)^2}{(T-u)^2} du
\le C \frac{h_{\max}^3}{T^2}\cdot \frac{T}{h_{\max}}
= C \frac{h_{\max}^2}{T}.
\]
Combining the two contributions yields
\[
\sum_{k=0}^{N-1}S_k^2 \le C d h_{\max}^2\left(T+\frac{1}{T}\right),
\qquad\Rightarrow\qquad
\Bigl(\sum_{k=0}^{N-1}S_k^2\Bigr)^{1/2}\le C\sqrt d h_{\max}\sqrt{T}
\le C\sqrt d h_{\max}T,
\]
where we used $T>1$.
Also $\sqrt{\sum \text{Bias}_k^2}\le \sum \text{Bias}_k$, so using \eqref{eq:bias-sum_T} we obtain
\begin{equation}\label{eq:var-sqrt_T}
\Bigl(\sum_{k=0}^{N-1}\text{Var}_k^2\Bigr)^{1/2}
\le C\sqrt d h_{\max}\left(T+\log \frac{T}{T-\tau}\right).
\end{equation}

\paragraph{Conclusion.}
Plugging \eqref{eq:bias-sum_T} and \eqref{eq:var-sqrt_T} into \eqref{eq:accumulation_T}, using $\Lambda_k\le e^A$, we obtain
\[
\|X_\tau-\bar X_N\|_{L^2}
\le C\sqrt d h_{\max}\left(T+\log \frac{T}{T-\tau}\right).
\]
Combining with \eqref{eq:w2-strong_T} yields the claim.
\end{proof} 

\subsubsection{Proof of Proposition~\ref{prop:geomeshLIp}}\label{sec:prop:geomeshLIp} 
\begin{proof}
We write the proof in the reduced model
\[
X_t=f_tY+\bar g_t\xi,\qquad Y\sim p^\star,\ \xi\sim \mathcal N(0,I_d)\ \text{independent},
\]
where $\bar g_t=\sigma_t$ in the Lipman setting and $\bar g_t=(g_t^2+\sigma_t^2)^{1/2}$ in the
Stochastic-interpolant setting. The two cases are identical once written in this form. We verify the hypotheses of Proposition~\ref{prop:geom_mesh_general} on $[0,\tau]$.
\textbf{Step 1: Lipschitz bounds.} By Corollaries~\ref{coro:lipmafm}/\ref{cor:lipman-global}/\ref{coro:dtv_rate}
in the Lipman case, and by Corollaries \ref{coro:notlipmafm}/\ref{coro:global-Lipschitz-v}/\ref{coro:timeestimatesi} in the
stochastic-interpolant case, there exists a dimension-free constant $C>0$ such that for a.e.\ $t\in(0,\tau)$,
\[
\int_t^\tau \sup_{x\in\R^d}\lambda_{\max}(\nabla v_s(x))\,ds \le C,
\qquad
\sup_{x\in\R^d}\|\nabla v_t(x)\|_{\text{op}}\le \frac{C}{1-t},
\]
and
\[
\|\partial_t v_t(x)\|\le \frac{C}{(1-t)^2}\bigl(\sqrt d+\|x\|\bigr),
\qquad x\in\R^d.
\]
Thus Assumption~\ref{assum:sdebounds}-(A1),(A2),(A3) hold with
\[
L_t:=\sup_{x\in\R^d}\lambda_{\max}(\nabla v_t(x)),
\qquad
C_t:=\frac{C}{1-t},
\qquad
M_t:=\frac{C}{(1-t)^2}.
\]
\textbf{Step 2: linear-growth bound on $v_t$.}
Set
\[
\mu_t(x):=\E[Y\mid X_t=x],\qquad
a_t:=\frac{\bar g_t'}{\bar g_t},\qquad
c_t:=f_t'-a_tf_t,\qquad
D_t:=\alpha \bar g_t^2+f_t^2.
\]
Since
\[
\dot X_t=f_t'Y+\bar g_t'\xi
      =a_tX_t+c_tY,
\]
taking conditional expectation given $X_t=x$ yields
\[
v_t(x)=a_tx+c_t\mu_t(x).
\]
By Lemma~\ref{lemma:bounfmut},
\[
\|\mu_t(x)\|
\le
C\left(
\| \E_{\nu_{t,0}}[W]\|
+\sqrt{\frac d{\gamma_t}}
+\frac{f_t}{\bar g_t^2\gamma_t}\|x\|
\right),
\qquad
\gamma_t:=\alpha+\frac{f_t^2}{\bar g_t^2},
\]
hence, since $\bar g_t^2\gamma_t=D_t$,
\[
\|\mu_t(x)\|
\le
C\left(
\| \E_{\nu_{t,0}}[W]\|
+\sqrt{\frac d{\gamma_t}}
+\frac{f_t}{D_t}\|x\|
\right).
\]
Moreover Lemma~\ref{lem:mean_bound_nu_t0} gives
\[
\| \E_{\nu_{t,0}}[W]\|
\le
\|\argmin u\|+\sqrt{\frac d{\alpha+f_t^2/\bar g_t^2}}
\le C\sqrt d,
\]
and also $\gamma_t\ge \alpha$, so $\sqrt{d/\gamma_t}\le C\sqrt d$. Therefore,
\[
\|\mu_t(x)\|
\le
C\left(\sqrt d+\frac{f_t}{D_t}\|x\|\right).
\]
Consequently,
\[
\|v_t(x)\|
\le
|a_t|\,\|x\|+|c_t|\,\|\mu_t(x)\|
\le
\left(|a_t|+C|c_t|\frac{f_t}{D_t}\right)\|x\|+C|c_t|\sqrt d.
\]
We now bound the coefficients. On $[t_0,\tau]$, Assumption~\ref{assum:lipmanliptime} gives
\[
\bar g_t=(1-t)^p\ell(t),\qquad
0<A^{-1}\le \ell(t)\le A,\qquad
\|\ell'\|_\infty+\|f'\|_\infty\le A,
\]
hence
\[
|a_t|
=
\left|\frac{\bar g_t'}{\bar g_t}\right|
=
\left|-\frac{p}{1-t}+\frac{\ell'(t)}{\ell(t)}\right|
\le \frac{C}{1-t},
\qquad
|c_t|
\le |f_t'|+|a_t|\,|f_t|
\le \frac{C}{1-t}.
\]
Also $f$ is increasing and $f_1=1$, so $f_t\ge f_{t_0}>0$ on $[t_0,\tau]$, and therefore
\[
D_t=\alpha \bar g_t^2+f_t^2\ge f_{t_0}^2.
\]
It follows that
\[
|a_t|+|c_t|\frac{f_t}{D_t}\le \frac{C}{1-t},
\qquad
|c_t|\le \frac{C}{1-t},
\qquad t\in[t_0,\tau].
\]

On $(0,t_0]$, Assumption~\ref{assum:lipmanliptime} gives
\[
\bar g_t^{-1}+|\bar g_t'|+|f_t'|\le C,
\]
hence $|a_t|+|c_t|\le C$. Since $0\le f_t\le 1$ and $\bar g_t\ge C^{-1}$ on $(0,t_0]$,
\[
D_t=\alpha \bar g_t^2+f_t^2\ge c_0>0,
\]
so
\[
|a_t|+|c_t|\frac{f_t}{D_t}\le C,
\qquad
|c_t|\le C,
\qquad t\in(0,t_0].
\]
Combining the two regimes, we obtain for all $t\in(0,\tau]$,
\[
\|v_t(x)\|\le \frac{C}{1-t}\bigl(\sqrt d+\|x\|\bigr).
\]
Thus Assumption~\ref{assum:sdebounds}-(A4) holds with
\[
F_t:=\frac{C}{1-t}.
\]
\textbf{Step 3: moment bound for the Euler scheme.}
Let $\hat X_t$ be the continuous Euler interpolation:
for $t\in[t_k,t_{k+1})$,
\[
\dot{\hat X}_t=v_{t_k}(\hat X_t),
\qquad
\hat X_{t_k}=\bar X_k.
\]
Then $\hat X_{t_k}=\bar X_k$ for all $k$. First, by Proposition~\ref{prop:second-moment},
\[
\|Y\|_{L_2}\le C\sqrt d,
\]
while $\|\xi\|_{L_2}=\sqrt d$ and $f_t,\bar g_t$ are bounded on $[0,\tau]$, so
\[
\|X_t\|_{L_2}\le |f_t|\,\|Y\|_{L_2}+|\bar g_t|\,\|\xi\|_{L_2}\le C\sqrt d.
\]

Now define
\[
\Delta_t:=\hat X_t-X_t,
\qquad
\delta_t:=\|\Delta_t\|_{L_2},
\qquad
\underline t:=t_k \ \text{for } t\in[t_k,t_{k+1}).
\]
For a.e.\ $t\in[t_k,t_{k+1})$,
\[
\dot\Delta_t
=
v_{\underline t}(\hat X_t)-v_t(X_t)
=
\bigl(v_t(\hat X_t)-v_t(X_t)\bigr)
+\bigl(v_{\underline t}(\hat X_t)-v_t(\hat X_t)\bigr).
\]
Hence
\[
\frac12\frac{d}{dt}\|\Delta_t\|^2
=
\langle \Delta_t, v_t(\hat X_t)-v_t(X_t)\rangle
+
\langle \Delta_t, v_{\underline t}(\hat X_t)-v_t(\hat X_t)\rangle
\le
L_t\|\Delta_t\|^2
+
\|\Delta_t\|\,\|v_t(\hat X_t)-v_{\underline t}(\hat X_t)\|.
\]
Therefore, for a.e.\ $t$ such that $\delta_t>0$,
\[
\delta_t'
\le
L_t\delta_t+\|v_t(\hat X_t)-v_{\underline t}(\hat X_t)\|_{L_2}.
\]
Using the time-Lipschitz bound at the fixed space point $\hat X_t$,
\[
\|v_t(\hat X_t)-v_{\underline t}(\hat X_t)\|_{L_2}
\le
\int_{\underline t}^t \|\partial_s v_s(\hat X_t)\|_{L_2}\,ds
\le
\left(\int_{\underline t}^t M_s\,ds\right)\bigl(\sqrt d+\|\hat X_t\|_{L_2}\bigr).
\]
Since $\|\hat X_t\|_{L_2}\le \|X_t\|_{L_2}+\delta_t\le C\sqrt d+\delta_t$, we get
\[
\delta_t'
\le
\bigl(L_t+\eta_t\bigr)\delta_t+C\sqrt d\,\eta_t,
\qquad
\eta_t:=\int_{\underline t}^t M_s\,ds.
\]
By Grönwall's lemma and $\delta_0=0$,
\[
\delta_t
\le
C\sqrt d\int_0^t
\eta_s
\exp\!\left(\int_s^t (L_u+\eta_u)\,du\right)ds.
\]
Now it remains to bound $\int_0^\tau \eta_s\,ds$. On each interval $[t_k,t_{k+1})$,
\[
\int_{t_k}^{t_{k+1}} \eta_s\,ds
=
\int_{t_k}^{t_{k+1}}\int_{t_k}^s M_u\,du\,ds
=
\int_{t_k}^{t_{k+1}}(t_{k+1}-u)M_u\,du
\le
h_k\int_{t_k}^{t_{k+1}} \frac{C\,du}{(1-u)^2}
\le
C\frac{h_k^2}{(1-t_k)^2}.
\]
Since $h_k=(1-r)(1-t_k)=h_{\max}(1-t_k)$ on the geometric grid,
\[
\int_{t_k}^{t_{k+1}} \eta_s\,ds \le C h_{\max}^2.
\]
Summing over $k$ yields
\[
\int_0^\tau \eta_s\,ds \le C N h_{\max}^2.
\]
Moreover, if
\[
x:=\frac1N\log\!\left(\frac1{1-\tau}\right),
\qquad
r=e^{-x},
\qquad
h_{\max}=1-r,
\]
then $h_{\max}\le x$, hence
\[
N h_{\max}^2 \le h_{\max}\log\!\left(\frac1{1-\tau}\right).
\]
Therefore
\[
\int_0^\tau \eta_s\,ds
\le
C h_{\max}\log\!\left(\frac1{1-\tau}\right).
\]
If $h_{\max}\le \bar h \log\!\bigl(\frac1{1-\tau}\bigr)^{-1}$ with $\bar h$ small enough, then
\[
\int_0^\tau \eta_s\,ds\le C,
\]
and the previous Grönwall bound gives
\[
\sup_{t\in[0,\tau]}\|\hat X_t-X_t\|_{L_2}
\le
C\sqrt d\, h_{\max}\log\!\left(\frac1{1-\tau}\right)
\le C\sqrt d.
\]
In particular, at the grid points,
\[
\sup_{0\le k\le N}\|\bar X_k\|_{L_2}
=
\sup_{0\le k\le N}\|\hat X_{t_k}\|_{L_2}
\le
\sup_{t\in[0,\tau]}\|X_t\|_{L_2}
+
\sup_{t\in[0,\tau]}\|\hat X_t-X_t\|_{L_2}
\le
C\sqrt d.
\]

\medskip

\noindent\textbf{Conclusion.}
We have verified all hypotheses of Proposition~\ref{prop:geom_mesh_general} with
\[
\sup_{z\in[0,\tau]}\int_z^\tau L_t\,dt\le C,
\qquad
F_t\le \frac{C}{1-t},
\qquad
C_t\le \frac{C}{1-t},
\qquad
M_t\le \frac{C}{(1-t)^2},\qquad
\sup_{0\le k\le N}\|\bar X_k\|_{L_2}\le C\sqrt d.
\]
Therefore, Proposition~\ref{prop:geom_mesh_general} applies and yields
\[
W_2\bigl(\text{Law}(X_\tau),\text{Law}(\bar X_N)\bigr)
\le
C\sqrt d\, h_{\max}\log\!\left(\frac1{1-\tau}\right).
\]
\end{proof}

\subsubsection{Proof of Proposition~\ref{prop:biasvar_no_propagation}}\label{sec:prop:biasvar_no_propagation}

\begin{proof}
We work on a probability space carrying the same $d$-dimensional Brownian motion $(W_t)_{t\in[0,\tau]}$
and the same initial condition $X_0=\bar X_0$.
By definition of the $2$-Wasserstein distance,
\[
W_2 \big(\text{Law}(X_\tau),\text{Law}(\bar X_N)\big)
\le \|X_\tau-\bar X_N\|_{L^2}=\|\Delta_N\|_{L^2}.
\]
Subtracting the Euler step  and using the same Brownian increment yields
\begin{align}
\Delta_{k+1}
&= \Delta_k + \int_{t_k}^{t_{k+1}} a_t(X_t) dt - h_k a_{t_k}(\bar  X_k)\nonumber\\
&= \Delta_k + h_k\big(a_{t_k}(X_{t_k})-a_{t_k}(\bar X_k)\big) + V_k ,
\end{align}
where $V_k$ is defined in \eqref{eq:Vk_def}. Defining,
\[
A_k := \Delta_k + h_k\big(a_{t_k}(X_{t_k})-a_{t_k}(\bar X_k)\big),
\qquad
B_k:=\E[V_k\mid\mathcal F_k], \qquad M_{k+1}:=V_k-\E[V_k\mid\mathcal F_k].
\]
we have
\begin{equation}\label{eq:Delta_rec_bias_mg}
\Delta_{k+1} = A_k+B_k + M_{k+1}.
\end{equation}
Since $A_k+B_k$ is $\mathcal F_k$-measurable and $\E[M_{k+1}\mid\mathcal F_k]=0$,
\[
\E\big[\langle A_k+B_k, M_{k+1}\rangle\big]
=\E\Big[\E\big[\langle A_k+B_k, M_{k+1}\rangle\mid\mathcal F_k\big]\Big]
=\E\big[\langle A_k+B_k, \E[M_{k+1}\mid\mathcal F_k]\rangle\big]=0.
\]
Therefore,
\begin{equation}\label{eq:Pythagoras}
\|\Delta_{k+1}\|_{L^2}^2
= \|A_k+B_k\|_{L^2}^2 + \|M_{k+1}\|_{L^2}^2
= \|A_k+\E[V_k\mid\mathcal F_k]\|_{L^2}^2 + \|V_k-\E[V_k\mid\mathcal F_k]\|_{L^2}^2 .
\end{equation}

By assumptions \eqref{eq:stability_discrete}--\eqref{eq:biasvar_Vk}, we have
\begin{align*}
\|A_k+\E[V_k\mid\mathcal F_k]\|_{L^2}
&\le \|A_k\|_{L^2} + \|\E[V_k\mid\mathcal F_k]\|_{L^2}
\le e^{\int_{t_k}^{t_{k+1}}L_s ds} \|\Delta_k\|_{L^2} + \mathrm{Bias}_k,
\\
\|V_k-\E[V_k\mid\mathcal F_k]\|_{L^2}
&\le \mathrm{Var}_k,
\end{align*}
so
plugging into \eqref{eq:Pythagoras} gives
\begin{equation}\label{eq:one_step_ineq}
\|\Delta_{k+1}\|_{L^2}^2 \le \Big(\Lambda(t_k,t_{k+1}) \|\Delta_k\|_{L^2} + \mathrm{Bias}_k\Big)^2 + \mathrm{Var}_k^{ 2},
\qquad
\Lambda(s,t):=\exp\Big(\int_s^t L_u du\Big).
\end{equation}
For convenience set $\alpha_k:=\Lambda(t_k,t_{k+1})>0$.
Define two nonnegative sequences $(S_k)_{k=0}^N$ and $(T_k)_{k=0}^N$ by
\[
S_0:=0,\qquad T_0:=0,
\qquad
S_{k+1}:=\alpha_k S_k + \mathrm{Bias}_k,
\qquad
T_{k+1}:=\alpha_k^2 T_k + \mathrm{Var}_k^{ 2}.
\]
We claim that for all $k\in\{0,\dots,N\}$,
\begin{equation}\label{eq:induction_claim}
\|\Delta_k\|_{L^2} \le S_k + \sqrt{T_k}.
\end{equation}
This is true for $k=0$ since $\|X_0-\bar X_0\|_{L^2}=0$.
Assume \eqref{eq:induction_claim} holds at step $k$.
Using \eqref{eq:one_step_ineq},
\begin{align*}
\|\Delta_{k+1}\|_{L^2}&
\le \sqrt{\big(\alpha_k(S_k+\sqrt{T_k})+\mathrm{Bias}_k\big)^2 + \mathrm{Var}_k^{ 2}}\\
& \leq \alpha_k S_k+\mathrm{Bias}_k+\sqrt{(\alpha_k\sqrt{T_k})^2+\mathrm{Var}_k^2}\\
& \leq S_{k+1} + \sqrt{T_{k+1}},
\end{align*}
which proves \eqref{eq:induction_claim} by induction. Now, by iterating the recursions for $S_k$ and $T_k$, we obtain
\[
S_N = \sum_{k=0}^{N-1}\Big(\prod_{j=k+1}^{N-1}\alpha_j\Big) \mathrm{Bias}_k
= \sum_{k=0}^{N-1}\Lambda(t_{k+1},\tau) \mathrm{Bias}_k,
\]
and
\[
T_N = \sum_{k=0}^{N-1}\Big(\prod_{j=k+1}^{N-1}\alpha_j\Big)^2 \mathrm{Var}_k^{ 2}
= \sum_{k=0}^{N-1}\Lambda(t_{k+1},\tau)^2 \mathrm{Var}_k^{ 2}.
\]
Combining with \eqref{eq:induction_claim} at $k=N$ yields
\[
\|\Delta_{N}\|_{L^2} \le S_N + \sqrt{T_N}
= \sum_{k=0}^{N-1}\Lambda(t_{k+1},\tau) \mathrm{Bias}_k
 + 
\Big(\sum_{k=0}^{N-1}\Lambda(t_{k+1},\tau)^2 \mathrm{Var}_k^{ 2}\Big)^{1/2}.
\]
Together with $W_2(\text{Law}(X_\tau),\text{Law}(\bar X_N))\le \|\Delta_{N}\|_{L^2}$, this gives the result.
\end{proof}

\subsubsection{Proof of Proposition~\ref{theo:dicdiff}}\label{sec:theo:dicdiff}

\begin{proof}
We apply Theorem~\ref{theo:geom_mesh_generaldiff} with diffusion coefficient
$b_t \equiv \sqrt{2}$ and drift
\[
a_t(x)=x+2s_t(x),\qquad s_t(x):=\nabla \log p_t(x),\qquad p_t:=q_{T-t},\quad t\in[0,T].
\]
Recall $\rho(t):=\min\{1,T-t\}$ and set
\[
\theta(t):=e^{-(T-t)}\in(0,1).
\]
We will repeatedly use the elementary comparability (for $u\ge 0$)
\begin{equation}\label{eq:rho-compare}
1-e^{-u} \gtrsim \min\{1,u\},\qquad 1-e^{-2u} \gtrsim \min\{1,u\}.
\end{equation}
First,
Since $s_t=\nabla\log p_t$ is a gradient field, $\nabla a_t$
is symmetric. For the moment bound, we use that the reverse SDE in Proposition~\ref{theo:dicdiff} is the time-reversal
of the OU path, so $\text{Law}(X_t)=p_t$ for $t\in[0,\tau]$. Then,
\[
\mathbb{E}\|X_t\|^2
=\theta(t)^2 \mathbb{E}\|Y\|^2+(1-\theta(t)^2) \mathbb{E}\|\xi\|^2
\le \mathbb{E}\|Y\|^2+d.
\]
Using Proposition~\ref{prop:second-moment} we have $\mathbb{E}\|Y\|^2\lesssim d$ so
\[
\sup_{t\in[0,\tau]}\|X_t\|_{L_2} \lesssim \sqrt d.
\]
Let us now verify the assumptions (A1)-(A4)

\noindent\emph{(A1)}
Because $\nabla a_t$ is symmetric,
\[
L_t:=\sup_{x\in\R^d}\lambda_{\max}(\nabla a_t(x))
=\sup_{x\in\R^d}\lambda_{\max}(I_d+2\nabla s_t(x)),
\]
so
\[
\sup_{z\in[0,\tau]}\int_z^\tau L_s ds
=\sup_{z\in[0,\tau]}\int_{\theta(z)}^{\theta(\tau)}\frac{1}{\theta} 
\sup_x \lambda_{\max} \big(I_d+2\nabla s_{\theta}(x)\big) d\theta
\le 2\int_{\theta(z)}^{\theta(\tau)}\frac{1}{\theta} 
\sup_x \lambda_{\max} \big(I_d+\nabla s_{\theta}(x)\big) d\theta.
\]
By Corollary~\ref{coro:score_osl_selfcontained}, the last integral is bounded by a constant
depending only on $(\alpha,\beta,K)$, uniformly over $z$, hence
$\sup_{z\in[0,\tau]}\int_z^\tau L_s ds\leq C$.

\noindent\emph{(A2).}
By Corollary~\ref{coro:difflipbound}, we have
\[
C_t:=\sup_{x\in\R^d}\|\nabla a_t(x)\|_{\text{op}}
=\sup_x \|I_d+2\nabla s_t(x)\|_{\text{op}}
\lesssim \frac{1}{1-\theta(t)^2}
\lesssim \frac{1}{\rho(t)}.
\]

\noindent\emph{(A3).}
By Corollary~\ref{coro:timeregudiffu}, we have
\[
\|\partial_t a_t(x)\|=2\|\partial_t s_t(x)\|\lesssim \frac{\sqrt d+\|x\|}{\rho(t)^2},
\]

\noindent\emph{(A4)}
By Lemma~\ref{lemma:boundnormvt}, we get
\[
\|a_t(x)\|
\le \|x\|+2\|s_t(x)\|
\leq C \frac{\sqrt d+\|x\|}{\rho(t)}.
\]

Let us now give a bound on $\|\Delta a_t\|$. The Fokker-Planck equation gives
\begin{equation}\label{eq:FP-pt}
\partial_t p_t= -\nabla \cdot(xp_t)-\Delta p_t.
\end{equation}
Let $g_t:=\log p_t$, so $s_t=\nabla g_t$. Dividing \eqref{eq:FP-pt} by $p_t$ and using
$\Delta p_t/p_t=\Delta g_t+\|\nabla g_t\|^2$ and $\nabla \cdot(xp_t)/p_t=d+x\cdot\nabla g_t$ gives
\[
\partial_t g_t=-(d+x\cdot\nabla g_t)-\Delta g_t-\|\nabla g_t\|^2.
\]
Taking a gradient yields the score PDE
\begin{equation}\label{eq:score-PDE}
\partial_t s_t + s_t + (\nabla s_t)(x+2s_t) + \Delta s_t = 0,
\end{equation}
so
\[
\|\Delta a_t(x)\|
\le 2\|\partial_t s_t(x)\| + 2\|s_t(x)\| + 2\|\nabla s_t(x)\|_{\text{op}} \|x+2s_t(x)\|.
\]
Using the bounds established above:
\[
\|\partial_t s_t(x)\|\lesssim \rho(t)^{-2}(\sqrt d+\|x\|),\qquad
\|s_t(x)\|\lesssim \rho(t)^{-1}(\sqrt d+\|x\|),\qquad
\|\nabla s_t(x)\|_{\text{op}}\lesssim \rho(t)^{-1},
\]
and $\|x+2s_t(x)\|=\|a_t(x)\|\lesssim \rho(t)^{-1}(\sqrt d+\|x\|)$, we obtain
\[
\|\Delta a_t(x)\| \lesssim \frac{\sqrt d+\|x\|}{\rho(t)^2},
\]
which is exactly the Laplacian hypothesis of Theorem~\ref{theo:geom_mesh_generaldiff}.

All assumptions of Theorem~\ref{theo:geom_mesh_generaldiff} are satisfied
so for $h_{\max}$ small enough we have
\[
W_2 \big(\text{Law}(X_\tau),\text{Law}(\bar X_N)\big)
 \le C\sqrt d h_{\max}\Big(T+\log\Big(\frac{T}{T-\tau}\Big)\Big).
\]
Finally, using $T\le A\log((T-\tau)^{-1})$ and $\log T\le T$, we get
\[
T+\log\Big(\frac{T}{T-\tau}\Big)
= T+\log T+\log\Big(\frac{1}{T-\tau}\Big)
 \le (2A+1)\log\Big(\frac{1}{T-\tau}\Big).
\]
which gives the result.
\end{proof}

\subsection{Proofs of the sampling errors}
\subsubsection{Proof of Lemma~\ref{lemma:sampling_error_sde}}\label{sec:lemma:sampling_error_sde}
\begin{proof}First, we have 
$$W_2 \bigl(p^\star,\text{Law}(\bar X_N)\bigr)
\le
W_2 \bigl(p^\star,\text{Law}(X_\tau)\bigr)
+
W_2 \bigl(\text{Law}(X_\tau),\text{Law}(\bar X_N)\bigl)$$ 
and $p^\star=\text{Law}(X_T)$.
Now, let $\hat X$ be the (learned) solution of
\[
d\hat X_t = \hat a_t(\hat X_t) dt + b_t dW_t,\qquad t\in[0,\tau],
\]
with the same initial condition $X_0=\hat X_0$ in law. We decompose
\begin{equation}
\label{eq:tri}
W_2 \bigl(\text{Law}(X_\tau),\text{Law}(\bar X_N)\bigr)
\le
W_2 \bigl(\text{Law}(X_\tau),\text{Law}(\hat X_\tau)\bigr)
+
W_2 \bigl(\text{Law}(\hat X_\tau),\text{Law}(\bar X_N)\bigr).
\end{equation}
Couple $X$ and $\hat X$ by taking the same Brownian motion $W$ and the same initial condition. Let $$\Delta_t:=X_t-\hat X_t,$$ we have
$$W_2^2 \bigl(\text{Law}(X_\tau),\text{Law}(\hat X_\tau)\bigr)\leq \mathbb{E}\|\Delta_{\tau}\|^2.$$
Furthermore,
\begin{align*}
\frac{d}{dt}\E\|\Delta_t\|^2
&= 2 \E\bigl\langle \Delta_t, a_t(X_t)-\hat a_t(\hat X_t)\bigr\rangle\\
&=
2 \E\bigl\langle \Delta_t, \hat a_t(X_t)-\hat a_t(\hat X_t)\bigr\rangle 
+
2 \E\bigl\langle \Delta_t, a_t(X_t)-\hat a_t(X_t)\bigr\rangle\\
& \leq 2\hat  L_t \E\|\Delta_t\|^2+2\|\Delta_t\|_{L_2} \epsilon_{\mathrm{drift}}(t)
\end{align*}
using the one-sided Lipschitzness of $\hat{a}_t$
and  Cauchy-Schwarz.
Finally, applying the non-linear Grönwall's inequality we get 
\[
\left(\mathbb E\|\Delta_\tau\|_{L^2}\right)^{1/2} \le \int_0^\tau \exp \Bigl(\int_t^\tau \hat L_s ds\Bigr) 
\epsilon_{\mathrm{drift}}(t) dt.
\]
\end{proof}

\subsubsection{Proof of Corollary~\ref{cor:lipman_sampling_error_log2_over_N}}\label{sec:cor:lipman_sampling_error_log2_over_N}
\begin{proof} We only give the detail of the proof in the Lipman setting as the proof is the same in the Stochastic interpolant setting by replacing Corollaries~\ref{cor:lipman-global} and~\ref{coro:dtv_rate} by Corollaries~\ref{coro:global-Lipschitz-v} and~\ref{coro:timeestimatesi}.
Applying Lemma~\ref{lemma:sampling_error_sde}, we have
for $(\hat X_t)_{t\in[0,\tau]}$
\begin{align*}
W_2 \bigl(p^\star,\text{Law}(\bar X_N)\bigr)
\le& W_2 \bigl(\text{Law}(X_1),\text{Law}(X_\tau)\bigr)+
\int_0^\tau
\exp \Bigl(\int_t^\tau \hat L_s ds\Bigr) 
\epsilon_{\mathrm{drift}}(t) dt+ W_2\bigl(\text{Law}(\hat{X}_\tau),\text{Law}(\bar X_N)\bigr).
\end{align*}
for $\hat X_t$ the solution of
\[
\partial_t \hat X_t = \hat v_t(\hat X_t),\qquad \hat X_0 \stackrel{d}= X_0,
\]
\noindent\textbf{Early stopping error $W_2(\text{Law}(X_1),\text{Law}(X_\tau))$.}
In the Lipman reduced-model setting, the marginal at time $t$ can be represented as
\[
X_t \sim f_t Y + \sigma_t \xi,
\qquad Y\sim p^\star,\ \xi\sim\mathcal N(0,I_d),\ Y\perp \xi,
\]
so in particular $X_1\sim Y\sim p^\star$ and $X_\tau\sim f_\tau Y+\sigma_\tau \xi$.
Applying Lemma~\ref{lem:general-coupling} with $A=f_\tau \text{Id}$ and $\eta=\sigma_\tau\xi$ yields
\begin{equation}
\label{eq:lemma6_trunc}
W_2\bigl(\text{Law}(X_1),\text{Law}(X_\tau)\bigr)
\le
\Bigl((1-f_\tau)^2 \E\|Y\|^2 + \E\|\sigma_\tau\xi\|^2\Bigr)^{1/2}
=
\Bigl((1-f_\tau)^2 \E\|Y\|^2 + d \sigma_\tau^2\Bigr)^{1/2}.
\end{equation}
Under the assumptions $\|\argmin u\|\leq A\sqrt{d}$, one has $\E\|Y\|^2 \leq C d$ by Proposition~\ref{prop:second-moment},
so \eqref{eq:lemma6_trunc} gives
\begin{equation}
\label{eq:trunc_simple}
W_2\bigl(\text{Law}(X_1),\text{Law}(X_\tau)\bigr)
 \leq C 
\sqrt d\bigl(|1-f_\tau|+\sigma_\tau\bigr).
\end{equation}
Next, since $f'\in L^\infty(0,1)$ and $f_1=1$, we have
\[
|1-f_\tau| = \Bigl|\int_\tau^1 f'(s) ds\Bigr|\le \|f'\|_\infty (1-\tau).
\]
By the schedule assumption, $\sigma_\tau \le A(1-\tau)^{p}$ for $\tau$ close enough to $1$.
Set
\[
q:=p\wedge 1,
\qquad
\tau = 1-\Bigl(\frac{\log^2 N}{N}\Bigr)^{1/q}.
\]
Writing
\[
x_N:=\frac{\log^2 N}{N},
\]
we have $1-\tau=x_N^{1/q}$. Hence
\[
\sigma_\tau  \leq C  (1-\tau)^{p}
= C x_N^{p/q}
\le C x_N
= C\frac{\log^2 N}{N},
\]
since $p/q\ge 1$, and
\[
|1-f_\tau|  \leq C  (1-\tau)
= C x_N^{1/q}
\le C x_N
= C\frac{\log^2 N}{N},
\]
since $1/q\ge 1$.
Plugging into \eqref{eq:trunc_simple} yields
\begin{equation}
\label{eq:trunc_final}
W_2\bigl(\text{Law}(X_1),\text{Law}(X_\tau)\bigr)
 \leq C 
\sqrt d \frac{\log^2 N}{N}.
\end{equation}
\noindent\textbf{Discretization error $W_2(\text{Law}(\hat X_\tau),\text{Law}(\bar X_N))$.}
By the same arguments than in the proof of Proposition~\ref{prop:geomeshLIp} , $\hat v_t$ satisfies the hypotheses \eqref{eq:lipman_envelopes_general} of Proposition~\ref{prop:geom_mesh_general}. It remains to verify the moment assumption in Proposition~\ref{prop:geom_mesh_general} for the learned dynamics. Let $(\hat X_t)_{t\in[0,\tau]}$ solve
\[
\dot{\hat X}_t=\hat v_t(\hat X_t),\qquad \hat X_0=X_0.
\]
Coupling $(X_t,\hat X_t)$ with the same initial condition and setting $\Delta_t:=X_t-\hat X_t$, the same computation as in Lemma~\ref{lemma:sampling_error_sde} yields
\[
\frac{d}{dt}\|\Delta_t\|_{L^2}\le \hat L_t\|\Delta_t\|_{L^2}+\varepsilon_{\mathrm{drift}}(t),
\]
hence, by Gr\"onwall and the integrated one-sided Lipschitz bound on $\hat v$,
\[
\sup_{t\in[0,\tau]}\|X_t-\hat X_t\|_{L^2}\le C\int_0^\tau \varepsilon_{\mathrm{drift}}(s)\,ds\le C\sqrt d .
\]
On the other hand, by Proposition~\ref{prop:second-moment} and the explicit representation of the exact interpolation, we have
\[
\sup_{t\in[0,\tau]}\|X_t\|_{L^2}\le C\sqrt d .
\]
Therefore,
\[
\sup_{t\in[0,\tau]}\|\hat X_t\|_{L^2}\le \sup_{t\in[0,\tau]}\|X_t\|_{L^2}
+\sup_{t\in[0,\tau]}\|X_t-\hat X_t\|_{L^2}\le C\sqrt d,
\]
so that
\[
\sup_{t\in[0,\tau]}\E\|\hat X_t\|^2\le Cd.
\]
We may now repeat verbatim Step~3 in the proof of Proposition~\ref{prop:geomeshLIp} with $\hat X_t$ in place of $X_t$, which gives
\[
\sup_{k=0,\dots,N}\E\|\bar X_k\|^2\le Cd,
\]
and thus the hypotheses of Proposition~\ref{prop:geom_mesh_general} are satisfied for the learned drift $\hat v$. Therefore, Proposition~\ref{prop:geom_mesh_general}
yields for $h_{\max}$ small enough,
\begin{equation}
\label{eq:prop3}
W_2\bigl(\text{Law}(\hat X_\tau),\text{Law}(\bar X_N)\bigr)
 \leq C 
\sqrt d  h_{\max} \log\Bigl(\frac{1}{1-\tau}\Bigr).
\end{equation}
We now relate $h_{\max}$ and $N$ for the geometric grid:
since $r^N=1-\tau$ and $h_{\max}=1-r$, we can write
\[
r = \exp\Bigl(-\frac{1}{N}\log\frac{1}{1-\tau}\Bigr)
\quad\Longrightarrow\quad
h_{\max} = 1-e^{-x}\le x
\quad\text{with}\quad
x:=\frac{1}{N}\log\frac{1}{1-\tau}.
\]
Therefore,
\begin{equation}
\label{eq:hmax_bound}
h_{\max}\le \frac{1}{N}\log\Bigl(\frac{1}{1-\tau}\Bigr).
\end{equation}
Combining \eqref{eq:prop3} and \eqref{eq:hmax_bound} gives
\begin{equation}
\label{eq:disc_log2_over_N_general_tau}
W_2\bigl(\text{Law}(\hat X_\tau),\text{Law}(\bar X_N)\bigr)
 \leq C 
\sqrt d 
\frac{\log^2 \bigl(\frac{1}{1-\tau}\bigr)}{N}.
\end{equation}
With $\tau=1-(\log^2 N/N)^{1/q}$ we have
\[
\log\Bigl(\frac{1}{1-\tau}\Bigr)
=
\frac{1}{q}\log\Bigl(\frac{N}{\log^2 N}\Bigr)
\le \frac{1}{q}\log N
\qquad\text{for $N$ large enough.}
\]
hence \eqref{eq:disc_log2_over_N_general_tau} implies
\begin{equation}
\label{eq:disc_final}
W_2\bigl(\text{Law}(\hat X_\tau),\text{Law}(\bar X_N)\bigr)
 \leq C 
\sqrt d \frac{\log^2 N}{N}.
\end{equation}
\end{proof}

\subsubsection{Proof of Corollary~\ref{cor:diffusion_sde_sampling_logN}}\label{sec:cor:diffusion_sde_sampling_logN}

\begin{proof}
We decompose,
\[
W_2\big(p^\star,\text{Law}(\bar X_N)\big)
\le
W_2\big(\text{Law}(X_T),\text{Law}(X_\tau)\big)
+
W_2\big(\text{Law}(X_\tau),\text{Law}(\hat X_\tau)\big)
+
W_2\big(\text{Law}(\hat X_\tau),\text{Law}(\bar X_N)\big)
\]
and bound separately each term.

\smallskip
\noindent\textbf{1- Early stopping.}
Since $\text{Law}(X_T)=p^\star$ and $\text{Law}(X_\tau)=q_{T-\tau}=q_{1/N^2}$, and
$q_s=\text{Law}(e^{-s}Y+\sqrt{1-e^{-2s}} \xi)$ with $Y\sim p^\star$ and $\xi\sim\mathcal{N}(0,I_d)$ independent, we may couple
$Y$ with $e^{-s}Y+\sqrt{1-e^{-2s}} \xi$ and obtain
\[
W_2\big(p^\star,q_s\big)^2
\le
\E\Big\|Y-\Big(e^{-s}Y+\sqrt{1-e^{-2s}} \xi\Big)\Big\|^2
=
(1-e^{-s})^2 \E\|Y\|^2 + (1-e^{-2s}) \E\|\xi\|^2.
\]
With $s=T-\tau=1/N^2$ and $\E\|\xi\|^2=d$, using $1-e^{-s}\le s$ and $1-e^{-2s}\le 2s$ gives
\[
W_2\big(p^\star,\text{Law}(X_\tau)\big)
=
W_2\big(p^\star,q_{1/N^2}\big)
\le
\Big(\frac{\E\|Y\|^2}{N^4}+\frac{2d}{N^2}\Big)^{1/2}
\le
C \frac{\sqrt d}{N},
\]
by Proposition~\ref{prop:second-moment}.

\noindent\textbf{2- Learning error.}
Couple $(X_0,\hat X_0)$ optimally so that $\|X_0-\hat X_0\|_{L^2}=W_2(q_T,\gamma)$, and drive $(X_t,\hat X_t)$
with the same Brownian motion. With $\Delta_t=X_t-\hat X_t$, the same computation as above yields
\[
\frac{d}{dt}\|\Delta_t\|_{L^2}\le \hat L_t\|\Delta_t\|_{L^2}+\varepsilon_{\mathrm{drift}}(t),
\]
hence by Grönwall,
\[
W_2(\text{Law}(X_\tau),\text{Law}(\hat X_\tau))
\le \|\Delta_\tau\|_{L^2}
\le e^{\int_0^\tau \hat L_s ds} W_2(q_T,\gamma)
+\int_0^\tau e^{\int_t^\tau \hat L_s ds} \varepsilon_{\mathrm{drift}}(t) dt.
\]
Using the bound $e^{\sup_{z\in[0,\tau]}\int_z^\tau \hat L_s ds}\le C$, it remains to control $W_2(q_T,\gamma)$.
Couple $q_T$ with $\gamma_d$ by pairing $Z_T$ with the same $\xi$. Then
\[
W_2(q_T,\gamma)^2 \le \E\|Z_T-\xi\|^2
= \E\Big\|e^{-T}Y+\big(\sqrt{1-e^{-2T}}-1\big)\xi\Big\|^2.
\]
Using $\big(1-\sqrt{1-u}\big)=\frac{u}{1+\sqrt{1-u}}\le u$ for $u\in[0,1]$,
\[
\E\|Z_T-\xi\|^2
\le 2e^{-2T}\E\|Y\|^2 + 2\big(1-\sqrt{1-e^{-2T}}\big)^2 \E\|\xi\|^2
\le 2e^{-2T}\E\|Y\|^2 + 2e^{-4T} d.
\]
Hence
\[
W_2(q_T,\gamma) \le \sqrt{2} e^{-T}\sqrt{\E\|Y\|^2} + \sqrt{2d} e^{-2T}
\le C \frac{\sqrt d}{N}.
\]
Therefore,
\[
W_2(\text{Law}(X_\tau),\text{Law}(\hat X_\tau))
\le C\frac{\sqrt d}{N}+C\int_0^\tau \varepsilon_{\mathrm{drift}}(t) dt.
\]

\noindent\textbf{3- Time discretization.}
Set $\Delta_t:=X_t-\hat X_t$. By Grönwall's lemma,
for every $t\in[0,\tau]$,
\[
\|\Delta_t\|_{L_2}
\le e^{\int_0^t \hat L_s ds}\|\Delta_0\|_{L_2}
+ \int_0^t e^{\int_u^t \hat L_s ds} \varepsilon_{\mathrm{drift}}(u) du.
\]
Since $\|\Delta_0\|_{L_2}\leq \|X_0\|_{L_2}+\|\hat{X}_0\|_{L_2}\leq C\sqrt d$, this gives
\begin{align*}
\|\hat X_t\|_{L_2}&\le \|X_t\|_{L_2}+\|X_t-\hat X_t\|_{L_2}\\
& \le C\sqrt{d}+ C\sqrt{d}
+ C \int_0^\tau \varepsilon_{\mathrm{drift}}(u) du\\
& \le C\sqrt{d}.
\end{align*}
Then, by repeating verbatim the verification of the hypotheses of Theorem~\ref{theo:geom_mesh_generaldiff} carried out in the proof of Proposition~\ref{theo:dicdiff} with $a_t$ replaced by $\hat a_t = x + 2\hat s_t$ and using the assumed structural bounds on $\hat s_t$, we may apply Theorem~\ref{theo:geom_mesh_generaldiff} to $(\hat X_t)_{t\leq \tau}$ and its Euler scheme  to obtain
\[
W_2\big(\text{Law}(\hat X_\tau),\text{Law}(\bar X_N)\big)
\le
C \sqrt d h_{\max} \log \Big(\frac1{T-\tau}\Big).
\]

Since $r=((T-\tau)/T)^{1/N}=\exp \big(-\frac1N\log\frac{T}{T-\tau}\big)$, letting
$x:=\frac1N\log\frac{T}{T-\tau}\ge 0$ gives $r=e^{-x}$ and
\[
h_{\max}=T(1-r)=T(1-e^{-x})\le T x
=
\frac{T}{N}\log \Big(\frac{T}{T-\tau}\Big).
\]
With $T=\log N$ and $T-\tau=1/N^2$, this yields
\[
h_{\max}\log \Big(\frac1{T-\tau}\Big)
\le
\frac{\log N}{N} 
\log \big( (\log N) N^2\big) 
\log(N^2)
\;\lesssim\;
\frac{\log^3 N}{N},
\]
for $N$ large enough. Therefore
\[
W_2\big(\text{Law}(\hat X_\tau),\text{Law}(\bar X_N)\big)
\le
C \sqrt d \frac{\log^3 N}{N}.
\]
\end{proof}

\end{document}